\newtheorem{theorem}{Theorem}[section]
\newtheorem{thmx}{Theorem}
\newtheorem{proposition}[theorem]{Proposition}
\newtheorem{corollary}[theorem]{Corollary}
\newtheorem{lemma}[theorem]{Lemma}
\newtheorem{fact}[theorem]{Fact}
\theoremstyle{definition}
\newtheorem{definition}[theorem]{Definition}
\newtheorem{notation}[theorem]{Notation}
\newtheorem*{thm*}{Theorem}
\newtheorem*{claim*}{Claim}
\newtheorem{remark}[theorem]{Remark}
\newtheorem*{convention}{Convention}
\newcounter{que}
\newtheorem{question}[que]{Question}
\newcommand{\setsep}{:\;}
\newcommand\isomtrclass[2][]{\langle #2\rangle_{\equiv}^{#1}}
\newcommand\isomrfclass[2][]{\langle #2\rangle_{\simeq}^{#1}}
\newcommand\approximates[2][]{\stackrel{#2}{\sim} #1}
\newcommand{\Span}{\operatorname{span}}
\def\Gurarii{\mathbb{G}}
\def\F{\mathcal{F}}
\def\B{\mathcal{B}}
\def\O{\mathcal{O}}
\def\PP{\mathcal{P}}
\def\T{\mathcal{T}}
\newcommand{\Rat}{\mathbb{Q}}
\newcommand{\Rea}{\mathbb{R}}
\newcommand{\Nat}{\mathbb{N}}
\newcommand{\qeSpan}{\Span_\Rat}
\def\b{\mathfrak{b}}
\def \dmu {\, \mathrm{d} \mu}
\begin{document}
\normalem %aby \emph{..} davalo font kurzivou a ne podtrzenim (pri nacteni package ulem je defaultni podtrzeni)

\title[Polish spaces of Banach spaces]{Polish spaces of Banach spaces.\\ Complexity of isometry and isomorphism classes}

\author[M. C\' uth]{Marek C\'uth}
\author[M. Dole\v{z}al]{Martin Dole\v{z}al}
\author[M. Doucha]{Michal Doucha}
\author[O. Kurka]{Ond\v{r}ej Kurka}
\email{cuth@karlin.mff.cuni.cz}
\email{dolezal@math.cas.cz}
\email{doucha@math.cas.cz}
\email{kurka.ondrej@seznam.cz}

\address[M.~C\' uth]{Charles University, Faculty of Mathematics and Physics, Department of Mathematical Analysis, Sokolovsk\'a 83, 186 75 Prague 8, Czech Republic}
\address[M.~Dole\v{z}al, M.~Doucha, O.~Kurka]{Institute of Mathematics of the Czech Academy of Sciences, \v{Z}itn\'a 25, 115 67 Prague 1, Czech Republic}

\subjclass[2010] {03E15,  46B04, 54E52 (primary), 46B20,  46B80 46C15 (secondary)}

\keywords{Banach spaces, descriptive set theory, Hilbert space, Lp spaces, Baire category}
\thanks{M. C\'uth was supported by Charles University Research program No. UNCE/SCI/023. Research of Martin Dole\v{z}al was supported by the GA\v{C}R project 20-22230L and RVO: 67985840. M. Doucha and O. Kurka were supported by the GA\v{C}R project 22-07833K and RVO: 67985840.
}

\begin{abstract}
   We study the complexities of isometry and isomorphism classes of separable Banach spaces in the Polish spaces of Banach spaces recently introduced and investigated by the authors in \cite{CDDK1}. We obtain sharp results concerning the most classical separable Banach spaces.
    
    %Our main result concerns new characterizations of the Hilbert space.
    We prove that the infinite-dimensional separable Hilbert space is characterized as the unique separable infinite-dimensional Banach space whose isometry class is closed, and also as the unique separable infinite-dimensional Banach space whose isomorphism class is $F_\sigma$. For $p\in\left[1,2\right)\cup\left(2,\infty\right)$, we show that the isometry classes of $L_p[0,1]$ and $\ell_p$ are $G_\delta$-complete sets and $F_{\sigma\delta}$-complete sets, respectively. Then we show that the isometry class of $c_0$ is an $F_{\sigma\delta}$-complete set.
     
     Additionally, we compute the complexities of many other natural classes of separable Banach spaces; for instance, the class of separable $\mathcal{L}_{p,\lambda+}$-spaces, for $p,\lambda\geq 1$, is shown to be a $G_\delta$-set, the class of superreflexive spaces is shown to be an $F_{\sigma\delta}$-set, and the class of spaces with local $\Pi$-basis structure is shown to be a $\boldsymbol{\Sigma}^0_6$-set. The paper is concluded with many open problems and suggestions for a future research.
\end{abstract}
\maketitle

%\tableofcontents %zaridil jsem to tak, aby se v obsahu nezobrazovaly podsekce (tj. pridal jsem "\setcounter{tocdepth}{1}" tesne pred "\begin{document}")

%\addcontentsline{toc}{part}{Introduction and preliminaries}
%\part*{Introduction and preliminaries}

\section*{Introduction}
Descriptive set theoretic approach to Banach spaces has proved to be a powerful tool in solving many problems in Banach space theory; for a wide selection of references ranging from the earliest ones of Bourgain to the most recent ones see e.g. \cite{Bourgain1, ArgDod, AGR03, ABB21, Cu18}. Traditionally, and as defined explicitly for the first time in the seminal papers of Bossard (\cite{Bossard93}, \cite{Bo02}), one considers the standard Borel space of all separable Banach spaces, which can be defined as an appropriate Borel subspace of the Effros-Borel space of all closed subspaces of some isometrically universal separable Banach space. Since such defined space of Banach spaces is not a topological space, it allows us to study Banach spaces globally only in a Borel way and not topologically, which would be desirable in some cases.  This drawback was addressed in a recent paper \cite{GS18} of Godefroy and Saint-Raymond where they propose to study certain natural topologies on the standard Borel spaces of separable Banach spaces and they compute Borel complexities, in these topologies, of several important classes of Banach spaces. The authors of this paper initiated in \cite{CDDK1} the study of the Polish spaces of norms and pseudonorms on the countable infinite-dimensional vector space over $\Rat$, which have additional further advantages:
\begin{itemize}
    \item they are almost canonical Polish spaces of separable Banach spaces;
    \item they have very nice topological properties that are connected to local theory of Banach spaces, especially to finite representability;
    \item the computation of Borel complexities of various classes of separable Banach spaces is usually in these spaces as straightforward as possible, which in particular allows us to improve several estimates by Godefroy and Saint-Raymond;
    \item this approach to topologizing the space of Banach spaces is somewhat similar to how metric structures are topologized generally in continuous model theory, which could connect descriptive set theory of Banach spaces with the model theory of Banach spaces, which is one of the most developed areas of applications of logic to metric structures.
\end{itemize} 

This paper is a companion and second part to \cite{CDDK1}, however it is completely self-contained and can be read independently. Its aim is to demonstrate the strength of this new approach by computing (in many cases, these computations are sharp) complexities of several classes of Banach spaces, focusing on isomorphism classes, improving several results of Godefroy and Saint-Raymond, and initiating the research of computing the complexities of isometry classes.

First we focus on the undoubtedly most important separable infinite-dimensional Banach space -- the Hilbert space $\ell_2(\Nat)$. We uniquely characterize it by both the complexity of its isometry class as well as the complexity of its isomorphism class.
\begin{thmx}\label{thm:Intro6}
\begin{enumerate}
    \item The separable infinite-dimensional Hilbert space is characterized as the unique separable infinite-dimensional Banach space whose isometry class is closed (see Theorem~\ref{thm:isometryClosedClass}).
    \item The separable infinite-dimensional Hilbert space is characterized as the unique, up to isomorphism, separable infinite-dimensional Banach space whose isomorphism class is $F_\sigma$ (see Theorem~\ref{thm:FSigmaIsomrfClasses}).
\end{enumerate}
\end{thmx}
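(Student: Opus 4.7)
The plan is to prove both items by a single Dvoretzky-based construction on the uniqueness side, while the forward halves rely on standard closed/$F_\sigma$ characterizations of Hilbert spaces. Throughout I identify a (pseudo)norm $\mu$ on the fixed countable-dimensional $\mathbb{Q}$-vector space $V=\bigoplus_{i\geq 1}\mathbb{Q} e_i$ with the separable Banach space obtained as the completion of $(V,\mu)$.

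For the forward direction of (1), I would invoke the Jordan--von Neumann theorem: a Banach space is isometric to a Hilbert space iff its norm satisfies the parallelogram identity. Since each evaluation $\mu\mapsto \mu(v)$ is continuous in the Polish topology,
\[
\mu(v+w)^2 + \mu(v-w)^2 = 2\mu(v)^2 + 2\mu(w)^2
\]
defines a closed subset for each fixed $(v,w)\in V\times V$; intersecting over the countable set $V\times V$ preserves closedness, and combined with separability plus infinite-dimensionality (built into the ambient Polish space) this closed set is precisely the isometry class of $\ell_2$. For the forward direction of (2), I would use the classical local criterion that a separable infinite-dimensional $X$ is isomorphic to $\ell_2$ iff $\sup_F d_{BM}(F,\ell_2^{\dim F})<\infty$, with the supremum over finite-dimensional subspaces $F$. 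Setting
\[
A_K=\bigl\{\mu : d_{BM}(F,\ell_2^{\dim F})\leq K \text{ for every finite-dim } \mathbb{Q}\text{-subspace } F\subset V\bigr\},
\]
each $A_K$ is a countable intersection of closed conditions (one per $F$) and is therefore closed; the isomorphism class of $\ell_2$ equals $\bigcup_{K\in\mathbb{N}} A_K$ and is $F_\sigma$.

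For the uniqueness halves, fix any separable infinite-dimensional $X$ and let $\mu_\infty$ denote the pullback of the $\ell_2$-norm to $V$ along $(e_i)$. I build pseudonorms $(\mu_k)$ with $(V,\mu_k)$ isometric to $X$ (in particular isomorphic) and $\mu_k\to\mu_\infty$ pointwise. By Dvoretzky's theorem, for each $k$ there exist $y_1^{(k)},\ldots,y_k^{(k)}\in X$ for which the linear map $\ell_2^k\to X$ sending the standard basis onto them is $(1+1/k)$-isomorphic. Extend to a $\mathbb{Q}$-linearly independent dense sequence $(x_i^{(k)})_{i\geq 1}$ in $X$ by taking $x_i^{(k)}=y_i^{(k)}$ for $i\leq k$ and choosing the tail by a standard perturbation using separability of $X$; then define $\mu_k(\sum q_i e_i) = \|\sum q_i x_i^{(k)}\|_X$. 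The assignment $e_i\mapsto x_i^{(k)}$ realizes $(V,\mu_k)$ as an isometric dense $\mathbb{Q}$-subspace of $X$, so its completion is isometric to $X$; and for any fixed $v\in \Span_{\mathbb{Q}}(e_1,\ldots,e_N)$ and $k\geq N$, Dvoretzky gives
\[
(1+1/k)^{-1}\|v\|_{\ell_2} \leq \mu_k(v) \leq \|v\|_{\ell_2},
\]
so $\mu_k(v)\to\mu_\infty(v)$. The uniqueness assertions now follow: for (1), closedness of the isometry class of $X$ forces $\mu_\infty$ into it, so $X$ is isometric to $\ell_2$; for (2), if the isomorphism class of $X$ equals $\bigcup_n F_n$ with each $F_n$ closed, then pigeonhole places infinitely many of the $\mu_k$'s into some $F_{n_0}$; that subsequence still converges to $\mu_\infty$, so closedness forces $\mu_\infty\in F_{n_0}$, hence $X\cong\ell_2$.

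The main obstacle is the clean implementation of the Dvoretzky transfer inside the Polish space of pseudonorms: one must choose the tail $(x_i^{(k)})_{i>k}$ so that the full sequence $(x_i^{(k)})_{i\geq 1}$ is simultaneously $\mathbb{Q}$-linearly independent and dense in $X$, without disturbing the Dvoretzky-embedded initial segment. Once this is in place, closedness of the parallelogram identity, closedness of a single Banach--Mazur inequality on a fixed finite-dim $\mathbb{Q}$-subspace, and the pigeonhole step applied to the $F_\sigma$ decomposition are routine.
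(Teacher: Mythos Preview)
Your argument is correct, and for the uniqueness half of item~(2) it is genuinely simpler than the paper's.

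For item~(1) you do exactly what the paper does: the parallelogram identity gives closedness of $\isomtrclass{\ell_2}$, and the Dvoretzky construction places some $\mu_\infty\in\isomtrclass{\ell_2}$ in the closure of $\isomtrclass{X}$ for every infinite-dimensional $X$ (this is the paper's Lemma~\ref{cor:Dvoretzky}). For the forward half of item~(2) the paper uses Kwapie\'n's theorem (type~2 and cotype~2 are $F_\sigma$ conditions) rather than your bounded Banach--Mazur distance criterion; both are standard, though the type/cotype formulation has the mild advantage that it is transparently closed under degeneration and so works verbatim in $\PP_\infty$, whereas your $A_K$ needs a word about what happens to $d_{BM}\big((F,\mu),\ell_2^{\dim F}\big)$ when $\mu|_F$ is only a pseudonorm.

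The substantive difference is in the uniqueness for item~(2). The paper proceeds by contradiction: assuming $X\not\simeq\ell_2$, it invokes the solution of the homogeneous subspace problem (Komorowski--Tomczak-Jaegermann and Gowers) to find an infinite-dimensional closed subspace $Y\subseteq X$ with $Y\not\simeq X$, and then runs a somewhat delicate combinatorial argument with permutations of the basis $(e_n)$ to produce a $\nu$ coding a space isomorphic to $Y$ that nevertheless lies in one of the closed pieces $F_m$ of $\isomrfclass{X}$. Your route bypasses all of this: the same Dvoretzky sequence $\mu_k\to\mu_\infty$ that settles item~(1), together with the pigeonhole principle, forces $\mu_\infty$ into some closed piece $F_{n_0}$ of the assumed $F_\sigma$ decomposition, whence $\ell_2\simeq X$. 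This uses nothing beyond Dvoretzky and is strictly more elementary; it is worth noting that the paper already has the key ingredient (Lemma~\ref{cor:Dvoretzky}) in hand but does not exploit it for item~(2).

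Two cosmetic points. Your displayed Dvoretzky bound should read $(1+1/k)^{-1}\|v\|_{\ell_2}\le \mu_k(v)\le (1+1/k)\|v\|_{\ell_2}$ unless you explicitly normalize the embedding to be a contraction. And for $\mu_k\in\B$ you need the extended sequence $(x_i^{(k)})_{i\ge 1}$ to be $\Rea$-linearly (not merely $\Rat$-linearly) independent, which is of course just as easy to arrange.
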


\noindent Let us briefly comment on the importance of this result. There are many known isometric characterizations of inner product spaces and several of those may be easily seen to form a closed condition, see e.g. the monograph \cite{innerBook}. By our isometric characterization from Theorem~\ref{thm:Intro6}, there is no other separable infinite-dimensional Banach space which can be characterized by a closed condition. Our isomorphic characterization of Hilbert spaces from Theorem~\ref{thm:Intro6} seems to be even more interesting. Recall that Bossard \cite[Problem 2.9]{Bo02} originally asked whether $\ell_2$ is the unique space with a Borel isomorphism class. Although this is known to be false now (see e.g. \cite{G17}), our Theorem~\ref{thm:Intro6} shows that it is arguably the Banach space with the simplest possible isomorphism class. The only other candidate possibly having a simple isomorphism class is the Gurari\u{\i} space, which might potentially have a $G_\delta$ isomorphism class (see Proposition~\ref{prop:classOfl2}). We conjecture that it is not the case (in fact, we do not know whether the isomorphism class of the Gurari\u{\i} space is even Borel), however we cannot disprove it at the moment.

Moreover, since our coding of Banach spaces is connected to the local theory of Banach spaces, it is of some interest to notice that there were some attempts to characterize $\ell_2$ up to isomorphism using its finite-dimensional structure, see e.g. \cite[Conjecture 7.3]{JLS96} for the conjecture by Johnson, Lindenstrauss and Schechtman. Thus, our setting enables us to formulate and prove a result similar in a spirit to what was conjectured by Johnson, Lindenstrauss and Schechtman. We refer the reader to Section~\ref{sect:question} for more information in this direction.\medskip

Next, we continue by studying complexities of \emph{isometry} classes of other Banach spaces, i.e. how easy/difficult it is to define them uniquely up to isometry. There is an active ongoing research whether for a particular Banach space its isomorphism class is Borel or not (see e.g.  \cite{G17}, \cite{K19}, \cite{Gh16}, \cite{Godef17}) while it is known that isometry classes of separable Banach spaces are always Borel (note that the linear isometry relation is Borel bi-reducible with an orbit equivalence relation \cite{Melleray} and orbit equivalence relations have Borel equivalence classes \cite[Theorem 15.14]{KechrisBook}). Having a topology at our disposal we compute complexities of isometry classes of several classical Banach spaces.

\begin{thmx}\label{thm:Intro2}
\begin{enumerate}
    
    \item For $p\in[1,2)\cup(2,\infty)$, the isometry class of $L_p[0,1]$ is $G_\delta$-complete. Moreover, for every $\lambda\geq 1$, the class of separable $\mathcal{L}_{p,\lambda+}$-spaces is a $G_\delta$-set and the class of separable $\mathcal{L}_p$-spaces is a $G_{\delta \sigma}$-set, improving the estimate from \cite{GS18} (see Theorems~\ref{thm:classOfLp} and~\ref{thm:scriptLp}, and Corollary~\ref{cor:scriptLp}).
    
    \item For $p\in[1,2)\cup(2,\infty)$, the isometry class of $\ell_p$ is an $F_{\sigma \delta}$-complete set (see Theorem~\ref{thm:c0AndEllP}).
    
    \item The isometry class of $c_0$ is an $F_{\sigma \delta}$-complete set (see Theorem~\ref{thm:c0AndEllP}).
    
    \item The isometry class of the Gurari\u{\i} space is a $G_\delta$-complete set (see Corollary~\ref{cor:gurariiComplexity}).
\end{enumerate}
\end{thmx}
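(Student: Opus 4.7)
The plan for Theorem~B is to prove each of the four parts by a two-step scheme: first derive the descriptive-set-theoretic upper bound from a known isometric characterization of the space, then prove completeness by a Borel reduction from a classical complete set in the appropriate Wadge class. All arguments take place in the Polish space of pseudonorms on the countable-dimensional rational vector space from \cite{CDDK1}, where finite-dimensional data (Banach--Mazur distances to rational finite-dimensional models, isometric copies of fixed finite-dimensional spaces, etc.) is continuously or semicontinuously accessible.

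For the upper bounds: a separable space is $\mathcal{L}_{p,\lambda+}$ iff for every rational $\varepsilon>0$ and every finite rational set $F$ there exists a finite-dimensional subspace containing $F$ at Banach--Mazur distance at most $\lambda+\varepsilon$ from $\ell_p^n$, which is a countable intersection of open conditions, hence $G_\delta$; the class of $\mathcal{L}_p$-spaces is then $G_{\delta\sigma}$ as a countable union over $\lambda$. For $L_p[0,1]$ I would combine the $\mathcal{L}_{p,1+}$-clause with an extra $G_\delta$-clause (for instance, a universality condition for the finite-dimensional subspaces of $L_p[0,1]$, or an atomlessness-type richness condition formulated in terms of finite-dimensional cubes) that pins down the isometry type. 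For $\ell_p$ and $c_0$ the isometric characterization is via the existence of a normalized $1$-unconditional Schauder basis with $\ell_p$- (resp.\ sup-)homogeneity of finite block norms; since such a basis can be chosen from vectors in the countable rational vector space, the existential quantifier contributes $F_\sigma$ while the basis and homogeneity axioms are $G_\delta$, yielding $F_{\sigma\delta}$. For the Gurari\u{\i} space the almost-extension property is, for each instance of rational $\varepsilon$ and finite isometric data, an open condition, so the whole property is $G_\delta$.

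For completeness: for $L_p[0,1]$ and the Gurari\u{\i} space I would reduce the classical $\boldsymbol{\Pi}^0_2$-complete set $\{(x_n)\in[0,1]^\N : x_n\to 0\}$ by associating to each sequence an amalgamation or direct-sum construction of pseudonorms whose isometry type becomes $X$ exactly when $x_n\to 0$. For $\ell_p$ and $c_0$ I would reduce a $\boldsymbol{\Pi}^0_3$-complete set via block-diagonal constructions in which the parameter sequence tunes the unconditionality or homogeneity constants on longer and longer prefixes of a candidate basis. The main technical obstacle throughout is precisely these hardness reductions: one must engineer parametric families of pseudonorms such that both implications of ``isometric to $X$'' are tightly controlled by the defining condition of the complete set. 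In particular, the converse direction requires ruling out every competing isometry type that could arise as a limit of the construction, which typically depends on fine uses of the very isometric uniqueness characterizations invoked in the upper-bound step.
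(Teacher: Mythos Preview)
Your outline for the $\mathcal{L}_{p,\lambda+}$ and Gurari\u{\i} upper bounds matches the paper, and your vague ``atomlessness-type richness'' clause for $L_p[0,1]$ is in the right spirit (the paper uses: every unit vector can be approximately split as a sum of two vectors $(1+\varepsilon)$-equivalent to the $\ell_p^2$-basis, which is a $G_\delta$-condition that within $\mathcal{L}_{p,1+}$ singles out $L_p[0,1]$ among the Lindenstrauss--Pe\l czy\'nski list $L_p,\ L_p\oplus_p\ell_p,\ \ell_p,\ L_p\oplus_p\ell_p^n$).

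There is, however, a genuine gap in your $F_{\sigma\delta}$ upper bound for $\ell_p$ and $c_0$. You write that ``such a basis can be chosen from vectors in the countable rational vector space, the existential quantifier contributes $F_\sigma$.'' But the basis is an \emph{infinite sequence} in $V$, so the existential ranges over the uncountable Polish space $V^{\Nat}$, not over a countable set; projecting a closed (or $G_\delta$) condition along $V^{\Nat}$ yields only an analytic set, not $F_\sigma$. The paper avoids quantifying over bases entirely. For $\ell_p$ it works inside the $G_\delta$-class $\mathcal{L}_{p,1+}$ and characterizes $\ell_p$ there by a \emph{negative} splitting condition (for every unit $x$ and every $\delta$ there exist $N,\varepsilon$ such that no $N$-tuple $(1+\varepsilon)$-equivalent to a scaled $\ell_p^N$-basis sums close to $x$), which is visibly $F_{\sigma\delta}$; the proof that this distinguishes $\ell_p$ from $L_p[0,1]$ uses Clarkson's inequality. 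For $c_0$ the paper works inside $\mathcal{L}_{\infty,1+}$ and uses the Szlenk-derivative identity $(B_{X^*})'_{2\varepsilon}=(1-\varepsilon)B_{X^*}$, after first establishing that the dual-ball map $\nu\mapsto\Omega(\nu)$ is continuous and the Szlenk derivation $s_\varepsilon$ is $\boldsymbol{\Sigma}^0_3$-measurable. Neither of these ideas is hinted at in your sketch.

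On hardness, your plan of building explicit continuous reductions is in principle viable but much heavier than what the paper does. For the $G_\delta$-complete cases ($L_p[0,1]$ and $\Gurarii$) the paper uses no explicit reduction at all: it observes via Baire category that two mutually finitely representable non-isometric spaces cannot both have $G_\delta$ isometry classes, deduces that $\isomtrclass{L_p}$ (resp.\ $\isomtrclass{\Gurarii}$) is not $F_\sigma$, and invokes the Hurewicz/Wadge dichotomy (any Borel non-$F_\sigma$ set is $G_\delta$-hard). For the $F_{\sigma\delta}$-hardness of $\ell_p$ and $c_0$ the paper bootstraps: first, $\isomtrclass{\ell_p}$ (resp.\ $\isomtrclass{c_0}$) is not $G_\delta$ by the uniqueness of $L_p$ (resp.\ $\Gurarii$) just proved, hence is $F_\sigma$-hard by the same dichotomy, giving a reduction $\varrho$ of the finite-support set $N_2\subseteq 2^\Nat$; then one reduces the $\boldsymbol{\Pi}^0_3$-complete set $P_3\subseteq 2^{\Nat\times\Nat}$ by sending $x$ to the $\ell_p$-sum (resp.\ $c_0$-sum) of the $\varrho(x^{(m)})$, using that a countable $\ell_p$-sum is isometric to $\ell_p$ iff every summand is (via $1$-complemented subspaces). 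Your ``block-diagonal tuning of homogeneity constants'' would have to reproduce exactly this rigidity, and you have not indicated how.
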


Let us also present here a few sample results which involve complexities of more general classes of Banach spaces.
\begin{thmx}\label{thm:Intro3}
\begin{enumerate}
    \item The class of all superreflexive spaces is an $F_{\sigma\delta}$-set (see Theorem~\ref{thm:superreflexive}).
    \item The class of all spaces with local $\Pi$-basis structure is a $\boldsymbol{\Sigma}^0_6$-set (see Theorem~\ref{thm:lbs}).
    \item For a fixed ordinal $\alpha\in[1,\omega_1)$, the class of spaces whose Szlenk index is bounded by $\omega^\alpha$ is a $\boldsymbol{\Pi}^0_{\omega^\alpha+1}$-set (see Theorem~\ref{thm:Szlenk}).
\end{enumerate}
\end{thmx}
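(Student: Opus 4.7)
The plan is to prove each of the three parts by unwinding the relevant Banach-space-theoretic definition into a Boolean combination of conditions on finitely many vectors of the countable test vector space $V$, where each atomic condition is open or closed in $\mu$, and then counting quantifier alternations; throughout, I use the framework from \cite{CDDK1} to relate properties of $(V,\mu)$ to those of its completion.

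For (1), I would use James's characterization: $X$ is superreflexive if and only if there exist $n \in \Nat$ and a rational $\lambda > 1$ such that $\ell_1^n$ does not $\lambda$-embed into $X$, equivalently such that no $n$-dimensional subspace has Banach--Mazur distance $\leq \lambda$ to $\ell_1^n$. For each fixed tuple $(e_i) \in V^n$, the Banach--Mazur distance $d_\mu(\operatorname{span}(e_i), \ell_1^n)$ depends continuously on $\mu$, so ``$d_\mu \geq \lambda + \varepsilon$'' is closed in $\mu$; intersecting over the countable set $V^n$ leaves it closed, the union over rational $\varepsilon > 0$ produces an $F_\sigma$ set, and the further union over $n$ and $\lambda$ is again $F_\sigma$, which is in particular $F_{\sigma\delta}$.

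For (2), I would unfold the definition of local $\Pi$-basis structure: there exists a constant $K$ such that for every finite-dimensional $E \subset X$ and every $\varepsilon > 0$ there is a finite-dimensional $F \subset X$ that contains $E$ up to $\varepsilon$ and admits a basis of basis constant at most $K+\varepsilon$. After enumerating finite-dimensional subspaces by their generating tuples in $V$, the basis-constant condition on $(f_1, \ldots, f_n)$ becomes the closed statement $\forall k \leq n, \forall (a_i) \in \Rat^n \colon \mu(\sum_{i\leq k} a_i f_i) \leq (K+\varepsilon)\, \mu(\sum_i a_i f_i)$. The full schema then reads $\exists K \in \Rat\, \forall (E,\varepsilon)\, \exists (F,(f_i))\, \forall (k,(a_i))$ of closed conditions, together with an auxiliary existential--universal block encoding the approximate inclusion of $E$ into $\operatorname{span}(f_i)$. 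Careful bookkeeping of these alternations produces the $\boldsymbol{\Sigma}^0_6$ upper bound.

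Part (3) proceeds by transfinite induction on $\alpha$. For each rational $\varepsilon > 0$ and countable ordinal $\beta$, the iterated Szlenk derivative is defined via $s_\varepsilon(K) = \{x^* \in K : \text{every weak-$*$ neighborhood of $x^*$ has norm-diameter} > \varepsilon\}$, iterated at successors and intersected at limits, so that $\operatorname{Sz}(X) \leq \omega^\alpha$ is equivalent to $s_\varepsilon^{\omega^\alpha}(B_{X^*}) = \emptyset$ for every $\varepsilon > 0$. The key lemma, proved by transfinite induction on $\beta$, is that $\{\mu : s_\varepsilon^\beta(B_{X^*}) = \emptyset\}$ is $\boldsymbol{\Pi}^0_{\beta+1}$: successor stages add one universal quantifier over rational open slices of the unit ball, and limit stages are countable intersections that stay at the supremum level. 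Since $\omega^\alpha$ is a limit ordinal for every $\alpha \geq 1$, the outer intersection over rational $\varepsilon$ keeps us inside $\boldsymbol{\Pi}^0_{\omega^\alpha+1}$. The main technical obstacle is that the Szlenk derivation lives on $B_{X^*}$ with its weak-$*$ topology whereas our Polish space parametrizes norms on $V$, not weak-$*$ compacta; one therefore has to reformulate ``$x^* \in s_\varepsilon(K)$'' as a condition decidable from finitely many evaluations of $\mu$ on rational vectors, so that the induction proceeds level by level without losing complexity.
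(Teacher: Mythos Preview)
Your approach to (3) is essentially the paper's: code $B_{X_\mu^*}$ as a compact subset of $(B_{\ell_\infty},w^*)$ depending continuously on $\mu$, then track the Borel class of the iterated Szlenk derivation by transfinite induction and intersect over $\varepsilon$. One caveat: the claim that ``successor stages add one universal quantifier'' is too optimistic at finite levels. The single derivation $F\mapsto s_\varepsilon(F)$ is only $\boldsymbol{\Sigma}^0_3$-measurable (and provably not $\boldsymbol{\Sigma}^0_2$-measurable), so $s_\varepsilon^n$ is $\boldsymbol{\Sigma}^0_{2n+1}$-measurable rather than $\boldsymbol{\Sigma}^0_{n+1}$. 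This does not affect the theorem, since you only need $\beta=\omega^\alpha\geq\omega$, where the bound $\boldsymbol{\Sigma}^0_{\beta+1}$ is reached at the first limit stage and then propagates; but your inductive hypothesis as stated is false for finite $\beta$.

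Part (1), however, rests on a false characterization. The condition ``there exist $n$ and $\lambda>1$ such that $\ell_1^n$ does not $\lambda$-embed into $X$'' is equivalent to $\ell_1$ not being finitely representable in $X$, i.e., to $B$-convexity (equivalently, nontrivial type), \emph{not} to superreflexivity. James himself constructed a nonreflexive Banach space of type $2$, which is therefore $B$-convex but not superreflexive. So your argument would only show that the class of $B$-convex spaces is $F_\sigma$ (which is true, and easier than what is asked). The paper instead uses Bourgain's metric characterization: there is a fixed sequence $(M_n)$ of finite metric spaces such that $X$ is non-superreflexive iff the $M_n$ admit uniformly bilipschitz embeddings into $X$; the latter is a $G_{\delta\sigma}$ condition, whence superreflexivity is $F_{\sigma\delta}$.

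Part (2) also misfires: what you have written down is the definition of local basis structure (LBS), not local $\Pi$-basis structure (L$\Pi$BS). The latter additionally requires the existence of uniformly bounded projections $P_n:X\to E_n$ onto the subspaces $E_n$. It is precisely this projection clause that drives the complexity from the $\boldsymbol{\Sigma}^0_4$ bound for LBS (which your quantifier string $\exists K\,\forall(E,\varepsilon)\,\exists(F,(f_i))\,\forall(k,(a_i))$ correctly suggests) up to $\boldsymbol{\Sigma}^0_6$. In the paper's formulation the projection condition reads, for each $l\in\Nat$, as ``$\exists u_1,\dots,u_l\in\qeSpan\{v_1,\dots,v_m\}\ \forall a,b$: [closed inequality]'', an $F_\sigma$ clause; the extra $\forall l$ in front gives a $\boldsymbol{\Pi}^0_3$ block inside the $\exists(v_i)$, and the remaining alternations $\exists K\,\forall n\,\exists(v_i)$ then yield $\boldsymbol{\Sigma}^0_6$.
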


\section{Preliminaries}\label{sect:pozustatky}

In this section we set up some notation that will be used throughout the paper and recall the notions and basic results from \cite{CDDK1} that we shall need in this paper.

\subsection{Notation}

Throughout the paper we usually denote the Borel classes of low complexity by the traditional notation such as $F_\sigma$ and $G_\delta$, or even $F_{\sigma \delta}$ (countable intersection of $F_\sigma$ sets) and $G_{\delta \sigma}$ (countable union of $G_\delta$-sets). However whenever it is more convenient or necessary we use the notation $\boldsymbol{\Sigma}_\alpha^0$, resp. $\boldsymbol{\Pi}_\alpha^0$, where $\alpha<\omega_1$ (we refer to \cite[Section 11]{KechrisBook} for this notation). We emphasize that open sets, resp. closed sets, are $\boldsymbol{\Sigma}^0_1$, resp. $\boldsymbol{\Pi}^0_1$, by this notation.

In a few occassions, for a Borel class $\boldsymbol{\Gamma}$ we will use the notion of $\boldsymbol{\Gamma}$-hard and $\boldsymbol{\Gamma}$-complete sets. We refer the reader to \cite[Definition 22.9]{KechrisBook} for  these notions. For a reader not familiar with them, let us emphasize that a set $A$ being $\boldsymbol{\Gamma}$-hard, for a Borel class $\boldsymbol{\Gamma}$, in particular implies that $A$ is not of a lower complexity than $\boldsymbol{\Gamma}$. Thus results stating that some set is $\boldsymbol{\Sigma}^0_\alpha$-complete means that the set is $\boldsymbol{\Sigma}^0_\alpha$ and not simpler.

Let us also state here the following simple lemma. Although it should be well known, we could not find a proper reference, so we provide a sketch of the proof.

\begin{lemma}\label{lem:F_sigma-hard}
Suppose that $X$ is a Polish space and $B\subseteq X$ is a Borel set which is not a $G_\delta$-set. Then $B$ is $F_\sigma$-hard. The same with the roles of $G_\delta$ and $F_\sigma$ interchanged.
\end{lemma}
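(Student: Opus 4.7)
The plan is to reduce the statement to two classical ingredients of descriptive set theory: Hurewicz's dichotomy for Borel $G_\delta$-sets, and the homogeneity of pairs consisting of a Cantor space together with a countable dense subset.

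I would first invoke Hurewicz's theorem in the following form: a Borel subset $B$ of a Polish space $X$ fails to be $G_\delta$ if and only if there exists a Cantor set $K\subseteq X$ for which $K\cap B$ is countable and dense in $K$. Under the hypothesis of the lemma, fix such a $K$ and put $D:=K\cap B$.

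Next, I would fix a canonical $F_\sigma$-complete subset of $2^\omega$, namely
\[
P:=\{x\in 2^\omega:\ \exists n\ \forall m\geq n,\ x(m)=0\},
\]
which is countable and dense in $2^\omega$. Both $(2^\omega,P)$ and $(K,D)$ are pairs consisting of a Cantor space together with a countable dense subset, and by a classical back-and-forth argument any two such pairs are homeomorphic. Thus one can choose a homeomorphism $h\colon 2^\omega\to K$ with $h(P)=D$. Viewing $h$ as a continuous map $h\colon 2^\omega\to X$, one obtains
\[
h^{-1}(B)\;=\;h^{-1}(B\cap K)\;=\;h^{-1}(D)\;=\;P,
\]
which is the desired continuous reduction of $P$ to $B$.

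To conclude, since $P$ is $F_\sigma$-complete, for every zero-dimensional Polish space $Y$ and every $F_\sigma$-set $F\subseteq Y$ there is a continuous map $g\colon Y\to 2^\omega$ with $g^{-1}(P)=F$; the composition $h\circ g\colon Y\to X$ then continuously reduces $F$ to $B$, so $B$ is $F_\sigma$-hard. The symmetric assertion follows by applying what has just been shown to $X\setminus B$ and taking complements in the reductions. The main obstacle, if any, is to locate the precise forms of Hurewicz's theorem and of the Cantor-pair homogeneity lemma needed; both are classical, so beyond these invocations the argument is pure bookkeeping.
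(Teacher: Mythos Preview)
Your proof is correct and follows essentially the same strategy as the paper: both invoke Hurewicz's theorem to obtain a Cantor set $K\subseteq X$ with $K\cap B$ countable and dense in $K$, and both then argue that this countable dense subset is $F_\sigma$-complete in $K$, whence $B$ is $F_\sigma$-hard in $X$. The only difference lies in how this last step is justified: the paper appeals directly to Wadge's theorem (\cite[Theorem 22.10]{KechrisBook}) to conclude that any $F_\sigma$, non-$G_\delta$ subset of a zero-dimensional Polish space is $F_\sigma$-complete, whereas you use the classical homogeneity fact that any two pairs consisting of a Cantor space together with a countable dense subset are homeomorphic, combined with the $F_\sigma$-completeness of the set of eventually zero sequences in $2^\omega$. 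Your route is marginally more elementary, since the back-and-forth argument replaces the (determinacy-based) Wadge theorem with an explicit construction; the paper's route is shorter to state once Wadge is taken as a black box.
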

\begin{proof}
By Hurewicz theorem (see e.g. \cite[Theorem 21.18]{KechrisBook}), there is a set $C\subseteq X$ homeomorphic to the Cantor space such that $C\cap B$ is countable dense in $C$.
Then $C\cap B$ is an $F_\sigma$-set but not a $G_\delta$-set in the zero-dimensional Polish space $C$, and so it is $F_\sigma$-complete in $C$ by Wadge's theorem (see e.g. \cite[Theorem 22.10]{KechrisBook}).
So for any zero-dimensional Polish space $Y$ and any $F_\sigma$-subset $A$ of $Y$, there is a Wadge reduction of $A\subseteq Y$ to $C\cap B\subseteq C$. But any such reduction is also a reduction of $A\subseteq Y$ to $B\subseteq X$, and so $B$ is $F_\sigma$-hard.

The argument with the roles of $F_\sigma$ and $G_\delta$ interchanged is similar.
\end{proof}

Moreover, given a class $\boldsymbol{\Gamma}$ of sets in metrizable spaces, we say that $f:X\to Y$ is $\boldsymbol{\Gamma}$-measurable if $f^{-1}(U)\in \boldsymbol{\Gamma}$ for every open set $U\subseteq Y$.\medskip

Given Banach spaces $X$ and $Y$, we denote by $X\equiv Y$ (resp. $X\simeq Y$) the fact that those two spaces are linearly isometric (resp. isomorphic). We denote by $X\hookrightarrow Y$ the fact that $Y$ contains a subspace isomorphic to $X$. For $K\geq 1$, a \emph{$K$-isomorphism} $T:X\to Y$ is a linear map with $K^{-1}\|x\|\leq\|Tx\|\leq K\|x\|$, $x\in X$. If $x_1,\ldots,x_n$ are linearly independent elements of $X$ and $y_1,\ldots,y_n\in Y$, we write $(Y,y_1,\ldots,y_n)\approximates[(X,x_1,\ldots,x_n)]{K}$ if the linear operator $T:\Span\{x_1,\ldots,x_n\}\to\Span\{y_1,\ldots,y_n\}$ sending $x_i$ to $y_i$ satisfies $\max\{\|T\|,\|T^{-1}\|\}<K$. If $X$ has a canonical basis $(x_1,\ldots,x_n)$ which is clear from the context, we just write $(Y,y_1,\ldots,y_n)\approximates[X]{K}$ instead of $(Y,y_1,\ldots,y_n)\approximates[(X,x_1,\ldots,x_n)]{K}$. Morevoer, if $Y$ is clear from the context we write $(y_1,\ldots,y_n)\approximates[X]{K}$ instead of $(Y,y_1,\ldots,y_n)\approximates[X]{K}$.

Throughout the text $\ell_p^n$ denotes the $n$-dimensional $\ell_p$-space, i.e. the upper index denotes dimension. 

Finally, in order to avoid any confusion, we emphasize that if we write that a mapping is an ``isometry'' or an ``isomorphism'', we do not mean it is surjective if this is not explicitly mentioned.

\subsection{Notions and results from \cite{CDDK1}}
The most important notion we want to recall is the Polish spaces of Banach spaces. We refer to \cite[Section 2]{CDDK1} for a proper introduction.
\bigskip

By $V$, let us denote the vector space over $\Rat$ of all finitely supported sequences of rational numbers; that is, the unique infinite-dimensional vector space over $\Rat$ with a countable Hamel basis $(e_n)_{n\in\Nat}$, which we may view as the vector space of all finitely supported rational sequences.

\begin{definition}{\cite[Definition 2.1]{CDDK1}}\label{def:basic}
Let us denote by $\PP$ the space of all pseudonorms on the vector space $V$. Since $\PP$ is a closed subset of $\Rea^V$, this gives $\PP$ the Polish topology inherited from $\Rea^V$. The subbasis of this topology is given by sets of the form $U[v,I]:=\{\mu\in\PP\setsep \mu(v)\in I\}$, where $v\in V$ and $I$ is an open interval.

We often identify $\mu\in\PP$ with its extension to the pseudonorm on the space $c_{00}$, that is, on the vector space over $\Rea$ of all finitely supported sequences of real numbers.

For every $\mu\in\PP$ we denote by $X_\mu$ the Banach space given as the completion of the quotient space $X/N$, where $X = (c_{00},\mu)$ and $N = \{x\in c_{00}\setsep \mu(x) = 0\}$. In what follows we often consider $V$ as a subspace of $X_\mu$, that is, we identify every $v\in V$ with its equivalence class $[v]_N\in X_\mu$.

By $\PP_\infty$ we denote the set of those $\mu\in\PP$ for which $X_\mu$ is infinite-dimensional Banach space, and by $\B$ we denote the set of those $\mu\in\PP_\infty$ for which the extension of $\mu$ to $c_{00}$ is an actual norm, that is, the vectors $ e_{1}, e_{2}, \dots $ are linearly independent in $X_\mu$.

We endow $\PP_\infty$ and $\B$ with topologies inherited from $\PP$.
\end{definition}

It is rather easy to verify that the topologies on $\PP_\infty$ and $\B$ are Polish, we refer to \cite[Corollary 2.5]{CDDK1} for a proof.

\begin{remark}\label{rem:admissible} As we have mentioned in the introduction, Godefroy and Saint-Raymond \cite{GS18} considered another way of topologizing the class of all separable (infinite-dimensional) Banach spaces. Namely, they consider the set $SB = \{X\subseteq C(2^\omega)\colon X \text{ is a closed linear subspace}\}$ and introduce a class of natural Polish topologies $\tau$ on $SB$ which they call ``admissible''. In \cite{CDDK1} we compared our spaces $\PP_\infty$ and $\B$ with $(SB,\tau)$. In particular, we observed that whenever $\tau$ is an admissible topology, then there exists a continuous map $\Phi:(SB,\tau)\to\PP$ such that for every $F\in SB(X)$ we have $F\equiv X_{\Phi(F)}$ isometrically, see \cite[Theorem 3.3]{CDDK1}. Thus, from our results obtained in the coding $\PP_\infty$ one may easily deduce also results formulated in the language of admissible typologies.
\end{remark}

The following definition precises the notation $\approximates{K}$ defined earlier.
\begin{definition}
If $v_1,\ldots,v_n\in V$ are given, for $\mu\in\PP$, instead of $(X_\mu,v_1,\ldots,v_n)\approximates[X]{K}$, we shall write $(\mu,v_1,\ldots,v_n)\approximates[X]{K}$.
\end{definition}
For further purposes, we record here the following lemma from \cite{CDDK1}.
\begin{lemma}[{\cite[Lemma 2.4]{CDDK1}}]\label{lem:infiniteDimIsGDelta}
Let $X$ be a Banach space with $\{x_1,\ldots,x_n\}\subseteq X$ linearly independent and let $v_1,\ldots,v_n\in V$. Then for any $K>1$ the set $$\mathcal{N}((x_i)_i,K,(v_i)_i)=\{\mu\in \PP\colon (\mu,v_1,\ldots,v_n)\approximates[(X,x_1,\ldots,x_n)]{K} \}$$ is open in $\PP$.

In particular, the set of those $\mu\in\PP$ for which the set $\{v_1,\ldots,v_n\}$ is linearly independent in $X_\mu$ is open in $\PP$.
\end{lemma}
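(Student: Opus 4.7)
The plan is to recast the openness condition as a statement about continuous functions on the compact sphere. By homogeneity, $(\mu,v_1,\ldots,v_n)\approximates[(X,x_1,\ldots,x_n)]{K}$ is equivalent to
$$\tfrac{1}{K}\,h(a) < g_\mu(a) < K\,h(a) \qquad \text{for every } a\in S^{n-1},$$
where I set $h(a):=\|\sum_i a_i x_i\|_X$ and $g_\mu(a):=\mu(\sum_i a_i v_i)$ (using the extension of $\mu$ to $c_{00}$). Here $h$ is continuous and bounded away from zero on $S^{n-1}$ by linear independence of $(x_i)_{i=1}^n$, and $g_\mu$ is Lipschitz on $S^{n-1}$ with constant $\sum_i\mu(v_i)$, since $|g_\mu(a)-g_\mu(b)|\le\mu(\sum_i(a_i-b_i)v_i)\le\|a-b\|_\infty\sum_i\mu(v_i)$. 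So the desired condition reads $h/K<g_\mu<Kh$ uniformly on the compact set $S^{n-1}$, which is an open condition on $g_\mu$ in the sup-norm topology on $C(S^{n-1})$.

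Given this reformulation, the main step will be to show that the evaluation map $\Phi\colon\PP\to C(S^{n-1})$, $\mu\mapsto g_\mu$, is continuous. This is the crux of the argument, since the subbasis on $\PP$ directly gives only the continuity of $\mu\mapsto\mu(w)$ for $w\in V$, i.e.\ for \emph{rational} coefficients, whereas $S^{n-1}$ contains real-coefficient vectors. My approach is, for fixed $\mu_0\in\PP$ and $\varepsilon>0$, to pick a finite $\ell^\infty$-$\eta$-net $F\subseteq\Rat^n$ for $S^{n-1}$ (possible by compactness and density of $\Rat^n$) and form the neighborhood
$$W:=\bigl\{\mu:|\mu(v_i)-\mu_0(v_i)|<1 \text{ for all } i\bigr\}\cap\bigcap_{a'\in F}\bigl\{\mu:|\mu(w_{a'})-\mu_0(w_{a'})|<\tfrac{\varepsilon}{3}\bigr\},$$
where $w_{a'}:=\sum_i a'_iv_i\in V$. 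The first set yields the uniform bound $\mu(v_i)\le\mu_0(v_i)+1$ on $W$, and a standard three-term estimate gives, for any $a\in S^{n-1}$ and $a'\in F$ with $\|a-a'\|_\infty<\eta$,
$$|g_\mu(a)-g_{\mu_0}(a)| \le \eta\sum_i\mu(v_i) + \tfrac{\varepsilon}{3} + \eta\sum_i\mu_0(v_i),$$
which is below $\varepsilon$ once $\eta$ is chosen small enough in terms of $\max_i\mu_0(v_i)$. This extraction of uniform control from finitely many pointwise constraints is the main obstacle.

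With continuity of $\Phi$ in hand, openness of $\NN((x_i),K,(v_i))$ is immediate: if $\mu_0$ lies in this set, compactness of $S^{n-1}$ gives $\delta>0$ with $h/K+\delta\le g_{\mu_0}\le Kh-\delta$, so the $\Phi$-preimage of the open $\delta$-ball around $g_{\mu_0}$ in $C(S^{n-1})$ is an open neighborhood of $\mu_0$ inside $\NN((x_i),K,(v_i))$. For the ``in particular'' clause, linear independence of $v_1,\ldots,v_n$ in $X_\mu$ is equivalent to $\mu(\sum a_iv_i)>0$ for every $a\in S^{n-1}$, and by compactness this amounts to the existence of some integer $K\ge 2$ with $\mu\in\NN((\xi_i),K,(v_i))$, where $(\xi_1,\ldots,\xi_n)$ denotes the standard basis of $\Rea^n$ with its Euclidean norm. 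The set of such $\mu$ is therefore $\bigcup_{K\in\Nat,\,K\ge 2}\NN((\xi_i),K,(v_i))$, a union of open sets and hence open.
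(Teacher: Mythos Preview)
Your argument is correct. Note, however, that this paper does not supply its own proof of the lemma: it is quoted verbatim as \cite[Lemma~2.4]{CDDK1}, so there is no in-paper proof to compare against. That said, your approach---reformulating the condition $(\mu,v_1,\ldots,v_n)\approximates[(X,x_1,\ldots,x_n)]{K}$ as the uniform inequality $h/K<g_\mu<Kh$ on the compact sphere $S^{n-1}$, and then proving continuity of $\mu\mapsto g_\mu$ into $C(S^{n-1})$ via a rational $\eta$-net and a three-term estimate---is the standard and essentially unavoidable route, and it matches what one would expect the original proof to do. The only point worth flagging is minor: in the ``in particular'' clause you implicitly use that if $v_1,\ldots,v_n$ are already linearly dependent in $c_{00}$ then $\NN((\xi_i),K,(v_i))=\emptyset$ for every $K$, so the union is empty and trivially open; this is true (some $g_\mu(a)$ vanishes identically in $\mu$), but it would do no harm to say so explicitly.
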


Since we are interested mainly in subsets of $\PP$ closed under isometries, we introduce the following notation.

\begin{notation}\label{not:ismfrClass}
Let $Z$ be a separable Banach space and let $\mathcal{I}$ be a subset of $\PP$. We put
\[\isomtrclass[\mathcal{I}]{Z}:=\{\mu\in\mathcal{I}\setsep X_\mu\equiv Z\} \quad \text{and} \quad \isomrfclass[\mathcal{I}]{Z}:=\{\mu\in\mathcal{I}\setsep X_\mu\simeq Z\}.\]
If $\mathcal{I}$ is clear from the context we write $\isomtrclass{Z}$ and $\isomrfclass{Z}$ instead of $\isomtrclass[\mathcal{I}]{Z}$ and $\isomrfclass[\mathcal{I}]{Z}$ respectively.
\end{notation}

The connection of the topologies on $\PP$, $\PP_\infty$, and $\B$ with finite representability was thoroughly explored in \cite{CDDK1}. Here we recall what will be useful in this paper.

\begin{definition}
We say that a Banach space $X$ is finitely representable in a Banach space $Y$ if given any finite-dimensional subspace $E$ of $X$ and $\varepsilon > 0$ there exists a finite-dimensional subspace $F$ of $Y$ which is $(1+\varepsilon)$-isomorphic to $E$.

Moreover, if $\F$ is a family of Banach spaces, we say that a Banach space $X$ is finitely representable in $\F$ if given any finite-dimensional subspace $E$ of $X$ and any $\varepsilon > 0$ there exists a finite-dimensional subspace $F$ of some $Y\in\F$ which is $(1+\varepsilon)$-isomorphic to $E$.
\end{definition}

\begin{proposition}[{\cite[Proposition 2.9]{CDDK1}}]\label{prop:closureOfIsometryClasses}
If $X$ is a separable infinite-dimensional Banach space, then
    \[\{\nu\in\B\setsep X_\nu\text{ is finitely representable in }X\} =  \overline{\isomtrclass[\B]{X}}\cap\B\]
    and similarly also if we replace $\B$ with $\PP_\infty$ or with $\PP$.
    
Moreover, let $\F\subseteq \B$ be such that $\isomtrclass[\B]{X_\mu}\subseteq \F$ for every $\mu\in\F$. Then
\[\{\nu\in\B\setsep X_\nu\text{ is finitely representable in }\F\}= \overline{\F}\cap \B.\] The same again holds if we replace $\B$ with $\PP_\infty$ or with $\PP$.
\end{proposition}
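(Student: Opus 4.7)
The plan is to prove both inclusions of the main identity for $\B$, and then note that the variations (replacing $\B$ by $\PP_\infty$ or $\PP$, and generalizing the single isometry class to the family $\F$) require only cosmetic changes.

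For the inclusion $\overline{\isomtrclass[\B]{X}}\cap\B \subseteq \{\nu\in\B\setsep X_\nu \text{ is finitely representable in } X\}$, I take $\nu \in \B$ as the limit of some sequence $\mu_n \in \isomtrclass[\B]{X}$. Given a finite-dimensional $E \subseteq X_\nu$ and $\varepsilon > 0$, the density of the image of $V$ in $X_\nu$ (which follows from density of $\Rat^m$ in $\R^m$ and continuity of $\nu$ on finite-dimensional subspaces of $c_{00}$) lets me pick $v_1,\ldots,v_k \in V$ whose $X_\nu$-span is $(1+\varepsilon)^{1/2}$-isomorphic to $E$. The pointwise convergence $\mu_n(v) \to \nu(v)$ on $V$ upgrades, by equicontinuity and compactness, to uniform convergence on bounded subsets of the finite-dimensional real space $W := \Span_\R\{v_1,\ldots,v_k\}$, since for large $n$ all the pseudonorms $\mu_n$ are uniformly dominated by a common Euclidean norm on $W$. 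Combined with Lemma~\ref{lem:infiniteDimIsGDelta} (which provides linear independence of $v_1,\ldots,v_k$ in $X_{\mu_n}$ for large $n$), this shows that for large $n$ the identity $v_i\mapsto v_i$ is a $(1+\varepsilon)^{1/2}$-isomorphism between the respective spans in $X_\nu$ and $X_{\mu_n}$. Composition then yields a $(1+\varepsilon)$-isomorphic copy of $E$ inside $X_{\mu_n}\equiv X$.

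For the reverse inclusion, suppose $\nu\in\B$ with $X_\nu$ finitely representable in $X$. A basic neighborhood of $\nu$ in $\PP$ has the form $W = \bigcap_{i=1}^k U[v_i, I_i]$ with $\nu(v_i) \in I_i$; I fix $m$ so that $v_1,\ldots,v_k \in \Span_\Rat\{e_1,\ldots,e_m\}$. Since $\nu \in \B$, the subspace $E_m := \Span_{X_\nu}\{e_1,\ldots,e_m\}$ is $m$-dimensional, and finite representability supplies, for any prescribed $\eta > 0$, a $(1+\eta)$-isomorphism $T \colon E_m \to F \subseteq X$. I choose $\eta$ small enough that $[(1+\eta)^{-1}\nu(v_i),(1+\eta)\nu(v_i)] \subseteq I_i$ for every $i$, and extend $T$ to a $\Rat$-linear map $L\colon V \to X$ by setting $L(e_j) := T(e_j)$ for $j\le m$ and recursively choosing $L(e_{m+j}) \in X$ so that (a) the family $(L(e_j))_{j\in\N}$ is $\R$-linearly independent in $X$, and (b) $L(V)$ is dense in $X$. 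Both can be arranged by enumerating a dense sequence of $X$ and at step $j$ placing $L(e_{m+j})$ within $2^{-j}$ of the $j$-th term of that sequence, perturbed slightly to avoid the countably many nontrivial linear combinations ruled out by (a) (possible since $X$ is infinite-dimensional). The pseudonorm $\mu(v) := \|L(v)\|_X$ extends to a genuine norm on $c_{00}$ by (a), so $\mu \in \B$; the $\R$-linear extension of $L$ induces an isometric isomorphism $X_\mu \equiv X$ by (b), so $\mu \in \isomtrclass[\B]{X}$. Finally $\mu(v_i) = \|T(v_i)\|_X \in I_i$, whence $\mu \in W$, establishing $\nu \in \overline{\isomtrclass[\B]{X}}$.

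The variations are minor. For $\PP_\infty$ or $\PP$ in place of $\B$, both directions carry over unchanged (the $\mu$ produced in direction $\subseteq$ lies in $\B \subseteq \PP_\infty \subseteq \PP$). For the ``Moreover'' part, direction $\supseteq$ is identical (each $X_{\mu_n}$ belongs to the family determined by $\F$); in direction $\subseteq$, finite representability in $\F$ supplies $T\colon E_m \to F \subseteq X_\sigma$ for some $\sigma \in \F$, and the same extension construction produces $\mu \in \B$ with $X_\mu \equiv X_\sigma$, whereupon the hypothesis $\isomtrclass[\B]{X_\sigma}\subseteq\F$ forces $\mu \in \F$. I expect the main technical obstacle to be the extension step in direction $\subseteq$: producing a $\Rat$-linear $L\colon V \to X$ that extends the prescribed partial embedding, keeps $(L(e_j))_j$ linearly independent, and has dense image. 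Each requirement is individually standard, but their joint realization demands a careful diagonal construction against enumerations of both a dense sequence of $X$ and the rational linear combinations that would violate independence; the infinite-dimensionality of $X$ is essential at each step to leave room for a valid choice of $L(e_{m+j})$.
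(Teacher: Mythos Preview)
The paper does not prove this proposition; it is quoted from \cite[Proposition~2.9]{CDDK1} and used as a black box, so there is no in-paper argument to compare against.

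Your outline is the natural proof and is essentially correct. Two small comments. First, in the inclusion $\overline{\isomtrclass[\B]{X}}\cap\B\subseteq\{\nu:X_\nu\text{ f.r.\ in }X\}$ the equicontinuity/compactness discussion is more than you need: once $v_1,\ldots,v_k$ are chosen linearly independent in $X_\nu$, Lemma~\ref{lem:infiniteDimIsGDelta} applied with the reference space $(X_\nu,v_1,\ldots,v_k)$ already tells you that $\{\mu:(\mu,v_1,\ldots,v_k)\approximates[(X_\nu,v_1,\ldots,v_k)]{(1+\varepsilon)^{1/2}}\}$ is open and contains $\nu$, hence eventually contains the $\mu_n$. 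Second, in the reverse inclusion your assertion that the $\mu$ you build lies in $\B$ is correct in the $\B$-version (since $\nu\in\B$ makes $T(e_1),\ldots,T(e_m)$ independent, so condition~(a) can be propagated), but for the $\PP_\infty$ and $\PP$ versions $\nu$ may annihilate some $e_j$'s and $T(e_1),\ldots,T(e_m)$ need not be independent. The fix is simply to drop condition~(a) and keep only the density condition~(b): this still yields $\mu\in\PP_\infty$ with $X_\mu\equiv X$, which is all that is required for those closures. So ``carry over unchanged'' is a slight overstatement, but the repair is immediate.
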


We finish this section by recalling one particular Banach space that will play a fundamental role in certain further results in this paper. That is, we recall what the Gurari\u{\i} space is. One of the characterizations of the Gurari\u{\i} space is the following, for more details we refer the interested reader e.g. to \cite{CGK} (the characterization below is provided by \cite[Lemma 2.2]{CGK}).

\begin{definition}\label{def:Gurarii}
The Gurari\u{\i} space is the unique (up to isometry) separable Banach space such that  for every $\varepsilon > 0$ and every isometric embedding $g:A\to B$, where $B$ is a finite-dimensional Banach space and $A$ is a subspace of $\mathbb{G}$, there is a $(1+\varepsilon)$-isomorphism $f:B\to \mathbb{G}$ such that $\|f\circ g - id_A\|\leq \varepsilon$.
\end{definition}

In the sequel we will need the following result.

\begin{theorem}[{\cite[Theorem 4.1]{CDDK1}}]\label{thm:gurariiTypicalInP}
Let $\mathbb{G}$ be the Gurari\u{\i} space. The set $\isomtrclass[\mathcal I]{\mathbb{G}}$ is a dense $G_\delta$-set in $\mathcal{I}$ for any $\mathcal{I}\in\{\PP,\PP_\infty,\B\}$.
\end{theorem}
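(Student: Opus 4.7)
The plan is to handle density and $G_\delta$-ness separately, with density essentially immediate from material already recalled in the excerpt, and the $G_\delta$ part obtained by discretizing the Gurari\u{\i} extension property. I focus on the case $\mathcal I=\PP$; the statements for $\PP_\infty$ and $\B$ follow by intersecting, together with the fact that only infinite-dimensional spaces can be isometric to $\Gurarii$.

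For density, recall the classical fact (essentially immediate from Definition~\ref{def:Gurarii} with $A=\{0\}$) that every finite-dimensional Banach space embeds $(1+\varepsilon)$-isometrically into $\Gurarii$ for each $\varepsilon>0$. Consequently, every separable Banach space---in particular every $X_\nu$ for $\nu\in\mathcal I$---is finitely representable in $\Gurarii$. Applying Proposition~\ref{prop:closureOfIsometryClasses} with $X=\Gurarii$ to each $\mathcal I\in\{\PP,\PP_\infty,\B\}$ then yields $\overline{\isomtrclass[\mathcal I]{\Gurarii}}\cap\mathcal I=\mathcal I$, which is the desired density.

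For the $G_\delta$ part, I fix a countable family $\mathcal D$ of triples $D=(E,F,\varepsilon)$, where $\varepsilon$ is a positive rational and $E\subseteq F$ are finite-dimensional real Banach spaces (from a fixed countable family that is dense in the Banach--Mazur sense) carrying distinguished bases $(e_1,\dots,e_n)$ of $E$ and $(f_1,\dots,f_m)$ of $F$ satisfying $e_i=f_i$ for $i\le n$. For $D\in\mathcal D$ and $v=(v_1,\dots,v_n)\in V^n$, let $B_{D,v}$ denote the set of those $\mu\in\PP$ such that $(\mu,v)\approximates[E]{1+\varepsilon/3}$ holds, yet no $w=(w_1,\dots,w_m)\in V^m$ satisfies both $\max_{i\le n}\mu(w_i-v_i)<\varepsilon/2$ and $(\mu,w)\approximates[F]{1+\varepsilon}$. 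By Lemma~\ref{lem:infiniteDimIsGDelta} the first condition is open, while the second condition (non-existence of $w$) is the complement of a countable union of open sets, hence closed. Thus each $B_{D,v}$ lies in $\boldsymbol{\Delta}^0_2$, so $\bigcup_{D,v} B_{D,v}$ is $F_\sigma$, and it remains to verify that $\PP\setminus\isomtrclass[\PP]{\Gurarii}=\bigcup_{D,v} B_{D,v}$.

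The set equality splits into two inclusions. The inclusion $\supseteq$ uses the \emph{robust} form of the Gurari\u{\i} extension property: if $X_\mu\equiv\Gurarii$ and the map $T\colon E\to X_\mu$ sending $e_i\mapsto v_i$ is a $(1+\varepsilon/3)$-isomorphism, then perturbing $T$ to an exact isometric embedding of a slight deformation of $E$, applying Definition~\ref{def:Gurarii}, and perturbing back produces a $(1+\varepsilon)$-isomorphism $S\colon F\to X_\mu$ with $\|S|_E-T\|$ controlled; approximating the images $S(f_i)$ by elements of $V$ then yields the required $w$ witnessing $\mu\notin B_{D,v}$. The reverse inclusion $\subseteq$ is the main technical obstacle. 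If $X_\mu$ is finite-dimensional, one chooses $D$ with $\dim F$ exceeding $\dim X_\mu$, forcing no admissible $w$ to exist by a linear-independence count, and picks a suitable $v$. If $X_\mu$ is infinite-dimensional and not Gurari\u{\i}, Definition~\ref{def:Gurarii} supplies a failure witness $(\varepsilon_0,A,g\colon A\to B)$; the task is then to approximate $A$ by a tuple $v\in V^n$ and the pair $(g(A),B)$ by some $(E,F)\in\mathcal D$ (using compactness of the unit ball of $B$), and to choose $\varepsilon\le\varepsilon_0$ small enough that any admissible $w$ for $\mu\notin B_{D,v}$ would translate back to a genuine $(1+\varepsilon_0)$-isomorphism $B\to X_\mu$ nearly equal to the identity on $A$, contradicting the choice of $(\varepsilon_0,A,g)$. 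Tracking how the various approximation errors propagate through this discretization is the delicate point of the proof.
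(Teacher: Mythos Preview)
The paper does not prove this theorem: it is stated as a citation of \cite[Theorem 4.1]{CDDK1} and no argument is given here. So there is nothing to compare against, and I can only assess your argument on its own.

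Your density argument via Proposition~\ref{prop:closureOfIsometryClasses} is clean and complete. The $G_\delta$ strategy---writing the complement as a countable union of locally closed ``bad'' sets $B_{D,v}$ indexed by discretized extension problems---is the natural one and is essentially what one finds in the literature (e.g.\ the Kubi\'s--Solecki treatment). Two points deserve attention. First, your handling of finite-dimensional $X_\mu$ in $\PP$ has a small gap: when $X_\mu=\{0\}$ (the zero pseudonorm), no tuple $v$ can satisfy $(\mu,v)\approximates[E]{1+\varepsilon/3}$ for any nontrivial $E$, so $\mu$ lies in no $B_{D,v}$; you must add $\PP\setminus\PP_\infty$ (which is $F_\sigma$) to the union, or simply run the argument in $\PP_\infty$ and intersect at the end. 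Second, the inclusion $\supseteq$ relies on a \emph{robust} (almost-isometric) form of the extension property---that a $(1+\delta)$-embedding of $E$ into $\Gurarii$ extends to a $(1+\eta)$-embedding of $F$ with controlled displacement---which does not follow directly from Definition~\ref{def:Gurarii} as stated; this is standard (it is exactly the content of \cite[Lemma 2.2]{CGK} and its proof via renorming $E$), but you should cite or prove it rather than leave it implicit. With these two fixes the argument goes through.
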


\section{Spaces with descriptively simple isometry and isomorphism classes}\label{sect:simpleClasses}

The topic of this section is to deal with spaces with descriptively simple isometry classes (see Subsection~\ref{subsec:smallIsometry}) and with spaces with descriptively simple isomorphism classes (see Subsection~\ref{subsect:smallIsomorphism}). The main outcome is Theorem~\ref{thm:Intro6} which follows from Theorem~\ref{thm:isometryClosedClass} and Theorem~\ref{thm:FSigmaIsomrfClasses}.

\subsection{Spaces with closed isometry classes}\label{subsec:smallIsometry}
In this subsection, we start our investigation of descriptive complexity of isometry classes with the main goal to prove the first part of Theorem~\ref{thm:Intro6}. Let us first observe that no isometry class can be open as every isometry class actually has an empty interior. Indeed, it follows from Proposition~\ref{prop:closureOfIsometryClasses} that the isometry class of every isometrically universal separable Banach space is dense. Since there are obviously many pairwise non-isometric universal Banach spaces we get that every open set (in all $\PP$, $\PP_\infty$ and $\B$) contains norms, resp. pseudonorms, defining distinct Banach spaces. The same argument can be also used to show that every isomorphism class has an empty interior.

\begin{lemma}\label{lem:Hilbertclosed}
$\isomtrclass{\ell_2}$ is closed in $\B$ and $\PP_\infty$.
\end{lemma}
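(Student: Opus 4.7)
The plan is to combine Proposition~\ref{prop:closureOfIsometryClasses} with the classical parallelogram law characterization of Hilbert spaces due to Jordan and von Neumann. First, I would apply Proposition~\ref{prop:closureOfIsometryClasses} with $X = \ell_2$ to obtain
\[\overline{\isomtrclass[\B]{\ell_2}} \cap \B = \{\nu \in \B \setsep X_\nu \text{ is finitely representable in } \ell_2\},\]
and analogously with $\B$ replaced by $\PP_\infty$. It therefore suffices to show that $\isomtrclass{\ell_2}$ already contains every $\nu$ in the right-hand side, i.e., that any infinite-dimensional Banach space finitely representable in $\ell_2$ is isometric to $\ell_2$.

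So let $X = X_\nu$ be such a space. Given any $x, y \in X$ and any $\varepsilon > 0$, pick a $(1+\varepsilon)$-isomorphism $T$ from $\Span\{x,y\}$ onto a finite-dimensional subspace of $\ell_2$. Since $\ell_2$ satisfies the parallelogram law exactly, $Tx$ and $Ty$ do; transferring this back through $T$ yields the parallelogram law in $X$ up to a factor that tends to $1$ as $\varepsilon \to 0$. Letting $\varepsilon \to 0$ shows the parallelogram law holds exactly on $X$, so by Jordan--von Neumann $X$ is a Hilbert space. Since $X_\nu$ is separable and infinite-dimensional, this forces $X_\nu \equiv \ell_2$.

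This proves $\overline{\isomtrclass[\B]{\ell_2}} \cap \B = \isomtrclass[\B]{\ell_2}$, and since closures are computed inside $\B$ (which carries the subspace topology from $\PP$), this means $\isomtrclass[\B]{\ell_2}$ is closed in $\B$. The identical argument, invoking the $\PP_\infty$-version of Proposition~\ref{prop:closureOfIsometryClasses}, handles the $\PP_\infty$ case.

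The only step that is not entirely mechanical is the quantitative parallelogram computation, but this is a routine estimate using the bound $K^{-1}\|x\| \leq \|Tx\| \leq K\|x\|$ with $K = 1+\varepsilon$; it does not pose a real obstacle. The conceptual heart of the argument is really the (well-known) fact that finite representability in a Hilbert space characterizes Hilbert spaces, which makes the closure operation trivial in this case.
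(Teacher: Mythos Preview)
Your argument is correct, but it takes a slightly more circuitous route than the paper. The paper does not invoke Proposition~\ref{prop:closureOfIsometryClasses} at all: it simply observes that the parallelogram law $\mu(x+y)^2+\mu(x-y)^2=2\mu(x)^2+2\mu(y)^2$ is a closed condition on $\mu$ when imposed for all $x,y\in V$, and that any (pseudo)norm satisfying it on the dense set $V$ defines a Hilbert space, which in $\B$ or $\PP_\infty$ must be $\ell_2$. You instead pass through the closure description via finite representability and then argue that finite representability in $\ell_2$ forces the parallelogram law in the limit. Both arguments ultimately rest on Jordan--von~Neumann, but the paper's version is more direct and avoids the extra layer of Proposition~\ref{prop:closureOfIsometryClasses}; your version has the mild advantage of making explicit the general principle (closed isometry class $\Leftrightarrow$ determined by finite representability) that underlies Theorem~\ref{thm:isometryClosedClass}.
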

\begin{proof}
Hilbert spaces are characterized among Banach spaces as those Banach spaces whose norm satisfies the parallelogram law, i.e. $\|x+y\|^2+\|x-y\|^2=2(\|x\|^2+\|y\|^2)$ for any pair of elements $x,y$. It is clear that a norm satisfies the parallelogram law if and only if it satisfies it on a dense set of vectors, therefore every norm, resp. pseudonorm, from $\B$, resp. $\PP_\infty$, satisfying the parallelogram law on $V$ defines a Hilbert space. Since norms, resp. pseudonorms, from $\B$, resp. $\PP_\infty$, define only infinite-dimensional spaces, they define spaces isometric to $\ell_2(\Nat)$. Since the parallelogram law is clearly a closed condition, we are done.
\end{proof}
\begin{remark}\label{rem:closedinP}
We note that here we need to work with the spaces $\B$ or $\PP_\infty$, since in $\PP$ the only space with closed isometry class is the trivial space. To show it, first notice that the trivial space is indeed closed. Next we show that any open neighborhood of a pseudonorm defining trivial space contains a pseudonorm defining arbitrary Banach space, which will finish our claim. Let such an open neighborhood be fixed. We may assume that it is of the form $\{\mu\in\PP\colon \mu(v_i)<\varepsilon, i\leq n\}$, where $v_1,\ldots,v_n\in V$ and $\varepsilon>0$. Let $m$ be such that all $v_i$, $i\leq n$, are in $\qeSpan\{e_j\colon j\leq m\}$. Let $X$ be an arbitrary separable Banach space and let $(f_i)_{i\in\Nat}\subseteq X$ be a sequence whose span is dense in $X$. We define $\mu\in\PP$ by $\mu(e_j)=0$, for $j\leq m$, and $\mu(\sum_{i\in I} \alpha_i e_{m+i})=\|\sum_{i\in I} \alpha_i f_i\|_X$, where $I\subseteq \Nat$ is finite and $(\alpha_i)_{i\in I}\subseteq \Rat$. This defines $\mu$ separately on $\qeSpan\{e_i\colon i\leq m\}$ and $\qeSpan\{e_i\colon i>m\}$, however the extension to the whole $V$ is unique. It is clear that $\mu$ is in the fixed open neighborhood and that $X_\mu\equiv X$.
\end{remark}
One may be interested whether there are other Banach spaces whose isometry class is closed. The answer is negative. This follows from another corollary of Proposition~\ref{prop:closureOfIsometryClasses} which we mentioned already in \cite[Corollary 2.11]{CDDK1}. Its proof is just an easy application of the Dvoretzky theorem.
\begin{lemma}[{\cite[Corollary 2.11]{CDDK1}}]\label{cor:Dvoretzky}
Let $X$ be a separable infinite-dimensional Banach space. Then $\isomtrclass[\B]{\ell_2}\subseteq \overline{\isomtrclass[\B]{X}}\cap\B$. The same holds if we replace $\B$ with $\PP_\infty$ or $\PP$.
\end{lemma}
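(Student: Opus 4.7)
The plan is to combine Dvoretzky's theorem with Proposition~\ref{prop:closureOfIsometryClasses}, which already characterizes the closure of an isometry class in terms of finite representability. So I first recall that, by Proposition~\ref{prop:closureOfIsometryClasses}, one has
\[\overline{\isomtrclass[\B]{X}}\cap\B = \{\nu\in\B\setsep X_\nu\text{ is finitely representable in }X\},\]
and analogously when $\B$ is replaced by $\PP_\infty$ or $\PP$. Hence it suffices to prove that for every $\nu\in\isomtrclass[\B]{\ell_2}$, the space $X_\nu\equiv\ell_2$ is finitely representable in $X$.

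This is exactly the content of Dvoretzky's theorem: since $X$ is infinite-dimensional, for every $n\in\Nat$ and every $\varepsilon>0$ there exists an $n$-dimensional subspace $F\subseteq X$ which is $(1+\varepsilon)$-isomorphic to $\ell_2^n$. Given an arbitrary finite-dimensional subspace $E\subseteq\ell_2$ of dimension $n$ and $\varepsilon>0$, we apply Dvoretzky to obtain such an $F\subseteq X$, which is then $(1+\varepsilon)$-isomorphic to $E\equiv\ell_2^n$. Therefore $\ell_2$, and hence every isometric copy $X_\nu$ of it, is finitely representable in $X$. Combining with the description of $\overline{\isomtrclass[\B]{X}}\cap\B$ above yields $\isomtrclass[\B]{\ell_2}\subseteq\overline{\isomtrclass[\B]{X}}\cap\B$, and the same argument works verbatim in $\PP_\infty$ and $\PP$.

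There is no real obstacle here: the entire content is packaged into Dvoretzky's theorem, and Proposition~\ref{prop:closureOfIsometryClasses} does the rest. The only mild point to be careful about is ensuring that the statement is applied in the right space ($\B$, $\PP_\infty$, or $\PP$), but Proposition~\ref{prop:closureOfIsometryClasses} is formulated uniformly across all three, so no separate argument is needed.
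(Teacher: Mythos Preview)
Your proof is correct and follows exactly the approach indicated in the paper: apply Proposition~\ref{prop:closureOfIsometryClasses} to reduce the claim to the finite representability of $\ell_2$ in $X$, and then invoke Dvoretzky's theorem. There is nothing to add.
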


The following theorem is now an immediate consequence of Lemmas~\ref{lem:Hilbertclosed} and~\ref{cor:Dvoretzky}.
\begin{theorem}\label{thm:isometryClosedClass}
$\ell_2$ is the only separable infinite-dimensional Banach space whose isometry class is closed in $\B$. The same holds if we replace $\B$ by $\PP_\infty$.
\end{theorem}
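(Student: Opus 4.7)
The plan is to combine the two preceding lemmas in a short two-line argument, after observing that the isometry class $\isomtrclass[\B]{\ell_2}$ is nonempty (the usual $\ell_2$-norm restricted to $V$ lies in $\B$, since the $e_n$ remain linearly independent and the resulting completion is isometric to $\ell_2$). So both directions of the ``only'' statement are meaningful, and the nontrivial direction is ``if $\isomtrclass[\B]{X}$ is closed in $\B$, then $X \equiv \ell_2$''.

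First I would record the easy direction: by Lemma~\ref{lem:Hilbertclosed}, $\isomtrclass[\B]{\ell_2}$ is closed in $\B$, which shows that $\ell_2$ at least qualifies. For the converse, I would fix an arbitrary separable infinite-dimensional Banach space $X$ whose isometry class $\isomtrclass[\B]{X}$ is closed in $\B$, and then invoke Lemma~\ref{cor:Dvoretzky} (the Dvoretzky-based corollary) to obtain the inclusion
\[
\isomtrclass[\B]{\ell_2}\;\subseteq\;\overline{\isomtrclass[\B]{X}}\cap\B.
\]
Since by assumption $\isomtrclass[\B]{X}$ is already closed in $\B$, the right-hand side equals $\isomtrclass[\B]{X}$ itself, so $\isomtrclass[\B]{\ell_2}\subseteq\isomtrclass[\B]{X}$.

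To conclude, I would pick any $\mu\in\isomtrclass[\B]{\ell_2}$ (which exists by the opening observation); by the inclusion just derived, $\mu\in\isomtrclass[\B]{X}$ as well, which forces $X\equiv X_\mu\equiv\ell_2$. The argument for $\PP_\infty$ in place of $\B$ is identical, since Lemma~\ref{lem:Hilbertclosed} and Lemma~\ref{cor:Dvoretzky} are stated for $\PP_\infty$ too.

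There is essentially no new obstacle here: all the work is packaged into Lemma~\ref{lem:Hilbertclosed} (parallelogram law is a closed condition defining exactly infinite-dimensional Hilbert spaces within $\B$ or $\PP_\infty$) and Lemma~\ref{cor:Dvoretzky} (Dvoretzky's theorem implying that every $\ell_2$-norm is a limit of norms isometric to $X$, via finite-dimensional almost-Euclidean subspaces). The only minor point to double-check is that the standard $\ell_2$-norm on $V$ genuinely belongs to $\B$, i.e.\ the basis vectors $e_n$ are linearly independent in the completion; this is immediate. The argument does not go through in the full space $\PP$, in accordance with Remark~\ref{rem:closedinP}.
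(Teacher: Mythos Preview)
Your proof is correct and follows exactly the same approach as the paper, which simply states that the theorem is an immediate consequence of Lemmas~\ref{lem:Hilbertclosed} and~\ref{cor:Dvoretzky}. You have merely spelled out the short deduction in full (including the harmless observation that $\isomtrclass[\B]{\ell_2}$ is nonempty), so there is nothing to add.
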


\subsection{$QSL_p$-spaces}
Before embarking on studying spaces with descriptively simple isomorphism classes, let us consider some natural closed subspaces of $\PP$, $\PP_\infty$ and $\B$. 

In \cite{Kwap}, Kwapie\' n denotes by $S_p$, resp. $SQ_p$, for $1\leq p<\infty$, the class of all Banach spaces isometric to a subspace of $L_p(\mu)$, resp. to a subspace of some quotient of $L_p(\mu)$, for some measure $\mu$. Note that a separable Banach space $X$ belongs to $S_p$, resp. $SQ_p$ if and only if it is isometric to a subspace of $L_p[0,1]$, resp. to a subspace of a quotient of $L_p[0,1]$, which easily follows from the fact that any separable $L_p(\mu)$ isometrically embeds into $L_p[0,1]$ as a complemented subspace (see e.g. Theorem~\ref{thm:scriptLpCharacterization} below).

Let us address the class $S_p$ first. We have the following simple lemma.
\begin{lemma}\label{lem:subspaceLp}
Let $1\leq p < \infty$. Put
\[M:=\{\mu\in\B\setsep X_\mu\text{ is isometric to a subspace of }L_p[0,1]\}.\]
Then $M$ is a closed set in $\B$ and we have
\[M= \overline{\isomtrclass[\B]{\ell_p}}\cap\B = \overline{\{\mu\in\B\setsep X_\mu\text{ is a $\mathcal{L}_{p,1+}$ space}\}}\cap\B.\]
The same holds if we replace $\B$ with $\PP_\infty$.
\end{lemma}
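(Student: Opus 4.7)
My plan is to identify all three sets in the statement with the collection $\{\mu \in \B\setsep X_\mu \text{ is finitely representable in } \ell_p\}$. First I would invoke Proposition~\ref{prop:closureOfIsometryClasses} with $X = \ell_p$ to see that this collection equals $\overline{\isomtrclass[\B]{\ell_p}}\cap\B$; in particular it is closed in $\B$.

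The key classical ingredient I would then cite is: for $1\leq p<\infty$, a separable Banach space $Y$ embeds isometrically into $L_p[0,1]$ if and only if $Y$ is finitely representable in $\ell_p$. The easy direction follows from step-function approximation, which shows that $L_p[0,1]$ is itself finitely representable in $\ell_p$, together with the fact that finite representability passes to subspaces. For the converse, a standard local ultraproduct argument embeds $Y$ isometrically into a non-principal ultrapower $(\ell_p)_{\mathcal U}$; by Dacunha-Castelle--Krivine such an ultrapower is isometric to some $L_p(\mu)$, and every separable subspace of an arbitrary $L_p(\mu)$ embeds isometrically into $L_p[0,1]$ by a routine reduction to a countably generated sub-$\sigma$-algebra. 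This identifies $M$ with the collection above, yielding $M = \overline{\isomtrclass[\B]{\ell_p}}\cap\B$ and in particular the closedness of $M$.

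For the equality with the third set, the inclusion $\overline{\isomtrclass[\B]{\ell_p}}\cap\B \subseteq \overline{\{\mu\in\B\setsep X_\mu \text{ is a } \mathcal{L}_{p,1+}\text{-space}\}}\cap\B$ is immediate, because $\ell_p$ is itself an $\mathcal{L}_{p,1+}$-space. Conversely, any $\mathcal{L}_{p,1+}$-space is, straight from its definition, finitely representable in $\ell_p$ (every finite-dimensional subspace is contained in a larger one which is $(1+\varepsilon)$-isomorphic to some $\ell_p^n$), so $\{\mu\in\B\setsep X_\mu \text{ is a } \mathcal{L}_{p,1+}\text{-space}\} \subseteq M$, and the closedness of $M$ established in the previous step preserves this inclusion after passage to closures. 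The case of $\PP_\infty$ follows by the verbatim argument, since Proposition~\ref{prop:closureOfIsometryClasses} is stated there too. The only step I would not carry out from scratch is the classical isometric-embedding characterization of subspaces of $L_p$, which I would invoke as a black box; this is the main potential obstacle.
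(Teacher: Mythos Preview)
Your proposal is correct and essentially identical to the paper's own proof: the paper also reduces everything to the classical fact that a separable infinite-dimensional Banach space embeds isometrically into $L_p[0,1]$ if and only if it is finitely representable in $\ell_p$ (cited as \cite[Theorem 12.1.9]{albiacKniha}), and then invokes Proposition~\ref{prop:closureOfIsometryClasses}. The only cosmetic difference is that for the third set the paper implicitly uses the ``Moreover'' clause of Proposition~\ref{prop:closureOfIsometryClasses} applied to the isometry-invariant family $\F$ of $\mathcal{L}_{p,1+}$-spaces, whereas you argue via the already-established closedness of $M$; both routes are equivalent one-liners.
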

\begin{proof}
We recall the fact that a separable infinite-dimensional Banach space is isometric to a subspace of $L_p[0,1]$ if and only if it is finitely representable in $\ell_p$, see e.g.  \cite[Theorem 12.1.9]{albiacKniha}. The rest follows from Proposition~\ref{prop:closureOfIsometryClasses}. We refer the reader to Section~\ref{sect:Gdelta} for a definition of the class $\mathcal{L}_{p,1+}$.
\end{proof}

In the rest, we focus on the class $SQ_p$. Notice that for $p=1$, this class coincides with the class of all Banach spaces, and for $p=2$, this class consists of Hilbert spaces.

These Banach spaces are also called $QSL_p$-spaces in literature and since it seems this is the more recent terminology, this is what we will use further. It seems to be well known, see e.g. \cite{Runde}, that this class of spaces is characterized by Proposition~\ref{prop:Herz} below. This result was first essentially proved probably by Kwapien \cite{Kwap} (however, in his paper he considered the isomorphic variant only), for a more detailed explanation of the proof (and even for a generalization) one may consult e.g. the proof in \cite[Theorem 3.2]{Me96} which uses ideas from \cite{Pi90} and \cite{He83}. Let us note that, by Proposition~\ref{prop:Herz} and \cite[Proposition 0]{Herz}, the class of $QSL_p$-spaces coincides with the class of $p$-spaces considered already in 1971 by Herz \cite{Herz}.

\begin{proposition}\label{prop:Herz}
A Banach space $X$ is a $QSL_p$-space, if and only if for every real valued $(n,m)$-matrix $M$ satisfying $$\sum_{i=1}^n \left|\sum_{j=1}^m M(i,j) r_j \right|^p\leq \sum_{k=1}^m |r_k|^p,$$ for all $m$-tuples $r_1,\ldots,r_m\in\Rea$, we have $$\sum_{i=1}^n \left\|\sum_{j=1}^m M(i,j) x_j \right\|_X^p\leq \sum_{k=1}^m \|x_k\|_X^p,$$ for all $m$-tuples $x_1,\ldots,x_m\in X$.
\end{proposition}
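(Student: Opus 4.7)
The plan is to prove both implications, with the forward direction being a direct $L_p$-computation and the backward direction being the classical (and nontrivial) part, essentially due to Kwapie\'n.

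For the forward direction, I would assume $X$ embeds isometrically into $L_p(\mu)/Z$ for some measure $\mu$ and closed subspace $Z\subseteq L_p(\mu)$. Given $x_1,\dots,x_m\in X$ and $\varepsilon>0$, the definition of the quotient norm yields lifts $y_j\in L_p(\mu)$ with $\|y_j\|_{L_p(\mu)}<\|x_j\|_X+\varepsilon$. Since the quotient map is a contraction, $\|\sum_j M(i,j)x_j\|_X\leq\|\sum_j M(i,j)y_j\|_{L_p(\mu)}$ for every $i$, hence
$$\sum_{i=1}^n\Big\|\sum_{j=1}^m M(i,j)x_j\Big\|_X^p\le\sum_{i=1}^n\Big\|\sum_{j=1}^m M(i,j)y_j\Big\|_{L_p(\mu)}^p=\int\sum_{i=1}^n\Big|\sum_{j=1}^m M(i,j)y_j(t)\Big|^p\dmu(t).$$
Applying the \emph{scalar} matrix inequality pointwise to the real numbers $y_1(t),\dots,y_m(t)$ bounds the right-hand side by $\int\sum_k|y_k(t)|^p\dmu(t)=\sum_k\|y_k\|_{L_p(\mu)}^p\le\sum_k(\|x_k\|_X+\varepsilon)^p$, and letting $\varepsilon\to 0$ finishes this direction.

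For the backward direction I would follow the classical argument laid out in \cite{Me96}. By a standard local (ultrapower) argument together with separability, it suffices to isometrically embed every finite-dimensional subspace $E\subseteq X$, spanned by some $x_1,\dots,x_m$, into a quotient of some $\ell_p^N$. One considers the cone of seminorms on $\Rea^m$ of the form $(r_1,\dots,r_m)\mapsto(\sum_i|\sum_j M(i,j)r_j|^p)^{1/p}$ as $M$ ranges over admissible matrices, together with the target value $(\sum_i\|\sum_j M(i,j)x_j\|_X^p)^{1/p}$, and applies a Hahn--Banach / separation argument in this cone. The assumed matrix inequality on $X$ is precisely what forces the separating functional to produce a norm-preserving (rather than merely bounded) surjection $\ell_p^N\twoheadrightarrow E$, equivalently an isometric embedding $E\hookrightarrow\ell_p^N/Z$.

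The main obstacle is this backward direction: the scalar hypothesis transfers trivially to $L_p(\mu)$-valued quantities by pointwise reasoning, but recovering a genuine isometric subspace-of-a-quotient representation requires the correct dual formulation in a cone of $p$-th powers. Since the full argument is well-documented in \cite{Kwap}, \cite{Herz}, and most explicitly in \cite[Theorem 3.2]{Me96}, in the actual write-up I would simply refer to these sources, as the authors themselves indicate.
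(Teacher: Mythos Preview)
Your proposal is correct and matches the paper's treatment: the paper does not give its own proof of this proposition but simply attributes it to Kwapie\'n \cite{Kwap} and refers to \cite[Theorem~3.2]{Me96} for a detailed argument, exactly as you conclude. Your explicit sketch of the forward direction (lifting to $L_p(\mu)$ and applying the scalar inequality pointwise) is a welcome addition that the paper omits entirely.
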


Since it is clear that it suffices to verify the condition from Proposition~\ref{prop:Herz} only on dense tuples of vectors, and that this condition is closed, we immediately obtain the following.

\begin{proposition}
For every $1< p<\infty$, the set $$\{\mu\in \PP_\infty\colon X_\mu\text{ is a }QSL_p\text{-space}\}$$ is closed in $\PP_\infty$.

The same is true if $\PP_\infty$ is replaced by $\B$.
\end{proposition}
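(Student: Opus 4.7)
The plan is to invoke the Herz-Kwapień characterization (Proposition~\ref{prop:Herz}) and observe that the quantifier "for all tuples $x_1,\ldots,x_m\in X$" can be replaced by "for all tuples $v_1,\ldots,v_m\in V$" without loss, which reduces the membership condition to an intersection of closed conditions on $\mu$. First, I would fix $1<p<\infty$ and denote the set in question by $\mathcal{Q}_p$. For each pair of natural numbers $n,m$, each real $(n,m)$-matrix $M$ satisfying the hypothesis of Proposition~\ref{prop:Herz}, and each tuple $(v_1,\ldots,v_m)\in V^m$, define
\[
C_{M,v_1,\ldots,v_m}:=\left\{\mu\in\PP_\infty\setsep \sum_{i=1}^n \mu\Bigl(\sum_{j=1}^m M(i,j)v_j\Bigr)^p \leq \sum_{k=1}^m \mu(v_k)^p\right\}.
\]
Each $C_{M,v_1,\ldots,v_m}$ is closed in $\PP_\infty$: the point is that although $\sum_j M(i,j)v_j$ lies in $c_{00}$ rather than in $V$ when $M$ has irrational entries, the evaluation map $\mu\mapsto\mu(w)$ is still continuous on $\PP$ for every $w\in c_{00}$ (approximate $w$ by rational vectors and use that $\mu(w-w')\leq \sum_i|w(i)-w'(i)|\,\mu(e_i)$, with the latter coefficients locally bounded in $\mu$).

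Next I would verify that $\mathcal{Q}_p=\bigcap C_{M,v_1,\ldots,v_m}$, the intersection ranging over all admissible $M$ and all $V$-tuples. The inclusion $\mathcal{Q}_p\subseteq\bigcap C_{M,v_1,\ldots,v_m}$ is immediate from Proposition~\ref{prop:Herz} applied with $x_j=v_j\in V\subseteq X_\mu$. For the reverse inclusion, fix $\mu$ in the intersection and a real matrix $M$ satisfying the hypothesis together with a tuple $x_1,\ldots,x_m\in X_\mu$. Since $V$ is dense in $X_\mu$, pick $v_j^{(\ell)}\in V$ with $\|v_j^{(\ell)}-x_j\|_{X_\mu}\to 0$ for each $j$; then by the triangle inequality $\|\sum_j M(i,j)v_j^{(\ell)}-\sum_j M(i,j)x_j\|_{X_\mu}\to 0$ as well. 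Taking $\ell\to\infty$ in the inequalities defining $C_{M,v_1^{(\ell)},\ldots,v_m^{(\ell)}}$ yields the Herz inequality for $x_1,\ldots,x_m$, so $X_\mu$ is $QSL_p$ by Proposition~\ref{prop:Herz}.

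Since an arbitrary intersection of closed sets is closed, $\mathcal{Q}_p$ is closed in $\PP_\infty$. The argument transfers verbatim to $\B$, as both the density of $V$ in $X_\mu$ and the continuity of $\mu\mapsto\mu(w)$ are inherited from $\PP_\infty$. The only mildly delicate point — and hence the main (still routine) obstacle — is the continuity of the extended evaluation $\mu\mapsto\mu(w)$ for $w\in c_{00}\setminus V$, which is needed because one cannot in general approximate a real matrix $M$ satisfying the hypothesis by a rational one preserving that hypothesis; restricting to rational matrices would require a slack argument that is best avoided by handling real $M$ directly.
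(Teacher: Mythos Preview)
Your argument is correct and follows exactly the approach the paper takes: the paper's proof is the one-line remark that the Herz--Kwapie\'n condition (Proposition~\ref{prop:Herz}) is closed and need only be checked on dense tuples, which is precisely what you spell out. Your observation that $\sum_j M(i,j)v_j$ may fall outside $V$ when $M$ has irrational entries, and that one therefore needs the continuity of $\mu\mapsto\mu(w)$ for $w\in c_{00}$, is a genuine technical point that the paper leaves implicit; your justification of it is fine.
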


Denote now the set $\{\mu\in \PP_\infty\colon X_\mu\text{ is a }QSL_p\text{-space}\}$ by $QSL_p$. By Lemma~\ref{lem:subspaceLp}, for $1\leq p<\infty$, the set $M_p:=\{\mu\in\mathcal P_\infty\colon X_\mu\text{ is isometric to a subspace of }L_p[0,1]\}$ is closed. Clearly, $M_p\subseteq QSL_p$ (and for $p=2$ there is an equality).

If $p\neq 2$ then $M_p\neq QSL_p$ because there exists a separable infinite-dimensional Banach space which is isomorphic to a quotient of $L_p[0,1]$ but not to its subspace. Indeed, if $p=1$ this is easy since every separable Banach space is isomorphic to a quotient of $\ell_1$, see e.g. \cite[Theorem 2.3.1]{albiacKniha}. If $2<q<p<\infty$ then $\ell_q$ is isometric to a quotient of $L_p[0,1]$ (because its dual $\ell_{q'}$ embeds isometrically into $L_{p'}[0,1]$) but is not isomorphic to a subspace of $L_p[0,1]$, see e.g. \cite[Theorem 6.4.18]{albiacKniha}. Finally, if $1<p<2$ then by \cite[Corollary 2]{FKP77} there exists a subspace $X$ of $\ell_{p'}\subseteq L_{p'}[0,1]$ which is not isomorphic to a quotient of $L_{p'}[0,1]$\footnote[1]{More precisely, by \cite[Theorem 1]{FKP77} (see also e.g. \cite[Corollary 3.2]{FJ80}) for every $n\in\Nat$ there exists a subspace $E_n$ of $\ell_\infty^{2n}$ such that $gl(E_n)\geq K\sqrt{n}$ where $K>0$ is a constant independent of $n$ and $gl(E_n)$ is a quantity related to the notion of a ``$GL$-space'' (or a space with the ``Gordon-Lewis property''). This implies that if we denote by $E_n^{p'}$ the space $E_n$ endowed with the $\ell_{p'}$-norm, we obtain $gl(E_n^{p'})\geq K\sqrt{n}d_{BM}(\ell_\infty^{2n},\ell_{p'}^{2n})^{-1} = K2^{-1/{p'}}n^{1/2-1/{p'}}\to \infty$; hence, $X:=(\bigoplus E_n^{p'})_{p'}$, the $\ell_{p'}$-sum of the spaces $E_n^{p'}$, is isometric to a subspace of $\ell_{p'}$ but it is not a $GL$-space. If $X$ was isomorphic to a quotient of $L_{p'}[0,1]$ then $X^*$ would be isomorphic to a subspace of $L_{p}[0,1]$ which would imply that $X^*$ and $X$ are $GL$-spaces (see e.g. \cite[Proposition 17.9 and Proposition 17.10]{diestelKniha}), a contradiction.} and so $X^*$ is isometric to a quotient of $L_p[0,1]$ which is not isomorphic to a subspace of $L_p[0,1]$. We would like to thank Bill Johnson for providing us these examples.

Moreover, we have the following.
\begin{proposition}
For $p\in \left[1,2\right)\cup \left(2,\infty\right)$, the set $M_p$ has an empty interior in $QSL_p$.
\end{proposition}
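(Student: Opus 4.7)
The plan is to show that $QSL_p \setminus M_p$ is dense in $QSL_p$. Fix $\mu \in QSL_p$ together with a basic open neighborhood $U = \{\rho \in \PP_\infty : |\rho(v_i) - \mu(v_i)| < \varepsilon,\ i = 1, \dots, n\}$, and choose $m$ so large that $v_1, \dots, v_n \in \qeSpan\{e_1, \dots, e_m\}$. I would produce $\nu \in U \cap QSL_p \setminus M_p$ by forming an $\ell_p$-direct sum of $\mu$ on the first $m$ coordinates with a fixed reference space $Y^{\ast}$ on the tail. For $Y^{\ast}$ I take any separable $QSL_p$-space that is not isometric to a subspace of $L_p[0,1]$; the paragraph preceding the proposition provides such an example for every $p \in [1,2)\cup(2,\infty)$ --- the space $c_0$ for $p = 1$, the space $\ell_q$ with $2 < q < p$ for $p \in (2,\infty)$, and the dual $X^{\ast}$ from the footnote for $p \in (1,2)$ --- since in each case the witness is isometric to a quotient of $L_p[0,1]$ (hence is $QSL_p$) but fails even to be isomorphic to a subspace of $L_p[0,1]$.

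Concretely, I would fix a rational-linear map $\phi : \qeSpan\{e_j : j > m\} \to Y^{\ast}$ with dense image (possible because $Y^{\ast}$ is separable) and, for $w_1 \in \qeSpan\{e_1, \dots, e_m\}$ and $w_2 \in \qeSpan\{e_j : j > m\}$, define
\[
\nu(w_1 + w_2) := \bigl( \mu(w_1)^p + \|\phi(w_2)\|_{Y^{\ast}}^p \bigr)^{1/p}.
\]
Minkowski's inequality in $\ell_p^2$, combined with the triangle inequalities for $\mu$ and $\|\cdot\|_{Y^{\ast}}$, gives the triangle inequality for $\nu$, so $\nu \in \PP$. Since each $v_i$ has vanishing $w_2$-part, $\nu(v_i) = \mu(v_i)$ and therefore $\nu \in U$. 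One reads off that $X_\nu$ is isometrically the $\ell_p$-direct sum of the finite-dimensional subspace of $X_\mu$ generated by $e_1, \dots, e_m$ and the completion of $\phi(\qeSpan\{e_j : j > m\}) = Y^{\ast}$; in particular $\nu \in \PP_\infty$.

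It remains to verify that $X_\nu \in QSL_p$ and $X_\nu \notin M_p$. The first follows from closure of the $QSL_p$ class under $\ell_p$-direct sums, which is an immediate coordinatewise consequence of Herz's criterion (Proposition~\ref{prop:Herz}); both summands are $QSL_p$-spaces --- the finite-dimensional one as a subspace of $X_\mu \in QSL_p$, and $Y^{\ast}$ by choice. On the other hand, $X_\nu$ contains $Y^{\ast}$ as an isometric subspace; were $X_\nu$ isometric to a subspace of $L_p[0,1]$, then so would be $Y^{\ast}$, contradicting its choice. Hence $\nu \in (U \cap QSL_p) \setminus M_p$, finishing the proof.

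The main obstacle is essentially bookkeeping: confirming that the $\ell_p$-direct sum of two $QSL_p$-spaces is again a $QSL_p$-space. This is not transparent from the "subspace of a quotient of $L_p(\mu)$" description, but it falls out at once from the Herz-type characterization already recorded. Everything else --- existence of $Y^{\ast}$, the elementary definition of $\nu$, and the inheritance argument $Y^{\ast}\subseteq X_\nu\not\in M_p$ --- is either supplied by the preceding discussion or a direct verification.
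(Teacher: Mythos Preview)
Your argument is correct and follows essentially the same route as the paper: given a basic open set $U$ meeting $QSL_p$, you build a point of $U\cap QSL_p$ outside $M_p$ by forming an $\ell_p$-sum with a fixed $QSL_p$-space that fails to embed into $L_p[0,1]$, invoking closure of $QSL_p$ under $\ell_p$-sums. The paper sums the \emph{entire} $X_\nu$ with the bad space (writing only ``it is easy to define $\nu'\in U$ so that $X_{\nu'}\equiv X_\nu\oplus_p X_\mu$''), whereas you keep only the finite-dimensional slice of $X_\mu$ needed to stay in $U$ and replace the tail by $Y^{\ast}$; this is a harmless variation and your construction is the more explicit one. The paper then adds an extra step---passing via finite representability and Proposition~\ref{prop:closureOfIsometryClasses} to an open $U'\ni\nu'$ disjoint from $M_p$---but since $M_p$ is closed, your single point already suffices, so your version is slightly more direct.
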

\begin{proof}
Fix $p\in \left[1,2\right)\cup \left(2,\infty\right)$. Pick  $\mu\in QSL_p$ such that $X_\mu$ does not isometrically embed as a subspace into $L_p[0,1]$ (such a space exists, see the examples above). Let $U$ be now a basic open neighborhood of some $\nu\in QSL_p$. Since the class of $QSL_p$-spaces is clearly closed under taking $\ell_p$-sums (see e.g. \cite{Runde}), $X_\nu\oplus_p X_\mu$ is still a $QSL_p$-space. It is easy to define $\nu'\in U$ so that $X_{\nu'}$ is isometric to $X_\nu\oplus_p X_\mu$. Now since $X_\mu$ does not isometrically embed as a subspace into $L_p[0,1]$, neither $X_{\nu'}$ does. By \cite[Theorem 12.1.9]{albiacKniha}, $X_{\nu'}$ is not finitely representable in $\ell_p$, so also not in $L_p[0,1]$ (by \cite[Proposition 12.1.8]{albiacKniha}). It follows from Proposition~\ref{prop:closureOfIsometryClasses} that there exists a basic open neighborhood $U'$ of $\nu'$ avoiding $M_p$. Now $U\cap U'$ is a non-empty open subsets of $U$ avoiding $M_p$ and we are done.
\end{proof}
\begin{corollary}\label{cor:LpnotQSLpgeneric}
For $p\in \left[1,2\right)\cup \left(2,\infty\right)$, $L_p[0,1]$ is not a generic $QSL_p$-space.
\end{corollary}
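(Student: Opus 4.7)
The plan is to deduce the corollary as a straightforward consequence of the preceding proposition together with the observation that the isometry class of $L_p[0,1]$ lives inside $M_p$. Here ``generic'' should be interpreted in the Baire-category sense: a property is generic in $QSL_p$ if the set of pseudonorms in $QSL_p$ possessing it is comeager in the Polish space $QSL_p$. Note that $QSL_p$ is indeed Polish and nonempty, being a closed subset of the Polish space $\PP_\infty$ that contains, for instance, the pseudonorm defining $L_p[0,1]$ itself.

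First I would record that $\isomtrclass[QSL_p]{L_p[0,1]} \subseteq M_p$, which is immediate since $L_p[0,1]$ is trivially isometric to a subspace of itself. Next, by Lemma~\ref{lem:subspaceLp}, $M_p$ is closed in $\PP_\infty$, hence closed in $QSL_p$; and by the preceding proposition, $M_p$ has empty interior in $QSL_p$. A closed set with empty interior is nowhere dense, so $M_p$ is nowhere dense in $QSL_p$, and consequently so is $\isomtrclass[QSL_p]{L_p[0,1]}$.

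Finally, since $QSL_p$ is a nonempty Polish space, and hence a Baire space, any meager subset of $QSL_p$ fails to be comeager (otherwise $QSL_p$ would be the union of two meager sets, contradicting the Baire category theorem). In particular, the isometry class of $L_p[0,1]$ is not comeager in $QSL_p$, which is exactly the statement that $L_p[0,1]$ is not generic among $QSL_p$-spaces.

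There is effectively no hard step here: all the substantive work has already been done in the preceding proposition (where the existence of $QSL_p$-spaces not embedding isometrically into $L_p[0,1]$, combined with the stability of $QSL_p$ under $\ell_p$-sums and Proposition~\ref{prop:closureOfIsometryClasses}, is used to deflate $M_p$). The corollary is just the translation of ``$M_p$ has empty interior in $QSL_p$'' into the language of genericity via Baire category.
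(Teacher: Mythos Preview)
Your argument is correct and is exactly the intended one: the paper states the corollary without proof, as it follows immediately from the preceding proposition that $M_p$ has empty interior in $QSL_p$, together with the obvious inclusion $\isomtrclass[QSL_p]{L_p[0,1]}\subseteq M_p$ and the fact that $M_p$ is closed. Your write-up just spells out the Baire-category translation that the paper leaves implicit.
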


\subsection{Spaces with descriptively simple isomorphism classes}\label{subsect:smallIsomorphism}
The main result of this subsection, and one of the main results of the whole paper, is the second part of Theorem~\ref{thm:Intro6}. That is, we prove the following.
\begin{theorem}\label{thm:FSigmaIsomrfClasses}
The Hilbert space $\ell_2$ is characterized as the unique, up to isomorphism, separable infinite-dimensional Banach space $X$ such that $\isomrfclass{X}$ is an $F_\sigma$-set in $\B$. The same holds if we replace $\B$ with $\PP_\infty$.

Moreover, $\isomrfclass{\ell_2}$ is $F_\sigma$-complete.
\end{theorem}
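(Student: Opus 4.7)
I plan to split the theorem into three claims: (i) $\isomrfclass[\B]{\ell_2}$ is $F_\sigma$, (ii) $\isomrfclass[\B]{\ell_2}$ is $F_\sigma$-hard (hence, with (i), $F_\sigma$-complete), and (iii) uniqueness: no other separable infinite-dimensional Banach space has an $F_\sigma$ isomorphism class. Parts (i) and (ii) are reasonably direct; the main obstacle is (iii).

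For (i), I use the standard consequence of Kwapień's theorem: a separable Banach space $X$ is isomorphic to $\ell_2$ if and only if there exists $C\geq 1$ with $d_{BM}(E,\ell_2^{\dim E})\leq C$ for every finite-dimensional subspace $E\subseteq X$. Setting
\[A_C:=\bigl\{\mu\in\B\colon d_{BM}\bigl(\Span_{X_\mu}\{v_1,\ldots,v_n\},\ell_2^n\bigr)\leq C\text{ for every $\Rat$-linearly independent }v_1,\ldots,v_n\in V\bigr\},\]
one has $\isomrfclass[\B]{\ell_2}=\bigcup_{C\in\Nat}A_C$. Since in $\B$ every $\Rat$-linearly independent tuple is linearly independent in $X_\mu$, and for each fixed tuple the Banach-Mazur distance depends continuously on $\mu$, $A_C$ is a countable intersection of closed sets, hence closed, and $\isomrfclass[\B]{\ell_2}$ is $F_\sigma$. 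In $\PP_\infty$ one restricts, for each tuple, to the open set where the tuple is linearly independent in $X_\mu$ (Lemma~\ref{lem:infiniteDimIsGDelta}); the argument is otherwise identical.

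For (ii), I reduce the standard $F_\sigma$-complete set $c_f=\{x\in 2^\Nat\colon x_n=0\text{ eventually}\}$ continuously to $\isomrfclass[\B]{\ell_2}$. Take $E_k=\ell_\infty^k$ (so $d_{BM}(E_k,\ell_2^k)=\sqrt{k}\to\infty$), partition the basis of $V$ into an infinite block $B_0$ and finite blocks $B_k$ of size $k$, and define $\mu_x\in\B$ so that
\[X_{\mu_x}\equiv\ell_2\oplus_2\Big(\bigoplus_{k\colon x_k=1}E_k\Big)_{\ell_2}\oplus_2\Big(\bigoplus_{k\colon x_k=0}\ell_2^k\Big)_{\ell_2},\]
with $B_0$ realizing the $\ell_2$-summand and $B_k$ carrying the $E_k$-norm when $x_k=1$ and the $\ell_2^k$-norm otherwise. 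The map $x\mapsto\mu_x$ is continuous; for $x\in c_f$ the sum has only finitely many $E_k$-summands, so $X_{\mu_x}\simeq\ell_2$, while for $x\notin c_f$, $X_{\mu_x}$ contains isometrically the $\ell_\infty^k$'s, whose Banach-Mazur distances to $\ell_2^k$ grow unboundedly, so $X_{\mu_x}\not\simeq\ell_2$ by (i).

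For (iii) --- the main obstacle --- suppose $X\not\simeq\ell_2$ and $\isomrfclass[\B]{X}=\bigcup_n F_n$ with $F_n$ closed, aiming at a contradiction. The strategy combines Kwapień, Proposition~\ref{prop:closureOfIsometryClasses}, and Baire category. By Kwapień there exist finite-dimensional subspaces $E_k\subseteq X$ with $d_{BM}(E_k,\ell_2^{\dim E_k})\to\infty$. From these one constructs a separable Banach space $Y$ that is finitely representable in $X$ but not isomorphic to $X$ (e.g.\ $Y=(\bigoplus_k E_k)_{\ell_2}$ or a suitable separable ultrapower $X^{\mathcal U}$); realizing $Y$ as $X_{\mu_\infty}$ for some $\mu_\infty\in\B$, Proposition~\ref{prop:closureOfIsometryClasses} yields $\mu_\infty\in\overline{\isomtrclass[\B]{X}}\subseteq\overline{\isomrfclass[\B]{X}}$. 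The plan is then to use a Baire-category refinement of each $F_n$ by isomorphism constant to a fixed copy of $X$ and to extract a sequence $\mu_j\to\mu_\infty$ with all $\mu_j$ in a common $F_n$, whose closedness places $\mu_\infty\in F_n\subseteq\isomrfclass[\B]{X}$, contradicting $X_{\mu_\infty}\not\simeq X$. The hard technical content is the controlled construction of this approximating sequence uniformly in the isomorphism constants across the $F_n$'s; the $\PP_\infty$ case follows by analogous reasoning.
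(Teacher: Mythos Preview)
Your treatments of (i) and (ii) are correct, and both take routes essentially equivalent to (if formally different from) the paper's: for (i) the paper invokes Kwapie\'n via type~2/cotype~2, while you use the equivalent uniform Banach--Mazur bound on finite-dimensional subspaces; for (ii) the paper simply notes that $\isomrfclass{\ell_2}$ is not $G_\delta$ (Proposition~\ref{prop:classOfl2}) and applies Lemma~\ref{lem:F_sigma-hard}, whereas you give an explicit reduction from $c_f$. Either way is fine.

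Part (iii), however, has a genuine gap --- and it is not a ``hard technical content'' issue but a wrong strategy. Two points:

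\textbf{The candidate $Y$.} Your proposed $Y=(\bigoplus_k E_k)_{\ell_2}$ need not be finitely representable in $X$ (there is no reason $X$ absorbs $\ell_2$-sums of its own finite-dimensional pieces almost isometrically), and you give no argument that $Y\not\simeq X$; the ultrapower variant has the same defect. This particular issue is salvageable (e.g.\ $Y=\ell_2$ is finitely representable in $X$ by Dvoretzky and $Y\not\simeq X$ by hypothesis), but that does not rescue the argument, because of the next point.

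\textbf{Closure of a union is not the union of closures.} From $\mu_\infty\in\overline{\isomrfclass[\B]{X}}=\overline{\bigcup_n F_n}$ you cannot conclude $\mu_\infty\in\overline{F_n}$ for any single $n$; the rationals in $\Rea$ already show this fails. Refining by ``isomorphism constant to a fixed copy of $X$'' does not help: the sets $\{\mu: d_{BM}(X_\mu,X)\le K\}$ are \emph{not} closed in $\B$ (their closure is governed by finite representability, not by isomorphism), so the refined pieces $F_n\cap\{d_{BM}\le K\}$ are not closed either, and no Baire argument on them produces the desired common $F_n$.

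The paper's argument is structurally different. It does \emph{not} approach $\mu_\infty$ through the closure of $\isomrfclass{X}$; instead it exploits the action of the permutation group of $\Nat$ on $\B$ by reindexing the basis $(e_n)$, which preserves the \emph{isometry} class of $X_\mu$. A Baire-type argument on the tree of finite injections $\gamma:\{1,\dots,k\}\hookrightarrow\Nat$ yields a single $m$ and a finite $\gamma$ such that every extension $\gamma'\supseteq\gamma$ has the set $\mathbb{M}^{\gamma'}_\mu$ (of norms agreeing with $\mu$ along $\gamma'$) meeting $F_m$. To exploit this, one needs a space $Y\not\simeq X$ that is a genuine \emph{subspace} of $X$ (not merely finitely representable in $X$), so that an appropriate reindexing $\nu$ of $\mu$ restricted to the $Y$-coordinates (plus the finitely many $\gamma$-coordinates) satisfies $X_\nu\simeq Y$ yet lies in $\overline{F_m}=F_m$. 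The existence of such a $Y\subseteq X$ with $Y\not\simeq X$ is exactly the solution of the homogeneous subspace problem (Komorowski--Tomczak-Jaegermann and Gowers), and the Markushevich-basis bookkeeping guarantees that passing to the $Y$-coordinates plus finitely many extras still gives a space isomorphic to $Y$. None of this is visible from your outline; the approximation-and-Baire plan you sketch cannot be made to work without these ingredients.
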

While there are several Banach spaces whose isometry classes have low complexity, as we shall see also in the next sections, there are reasons to suspect that isomorphism classes are rather complicated in general. Theorem~\ref{thm:FSigmaIsomrfClasses} is a strong evidence that $\ell_2$ is quite unique with respect to its property of having a simple isomorphism class. Another piece of evidence which we state and prove before proving Theorem~\ref{thm:FSigmaIsomrfClasses} is the following result.

\begin{proposition}\label{prop:classOfl2}
No isomorphism class can be closed in $\PP_\infty$, $\B$ and $\PP$, and with the possible exception of spaces isomorphic to $\mathbb{G}$ for which we do not know the answer, no isomorphism class can even be a $G_\delta$-set.

%Moreover, $\isomrfclass{\ell_2}$ is $F_\sigma$-complete in both $\PP_\infty$ and $\B$.
\end{proposition}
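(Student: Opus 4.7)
The plan splits naturally into two parts. First I will show that for every separable infinite-dimensional Banach space $X$, the class $\isomrfclass{X}$ is dense in each of $\B$, $\PP_\infty$, and $\PP$. The ``no closed isomorphism class'' statement then follows immediately, since a dense proper subset cannot be closed, and $\isomrfclass{X}$ is proper because there are pairwise non-isomorphic separable infinite-dimensional Banach spaces all of which are realised in these spaces. For the $G_\delta$ part I will combine the density with the fact (Theorem~\ref{thm:gurariiTypicalInP}) that $\isomtrclass{\mathbb{G}}$ is a dense $G_\delta$-set, and finish by Baire category.

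To prove density I want to invoke the ``moreover'' part of Proposition~\ref{prop:closureOfIsometryClasses}. Note that $\isomrfclass{X}$ is automatically closed under isometry, so its closure equals the set of pseudonorms whose completion is finitely representable in the family $\isomrfclass{X}$. The task therefore reduces to showing that \emph{every} separable Banach space is finitely representable in $\isomrfclass{X}$. For this, I plan to use the following renorming trick: given any finite-dimensional Banach space $E$, pick an arbitrary subspace $F \leq X$ with $\dim F = \dim E$, a bounded projection $P : X \to F$ (which exists because finite-dimensional subspaces of any Banach space are complemented via Hahn--Banach), and any linear bijection $T : F \to E$. Then
\[
\|x\|' := \|T(P(x))\|_E + \|x - P(x)\|_X
\]
is an equivalent renorming of $X$ which restricts on $F$ to the pullback of $E$'s norm via $T$. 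Thus $(X, \|\cdot\|')$ is isomorphic to $X$, and $E$ embeds into it isometrically; since isometric embedding is in particular $(1+\varepsilon)$-embedding for every $\varepsilon > 0$, this yields the required finite representability of $E$, and hence of any separable Banach space, in $\isomrfclass{X}$.

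For the $G_\delta$ claim, assume $\isomrfclass{X}$ is $G_\delta$. By the density step it is in fact a dense $G_\delta$-set, and since $\isomtrclass{\mathbb{G}}$ is also dense $G_\delta$, the Baire category theorem forces them to intersect. Any pseudonorm $\mu$ in the intersection satisfies both $X_\mu \simeq X$ and $X_\mu \equiv \mathbb{G}$, and so $X \simeq \mathbb{G}$. The main point to watch, which is really just bookkeeping, is that Proposition~\ref{prop:closureOfIsometryClasses} is stated uniformly for the three ambient spaces $\B$, $\PP_\infty$, $\PP$ (which it is), so that the density step goes through in each of them; beyond this, the renorming construction and the Baire argument are both entirely routine.
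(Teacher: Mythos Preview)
Your proposal is correct and follows essentially the same approach as the paper. Both arguments establish density of $\isomrfclass{X}$ via Proposition~\ref{prop:closureOfIsometryClasses} by showing that every finite-dimensional space embeds isometrically into some renorming of $X$; your explicit formula $\|x\|' = \|T(P(x))\|_E + \|x - P(x)\|_X$ is precisely the $\ell_1$-sum norm $E \oplus_1 \ker P$, which is exactly the paper's ``$X \simeq F \oplus_1 Y$'' written out in coordinates, and the Baire-category conclusion via Theorem~\ref{thm:gurariiTypicalInP} is identical.
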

\begin{proof}
Let $X$ be a separable infinite-dimensional Banach space. We show that $\isomrfclass{X}$ is dense (we show the argument only for $\PP_\infty$, the other cases are analogous). Let $F$ be a finite-dimensional Banach space. It is well known that every finite-dimensional space is complemented in any infinite-dimensional Banach space, so we have $X\simeq F\oplus_1 Y$ for some Banach space $Y$. Since $F$ was arbitrary, it follows that every separable Banach space is finitely representable in $\isomrfclass{X}$, so by Proposition~\ref{prop:closureOfIsometryClasses}, $\overline{\isomrfclass{X}}=\PP_\infty$, hence $\isomrfclass{X}$ is dense.

It follows that $\isomrfclass{X}$ cannot be closed for any $X$ because it is dense and there are obviously two non-isomorphic spaces.
Moreover, if $X$ is not isomorphic to the Gurari\u{\i} space then $\isomrfclass{X}$ cannot be a $G_\delta$-set since by Theorem~\ref{thm:gurariiTypicalInP}, the isometry class of the Gurari\u{\i} space is a dense $G_\delta$-set, so it would have non-empty intersection with $\isomrfclass{X}$ otherwise.
\end{proof}

 Besides $\ell_2$, whose isomorphism clas is $F_\sigma$, it is proved in \cite[Theorem 4.12]{GS18} that separable Banach spaces determined by their pavings have $\boldsymbol{\Sigma}_4^0$ isomorphism classes in any admissible topology (see Remark~\ref{rem:admissible} where we recall some basics about admissible topologies). We refer the interested reader to the text before Theorem~\ref{thm:paving} for a definition of spaces determined by their pavings. Here we just briefly note that this class of spaces was introduced by Johnson, Lindenstrauss and Schechtman in \cite{JLS96} and that there are known examples of separable Banach spaces determined by their pavings not isomorphic to $\ell_2$ (e.g. certain $\ell_2$-sums of finite-dimensional spaces are such). The second main result of this section is the following improvement of the estimate mentioned above.
 \begin{theorem}\label{thm:pavingDetermined2}
Let $X$ be a separable infinite-dimensional Banach space that is determined by its pavings. Then $\isomrfclass{X}$ is a $G_{\delta\sigma}$-set in $\PP_\infty$. In particular, it is a $G_{\delta\sigma}$-set in $\PP$ and in any admissible topology.
\end{theorem}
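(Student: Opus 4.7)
The plan is to characterize $\isomrfclass{X}$ by a ``strong finite representability'' condition of the distinguished paving of $X$ inside $X_\mu$, and then to recognize that condition as manifestly $G_{\delta\sigma}$. The improvement over the $\boldsymbol{\Sigma}^0_4$ estimate of~\cite{GS18} ultimately comes from Lemma~\ref{lem:infiniteDimIsGDelta}, which makes $K$-approximations \emph{open} in $\PP_\infty$ rather than merely $G_\delta$, thereby saving one Borel level.

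Fix a paving $(E_n)$ of $X$ witnessing that $X$ is determined by its pavings, and basis vectors $x_1^{(n)},\ldots,x_{k_n}^{(n)}$ of $E_n$ extending from level $n$ to level $n+1$. I claim $\mu\in\isomrfclass{X}$ if and only if there exists $C\in\Nat$ such that, for every finite tuple $(v_1,\ldots,v_m)\in V^m$ and every rational $\varepsilon>0$, there exist $n\in\Nat$, $(y_1,\ldots,y_{k_n})\in V^{k_n}$ and rationals $(q_{j,i})_{j\leq m,\,i\leq k_n}$ with
\[
(\mu,y_1,\ldots,y_{k_n})\approximates[E_n]{C}\qquad\text{and}\qquad \mu\Bigl(v_j-\sum_{i=1}^{k_n}q_{j,i}y_i\Bigr)<\varepsilon\ \text{ for every } j\leq m.
\]
For a fixed $C$ this is a $G_\delta$-set: the first clause is open in $\mu$ by Lemma~\ref{lem:infiniteDimIsGDelta}, the inequalities in the second clause are open, the countable union over $(n,(y_i),(q_{j,i}))$ preserves openness, and the countable intersection over the triples $(m,(v_j),\varepsilon)$ yields $G_\delta$; the union over $C\in\Nat$ then produces the announced $G_{\delta\sigma}$ bound. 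The conclusions for $\PP$ and for any admissible topology follow by standard transfer (see Remark~\ref{rem:admissible}).

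The ``$\Rightarrow$'' direction is routine: given an isomorphism $T:X\to X_\mu$ with $\|T\|\|T^{-1}\|<C$, for an input $(v_j,\varepsilon)$ one picks $n$ large enough that each $T^{-1}(v_j)$ lies within $E_n$ up to a small error, reads off rational coefficients $q_{j,i}$ in the basis $(x_i^{(n)})$, and chooses $y_i\in V$ as $\mu$-close approximations of $T(x_i^{(n)})$. The content is the converse, where the paving-determined hypothesis really enters, and it proceeds by an inductive construction of a nested sequence $F^{(1)}\subseteq F^{(2)}\subseteq\ldots$ of finite-dimensional subspaces of $X_\mu$ with $F^{(k)}$ a $C'$-approximant (for some $C'$ slightly larger than $C$) of some $E_{n_k}$ and $e_1,\ldots,e_k\in F^{(k)}$. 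At step $k$, invoke the local condition with input consisting of $e_1,\ldots,e_k$ together with a basis (in $V$) of $F^{(k-1)}$ and with a summably small $\varepsilon_k$; a standard small-perturbation lemma for finite-dimensional subspaces of a Banach space promotes the resulting approximate containments to honest ones while inflating $d_{BM}$ only by a factor $1+O(\varepsilon_k)$. Filling in levels $n\in(n_{k-1},n_k]$ by pulling back $E_n\subseteq E_{n_k}$ through the chosen $C$-isomorphism $F^{(k)}\simeq E_{n_k}$ yields a nested paving $(F_n)$ of $X_\mu$ with $\sup_n d_{BM}(E_n,F_n)<\infty$ and $\overline{\bigcup_n F_n}=X_\mu$, and then paving-determinedness forces $X_\mu\simeq X$. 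The main technical obstacle is to arrange the inductive perturbations coherently so that the isomorphism $F^{(k)}\simeq E_{n_k}$ at each step restricts, with controlled distortion, to the one chosen at step $k-1$; this is a telescoping argument, governed by the summable $\varepsilon_k$ and by the compactness of the Banach--Mazur compactum in each fixed finite dimension.
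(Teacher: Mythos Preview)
Your approach is correct and in the same spirit as the paper's, but the paper's argument, factored through Theorem~\ref{thm:paving}, is cleaner at exactly the two places where you work hardest. First, instead of your approximate containment clause $\mu(v_j-\sum_i q_{j,i}y_i)<\varepsilon$, the paper requires \emph{exact} containment $v_j\in\Span_\Rat\{w_1,\ldots,w_{|\vec w|}\}$ (this is the set $S_{\vec v}$ in the proof of Theorem~\ref{thm:paving}); since $V$ is a countable $\Rat$-vector space this is still a countable existential quantifier, and it makes the inductive construction of a \emph{genuinely nested} sequence $F_k=\Span\vec w^{(k)}$ automatic, with no small-perturbation lemma needed. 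Second, your ``filling in'' of levels $n\in(n_{k-1},n_k]$ and the attendant coherence problem for the isomorphisms $T_k$ are unnecessary: a subsequence of a paving is again a paving, so once you have nested $F^{(k)}$ with $\sup_k d_{BM}(F^{(k)},E_{n_k})<\infty$ and $\overline{\bigcup_k F^{(k)}}=X_\mu$, the pair $\{E_{n_k}\}_k$, $\{F^{(k)}\}_k$ already witnesses the hypothesis of paving-determinedness. With these two simplifications your argument collapses to the paper's.
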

Let us start with the proof of Theorem~\ref{thm:FSigmaIsomrfClasses}.
\begin{proof}[Proof of Theorem~\ref{thm:FSigmaIsomrfClasses}]
First we show that isomorphism class is $F_\sigma$. This was proved for an admissible topology on $SB_\infty$ in \cite[Theorem 4.3]{GS18}. The same proof, which we briefly sketch, works also for $\PP_\infty$ and $\B$. By Kwapie\'{n}'s theorem (see e.g. \cite[Theorem 7.4.1]{albiacKniha}) a separable infinite-dimensional Banach space is isomorphic to $\ell_2$ if and only if it is of type $2$ and of cotype $2$. It is clear from the definition of type and cotype (see e.g. \cite[Definition 6.2.10]{albiacKniha}) that these properties are $F_\sigma$-conditions. So to show that $\isomrfclass{\ell_2}$ is even $F_\sigma$-complete, by Lemma~\ref{lem:F_sigma-hard} it suffices to show that $\isomrfclass{\ell_2}$ is not a $G_\delta$-set, which we have already proved.
An alternative proof showing that the isomorphism class $\isomrfclass{\ell_2}$ is an $F_\sigma$-set follows from \cite[Theorem 2']{Kwap} (see also Remark 4 therein) which provides a formula defining spaces isomorphic to $\ell_2$ and which obviously defines an $F_\sigma$-set (in $\PP_\infty$ and $\B$).\medskip

Next we show that if a separable infinite-dimensional Banach space $X$ is not isomorphic to $\ell_2$, then $\isomrfclass{X}$ is not $F_\sigma$ in $\B$. In what follows, we denote by $\T$ the set of finite tuples (including empty) of natural numbers without repetition. The length of $\gamma\in \T$ is denoted by $|\gamma|$ and its range by $\operatorname{rng}(\gamma)$. Moreover, for every $\gamma\in \T$ and every $\mu\in\B$ we put
\[
\mathbb{M}^{\gamma}_\mu:=\Big\{\nu\in\B\colon \textnormal{for every $(a_i)_{i=1}^{|\gamma|}\in\Rat^{|\gamma|}$ we have } \nu\Big(\sum_{i=1}^{|\gamma|} a_i e_i\Big) = \mu\Big(\sum_{i=1}^{|\gamma|} a_i e_{\gamma(i)}\Big)\Big\}.
\]

In order to get a contradiction assume that $(F_n)_{n=1}^\infty$ are closed sets in $\B$ such that $\isomrfclass{X} = \bigcup_{n=1}^\infty F_n$.
\begin{claim*}
For every $\mu\in\B$ with $\isomtrclass{X_\mu}\subseteq \bigcup_{n=1}^\infty F_n$ there exist $\gamma\in \T$ and $m\in\Nat$ such that we have $\mathbb{M}^{\gamma'}_\mu\cap F_m\neq\emptyset$ for every $\gamma'\in \T$ with $\gamma'\supseteq\gamma$.
\end{claim*}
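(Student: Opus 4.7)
I would argue by contradiction. Suppose $\mu \in \B$ satisfies the hypothesis $\isomtrclass{X_\mu} \subseteq \bigcup_{n} F_n$ but no pair $(\gamma, m)$ works; equivalently, for every $\gamma \in \T$ and every $m \in \Nat$ there is some $\gamma' \in \T$ extending $\gamma$ with $\mathbb{M}^{\gamma'}_\mu \cap F_m = \emptyset$. The plan is to build, by a back-and-forth, an increasing sequence $\gamma_0 \subseteq \gamma_1 \subseteq \cdots$ in $\T$ whose union $\gamma$ is a bijection $\Nat \to \Nat$ and such that $\mathbb{M}^{\gamma_n}_\mu \cap F_n = \emptyset$ for every $n \geq 1$. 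The norm $\nu$ on $V$ defined by $\nu(\sum a_i e_i) := \mu(\sum a_i e_{\gamma(i)})$ will then contradict the hypothesis: it is isometric to $X_\mu$, so lies in $\isomtrclass{X_\mu} \subseteq \bigcup_n F_n$, while membership in $\mathbb{M}^{\gamma_m}_\mu$ for every $m$ forces it to avoid every $F_m$.

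\textbf{Key monotonicity.} The whole construction rests on the observation that if $\gamma' \in \T$ extends $\gamma$, then $\mathbb{M}^{\gamma'}_\mu \subseteq \mathbb{M}^{\gamma}_\mu$; indeed, setting the last $|\gamma'|-|\gamma|$ coefficients $a_i$ to $0$ in the defining equality for $\mathbb{M}^{\gamma'}_\mu$ recovers the defining equality for $\mathbb{M}^\gamma_\mu$. In particular, any emptiness condition $\mathbb{M}^\gamma_\mu \cap F_m = \emptyset$ is preserved under further extension of $\gamma$, which frees the construction to alternate two kinds of steps without interference.

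\textbf{Construction and limit.} Set $\gamma_0 := \emptyset$. Given $\gamma_n$, first apply the negation of the claim to the pair $(\gamma_n, n+1)$ to find some $\tilde\gamma \in \T$ extending $\gamma_n$ with $\mathbb{M}^{\tilde\gamma}_\mu \cap F_{n+1} = \emptyset$. Then extend $\tilde\gamma$ further to $\gamma_{n+1}$ by appending finitely many fresh entries so that $|\gamma_{n+1}| \geq n+1$ and $\{1,\ldots,n+1\} \subseteq \operatorname{rng}(\gamma_{n+1})$; by monotonicity $\mathbb{M}^{\gamma_{n+1}}_\mu \cap F_{n+1}$ remains empty. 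The union $\gamma := \bigcup_n \gamma_n$ is then a bijection $\Nat \to \Nat$. The associated $\nu$ is a well-defined element of $\B$ (since $(e_{\gamma(i)})_i$ is a reordering of the basis of $V$, linear independence transfers from $\mu$), and the map $e_i \mapsto e_{\gamma(i)}$ extends to a surjective linear isometry $X_\nu \to X_\mu$, giving $\nu \in \isomtrclass{X_\mu}$. But $\nu \in \mathbb{M}^{\gamma_n}_\mu$ for every $n$, and for each $m$ the emptiness condition $\mathbb{M}^{\gamma_m}_\mu \cap F_m = \emptyset$ forces $\nu \notin F_m$---contradiction.

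\textbf{Main difficulty.} The only delicate point is to reconcile two competing requirements---avoiding each $F_m$ and achieving bijectivity of the limit---within a single back-and-forth. The monotonicity of $\mathbb{M}^{\gamma}_\mu$ under extensions of $\gamma$ is exactly what makes them compatible: once emptiness is secured at some stage, the subsequent cosmetic extensions needed for bookkeeping cannot damage it. Once this is arranged, checking that $\nu$ lies in $\B$ and induces a genuine isometry $X_\nu \equiv X_\mu$ is routine, resting only on the fact that $\gamma$ is a permutation of $\Nat$.
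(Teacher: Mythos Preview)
Your proof is correct and follows essentially the same approach as the paper's: argue by contradiction, recursively build an increasing chain $\gamma_0\subseteq\gamma_1\subseteq\cdots$ in $\T$ so that $\mathbb{M}^{\gamma_n}_\mu\cap F_n=\emptyset$ while forcing the union to be a bijection $\Nat\to\Nat$, and then observe that the permuted norm $\nu$ lies in $\isomtrclass{X_\mu}$ yet in no $F_n$. Your explicit isolation of the monotonicity $\mathbb{M}^{\gamma'}_\mu\subseteq\mathbb{M}^{\gamma}_\mu$ for $\gamma'\supseteq\gamma$ makes the bookkeeping step cleaner than in the paper, which uses this fact implicitly when appending a single entry at each stage.
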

\begin{proof}[Proof of the claim]
Suppose the statement is not true. In particular, it does not hold for $\gamma = \emptyset$ and $m=1$. That is, there is some $\gamma'_1\in \T$ so that $\mathbb{M}^{\gamma_1'}_\mu \cap F_1=\emptyset$. If $1\in\operatorname{rng}(\gamma'_1)$, we set $\gamma_1=\gamma'_1$. Otherwise, we set $\gamma_1={\gamma'_1}^{\frown} (1)$.

In the next step, we use that the statement is not true for $\gamma_1$ and $m=2$ to obtain $\gamma'_2\in \T$, $\gamma'_2\supseteq \gamma_1$ so that $\mathbb{M}^{\gamma'_2}_\mu \cap F_2=\emptyset$. If $2\in\operatorname{rng}(\gamma'_2)$, we set $\gamma_2=\gamma'_2$. Otherwise, we set $\gamma_2={\gamma'_2}^\frown{} (2)$.

We continue analogously. At the end of the recursion, we obtain a bijection $\pi:\Nat\to\Nat$ such that $\pi\supseteq \gamma_n$ for every $n\in\Nat$ and $\mathbb{M}^{\gamma_n}_\mu\cap F_n=\emptyset$ for every $n\in\Nat$. Consider $\mu_0\in\B$ given as 
\[
\mu_0\Big(\sum_{i=1}^k a_i e_i\Big):=\mu\Big(\sum_{i=1}^k a_i e_{\pi(i)}\Big),\quad k\in\Nat,\; (a_i)_{i=1}^k\in \Rat^k.
\]
Then the linear mapping given by $e_i\mapsto e_{\pi(i)}$, $i\in\Nat$, witnesses that $X_{\mu_0}\equiv X_\mu$ and $\mu_0\in \mathbb{M}^{\gamma_n}_\mu$ for every $n\in\Nat$. Thus, $\mu_0\notin \bigcup_{n=1}^\infty F_n$ which is in contradiction with $\mu_0\in\isomtrclass{X_\mu}\subseteq  \bigcup_{n=1}^\infty F_n$.
\end{proof}
Since $X\not\simeq \ell_2$, by the celebrated solution to the homogeneous subspace problem following from the results of Komorowski and Tomczak-Jaegermann (\cite{KoTJ95}) and of Gowers (\cite{G02}), it must contain an infinite-dimensional closed subspace $Y\subseteq X$ that is not isomorphic to $X$. Let $I\subseteq \Nat$ be an infinite subset and $\{x_n\}_{n\in\Nat}$ a sequence of linearly independent vectors in $X$ so that

\begin{enumerate}
    \item $\overline{\Span}\{x_n\}_{n\in\Nat}=X$, $\overline{\Span}\{x_n\colon n\in I\}=Y$, and
    \item\label{it:sums} for every finite set $F\subseteq \Nat$ we have \[\overline{\Span}\{x_n\}_{n\in I \cup F} = \overline{\Span}\{x_n\}_{n\in F\cap I}\oplus \overline{\Span}\{x_n\}_{n\in F\setminus I}\oplus \overline{\Span}\{x_n\}_{n\in I \setminus F}.\]
\end{enumerate}
Such a choice is possible e.g. by finding a Markushevich basis $\{x_n\}_{n\in I}$ on $Y$ (see \cite[Theorem 1.22]{HMVZ}), extending it to a Markushevich basis $\{x_n\}_{n\in \Nat}$ on $X$ (see \cite[Theorem 1.45]{HMVZ}) and then observing that for any $J\subseteq \Nat$ we have that $\{x_n\}_{n\in J}$ is a fundamental biorthogonal system of $\overline{\Span}\{x_n\}_{n\in J}$ and so \eqref{it:sums} holds (see \cite[Fact 1.5]{HMVZ}).

We define $\mu\in\B$ as
\[
\mu\Big(\sum_{i=1}^k a_i e_i\Big):=\Big\|\sum_{i=1}^k a_i x_i\Big\|_X,\quad k\in\Nat,\;(a_i)_{i=1}^k\in\Rat^k.
\]
Then $\isomtrclass{X_\mu}=\isomtrclass{X}\subseteq \bigcup_{n=1}^\infty F_n$ and so, by the claim above, there exist $\gamma\in \T$ and $m\in\Nat$ with $\mathbb{M}^{\gamma'}_\mu\cap F_m\neq\emptyset$ for every $\gamma'\in \T$ with $\gamma'\supseteq \gamma$. Consider now the space $Z:=(\overline{\Span}\{e_i\colon i\in I\cup \operatorname{rng}(\gamma)\},\mu)\subseteq X_\mu$. Fix some $\widetilde I\subseteq I$ such that $|I\setminus\widetilde I|=|\operatorname{rng}(\gamma)\setminus I|$ and such that $(I\setminus\widetilde I)\cap\operatorname{rng}(\gamma)=\emptyset$. Define $\widetilde Z:=(\overline{\Span}\{e_i\colon i\in \widetilde I\cup \operatorname{rng}(\gamma)\},\mu)$. Then, using \eqref{it:sums}, we have 
\[
\widetilde{Z} \simeq \overline{\Span}\{x_n\}_{n\in \widetilde{I}}\oplus \Rea^{| \operatorname{rng}(\gamma)\setminus\widetilde{I}|}\simeq \overline{\Span}\{x_n\}_{n\in \widetilde{I}}\oplus \Rea^{| I\setminus\widetilde{I}|}\simeq Y,
\]
and so $\widetilde Z\not\simeq X$. Let $\varphi:\Nat\to \operatorname{rng}(\gamma)\cup \widetilde I$ be a bijection with $\varphi\supseteq\gamma$. We define $\nu\in\B$ by
\[
\nu\Big(\sum_{i=1}^k a_i e_i\Big):=\mu\Big(\sum_{i=1}^k a_i e_{\varphi(i)}\Big),\quad k\in\Nat,\;(a_i)_{i=1}^k\in\Rat^k.
\]
Clearly, $X_\nu\equiv \widetilde Z\not\simeq X$.

We \emph{claim} that $\nu\in F_m$. This will be in contradiction with the fact that $F_m\subseteq \isomrfclass{X}$. Since $F_m$ is closed, it suffices to check that each basic open neighborhood of $\nu$ intersects $F_m$. Pick $v_1,\ldots, v_l\in V$ and $\varepsilon>0$. We need to find $\mu'\in F_m$ so that $|\mu'(v_j)-\nu(v_j)|<\varepsilon$ for every $j\leq l$. 

Let $L\in\Nat$, $L\geq |\gamma|$, be such that $v_1,\ldots,v_l\in\Span\{e_i\colon i\leq L\}$. Since $\varphi|_{\{1,\ldots,L\}}\supseteq \gamma$, we may pick $\mu'\in \mathbb{M}^{\varphi|_{\{1,\ldots,L\}}}_\mu\cap F_m$. Then
\[
\mu'\Big(\sum_{i=1}^L a_i e_i\Big) = \mu\Big(\sum_{i=1}^L a_i e_{\varphi(i)}\Big) = \nu\Big(\sum_{i=1}^L a_i e_i\Big),\quad (a_i)_{i=1}^L\in\Rat^L.
\]
In particular, $\mu'(v_j)=\nu(v_j)$, $j\leq l$, as desired.
\end{proof}
In the remainder of this section we head towards the proof of Theorem~\ref{thm:pavingDetermined2}.
  Following \cite{JLS96}, we say that an increasing sequence $E_1\subseteq E_2\subseteq\ldots$ of finite-dimensional subspaces of a separable Banach space $X$ whose union is dense is a \emph{paving of $X$}. A separable Banach space $X$ is \emph{determined by its pavings} if whenever $Y$ is a Banach space for which there are pavings $\{E_n\}_{n=1}^\infty$ of $X$ and $\{F_n\}_{n=1}^\infty$ of $Y$ with $\sup_{n\in\Nat}d_{BM}(E_n,F_n)<\infty$, then $Y$ is isomorphic to $X$. We refer the reader to \cite{JLS96} for details and examples.

We start with a theorem that is interesting on its own.

\begin{theorem}\label{thm:paving}
Let $X$ be a separable infinite-dimensional Banach space, $\{E_n\}_{n=1}^\infty$ a paving of $X$ and $\lambda\geq 1$. Then the set
\[\begin{split}
\mathcal{Z}:=\big\{\mu\in\PP\colon & \textnormal{for every $\varepsilon>0$ there is a paving $\{F_k\}_{k=1}^\infty$ of $X_\mu$ and an increasing } \\
& \textnormal{sequence $(n_k)_{k=1}^\infty\in \Nat^\Nat$ with $\sup _{k\in\Nat}d_{BM}(F_k,E_{n_k})\leq \lambda+\varepsilon$}\big\}
\end{split}\]
is a $G_\delta$-set in $\PP$.
\end{theorem}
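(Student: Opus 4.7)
The plan is to express $\mathcal Z$ as a countable intersection of open sets in $\PP$ by replacing the ``there exists a paving'' quantifier with a countable family of finitary existence conditions. For $\ell,K\in\Nat$, define $B_{\ell,K}\subseteq\PP$ to be the set of $\mu$ for which there exist nested finite sets $W_1\subseteq\cdots\subseteq W_K$ in $V$ whose images in $X_\mu$ are linearly independent, integers $n_1<\cdots<n_K$, and, for every $k\le K$ and every $j\le k$, a vector $w_j^{(k)}\in\qeSpan(W_k)$, such that $d_{BM}([W_k]_\mu,E_{n_k})<\lambda+1/\ell$ for all $k\le K$ and $\mu(e_j-w_j^{(k)})<1/k$ whenever $j\le k\le K$. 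The admissible tuples range over a countable set; for each fixed tuple the conditions are open in $\PP$, since linear independence of $W_k$ in $X_\mu$ is open by Lemma~\ref{lem:infiniteDimIsGDelta}, the strict Banach--Mazur bound is open because an isomorphism $E_{n_k}\to[W_k]_\mu$ can be parametrized by the entries of its matrix on a fixed basis and $\|T\|\cdot\|T^{-1}\|$ is jointly continuous in those entries and in $\mu$, and $\mu(e_j-w_j^{(k)})<1/k$ is subbasic open. Hence each $B_{\ell,K}$ is open, and so $\bigcap_{\ell,K}B_{\ell,K}$ is a $G_\delta$-set in $\PP$.

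It remains to verify $\mathcal Z=\bigcap_{\ell,K}B_{\ell,K}$. The inclusion $\mathcal Z\subseteq\bigcap_{\ell,K}B_{\ell,K}$ is a routine approximation: given $\mu\in\mathcal Z$, apply the definition with $\varepsilon=1/(2\ell)$ to obtain a paving $\{F_k\}$ and $(n_k)$; pass to a subsequence $(F_{k_j})_j$ (still a paving of $X_\mu$) with $\dist([e_i]_\mu,F_{k_j})<1/(2j)$ whenever $i\le j$; inductively build nested rational $W_j\subseteq V$ with $[W_j]_\mu$ sufficiently close to $F_{k_j}$ in Banach--Mazur distance to preserve $d_{BM}([W_j]_\mu,E_{n_{k_j}})<\lambda+1/\ell$; and for each $k\le K$ and $j\le k$, obtain $w_j^{(k)}\in\qeSpan(W_k)$ as a rational approximation of the nearest point of $F_{k_k}$ to $[e_j]_\mu$, yielding $\mu(e_j-w_j^{(k)})<1/k$.

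The main obstacle is the reverse inclusion, which requires reconstructing a coherent infinite paving from the independently chosen finite witnesses. Fix $\ell$ and consider the tree $T$ whose level-$K$ nodes are the tuples $(W_1,\dots,W_K,n_1,\dots,n_K)$ realizing $\mu\in B_{\ell,K}$ (with the $w_j^{(k)}$ existentially quantified) and which admit an extension to a level-$K'$ node for \emph{every} $K'\ge K$. The crucial observation is that the condition at level $k$ involves only $W_1,\dots,W_k$, so the restriction of any valid level-$K'$ witness to any smaller length $K<K'$ is itself a valid level-$K$ witness; consequently $T$ is closed under prefixes, and the empty node belongs to $T$ because $\mu\in B_{\ell,K'}$ for every $K'$. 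The tree $T$ is pruned: for an extendable level-$K$ node $\sigma$, each $K'\ge K+1$ provides an extension $\tau_{K'}$ of $\sigma$, and its level-$(K+1)$ prefix $\tau_{K'}|_{K+1}$ is a child of $\sigma$; since $\sigma$ has only countably many children, the pigeonhole principle produces a single child $\sigma'$ equal to $\tau_{K'}|_{K+1}$ for arbitrarily large $K'$, and then for any $K''\ge K+1$ picking $K'\ge K''$ with $\tau_{K'}|_{K+1}=\sigma'$ and restricting $\tau_{K'}$ to length $K''$ yields a valid level-$K''$ witness extending $\sigma'$, so $\sigma'$ is extendable. By dependent choice, $T$ has an infinite branch $(W_j,n_j)_{j\in\Nat}$. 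The Banach--Mazur bound $d_{BM}([W_j]_\mu,E_{n_j})<\lambda+1/\ell$ is inherited from the finite prefixes. Density of $\bigcup_j[W_j]_\mu$ in $X_\mu$ follows because for every $j_0\in\Nat$ and every $\delta>0$, choosing any $k\ge\max(j_0,1/\delta)$ and applying the condition at level $k$ produces $w_{j_0}^{(k)}\in\qeSpan(W_k)$ with $\mu(e_{j_0}-w_{j_0}^{(k)})<1/k<\delta$. Hence $F_j:=[W_j]_\mu$ is a paving of $X_\mu$ with $\sup_j d_{BM}(F_j,E_{n_j})\le\lambda+1/\ell$, and since $\ell$ was arbitrary, $\mu\in\mathcal Z$.
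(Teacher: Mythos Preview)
Your argument is correct and takes a genuinely different route from the paper's. The paper encodes $\mathcal Z$ via the family of sets
\[
P(\vec v,m)=\big\{\mu:\ \text{if }\mu\text{ is a norm on }\Span\vec v,\text{ then some }\vec w\in S_{\vec v}\text{ and some }n\text{ satisfy }(\mu,\vec w)\approximates[E_n]{\sqrt{\lambda+1/m}}\big\},
\]
each of which is only $G_\delta$ (a union of a closed and an open set), and shows $\mathcal Z=\bigcap_{m}\bigcap_{\vec v}P(\vec v,m)$. Because the condition is universal in $\vec v$, the inclusion $\mathcal G\subseteq\mathcal Z$ is immediate: one builds the paving one step at a time, always invoking the hypothesis for the current $\vec v$. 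Conversely you parametrize directly by finite \emph{chains} $W_1\subseteq\cdots\subseteq W_K$, which makes each $B_{\ell,K}$ genuinely open, but then the inclusion $\bigcap B_{\ell,K}\subseteq\mathcal Z$ requires a K\H{o}nig-type compactness argument on a countably branching tree of witnesses; your pigeonhole-and-restriction step (``infinitely often hit by prefixes $\Rightarrow$ extendable to every level'') is exactly what is needed to replace finite branching. The trade-off is that the paper pays for simpler construction of the paving with slightly more delicate building blocks (the ``if $\mu$ is a norm'' clause), while you pay for cleaner open building blocks with a tree argument. Both yield the same $G_\delta$ bound; the paper's formulation is marginally shorter and avoids invoking dependent choice, whereas yours is closer in spirit to the statement being proved and makes the openness of the basic sets transparent.
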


In the proof we will need the following observation, which is well-known and easy to prove. We use the formulation from \cite[Lemma 4.3]{CDDK1}.

\begin{lemma}\label{lem:approx2}
Given a basis $\b_E = \{e_1,\ldots,e_n\}$ of a finite-dimensional Banach space $E$, there is $C>0$ and a function $\phi_2^{\b_E}:[0,C)\to[0,\infty)$ continuous at zero with $\phi_2^{\b_E}(0)=0$ such that whenever $X$ is a Banach space with $E\subseteq X$ and $\{x_i\setsep i\leq n\}\subseteq X$ are such that $\|x_i-e_i\|<\varepsilon$, $i\leq n$, for some $\varepsilon<C$, then the linear operator $T:E\to X$ given by $T(e_i):=x_i$ is a $(1+\phi_2^{\b_E}(\varepsilon))$-isomorphism and $\|T-Id_E\|\leq \phi_2^{\b_E}(\varepsilon)$.
\end{lemma}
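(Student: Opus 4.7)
The plan is to exploit that on the finite-dimensional space $E$ the coordinate functionals of the basis $\b_E$ are bounded, giving uniform control on the coefficient of each $e_i$ in terms of the norm of a vector. Everything then follows by a direct triangle-inequality perturbation computation, with $\phi_2^{\b_E}$ chosen in closed form.

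More concretely, I would first let $e_1^*,\ldots,e_n^*\in E^*$ denote the biorthogonal functionals to $e_1,\ldots,e_n$ and set $M:=\sum_{i=1}^n \|e_i^*\|$. Then for every $v=\sum_{i=1}^n a_i e_i\in E$ we have $|a_i|\leq \|e_i^*\|\cdot\|v\|$, so the operator $T:E\to X$ with $T(e_i)=x_i$ satisfies
\[
\|Tv-v\|=\Bigl\|\sum_{i=1}^n a_i(x_i-e_i)\Bigr\|\leq \varepsilon\sum_{i=1}^n |a_i|\leq \varepsilon M\|v\|,
\]
which immediately yields the bound $\|T-Id_E\|\leq \varepsilon M$.

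Next I would set $C:=1/M$ and define $\phi_2^{\b_E}(\varepsilon):=\varepsilon M/(1-\varepsilon M)$ for $\varepsilon\in[0,C)$; this function is continuous at $0$ with $\phi_2^{\b_E}(0)=0$, and it dominates $\varepsilon M$, so the estimate $\|T-Id_E\|\leq\phi_2^{\b_E}(\varepsilon)$ holds. The isomorphism bounds follow from
\[
(1-\varepsilon M)\|v\|\leq \|Tv\|\leq (1+\varepsilon M)\|v\|,
\]
because a short algebraic check shows $1+\varepsilon M\leq 1+\phi_2^{\b_E}(\varepsilon)$ and $(1-\varepsilon M)^{-1}\leq 1+\phi_2^{\b_E}(\varepsilon)$, so $(1+\phi_2^{\b_E}(\varepsilon))^{-1}\|v\|\leq \|Tv\|\leq (1+\phi_2^{\b_E}(\varepsilon))\|v\|$ for all $v\in E$.

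There is no real obstacle here; the only thing to verify carefully is that the constants $C$ and $M$ depend only on the basis $\b_E$ (not on the ambient space $X$), which is immediate because the coordinate functionals live in $E^*$ and the definition of $T$ only uses the norm of $X$ after applying the coefficients extracted inside $E$. Thus the same $\phi_2^{\b_E}$ works uniformly for every choice of superspace $X\supseteq E$ and every tuple $(x_i)$ within $\varepsilon$ of $(e_i)$.
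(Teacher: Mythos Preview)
Your proof is correct and is the standard perturbation argument one would expect. Note that the paper itself does not supply a proof of this lemma: it is quoted as a well-known observation from the companion paper \cite[Lemma 4.3]{CDDK1}, so there is nothing to compare against here beyond saying that your argument is precisely the kind of elementary computation the authors had in mind.
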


\begin{proof}[Proof of Theorem~\ref{thm:paving}]
For each $E_n$, $n\in\Nat$, we fix its basis (which will be used only in order to know what $Z\approximates[E_n]{K}$ means for a finite-dimensional space $Z$ and $K>1$). For each finite tuple $\vec v$ of elements from $V$ we set $S_{\vec v}$ to be the set of all finite tuples $\vec w$, $\Rat$-linearly independent in $V$, such that each element of $\vec v$ (considered as an element of $c_{00}$) lies in $\Span\vec w$. For $m\in\Nat$ and a finite tuple $\vec v$ of elements from $V$ we set
\[\begin{split}
P(\vec v,m):=\big\{\mu\in\PP\colon & \textnormal{if $\mu$ restricted to }\Span\{v_1,\ldots,v_{|\vec v|}\}\subseteq c_{00}\textnormal{ is a norm, then }\\
& \textnormal{there exist $\vec w\in S_{\vec v}$ and $ n\in\Nat$ such that }\\
& \textnormal{$\mu$ restricted to }\Span\{w_1,\ldots,w_{|\vec w|}\}\subseteq c_{00}\textnormal{ is a norm and }\\
& (((\Span\{w_1,\ldots,w_{|\vec w|}\},\mu),\vec w)\approximates[E_{n}]{\sqrt{\lambda + \frac{1}{m}}})\big\}.
\end{split}\]
Then, using the observation that $\{\mu\in\PP\colon \mu\textnormal{ restricted to }\Span\{v_1,\ldots,v_{|\vec v|}\}\subseteq c_{00}\textnormal{ is a norm}\}$ is open due to Lemma~\ref{lem:infiniteDimIsGDelta}, $P(\vec v,m)$ is the union of a closed and an open set, so it is a $G_\delta$-set.

Denote by $LI$ the set of all finite tuples $\vec v = (v_1,\ldots,v_{|\vec v|})$ of elements from $V$ which are linearly independent in $c_{00}$. We now set
\[
\mathcal{G}:=\bigcap_{m\in\Nat} \bigcap_{\vec v\in LI} P(\vec v,m),
\]
which is clearly a $G_{\delta}$-set. We shall prove that $\mathcal{G}=\mathcal{Z}$.

If $\mu\in\mathcal{G}$, it is clear that for every $m$, we can recursively build an increasing sequence $\{F_k\}_{k=1}^\infty$ of finite-dimensional subspaces whose union is dense in $X_\mu$ such that we have $d_{BM}(F_k,E_{n_k})\leq \lambda + \frac{1}{m}$ for every $k\in\Nat$. It follows that $\mu\in \mathcal{Z}$.

On the other hand, pick $\mu\in\mathcal{Z}$. In what follows for $x\in c_{00}$ we denote by $[x]\in X_\mu$ the equivalence class corresponding to $x$. Pick some $m\in\Nat$ and an $n$-tuple $\vec v\in LI$ such that $\mu$ restricted to $\Span\{v_1,\ldots,v_n\}\subseteq c_{00}$ is a norm, so $\{v_1,\ldots,v_n\}$ is a basis of $\Span\{v_1,\ldots,v_n\}$. Pick $\lambda'\in(\lambda,\lambda + \tfrac{1}{m})$ and $\delta>1$ with $\delta\lambda'<\lambda + \frac{1}{m}$. Since $\mu\in\mathcal{Z}$, there is an increasing sequence $\{F_k\}_{k=1}^\infty$ of finite-dimensional subspaces whose union is dense in $X_\mu$ such that $\sup_{k\in\Nat}d_{BM}(F_k,E_{n_k})\leq \lambda'$. By \cite[Section 17, Theorem 6]{LaceyBook}, we can find a finite dimensional subspace $\Span\{[v_1],\ldots,[v_n]\}\subseteq Y\subseteq X_\mu$ and $k\in\Nat$ such that $d_{BM}(Y,F_k)\leq \delta$ so $d_{BM}(Y,E_{n_k})\leq \delta\lambda'$. Select $y_{n+1},\ldots, y_{\dim Y}\in Y$ such that $\mathfrak{b} = \{[v_1],\ldots, [v_n], y_{n+1},\ldots, y_{\dim Y}\}$ is a basis of $Y$. Let $\phi_2^{\mathfrak{b}}$ be the function from Lemma~\ref{lem:approx2} and let $\eta>0$ be such that $\delta\lambda'(1+\phi_2^{\mathfrak{b}}(\eta))^2<\lambda+\frac{1}{m}$. Further, for every $n+1\leq i\leq \dim Y$ pick $v_i\in V$ with $\|[v_i]-y_i\|_{X_\mu}<\eta$.
Then $(\mu,[v_1],\ldots,[v_{\dim Y}])\approximates[Y]{1+\phi_2^{\mathfrak{b}}(\eta)}$ so $\mu$ restricted to $\Span\{v_1,\ldots,v_{\dim Y}\}\subseteq c_{00}$ is a norm and $\Span\{[v_1],\ldots,[v_{\dim Y}]\}\subseteq X_\mu$ is isometric to $(\Span\{v_1,\ldots,v_{\dim Y}\},\mu)$. Since $d_{BM}((\Span\{v_1,\ldots,v_{\dim Y}\},\mu),E_{n_k})<\delta\lambda'\cdot(1+\phi_2^{\mathfrak{b}}(\eta))^2<\lambda + \frac{1}{m}$, there exists a surjective isomorphism $T:E_{n_k}\to (\Span\{v_1,\ldots,v_{\dim Y}\},\mu)$  with $\max\{\|T\|,\|T^{-1}\|\}<\sqrt{\lambda + \frac{1}{m}}$. By Lemma~\ref{lem:approx2}, we may without loss of generality assume that $w_i:=T(e_i)\in V$ for every $i\leq \dim Y$. Then $\mu$ restricted to $\Span\{w_1,\ldots,w_{\dim Y}\}\subseteq c_{00}$ is a norm, $\vec w\in S_{\vec v}$ and $((\mu,\vec w)\approximates[E_{n_k} ]{\sqrt{\lambda + \frac{1}{m}}})$.
\end{proof}

\begin{proof}[Proof of Theorem~\ref{thm:pavingDetermined2}]
Pick a paving $\{E_n\}_{n=1}^\infty$ of $X$. It is easy to see that for every $\mu\in\PP_\infty$ the Banach space $X_\mu$ is isomorphic to $X$ if and only if $\mu$ belongs to the set $\mathcal{Z}$ from Theorem~\ref{thm:paving} for some $\lambda\geq 1$. The ``In particular'' part follows since $\PP_\infty$ is a $G_\delta$-set in $\PP$. For admissible topologies, the result follows by applying \cite[Theorem 3.3]{CDDK1}.
\end{proof}

\section{Spaces with \texorpdfstring{$G_\delta$}{Gdelta} isometry classes}\label{sect:Gdelta}% \texorpdfstring ... aby zmizel warning v TeXu
In this section, we investigate Banach spaces whose isometry classes are $G_\delta$-sets, or even $G_\delta$-complete sets. The main results here are Theorems~\ref{thm:gurariiTypicalInPGeneralized}, \ref{thm:classOfLp} and \ref{thm:scriptLp} which imply the first and the last part of Theorem~\ref{thm:Intro2}.

Besides $\ell_2$, whose isometry class is actually closed, we have already mentioned in Theorem~\ref{thm:gurariiTypicalInP} that the isometry class of the Gurari\u{\i} space is a $G_\delta$-set in $\PP_\infty$ and $\B$. We start the section with some basic corollaries of that result; in particular, that the isometry class of $\mathbb{G}$ is even a $G_\delta$-complete set.
\medskip

Since for any separable infinite-dimensional Banach space $X$ we obviously have $\isomtrclass[\B]{X} = \isomtrclass[\PP_\infty]{X}\cap \B$, it is sufficient to formulate  our positive results in the coding of $\PP_\infty$ and negative results in the coding of $\B$.

\begin{lemma}\label{lem:uniqueGDelta}
Let $X$, $Y$ be separable infinite-dimensional Banach spaces such that $X$ is finitely representable in $Y$ and $Y$ is finitely representable in $X$. If $\isomtrclass{X}$ is a $G_\delta$-set in $\B$ and $X\not\equiv Y$, then 
\begin{enumerate}[(i)]
    \item $\isomtrclass{Y}$ is not a $G_\delta$-set in $\B$.
    \item $\isomtrclass{X}$ is a $G_\delta$-complete set in $\B$.
\end{enumerate}
\end{lemma}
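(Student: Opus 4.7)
The plan is to apply Proposition~\ref{prop:closureOfIsometryClasses} together with the Baire category theorem on the common closure of the two isometry classes.

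First I would observe that by Proposition~\ref{prop:closureOfIsometryClasses} we have
\[
\overline{\isomtrclass[\B]{X}}\cap\B = \{\nu\in\B\setsep X_\nu \text{ is finitely representable in } X\},
\]
and similarly for $Y$. Since $X$ is finitely representable in $Y$ and $Y$ is finitely representable in $X$, a space $X_\nu$ is finitely representable in $X$ if and only if it is finitely representable in $Y$. Hence the two closures agree; call this closed set $C$. Then $C$ is a Polish space (being closed in the Polish space $\B$), and both $\isomtrclass{X}$ and $\isomtrclass{Y}$ are dense in $C$. Since $X\not\equiv Y$, these two dense subsets are disjoint.

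For part (i), suppose toward a contradiction that $\isomtrclass{Y}$ is a $G_\delta$-set in $\B$. Then so is its intersection with $C$, so both $\isomtrclass{X}$ and $\isomtrclass{Y}$ are dense $G_\delta$-subsets of the Polish space $C$. By the Baire category theorem their intersection is dense in $C$, and in particular non-empty, contradicting disjointness.

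For part (ii), $\isomtrclass{X}$ is $G_\delta$ by hypothesis, so by Lemma~\ref{lem:F_sigma-hard} it suffices to show $\isomtrclass{X}$ is not $F_\sigma$ in $\B$ (this will give $G_\delta$-hardness, hence $G_\delta$-completeness). Suppose toward a contradiction that $\isomtrclass{X}=\bigcup_{n}F_n$ with each $F_n$ closed in $\B$. Then $\isomtrclass{X}\cap C=\bigcup_n (F_n\cap C)$ is a dense $F_\sigma$ in the Polish space $C$, so by the Baire category theorem some $F_n\cap C$ has non-empty interior in $C$. This interior is a non-empty open subset of $C$ contained in $\isomtrclass{X}$; since $\isomtrclass{Y}$ is dense in $C$, it must meet this open set, contradicting $\isomtrclass{X}\cap\isomtrclass{Y}=\emptyset$.

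There is no real obstacle here beyond correctly identifying the Polish ambient space $C$ given by the common closure; once that is in place, two applications of the Baire category theorem dispose of both (i) and (ii), with Lemma~\ref{lem:F_sigma-hard} used to upgrade ``not $F_\sigma$'' to ``$G_\delta$-hard.''
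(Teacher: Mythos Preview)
Your proof is correct and follows essentially the same approach as the paper: identify the common closure $N$ (your $C$) via Proposition~\ref{prop:closureOfIsometryClasses}, then use the Baire category theorem for both parts, invoking Lemma~\ref{lem:F_sigma-hard} to upgrade ``not $F_\sigma$'' to $G_\delta$-hardness. The only cosmetic difference is that for (ii) the paper phrases the Baire step as ``$\isomtrclass{X}$ has empty interior in $N$ yet is comeager, hence cannot be $F_\sigma$,'' whereas you argue directly that some $F_n\cap C$ would have non-empty interior; these are equivalent formulations of the same argument.
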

\begin{proof}
Recall that by Proposition~\ref{prop:closureOfIsometryClasses} we have that both $\isomtrclass{X}$ and $\isomtrclass{Y}$ are dense in
\[
N:=\{\nu\in\B\setsep X_\nu\text{ is finitely representable in }X\}.
\]
(i): If both $\isomtrclass{X}$ and $\isomtrclass{Y}$ are $G_\delta$-sets, by the Baire theorem we have that $\isomtrclass{X}\cap \isomtrclass{Y}$ is comeager in $N$. Thus, the intersection cannot be an empty set and we obtain $X\equiv Y$.\\
(ii): Since $X\not\equiv Y$, we have that $\isomtrclass{X}$ has empty interior in $N$. But it is also comeager in $N$, and so it cannot be $F_\sigma$. Therefore it is a $G_\delta$-complete set by Lemma~\ref{lem:F_sigma-hard}.
\end{proof}

\begin{corollary}\label{cor:gurariiComplexity}
$\mathbb{G}$ is the only isometrically universal separable Banach space whose isometry class is a $G_\delta$-set in $\B$. The same holds if we replace $\B$ by $\PP_\infty$.

Moreover, $\isomtrclass{\mathbb{G}}$ is a $G_\delta$-complete set in both $\PP_\infty$ and $\B$.
\end{corollary}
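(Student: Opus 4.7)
The plan is to deduce this essentially as a direct application of Lemma~\ref{lem:uniqueGDelta} (and its obvious analogue for $\PP_\infty$, whose proof copies word for word since Proposition~\ref{prop:closureOfIsometryClasses} and Theorem~\ref{thm:gurariiTypicalInP} are both available in $\PP_\infty$). The input we already have is that $\isomtrclass{\mathbb{G}}$ is a (dense) $G_\delta$-set by Theorem~\ref{thm:gurariiTypicalInP}, so everything reduces to verifying the mutual finite representability hypothesis of Lemma~\ref{lem:uniqueGDelta} with $X=\mathbb{G}$ and $Y$ any isometrically universal separable Banach space.

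First I would observe that if $Y$ is an isometrically universal separable Banach space then $\mathbb{G}$ embeds isometrically into $Y$ (since $\mathbb{G}$ is separable) and conversely $Y$ embeds isometrically into $\mathbb{G}$ (since $\mathbb{G}$ is itself isometrically universal among separable Banach spaces, which is a well-known property of the Gurari\u{\i} space). In particular each of $\mathbb{G}$ and $Y$ is finitely representable in the other. Therefore, assuming $Y\not\equiv\mathbb{G}$, Lemma~\ref{lem:uniqueGDelta}(i) applied with $X=\mathbb{G}$ gives that $\isomtrclass{Y}$ is not a $G_\delta$-set in $\B$; the same conclusion holds in $\PP_\infty$ by the analogous lemma. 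This proves the first part of the statement.

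For the ``Moreover'' part, I only need to exhibit a single isometrically universal separable Banach space not isometric to $\mathbb{G}$ and invoke Lemma~\ref{lem:uniqueGDelta}(ii) (resp.\ its $\PP_\infty$ analogue). A standard choice is $Y=C(2^\omega)$ (or $C[0,1]$), which is isometrically universal by the Banach--Mazur theorem but is not isometric to $\mathbb{G}$ (for example, because $\mathbb{G}$ fails to be a $C(K)$-space, as it has smooth points by its almost-homogeneity). With $X=\mathbb{G}$ and such a $Y$, the hypotheses of Lemma~\ref{lem:uniqueGDelta} are met and part (ii) yields that $\isomtrclass{\mathbb{G}}$ is $G_\delta$-complete in $\B$, and the analogous argument in $\PP_\infty$ gives the completeness there.

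I do not foresee any real obstacle here; the only point that requires any care is making sure the $\PP_\infty$ analogue of Lemma~\ref{lem:uniqueGDelta} is valid, and this is a verbatim repetition of the given proof using that $\isomtrclass[\PP_\infty]{\mathbb{G}}$ is a dense $G_\delta$-set in $\PP_\infty$ (Theorem~\ref{thm:gurariiTypicalInP}), that $\isomtrclass[\PP_\infty]{\mathbb{G}}$ and $\isomtrclass[\PP_\infty]{Y}$ are both dense in the relative Polish $G_\delta$-subset of $\PP_\infty$ consisting of pseudonorms whose Banach space is finitely representable in $\mathbb{G}$ (Proposition~\ref{prop:closureOfIsometryClasses}), and finally Lemma~\ref{lem:F_sigma-hard} to upgrade non-$F_\sigma$-ness to $G_\delta$-hardness.
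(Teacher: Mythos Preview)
Your proposal is correct and follows essentially the same approach as the paper: both deduce everything from Lemma~\ref{lem:uniqueGDelta} with $X=\mathbb{G}$ and $Y$ any other isometrically universal separable space, using Theorem~\ref{thm:gurariiTypicalInP} for the $G_\delta$ input. The paper handles the passage to $\PP_\infty$ more briskly (noting that non-$G_\delta$ in $\B$ forces non-$G_\delta$ in $\PP_\infty$ since $\isomtrclass[\B]{Y}=\isomtrclass[\PP_\infty]{Y}\cap\B$, and that $G_\delta$-hardness transfers upward along the inclusion $\B\hookrightarrow\PP_\infty$), and for the ``Moreover'' part it also mentions $c_0$ as a lighter alternative witness, but these are minor stylistic differences rather than a different argument.
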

\begin{proof}
By Theorem~\ref{thm:gurariiTypicalInP}, the isometry class of $\mathbb{G}$ is a $G_\delta$-set. Let $X$ be an isometrically universal separable Banach space. By Lemma~\ref{lem:uniqueGDelta}, if $X\not\equiv \mathbb{G}$ then $\isomtrclass[\B]{X}$ is not a $G_\delta$-set in $\B$ (and so neither in $\PP_\infty$).

For the ``moreover'' part we use Lemma~\ref{lem:uniqueGDelta} and any Banach space $X$ not isometric to $\mathbb{G}$ that is finitely representable in $\mathbb{G}$ and vice versa (e.g. any other universal separable Banach space or $c_0$).
\end{proof}

The same proof gives us actually the following strengthening. Let us recall that by Maurey–Pisier theorem, see \cite{MP76} or \cite[Theorem 12.3.14]{albiacKniha}, a Banach space $X$ has no nontrivial cotype if and only if $\ell_\infty$ is finitely-representable in $X$ (and yet equivalently, $c_0$ is finitely-representable in $X$).
\begin{theorem}\label{thm:gurariiTypicalInPGeneralized}
$\mathbb{G}$ is the only separable Banach space with no nontrivial cotype whose isometry class is a $G_\delta$-set in $\B$. The same holds if we replace $\B$ by $\PP_\infty$.
\end{theorem}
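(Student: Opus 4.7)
The plan is to mimic the proof of Corollary~\ref{cor:gurariiComplexity}: given any separable Banach space $X$ with no nontrivial cotype and $X\not\equiv\mathbb{G}$, I will verify the hypotheses of Lemma~\ref{lem:uniqueGDelta} for the pair $(\mathbb{G},X)$ and conclude that $\isomtrclass{X}$ fails to be $G_\delta$, while $\isomtrclass{\mathbb{G}}$ is $G_\delta$ by Theorem~\ref{thm:gurariiTypicalInP}. So the only thing to check is mutual finite representability between $\mathbb{G}$ and $X$.

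First, $X$ is finitely representable in $\mathbb{G}$: since $\mathbb{G}$ is isometrically universal among separable Banach spaces, every separable $X$ (and in particular every finite-dimensional subspace of $X$) embeds isometrically into $\mathbb{G}$. So this direction is immediate.

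For the converse direction I would argue that $\mathbb{G}$ is finitely representable in $c_0$, and that the Maurey--Pisier theorem (recalled right before the statement) gives us that $c_0$ is finitely representable in any space with no nontrivial cotype. For $\mathbb{G}$ finitely representable in $c_0$, note that for each $n$ the space $\ell_\infty^n$ is isometric to $\Span\{e_1,\dots,e_n\}\subseteq c_0$, and every finite-dimensional Banach space $(1+\varepsilon)$-embeds into some $\ell_\infty^N$ (e.g.\ by choosing an $\varepsilon$-net in the unit sphere of the dual and evaluating). Hence every finite-dimensional subspace of $\mathbb{G}$ $(1+\varepsilon)$-embeds into $c_0$, which is exactly finite representability of $\mathbb{G}$ in $c_0$. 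Chaining the two finite representabilities gives that $\mathbb{G}$ is finitely representable in $X$.

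With both directions of finite representability established, Lemma~\ref{lem:uniqueGDelta}(i) applied to the pair $(\mathbb{G},X)$ (taking $X:=\mathbb{G}$ and $Y:=X$ in the notation of the lemma) yields that $\isomtrclass{X}$ is not a $G_\delta$-set in $\B$, completing the proof. The equivalence of the statements in $\B$ and in $\PP_\infty$ follows from the already noted relation $\isomtrclass[\B]{X}=\isomtrclass[\PP_\infty]{X}\cap\B$, as indicated in the paragraph preceding Lemma~\ref{lem:uniqueGDelta}. I do not expect a genuine obstacle here; the only point that warrants a line or two of explicit argument is the finite representability of $\mathbb{G}$ in $c_0$, since the other ingredients (Lemma~\ref{lem:uniqueGDelta}, Theorem~\ref{thm:gurariiTypicalInP}, Maurey--Pisier) are quoted off the shelf.
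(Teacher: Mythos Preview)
Your proposal is correct and follows essentially the same route as the paper's proof: both arguments use that every separable Banach space is finitely representable in $c_0$ (you spell this out via the $\ell_\infty^n$-embedding, the paper simply asserts it), combine this with Maurey--Pisier to get mutual finite representability, and then invoke Lemma~\ref{lem:uniqueGDelta} together with Theorem~\ref{thm:gurariiTypicalInP}. The only cosmetic difference is that the paper phrases the conclusion as ``at most one space with $c_0$ finitely representable in it can have a $G_\delta$ isometry class,'' whereas you fix the pair $(\mathbb{G},X)$ explicitly.
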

\begin{proof}
Any separable Banach space is finitely representable in $c_0$, so by Lemma~\ref{lem:uniqueGDelta} there is at most one Banach space $X$ such that $c_0$ is finitely representable in $X$ and $\isomtrclass{X}$ is a $G_\delta$-set. By Theorem~\ref{thm:gurariiTypicalInP}, $\isomtrclass{\mathbb{G}}$ is a $G_\delta$-set.
\end{proof}

\subsection{$L_p$-spaces}
Let us recall that a Banach space $X$ is said to be an $\mathcal{L}_{p,\lambda}$-space (with $1\leq p\leq\infty$ and $\lambda\geq 1$) if every finite-dimensional subspace of $X$ is contained in another finite-dimensional
subspace of $X$ whose Banach-Mazur distance $d_{BM}$ to the corresponding $\ell_p^n$ is at most $\lambda$.
A space $X$ is said to be an $\mathcal{L}_{p}$-space, resp. $\mathcal{L}_{p,\lambda+}$-space, if it is an $\mathcal{L}_{p,\lambda'}$-space for some for some $\lambda'\geq 1$, resp. for every $\lambda'>\lambda$.

The main result of this subsection is the following.

\begin{theorem}\label{thm:classOfLp}
For every $1\leq p<\infty$, $p\neq 2$, the isometry class of $L_p[0,1]$ is a $G_\delta$-complete set in $\B$ and $\PP_\infty$.

Moreover, $L_p[0,1]$ is the only separable $\mathcal{L}_{p,1+}$-space whose isometry class is a $G_\delta$-set in $\B$, and the same holds if we replace $\B$ by $\PP_\infty$.
\end{theorem}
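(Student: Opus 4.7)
The plan is to leverage two results: the $G_\delta$-ness of the class of separable $\mathcal{L}_{p,1+}$-spaces (provided by the forthcoming Theorem~\ref{thm:scriptLp}) and a Gurari\u{\i}-style characterization of $L_p[0,1]$ within this class, combined with Lemma~\ref{lem:uniqueGDelta}. Since $\isomtrclass[\B]{L_p[0,1]} = \isomtrclass[\PP_\infty]{L_p[0,1]}\cap\B$, I would work in $\PP_\infty$ throughout.

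First I would show that $\isomtrclass{L_p[0,1]}$ is $G_\delta$ in $\PP_\infty$. By Theorem~\ref{thm:scriptLp}, the set $\mathcal{M}:=\{\mu\in\PP_\infty\setsep X_\mu\text{ is }\mathcal{L}_{p,1+}\}$ is a $G_\delta$-set, so it suffices to realize $\isomtrclass{L_p[0,1]}$ as a $G_\delta$-subset of $\mathcal M$. The key ingredient is a theorem of Lusky asserting that, for $p\neq 2$, $L_p[0,1]$ is the unique separable $\mathcal L_{p,1+}$-space with a Gurari\u{\i}-type approximate extension property, where the target spaces $B$ in Definition~\ref{def:Gurarii} are restricted to finite-dimensional subspaces of $\ell_p^n$ (rather than arbitrary finite-dimensional Banach spaces). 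This condition can be transcribed into a $G_\delta$ formula in $\mu$ by the same bookkeeping underlying Theorem~\ref{thm:gurariiTypicalInP}: quantify over finite tuples of vectors from $V$ specifying the source subspace $A$ and the embedding $g:A\to B$, quantify over a countable dense family of rational $\ell_p^n$-configurations for $B$, and express the existence of the $(1+\varepsilon)$-embedding $f:B\to X_\mu$ with $\|f\circ g - \Id_A\|\leq\varepsilon$ as an open existential condition on tuples in $V$, via Lemma~\ref{lem:infiniteDimIsGDelta}. Density of $\isomtrclass{L_p[0,1]}$ in $\mathcal M$ is automatic from Proposition~\ref{prop:closureOfIsometryClasses}: $L_p[0,1]$ is finitely representable in every $\mathcal{L}_{p,1+}$-space, since such spaces contain $(1+\varepsilon)$-copies of every $\ell_p^n$, and $L_p[0,1]$ is finitely representable in $\ell_p$ via dyadic step-function approximations.

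The $G_\delta$-completeness and the ``moreover'' part then fall out of Lemma~\ref{lem:uniqueGDelta}. For completeness, take $Y:=\ell_p$: both $L_p[0,1]$ and $\ell_p$ are separable $\mathcal L_{p,1+}$-spaces, each finitely representable in the other (disjointly supported copies of $\ell_p^n$ sit isometrically inside $L_p[0,1]$, while $L_p[0,1]$ is finitely representable in $\ell_p$ by the dyadic approximation), but for $p\neq 2$ they are not isometric (e.g., $L_p[0,1]$ contains an isometric copy of $\ell_2$ whereas $\ell_p$ does not). Lemma~\ref{lem:uniqueGDelta}(ii) then says $\isomtrclass{L_p[0,1]}$ is not $F_\sigma$, which together with the $G_\delta$-ness gives $G_\delta$-completeness. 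For the uniqueness statement, if $X$ is any separable $\mathcal L_{p,1+}$-space then $X$ is finitely representable in $\ell_p$ and hence in $L_p[0,1]$, while $L_p[0,1]$ is finitely representable in $X$ because $X$ contains almost-isometric $\ell_p^n$'s; so Lemma~\ref{lem:uniqueGDelta}(i) forbids any $\mathcal L_{p,1+}$-space other than $L_p[0,1]$ from having a $G_\delta$ isometry class.

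The main obstacle is the first step. Once the correct Lusky-type characterization of $L_p[0,1]$ within $\mathcal{L}_{p,1+}$ is in hand, its $G_\delta$ formulation follows from an adaptation of the Gurari\u{\i} scheme, but the technicalities are more delicate than in the Gurari\u{\i} case: the quantifier on test spaces $B$ is restricted to the $\mathcal{L}_{p,1+}$ realm, so one must choose a countable dense family of admissible $(A,g,B)$-data carefully enough that the approximate extension condition yields a genuine $G_\delta$-set rather than only a $G_{\delta\sigma}$-set. Everything else is a clean application of the uniqueness Lemma~\ref{lem:uniqueGDelta}.
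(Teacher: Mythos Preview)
Your outline is essentially correct, but it takes a genuinely different route from the paper for the crucial step of showing $\isomtrclass{L_p[0,1]}$ is $G_\delta$.

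The paper does not invoke a Lusky/Fra\"{\i}ss\'e extension property. Instead it proves directly (Theorem~\ref{thm:characterizationLp}) that among separable infinite-dimensional $\mathcal L_{p,1+}$-spaces, $L_p[0,1]$ is singled out by a very simple \emph{splitting} condition: every unit vector $x$ can, for each $\varepsilon,\delta>0$, be approximated by $x_1+x_2$ with $(x_1,x_2)\approximates[\ell_p^2]{1+\varepsilon}$. This condition is visibly a countable intersection of open sets (via Lemma~\ref{lem:infiniteDimIsGDelta} and Fact~\ref{fact:openCondition}), and intersecting with the $G_\delta$-set of $\mathcal L_{p,1+}$-spaces gives the result with almost no bookkeeping. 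Your Fra\"{\i}ss\'e-style approach would also work---indeed the paper itself remarks in Section~\ref{sect:question} that $L_p[0,1]$ is a Fra\"{\i}ss\'e limit---and it has the conceptual merit of running parallel to the Gurari\u{\i} argument, but it requires quoting or reproving a nontrivial external characterization and, as you yourself flag, carefully arranging the countable family of $(A,g,B)$-data so that the resulting formula is genuinely $G_\delta$. The paper's splitting criterion sidesteps all of that.

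For the $G_\delta$-completeness and uniqueness, your use of Lemma~\ref{lem:uniqueGDelta} with $Y=\ell_p$ matches the paper exactly (the paper cites \cite[Proposition 12.1.8]{albiacKniha} for mutual finite representability and Theorem~\ref{thm:scriptLpCharacterization} for the uniqueness among $\mathcal L_{p,1+}$-spaces). One small caution: your argument that $L_p[0,1]\not\equiv\ell_p$ via ``$L_p[0,1]$ contains an isometric copy of $\ell_2$'' is fine for $1\le p\le 2$ but does not work as stated for $p>2$; it is cleaner simply to invoke the Lindenstrauss--Pe{\l}czy\'nski classification (Theorem~\ref{thm:scriptLpCharacterization}), which lists them as distinct isometry types.
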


\begin{remark}\label{rem:scriptLpAgain}
It is easy to see (e.g. using \cite[Section 17, Theorem 6]{LaceyBook}) that for every $p\in[1,\infty]$ and $\lambda\geq 1$ we have that a separable infinite-dimensional Banach space $Y$ is an $\mathcal{L}_{p,\lambda+}$-space if and only if for every $\varepsilon>0$ there is an increasing sequence $\{F_k\}_{k=1}^\infty$ of finite-dimensional subspaces whose union is dense in $Y$ such that $d_{BM}(\ell_p^{\dim F_k},F_k)\leq \lambda+\varepsilon$ for every $k\in\Nat$.
\end{remark}

The next theorem is a crucial step in proving Theorem~\ref{thm:classOfLp}. However, it is also of independent interest and its corollary improves the related result from \cite{GS18}.

\begin{theorem}\label{thm:scriptLp}
Let $1\leq p\leq \infty$ and $\lambda\geq 1$. The class of separable infinite-dimensional $\mathcal{L}_{p,\lambda+}$-spaces is a $G_\delta$-set in $\PP$. In particular, the class of separable infinite-dimensional $\mathcal{L}_{p,\lambda+}$-spaces is a $G_\delta$-set in $\PP_\infty$.
\end{theorem}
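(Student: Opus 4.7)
The plan is to apply the just-established Theorem~\ref{thm:paving} to a well-chosen reference space. Namely, set $X := \ell_p$ when $1 \leq p < \infty$, and $X := c_0$ when $p = \infty$; in both cases let $E_n := \Span\{e_1,\ldots,e_n\}$, so that $\{E_n\}_{n=1}^\infty$ is a paving of $X$ with each $E_n$ linearly isometric to $\ell_p^n$. Feeding this $X$, paving, and the given $\lambda$ into Theorem~\ref{thm:paving} produces a $G_\delta$-subset $\mathcal{Z}\subseteq \PP$ consisting of those $\mu$ for which, for every $\varepsilon > 0$, some paving $\{F_k\}_{k=1}^\infty$ of $X_\mu$ and some increasing sequence $(n_k)$ satisfy $\sup_k d_{BM}(F_k, E_{n_k}) \leq \lambda+\varepsilon$.

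The next step is to identify $\mathcal{Z}\cap\PP_\infty$ with the class of separable infinite-dimensional $\mathcal{L}_{p,\lambda+}$-spaces, using the characterization of Remark~\ref{rem:scriptLpAgain}. For the forward direction, if $\mu\in\mathcal{Z}\cap\PP_\infty$, fix $\varepsilon>0$ and obtain a paving $\{F_k\}$ and an increasing $(n_k)$ with $d_{BM}(F_k, E_{n_k})\leq \lambda+\varepsilon$; since Banach--Mazur distance is defined only between spaces of matching dimension, $\dim F_k = n_k$, which together with $E_{n_k}\equiv \ell_p^{n_k}$ gives $d_{BM}(F_k,\ell_p^{\dim F_k})\leq\lambda+\varepsilon$, so Remark~\ref{rem:scriptLpAgain} applies. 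For the converse, given an infinite-dimensional $\mathcal{L}_{p,\lambda+}$-space $X_\mu$, take a paving $\{F_k\}$ with $d_{BM}(F_k,\ell_p^{\dim F_k})\leq\lambda+\varepsilon$; since $X_\mu$ is infinite-dimensional and $\bigcup F_k$ is dense, we may pass to a subsequence on which $\dim F_k$ is strictly increasing, and then $n_k := \dim F_k$ witnesses membership in $\mathcal{Z}$.

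Finally, since $\PP_\infty$ is a $G_\delta$-subset of $\PP$ (implicit in Lemma~\ref{lem:infiniteDimIsGDelta} and recalled in the preliminaries), the set $\mathcal{Z}\cap \PP_\infty$ is a $G_\delta$-subset of $\PP$, which is the main statement; the ``in particular'' clause then follows immediately by taking the subspace topology. I do not expect a genuine obstacle here: all the heavy machinery was built into Theorem~\ref{thm:paving}, which was the point of extracting that statement. The only non-trivial check is the clean translation between the two formulations of the $\mathcal{L}_{p,\lambda+}$ condition, and this reduces to the routine matching between an abstract paving of $X_\mu$ and the canonical paving $\{E_n\}$ of $\ell_p$ (or $c_0$) outlined above.
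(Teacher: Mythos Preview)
Your proposal is correct and follows exactly the same route as the paper, which simply states that the result is an immediate consequence of Theorem~\ref{thm:paving} and Remark~\ref{rem:scriptLpAgain}; you have merely (and correctly) unpacked what ``immediate'' means here. One minor remark: the intersection with $\PP_\infty$ is in fact redundant, since any $\mu\in\mathcal{Z}$ already forces $\dim F_k=n_k\to\infty$ and hence $\mu\in\PP_\infty$, but this does no harm to the argument.
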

\begin{proof}
This is an immediate consequence of Theorem~\ref{thm:paving}, Remark~\ref{rem:scriptLpAgain}.
\end{proof}
Note that for $1\leq p\leq \infty$ the class of $\mathcal{L}_p$-spaces is obtained as the union $\bigcup_{\lambda\geq 1} \mathcal{L}_{p,\lambda+}$. It is shown in \cite[Proposition 4.5]{GS18} that the class of separable $\mathcal{L}_p$-spaces is a $\boldsymbol{\Sigma}_4^0$-set in an admissible topology.  It is immediate from Theorem~\ref{thm:scriptLp} (and using \cite[Theorem 3.3]{CDDK1}) that we have a better estimate.
\begin{corollary}\label{cor:scriptLp}
For every $1\leq p\leq \infty$ the class of separable $\mathcal{L}_p$-spaces is a $G_{\delta \sigma}$-set in $\PP$ and any admissible topology.
\end{corollary}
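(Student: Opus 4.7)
The plan is to deduce the corollary as a direct countable union argument from Theorem~\ref{thm:scriptLp}, with an extra step to transfer the result to admissible topologies via \cite[Theorem 3.3]{CDDK1}. First I would observe that any $\mathcal{L}_{p,\lambda_0}$-space is automatically an $\mathcal{L}_{p,n+}$-space for every integer $n\geq \lambda_0$ (since for $\lambda'>n\geq\lambda_0$ one trivially has $\mathcal{L}_{p,\lambda_0}\subseteq \mathcal{L}_{p,\lambda'}$), and conversely every $\mathcal{L}_{p,n+}$-space is an $\mathcal{L}_{p,n+1}$-space. Hence the class of separable infinite-dimensional $\mathcal{L}_p$-spaces coincides with the \emph{countable} union $\bigcup_{n\in\Nat}\mathcal{L}_{p,n+}$.

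Next I would invoke Theorem~\ref{thm:scriptLp}, which gives that each set $\mathcal{L}_{p,n+}\cap\PP_\infty$ is $G_\delta$ in $\PP$. Their countable union is therefore $G_{\delta\sigma}$ in $\PP$. To cover the whole class of separable $\mathcal{L}_p$-spaces (which in principle includes the finite-dimensional ones present in $\PP$), I would note that every finite-dimensional Banach space is trivially an $\mathcal{L}_p$-space (it is contained in itself with finite Banach--Mazur distance to the corresponding $\ell_p^n$), and that the set $\PP\setminus\PP_\infty$ of pseudonorms with finite-dimensional quotient is $F_\sigma$ as the complement of the $G_\delta$-set $\PP_\infty$. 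Adding this $F_\sigma$-set to a $G_{\delta\sigma}$-set still yields a $G_{\delta\sigma}$-set in $\PP$.

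Finally, for an admissible topology $\tau$ on $SB$, I would use the continuous map $\Phi\colon (SB,\tau)\to\PP$ provided by \cite[Theorem 3.3]{CDDK1} which satisfies $F\equiv X_{\Phi(F)}$ for every $F\in SB$. Since being an $\mathcal{L}_p$-space is an isometric invariant, the class of separable $\mathcal{L}_p$-spaces in $(SB,\tau)$ is the $\Phi$-preimage of the corresponding class in $\PP$. Continuous preimages preserve the class $G_{\delta\sigma}$, so the result in $(SB,\tau)$ follows at once.

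I do not expect any genuine obstacle here, as the paper itself remarks that the corollary is ``immediate from Theorem~\ref{thm:scriptLp}''; the only minor points that need a line each are the reduction of the otherwise uncountable union $\bigcup_{\lambda\geq 1}$ to the countable one $\bigcup_{n\in\Nat}$ and the bookkeeping for finite-dimensional spaces in the ambient $\PP$.
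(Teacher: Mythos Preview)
Your proposal is correct and follows essentially the same route as the paper: split into the finite-dimensional part $\PP\setminus\PP_\infty$ (an $F_\sigma$-set) and the infinite-dimensional part, apply Theorem~\ref{thm:scriptLp} to the latter, and transfer to admissible topologies via the continuous map of \cite[Theorem 3.3]{CDDK1}. You have simply made explicit two points the paper leaves implicit---the reduction of $\bigcup_{\lambda\geq 1}$ to a countable union and the isometric-invariance argument for the pullback---but there is no substantive difference.
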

\begin{proof}
Note that any finite-dimensional space is an $\mathcal{L}_p$-space, so the class of separable $\mathcal{L}_p$-spaces may be written as the union of $\PP\setminus\PP_\infty$ (which is an $F_\sigma$-set by \cite[Corollary 2.5]{CDDK1}) and $\{\mu\in\PP_\infty\colon X_\mu\text{ is an $\mathcal{L}_p$-space}\}$ (which is $G_{\delta \sigma}$-set by Theorem~\ref{thm:scriptLp} above).
\end{proof}
Let us recall the following classical result.
\begin{theorem}[Lindenstrauss, Pe\l czy\'{n}ski]\label{thm:scriptLpCharacterization}
For every $1\leq p<\infty$ and a separable infinite-dimensional Banach space $X$ the following assertions are equivalent.
\begin{itemize}
    \item $X$ is an $\mathcal{L}_{p,1+}$-space.
    \item $X$ is isometric to a separable $L_p(\mu)$ space for some measure $\mu$.
    \item $X$ is isometric to one of the following spaces
\[
L_p[0,1],\quad L_p[0,1]\oplus_p \ell_p,\quad \ell_p,\quad L_p[0,1]\oplus_p \ell_p^n\; \text{ (for some }n\in\Nat).
\]
\end{itemize}
\end{theorem}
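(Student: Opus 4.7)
The plan is to establish the three-way equivalence by proving (iii)$\Rightarrow$(ii)$\Rightarrow$(i) and then closing the loop with (i)$\Rightarrow$(ii)$\Rightarrow$(iii). The last implication and the characterization (i)$\Leftrightarrow$(ii) are the classical Lindenstrauss--Pe\l czy\'nski theorem, so the right strategy is to cite it carefully rather than to reprove it from scratch.

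The direction (iii)$\Rightarrow$(ii) is formal: each space listed in (iii) is visibly of the form $L_p(\mu)$ for an appropriate measure $\mu$ (counting measure on a finite or countable set, Lebesgue measure on $[0,1]$, or a disjoint union of these). For (ii)$\Rightarrow$(i), I would argue directly from the definition: given a finite-dimensional subspace $E\subseteq L_p(\mu)$ and $\varepsilon>0$, approximate each element of a fixed basis of $E$ in $L_p$-norm by a simple function; since any finite collection of simple functions lies in the linear span of simple functions with pairwise disjoint supports, and such a span is isometric to $\ell_p^n$, one can find a finite-dimensional $F\supseteq E$ with $d_{BM}(F,\ell_p^{\dim F})\leq 1+\varepsilon$. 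Modifying this so that $F$ genuinely contains $E$ (rather than only almost-contains it) is the only delicate point, but it is handled by a standard perturbation lemma such as Lemma~\ref{lem:approx2}.

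For (ii)$\Rightarrow$(iii), I would invoke the classical structure theorem for separable $L_p$-spaces: decompose $\mu=\mu_a+\mu_c$ into its purely atomic and atomless parts. Then $L_p(\mu)\equiv L_p(\mu_a)\oplus_p L_p(\mu_c)$ with the atomic summand isometric to $\ell_p^n$ (if $\mu_a$ has $n$ atoms), to $\ell_p$ (if $\mu_a$ has countably many atoms), or trivial. The atomless summand, if nontrivial, is isometric to $L_p[0,1]$ by the isomorphism theorem for separable atomless measure algebras. Enumerating the possibilities and collapsing degenerate cases (e.g.\ finite atomic part absorbed into the $L_p[0,1]$ summand) gives exactly the four cases in (iii).

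The main obstacle is the implication (i)$\Rightarrow$(ii), which is the deep Lindenstrauss--Pe\l czy\'nski result that an $\mathcal{L}_{p,1+}$-space is isometrically an $L_p(\mu)$ space. I would cite it from the original paper of Lindenstrauss and Pe\l czy\'nski (and for an expository account, e.g.\ \cite{albiacKniha}); the strategy there is to exhaust $X$ by an increasing sequence of finite-dimensional subspaces $(E_n)$ with $d_{BM}(E_n,\ell_p^{\dim E_n})\to 1$, and then promote this approximate $\ell_p$-structure to an exact $L_p$-structure, either via an ultrapower and separable descent, or via a direct-limit/compactness argument using the rigidity of isometric embeddings between finite-dimensional $\ell_p$-spaces for $p\neq 2$ (and triviality for $p=2$). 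Since the paper only needs the statement as a black box to be combined with Theorem~\ref{thm:scriptLp}, a brief citation suffices here and the proof amounts to assembling the three easy directions above.
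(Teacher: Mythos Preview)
Your proposal is correct and matches the paper's approach: the paper's proof is literally two citations---\cite[Section 7, Corollaries 4 and 5]{liPe68} for (i)$\Leftrightarrow$(ii) and \cite[p.~137--138]{albiacKniha} for the classification (ii)$\Leftrightarrow$(iii)---so your plan of citing Lindenstrauss--Pe\l czy\'nski for the hard direction and sketching the routine structure-of-$L_p(\mu)$ argument for the rest is exactly in the same spirit, only with more detail filled in than the paper bothers to give.
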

\begin{proof}
By \cite[Section 7, Corollaries 4 and 5]{liPe68}, a separable Banach space is an $\mathcal{L}_{p,1+}$-space if and only if it is isometric to an $L_p(\mu)$ space for some measure $\mu$. Finally, note that every separable infinite-dimensional $L_p(\mu)$ space is isometric to one of the spaces mentioned above, see e.g. \cite[p. 137-138]{albiacKniha}.
\end{proof}

Recall that given a finite sequence $(z_n)_{n\in N}$ in a Banach space $Z$, the symbol $(z_n)\approximates[\ell_p^N]{K}$ means that $K^{-1}\left(\sum_{i\in N}|a_i|^p\right)^{1/p} < \|\sum_{i\in N} a_i z_i\| < K\left(\sum_{i\in N}|a_i|^p\right)^{1/p}$ for every $a\in c_{00}^N$. If $(z_n)$ is isometrically equivalent to the $\ell_p^N$ basis (that is, $(z_n)\approximates[\ell_p^N]{1+\varepsilon}$ for every $\varepsilon > 0$), we write $(z_n)\equiv \ell_p^N$.

\begin{theorem}\label{thm:characterizationLp}
Let $1\leq p<\infty$, $p\neq 2$, and let $X$ be a separable infinite-dimensional $\mathcal{L}_{p,1+}$ space. Then the following assertions are equivalent.
\begin{enumerate}[(i)]
    \item $X$ is isometric to $L_p[0,1]$.
    \item For every $x\in S_X$ the following condition is satisfied
    \[\forall N\in\Nat\;\exists x_1,\ldots,x_N\in X:\quad 
      (x_i)_{i=1}^N \equiv \ell_p^N \text{ and }N^{1/p}\cdot x = \sum_{i=1}^N x_i.\]
     \item For every $x\in S_X$ the following condition is satisfied
     \[\forall \varepsilon > 0\;\exists x_1,x_2\in X:\quad 
     (x_1,x_2)\approximates[\ell_p^2]{1+\varepsilon}  \text{ and }2^{1/p}\cdot x = x_1 + x_2.\]
     \item For every $x\in S_X$ the following condition is satisfied
     \[\forall \varepsilon > 0\;\forall \delta > 0\;\exists x_1,x_2\in X:\quad 
     (x_1,x_2)\approximates[\ell_p^2]{1+\varepsilon} \text{ and }\|2^{1/p}\cdot x - x_1 - x_2\| < \delta.\]
\end{enumerate}
\end{theorem}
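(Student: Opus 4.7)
The plan is to prove the cycle (i) $\Rightarrow$ (ii) $\Rightarrow$ (iii) $\Rightarrow$ (iv) $\Rightarrow$ (i). The first three arrows are easy; the last is the heart of the matter.

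For (i) $\Rightarrow$ (ii), given $x \in S_{L_p[0,1]}$ and $N \in \Nat$, I would use atomlessness of Lebesgue measure to partition $[0,1]$ into measurable sets $A_1, \dots, A_N$ with $\int_{A_i} |x|^p \, dt = 1/N$ for each $i$, and set $x_i := N^{1/p} x \chi_{A_i}$. Then $\sum_{i=1}^N x_i = N^{1/p} x$ and the $x_i$ have pairwise disjoint supports with $\|x_i\|_p = 1$, which forces $(x_i)_{i=1}^N \equiv \ell_p^N$. The implications (ii) $\Rightarrow$ (iii) $\Rightarrow$ (iv) are trivial: specialize (ii) to $N=2$, observe that an isometric $\ell_p^2$-equivalence is in particular a $(1+\varepsilon)$-equivalence for every $\varepsilon > 0$, and an exact sum trivially approximates itself.

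For (iv) $\Rightarrow$ (i), I would invoke Theorem~\ref{thm:scriptLpCharacterization}: since $X$ is a separable infinite-dimensional $\mathcal{L}_{p,1+}$-space, $X$ is isometric to one of $L_p[0,1]$, $\ell_p$, $L_p[0,1]\oplus_p \ell_p$, or $L_p[0,1]\oplus_p \ell_p^n$, so it suffices to show that (iv) fails in each of the three latter candidates. In each of these the natural $L_p(\mu)$-representation has an atom $a$ of some mass $\alpha > 0$, and I would take $x := \alpha^{-1/p} \chi_{\{a\}}$ as my witness unit vector, aiming to show that for it (iv) cannot hold.

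The main ingredient is a classical stability fact, valid precisely because $p \neq 2$: for every $\eta > 0$ there exists $\varepsilon > 0$ such that whenever $(z_1, z_2)$ in an arbitrary $L_p(\mu)$-space satisfies $(z_1,z_2) \approximates[\ell_p^2]{1+\varepsilon}$, there exist disjointly supported $w_1, w_2 \in L_p(\mu)$ with $\|z_i - w_i\|_p < \eta$ for $i=1,2$. The exact case ($\varepsilon=0 \Rightarrow \eta=0$) is the standard consequence of strict convexity of Clarkson's inequality, which characterizes $\|f+g\|_p^p = \|f\|_p^p+\|g\|_p^p$ by disjoint supports when $p\neq 2$; the approximate version follows by a compactness argument. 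Granting this lemma, if (iv) held for my atomic $x$, then picking $\varepsilon, \eta, \delta$ small I could find $z_1, z_2$ with $(z_1,z_2)\approximates[\ell_p^2]{1+\varepsilon}$ and $\|z_1+z_2-2^{1/p}x\|_p < \delta$, then pass to disjointly supported $w_1, w_2$ with $\|z_i - w_i\|_p < \eta$. Since $2^{1/p} x$ is supported on the single atom $a$, at most one of $w_1(a), w_2(a)$ is nonzero, say $w_2(a) = 0$. Disjointness plus the small bound on $\|w_1+w_2-2^{1/p}x\|_p$ outside $\{a\}$ then forces $\|w_2\|_p$ to be small, whereas $\ell_p^2$-equivalence of $(z_1, z_2)$ forces $\|w_2\|_p$ to be close to $1$, a contradiction.

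The main obstacle I anticipate is formulating and either proving directly or citing cleanly the quantitative stability lemma for approximate $\ell_p^2$-pairs in $L_p(\mu)$; once it is available, the closing contradiction is the short disjoint-support calculation sketched above, and the choice of constants $\varepsilon, \eta, \delta$ is routine.
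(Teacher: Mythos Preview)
Your proposal is correct, and the closing contradiction works exactly as you sketch. The route differs from the paper's in the hard direction. The paper first shows $(iv)\Rightarrow(iii)$ by a trivial perturbation (add half the error vector to each $x_i'$), and then proves $(iii)\Rightarrow(i)$ by exploiting the \emph{exact} equation $f+g=2^{1/p}\delta_\omega$: this forces $f=-g$ off the atom, after which a short calculation comparing $\|2f\|_p^p$ (for $p<2$) or $\|2g\|_p^p$ (for $p>2$) against $\|f\pm g\|_p^p$ via superadditivity of $t\mapsto t^p$ yields the contradiction directly, with no auxiliary lemma. Your approach instead collapses $(iv)\Rightarrow(i)$ into one step via the stability lemma, which is conceptually cleaner and more reusable, while the paper's argument is more self-contained.

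One caution on the lemma: ``compactness'' is not the right mechanism---you are ranging over all $L_p(\mu)$-spaces, and even in a fixed one the unit ball is not compact, so a sequential contradiction argument does not go through without heavier tools (ultraproducts and Kakutani's representation). The clean proof is direct and quantitative. Strictness of Clarkson's inequality upgrades, by homogeneity and a one-variable check on $[0,1]$, to a pointwise bound
\[
\big|\,|a+b|^p+|a-b|^p-2|a|^p-2|b|^p\,\big|\;\ge\;c(p)\,\min(|a|^p,|b|^p),
\]
valid for all $a,b\in\Rea$ when $p\neq 2$. Integrating gives $\int\min(|z_1|^p,|z_2|^p)\,d\mu\le c(p)^{-1}\big|\,\|z_1+z_2\|^p+\|z_1-z_2\|^p-2\|z_1\|^p-2\|z_2\|^p\,\big|$, and the right-hand side is $O(\varepsilon)$ when $(z_1,z_2)\approximates[\ell_p^2]{1+\varepsilon}$. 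The disjointly supported approximants are then simply $w_1=z_1\chi_{\{|z_1|\ge|z_2|\}}$ and $w_2=z_2\chi_{\{|z_1|<|z_2|\}}$, with $\|z_i-w_i\|_p^p\le\int\min(|z_1|^p,|z_2|^p)\,d\mu$. With this in hand your argument closes exactly as written.
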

\begin{proof}
$(i)\implies(ii)$: Pick $f\in S_{L_p[0,1]}$ and $N\in\Nat$. Then, using the continuity of the mapping $[0,1]\ni x\mapsto \int_0^x |f|$, we find $0=x_0<x_1<\ldots<x_N=1$ such that $\int_{x_{i-1}}^{x_i} |f|^p = \frac{1}{N} \int_0^1 |f|^p$ for every $i=1,\ldots,N$. We put $f_i:=N^{1/p}\cdot f\cdot \chi_{[x_{i-1},x_i]}$, $i=1,\ldots,N$. Then, since the supports of $f_i$ are disjoint and since $f_i$ are normalized, we have $(f_i)_{i=1}^N \equiv \ell_p^N$. Further, we obviously have $N^{1/p}\cdot f = \sum_{i=1}^N f_i$.

Obviously, we have $(ii)\implies (iii)$ and $(iii)\implies (iv)$.

$(iii)\implies (i)$: In order to get a contradiction, let us assume that $X$ is not isometric to $L_p[0,1]$ which, by Theorem~\ref{thm:scriptLpCharacterization}, implies that $X$ is isometric to $L_p(\mu)$, where $(\Omega,S,\mu)$ is a measure space for which there is $\omega\in \Omega$ with $\mu(\{\omega\}) = 1$. Fix $\varepsilon > 0$ small enough (to be specified later). Suppose to the contrary that there are $f,g\in L_p(\mu)$ such that $(f,g)\approximates[\ell_p^2]{1+\varepsilon}$ and $2^{\frac 1p}\cdot\delta_{\omega}=f+g$, where $\delta_{\omega}$ is the Dirac function supported by the point $\omega$. For $\mu$-a.e. $x\in\Omega\setminus\{\omega\}$, we have $f(x)+g(x)=0$, so we assume this holds for all $x\in\Omega\setminus\{\omega\}$. We without loss of generality assume that $f(\omega)\geq g(\omega)$.

We claim that both $f(\omega)$ and $g(\omega)$ are positive and $|f(\omega) - g(\omega)|^p<\frac{1}{2}$ if $\varepsilon>0$ is chosen sufficiently small. Indeed, we have
\[(1+\varepsilon)^p - \frac{1}{(1+\varepsilon)^p}\geq \Big|\|f\|_p^p - \|g\|_p^p\Big| = \Big||f(\omega)|^p - |g(\omega)|^p\Big|,\]
which implies $||f(\omega)| - |g(\omega)||<2^{-1/p}$ for sufficiently small $\varepsilon>0$. The claim follows since if both $f(\omega)$ and $g(\omega)$ were not positive we would have $2^{1/p} > 2^{-1/p} > ||f(\omega)| - |g(\omega)|| = |(f+g)(\omega)| = 2^{1/p}$, a contradiction.

First, let us handle the case when $1\leq p<2$. We have
\[\begin{split}
    \|2f\|_p^p & = \int_{\Omega\setminus\{\omega\}} |2f|^p \dmu + (2f(\omega))^p = \int_{\Omega\setminus\{\omega\}} |f-g|^p \dmu + (2f(\omega))^p\\
    & = \|f-g\|_p^p + (2f(\omega))^p - ((f-g)(\omega))^p\\
    & \geq  \|f-g\|_p^p + ((f+g)(\omega))^p = \|f-g\|_p^p + \|f+g\|_p^p,
\end{split}\]
where in the inequality we used superadditivity of the function $[0,\infty)\ni t\mapsto t^p$. Thus, $(f,g)\approximates[\ell_2^p]{1+\varepsilon}$ implies
\[
(1+\varepsilon)^p\geq \|f\|_p^p\geq \frac{\|f-g\|_p^p + \|f+g\|_p^p}{2^p}\geq \frac{4}{2^p(1+\varepsilon)^p};
\]
hence, if $1\leq p<2$ we get a contradiction for sufficiently small $\varepsilon>0$.

Finally, let us handle the case when $p>2$. Note that since $f(\omega)\geq g(\omega)\geq 0$ and $f(\omega) + g(\omega) = 2^{1/p}$, we have $g(\omega)\leq 2^{1/p-1}$. Further, we have
\[\begin{split}
\|2g\|_p^p & = \int_{\Omega\setminus\{\omega\}} |f-g|^p \dmu + (2g(\omega))^p \leq \|f-g\|_p^p + 2.
\end{split}\]
Thus, $(f,g)\approximates[\ell_2^p]{1+\varepsilon}$ implies
\[
\frac{1}{(1+\varepsilon)^p}\leq \|g\|_p^p\leq \frac{\|f-g\|_p^p + 2}{2^p}\leq \frac{2(1+\varepsilon)^p + 2}{2^p};
\]
hence, if $p>2$ we get a contradiction for sufficiently small $\varepsilon>0$.

$(iv)\implies (iii)$: Fix $x\in S_X$ and $\varepsilon > 0$. Pick $\delta>0$ small enough (to be specified later). Applying the condition $(iv)$ we obtain $x_1',x_2'\in X$ such that $(x_1',x_2')\approximates[\ell_p^2]{1+\tfrac{\varepsilon}{2}}$ and $\|2^{1/p}\cdot x - x_1' - x_2'\| < \delta$. Now set $x_1=x'_1+(2^{1/p}\cdot x-(x'_1+x'_2))/2$ and $x_2=x'_2+(2^{1/p}\cdot x-(x'_1+x'_2))/2$. If $\delta$ was chosen sufficiently small, we have $(x_1,x_2)\approximates[\ell_p^2]{1+\varepsilon}$ and clearly $2^{1/p}\cdot x=x_1+x_2$.
\end{proof}

Let us note the following easy observation. The proof is easy and so omitted.

\begin{fact}\label{fact:openCondition}
Let $v,w\in V$, $v\neq 0$ and $a,b\in\Rea$. Then the set  \[\left\{\mu\in\PP\setsep \mu(v)\neq 0\text{ and }\mu(a\cdot\frac{v}{\mu(v)} - w)<b\right\}\]
is open in $\PP$.
\end{fact}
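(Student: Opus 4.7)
The plan is to verify openness pointwise: given $\mu_0$ in the set, I will produce a finite intersection of subbasic open sets $U[\cdot, I]$ around $\mu_0$ that is contained in the set. The first condition, $\mu_0(v)\neq 0$, is preserved on a neighborhood simply by continuity of $\mu\mapsto \mu(v)$, so the real work is the inequality $\mu(a\cdot v/\mu(v) - w) < b$. The difficulty is that the vector $a\cdot v/\mu(v) - w$ depends on $\mu$ and typically lies in $c_{00}\setminus V$, so we cannot directly feed it into a subbasic set; the remedy is to approximate the (generally irrational) scalar $a/\mu_0(v)$ by a rational, yielding a vector in $V$ on which $\mu\mapsto\mu(\cdot)$ is continuous.

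In detail, set $t_0:=\mu_0(v)>0$ and $s_0:=\mu_0(a\cdot v/t_0 - w)<b$, fix $\delta>0$ with $s_0+\delta<b$, and choose $q\in\Rat$ so close to $a/t_0$ that $|q-a/t_0|\,t_0 < \delta/4$. By the triangle inequality and homogeneity of $\mu_0$,
\[
\mu_0(qv-w) \leq \mu_0(a\cdot v/t_0 - w) + |q-a/t_0|\,\mu_0(v) < s_0 + \delta/4,
\]
and moreover $qv-w\in V$. Now pick $\eta>0$ small so that $|q|\eta < \delta/4$, and consider the open set
\[
U := \{\mu\in\PP \setsep |\mu(v)-t_0|<\eta \text{ and } \mu(qv-w) < s_0 + \delta/2\},
\]
which is a finite intersection of subbasic open sets and contains $\mu_0$.

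For any $\mu\in U$ we have $\mu(v)>0$, and again by the triangle inequality and homogeneity
\[
\mu(a\cdot v/\mu(v) - w) \leq \mu(qv-w) + |a - q\mu(v)| \leq \mu(qv-w) + |a-qt_0| + |q|\eta,
\]
which is bounded by $(s_0+\delta/2)+\delta/4+\delta/4 = s_0+\delta < b$. Hence $U$ lies in the prescribed set, establishing openness. The only real ``obstacle'' is bookkeeping: one must juggle three small parameters ($\delta$, the quality of the rational approximation of $a/t_0$, and $\eta$) so that the nested estimates close. Conceptually the argument is just the triangle inequality combined with the continuity of $\mu\mapsto\mu(u)$ for $u\in V$.
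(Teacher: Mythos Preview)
Your argument is correct and is exactly the kind of elementary $\varepsilon$--$\delta$ computation the paper has in mind; the paper itself omits the proof as easy, so there is nothing to compare against. One tiny bookkeeping point: when you ``pick $\eta>0$ small so that $|q|\eta<\delta/4$'' you should also require $\eta<t_0$, so that the assertion ``for any $\mu\in U$ we have $\mu(v)>0$'' is actually justified (otherwise, e.g.\ when $q=0$, the single constraint $|q|\eta<\delta/4$ does not force $\eta$ to be small).
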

%\begin{proof}
%Let $\mu\in\PP$ be such that $\mu(v)>0$ and $\varepsilon:=b - \mu(a\cdot\frac{v}{\mu(v)} - w)$ is positive number. Pick $\eta\in(0,\mu(v))$ such that $\eta(1+2\cdot \frac{|a|+\eta}{\mu(v)-\eta})<\frac{\varepsilon}{4}$ and rational numbers $q$ and $r$ such that $|\mu(v)-q|<\eta$, $|a-r|<\eta$ and $\mu(\frac{rv}{q} - w)<\mu(a\cdot\frac{v}{\mu(v)} - w) + \frac{\varepsilon}{4}$. Put
%\[U:=\left\{\nu\in\PP\setsep |\mu(v)-\nu(v)|<\eta, |\mu(\frac{rv}{q} - w) - \nu(\frac{rv}{q} - w)|<\frac{\varepsilon}{4}\right\}.\]
%This is an open neighborhood of $\mu$ and for every $\nu\in U$ we have
%\[\begin{split}
%\nu(a\cdot\frac{v}{\nu(v)} - w) & \leq \nu(a\cdot\frac{v}{\nu(v)} - r\cdot \frac{v}{q}) + \frac{\varepsilon}{4} + \mu(a\cdot\frac{v}{\mu(v)} - w) + \frac{\varepsilon}{4}\\
%& = \left|a - \frac{r\nu(v)}{q}\right| + b - \frac{\varepsilon}{2}\leq  \eta + \left|\frac{r}{q}\right||q-\nu(v)| + b - \frac{\varepsilon}{2}\\ 
%& \leq \eta + 2\eta\frac{|a| + \eta}{\mu(v)-\eta} + b - \frac{\varepsilon}{2} < b - \frac{3\varepsilon}{4} < b.
%\end{split}\]
%\end{proof}

\begin{proof}[Proof of Theorem~\ref{thm:classOfLp}]
Let $\F$ be the set of those $\nu\in\PP_\infty$ for which $X_\nu$ is an $\mathcal{L}_{p,1+}$-space. By Theorem~\ref{thm:scriptLp}, $\F\subseteq \PP_\infty$ is a $G_\delta$-set. By Theorem~\ref{thm:characterizationLp}, using the obvious observation that condition $(iv)$ may be verified on a dense subset, we have
\[\isomtrclass[\PP_\infty]{L_p[0,1]} = \F\cap\bigcap_{v\in V}\bigcap_{n,k\in\Nat} U_{v,n,k},\]
where $U_{v,n,k}$ are open sets (using Fact~\ref{fact:openCondition} and Lemma~\ref{lem:infiniteDimIsGDelta}) defined as
\[U_{v,n,k}:=\Big\{\mu\in\PP_\infty\setsep \exists v_1,v_2\in V\setsep (v_1,v_2)\approximates[\ell_p^2]{1+\tfrac{1}{n}} \text{ and }\mu\big(2^{1/p}\cdot \frac{v}{\mu(v)} - v_1 - v_2\big) < \frac{1}{k}\Big\}.\]
Thus, $\isomtrclass[\PP_\infty]{L_p[0,1]}$ is a $G_\delta$-set.

On the other hand, since any $L_p(\mu)$ is finitely representable in $\ell_p$ and vice versa (see e.g. \cite[Proposition 12.1.8]{albiacKniha}), from Lemma~\ref{lem:uniqueGDelta} and Theorem~\ref{thm:scriptLpCharacterization} we obtain that there is at most one (up to isometry) $\mathcal{L}_{p,1+}$ space $X$ such that $\isomtrclass{X}$ is a $G_\delta$-set in $\B$ and that $\isomtrclass{L_p[0,1]}$ is a $G_\delta$-complete set.
\end{proof}

\section{Spaces with \texorpdfstring{$F_{\sigma \delta}$}{Fsigmadelta} isometry classes}\label{sect:Fsigmadelta}% \texorpdfstring ... aby zmizel warning v TeXu
In this section we focus on another classical Banach spaces, namely $\ell_p$ spaces, for $p\in[1,2)\cup(2,\infty)$, and $c_0$. The main result of this section is the following, which proves the second and the third part of Theorem~\ref{thm:Intro2}.
\begin{theorem}\label{thm:c0AndEllP}
The sets $\isomtrclass{c_0}$ and $\isomtrclass{\ell_p}$ (for $p\in[1,2)\cup(2,\infty)$) are $F_{\sigma \delta}$-complete sets in both $\PP_\infty$ and $\B$.
\end{theorem}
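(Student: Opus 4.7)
The plan is to establish the upper bound (that these sets are $F_{\sigma\delta}$) and the $F_{\sigma\delta}$-hardness separately, with parallel arguments for $\isomtrclass{\ell_p}$ and $\isomtrclass{c_0}$. I sketch the case of $\isomtrclass{\ell_p}$ with $p\in[1,2)\cup(2,\infty)$ in detail; $c_0$ will be handled by the same strategy applied to the structure theory of separable $\mathcal{L}_{\infty,1+}$-spaces.

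For the upper bound on $\isomtrclass[\PP_\infty]{\ell_p}$, the idea is to combine the $G_\delta$-condition $\mathcal{L}_{p,1+}$ (Theorem~\ref{thm:scriptLp}) with an $F_{\sigma\delta}$-condition that distinguishes $\ell_p$ from the other three isometry classes of separable infinite-dimensional $\mathcal{L}_{p,1+}$-spaces classified in Theorem~\ref{thm:scriptLpCharacterization}. Call $x\in S_X$ \emph{splittable} if condition (iv) of Theorem~\ref{thm:characterizationLp} holds. A short direct computation (essentially the one ruling out splittability in the $(iii)\Rightarrow(i)$ direction of Theorem~\ref{thm:characterizationLp}) shows that for $p\neq 2$ the canonical basis vectors of $\ell_p$ are non-splittable, and that $\ell_p$ is uniquely characterized among the four $\mathcal{L}_{p,1+}$-classes as the one whose non-splittable unit vectors densely span: in $L_p[0,1]$ every unit vector is splittable, and in $L_p[0,1]\oplus_p\ell_p$ or $L_p[0,1]\oplus_p\ell_p^n$ the non-splittable vectors live in a proper $\ell_p$- or $\ell_p^n$-summand.

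The complexity then unfolds as follows. For fixed $w\in V$, the statement ``$w$ represents a splittable vector in $X_\mu$'' becomes $\forall m,n\in\Nat\ \exists y_1,y_2\in V$ such that $(y_1,y_2)\approximates[\ell_p^2]{1+1/m}$ in $X_\mu$ (open in $\mu$ by Lemma~\ref{lem:infiniteDimIsGDelta}) and $\mu(2^{1/p}w-y_1-y_2)<1/n$ (open), so it is $G_\delta$ in $\mu$; its negation is $F_\sigma$. The density condition ``non-splittable vectors $1/k$-approximate every $v\in V$'' is
\[
\forall v\in V,\ \forall k\in\Nat,\ \exists m,\ w_1,\ldots,w_m\in V,\ r_1,\ldots,r_m\in\Rat:\ \mu\Big(v-\sum_i r_iw_i\Big)<\tfrac{1}{k}\ \text{and each }w_i\text{ non-splittable,}
\]
a $\forall\forall\exists$ of $F_\sigma$-conditions, hence $F_{\sigma\delta}$. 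Intersecting with the $G_\delta$-condition $\mathcal{L}_{p,1+}$ keeps us in $F_{\sigma\delta}$.

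For $F_{\sigma\delta}$-hardness I would continuously reduce a canonical $\Pi^0_3$-complete set, for instance $A=\{a\in\Nat^\Nat:a_n\to\infty\}$, into $\isomtrclass[\B]{\ell_p}$. Given $a$, I assign $\mu_a\in\B$ so that $X_{\mu_a}$ is isometric to an $\ell_p$-sum $(\bigoplus_n E_{a_n,n})_{\ell_p}$ of finite-dimensional blocks $E_{k,n}$ engineered so that $E_{k,n}\equiv\ell_p^{\dim E_{k,n}}$ exactly when $k$ exceeds an $n$-dependent threshold and is a strictly non-$\ell_p$ perturbation otherwise; then $X_{\mu_a}\equiv\ell_p$ iff every block is eventually an $\ell_p^{\dim}$, i.e.\ iff $a\in A$. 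Continuity of $a\mapsto\mu_a$ in the topology of $\B$ is routine, and the $c_0$-case is handled analogously using $c_0$-sums. The principal obstacle is the isometric rigidity of $\ell_p$: generic perturbations destroy the isometry class, so the blocks $E_{k,n}$ must be built with care (for instance by mixing $\ell_p^{\dim}$ with a small $\ell_q^{\dim}$-ingredient, $q\neq p$, whose coefficient vanishes precisely when $k$ exceeds the $n$-dependent threshold) so that the exact identity $X_{\mu_a}\equiv\ell_p$ reflects the $\Pi^0_3$-condition on $a$, and not merely its isomorphism class.
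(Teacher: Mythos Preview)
Your upper bound for $\isomtrclass{\ell_p}$ rests on the claim that, among separable infinite-dimensional $\mathcal{L}_{p,1+}$-spaces, $\ell_p$ is singled out by ``the non-splittable unit vectors densely span''. This characterization is false. Observe that the set of splittable unit vectors is \emph{closed} in $S_X$: if $x_n\to x$ and each $x_n$ is splittable, then for given $\varepsilon,\delta$ one picks $n$ with $2^{1/p}\|x-x_n\|<\delta/2$ and uses witnesses for $x_n$. Hence the non-splittable unit vectors form an \emph{open} subset of $S_X$, and any nonempty open subset of the unit sphere spans the whole space. In $L_p[0,1]\oplus_p\ell_p$ the vector $(0,e_1)$ is non-splittable (this is exactly the atom computation from Theorem~\ref{thm:characterizationLp}), so non-splittable vectors densely span there as well, and your condition does not separate $\ell_p$ from $L_p\oplus_p\ell_p$. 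The paper's fix is to demand that \emph{every} unit vector fail $N$-splittability for some $N$ depending on the vector (Proposition~\ref{prop:characterizationOflp}); the variable $N$ is essential, and the resulting formula is still $F_{\sigma\delta}$.

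Your hardness argument also has a structural problem. With finite-dimensional blocks $E_{a_n,n}$, a \emph{single} block that is not isometric to $\ell_p^{\dim}$ already prevents $(\bigoplus_n E_{a_n,n})_{\ell_p}$ from being isometric to $\ell_p$, because every $1$-complemented finite-dimensional subspace of $\ell_p$ is an $\ell_p^m$. Thus your construction encodes the $\Pi^0_1$ condition ``$\forall n:\,a_n>\text{threshold}(n)$'', not the $\Pi^0_3$ condition $a_n\to\infty$. The paper sidesteps this by a two-step bootstrap: first it uses that $\isomtrclass{\ell_p}$ is not $G_\delta$ (Theorem~\ref{thm:classOfLp}) and Lemma~\ref{lem:F_sigma-hard} to obtain, abstractly, a continuous reduction $\varrho:2^\Nat\to\B$ of the $F_\sigma$-complete set $N_2$ to $\isomtrclass{\ell_p}$; then it takes $\ell_p$-sums of the \emph{infinite-dimensional} spaces $X_{\varrho(x^{(m)})}$ to reduce $P_3$. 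The key lemma (Lemma~\ref{lem:crucialStepHard}) is that an $\ell_p$-sum of separable infinite-dimensional spaces is isometric to $\ell_p$ iff every summand is---which is exactly the countable conjunction needed to climb from $\Pi^0_2$ to $\Pi^0_3$.

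Finally, the throwaway ``same strategy for $c_0$'' is a genuine gap: there is no short list of separable $\mathcal{L}_{\infty,1+}$-spaces analogous to Theorem~\ref{thm:scriptLpCharacterization}, so no splittability-style characterization is available. The paper instead proves an isometric characterization of $c_0$ via the Szlenk derivative, $(B_{X^*})'_{2\varepsilon}=(1-\varepsilon)B_{X^*}$ (Theorem~\ref{thm:characterizationOfc0}), and separately establishes that the Szlenk-derivative map is $\boldsymbol{\Sigma}^0_3$-measurable; this is what yields the $F_{\sigma\delta}$ upper bound for $\isomtrclass{c_0}$.
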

Note that in order to obtain that result we prove Proposition~\ref{prop:characterizationOflp} and Theorem~\ref{thm:characterizationOfc0}, which are of independent interest and where the ``easiest possible'' isometric characterizations of the Banach spaces $\ell_p$, resp. $c_0$, among separable $\mathcal L_{p,1+}$-spaces, resp. separable $\mathcal L_{\infty,1+}$-spaces are given. The proof of Theorem~\ref{thm:c0AndEllP} follows immediately from Proposition~\ref{prop:hardPart}, Proposition~\ref{prop:class_of_l_p} and Proposition~\ref{prop:classOfc0}.

Let us emphasize that in subsection~\ref{subsection:szlenkDerivative} we compute the Borel complexity of the operation assigning to a given Banach space the Szlenk derivative of its dual unit ball, which could be of an independent interest as well. See e.g. subsection~\ref{subsection:szlenkIndices} for some consequences. The reason why we need to do it here is obviously that our isometric characterization of the space $c_0$ involves Szlenk derivatives.

We start with the part which is common for both cases -- that is, for $\isomtrclass{c_0}$ and $\isomtrclass{\ell_p}$.
\begin{lemma}\label{lem:crucialStepHard}
Let $p\in[1,\infty)$ and let $X=(\bigoplus_{n\in\Nat} X_n)_p$ be the $\ell_p$-sum of a family $(X_n)_{n\in\Nat}$ of separable infinite-dimensional Banach spaces. Then $X\equiv \ell_p$ if and only if $X_n\equiv \ell_p$ for every $n\in\Nat$.

Similarly, let $X=(\bigoplus_{n\in\Nat} X_n)_0$ be the $c_0$-sum of a family $(X_n)_{n\in\Nat}$ of separable infinite-dimensional Banach spaces. Then $X\equiv c_0$ if and only if $X_n\equiv c_0$ for every $n\in\Nat$.
\end{lemma}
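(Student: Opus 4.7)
For the ``if'' direction in both statements, fix for each $n$ a surjective linear isometry $T_n\colon X_n\to\ell_p$ (respectively $T_n\colon X_n\to c_0$), pick any bijection $\pi\colon\Nat\times\Nat\to\Nat$, and use $\pi$ to relabel the ``coordinates of coordinates''. The resulting map from $(\bigoplus_n X_n)_p$ (respectively $(\bigoplus_n X_n)_0$) is manifestly a surjective linear isometry onto $\ell_p$ (respectively $c_0$), reducing in each case to the trivial identities $\sum_n\sum_k|a_{n,k}|^p=\sum_{n,k}|a_{n,k}|^p$ and $\sup_n\sup_k|a_{n,k}|=\sup_{n,k}|a_{n,k}|$.

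The substance lies in the ``only if'' direction, and the plan is to realise each $X_n$ as a $1$-complemented closed subspace of $\ell_p$ (respectively $c_0$) and then quote the classical isometric classification of such subspaces. The coordinate projection $P_n\colon X\to X$, $(x_k)_k\mapsto(\delta_{n,k}x_k)_k$, is a linear projection of operator norm exactly one both in the $\ell_p$-sum and in the $c_0$-sum; its range is canonically isometric to $X_n$. Composing with a fixed isometry $X\equiv\ell_p$ (respectively $X\equiv c_0$) therefore displays $X_n$ as a $1$-complemented, closed, infinite-dimensional subspace of $\ell_p$ (respectively of $c_0$).

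It then remains to invoke the classical classification. For $p\in[1,\infty)\setminus\{2\}$ this is And\^o's theorem: the range of any contractive projection on $\ell_p$ is isometric to $\ell_p(I)$ for some index set $I$; separability and infinite-dimensionality of $X_n$ then force $I$ to be countably infinite, and $X_n\equiv\ell_p$ follows. The case $p=2$ is immediate because every closed infinite-dimensional subspace of $\ell_2$ is isometric to $\ell_2$. For $c_0$ the corresponding classical result, that the range of any contractive projection on $c_0$ is isometric to $c_0(I)$ for some $I$, handles the $c_0$-sum case in exactly the same manner, since again $I$ must be countably infinite.

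The only genuine obstacle is bibliographic: producing clean references for the isometric classification of $1$-complemented subspaces of $\ell_p$ and of $c_0$; everything else is bookkeeping. If one wished to side-step the $c_0$ classification, an alternative is to pass to the dual decomposition $X^*\equiv(\bigoplus_n X_n^*)_1$ and apply And\^o inside $\ell_1$ to each $X_n^*$, but recovering $X_n\equiv c_0$ (rather than just $X_n^*\equiv\ell_1$) still needs an additional predual-identification step, so little is gained.
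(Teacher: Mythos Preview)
Your proposal is correct and follows essentially the same approach as the paper: the ``if'' direction is dismissed as well-known, and the ``only if'' direction is reduced to the classical fact that every $1$-complemented infinite-dimensional subspace of $\ell_p$ (respectively $c_0$) is isometric to $\ell_p$ (respectively $c_0$). The paper cites \cite[page 54]{LiTz77} for this classification, which covers both cases at once; your attribution to And\^o for the $\ell_p$ case is also appropriate.
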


\begin{proof}
It is easy and well-known that the $\ell_p$-sum of countably many $\ell_p$ spaces is isometric to $\ell_p$, and that the $c_0$-sum of countably many $c_0$ spaces is isometric to $c_0$. The opposite implications follow from the facts that every $1$-complemented infinite-dimensional subspace of $\ell_p$ is isometric to $\ell_p$, and that every $1$-complemented infinite-dimensional subspace of $c_0$ is isometric to $c_0$, see \cite[page 54]{LiTz77}.
\end{proof}

\begin{proposition}\label{prop:hardPart}
Let $X$ be one of the spaces $\ell_p$, $p\in[1,2)\cup(2,\infty)$, or $c_0$.
Then the set $\isomtrclass{X}$ is $F_{\sigma \delta}$-hard in $\B$.
\end{proposition}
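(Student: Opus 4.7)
The plan is to first show that $\isomtrclass[\B]{X}$ fails to be a $G_\delta$-set, and hence is $F_\sigma$-hard by Lemma~\ref{lem:F_sigma-hard}, and then to amplify this to $F_{\sigma\delta}$-hardness by composing the resulting $F_\sigma$-reduction with a continuous ``countable $\ell_p$-sum (resp.\ $c_0$-sum)'' operation on pseudonorms, where Lemma~\ref{lem:crucialStepHard} translates the amplification to the Banach-space level.

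For the failure to be $G_\delta$, I would invoke Lemma~\ref{lem:uniqueGDelta}. For $X=\ell_p$, pair it with $L_p[0,1]$: the two spaces are mutually finitely representable (both are $\mathcal L_{p,1+}$, and any $\mathcal L_{p,1+}$-space is finitely representable in $\ell_p$) and are not isometric for $p\neq 2$, while $\isomtrclass[\B]{L_p[0,1]}$ is $G_\delta$ by Theorem~\ref{thm:classOfLp}. For $X=c_0$, pair it with $\mathbb G$: both are finitely representable in each other (every finite-dimensional Banach space $(1+\varepsilon)$-embeds into $c_0$ via $\ell_\infty^N$ and into $\mathbb G$ by its finite-dimensional universality), they are non-isometric, and $\isomtrclass[\B]{\mathbb G}$ is $G_\delta$ by Theorem~\ref{thm:gurariiTypicalInP}. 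In both cases, Lemma~\ref{lem:F_sigma-hard} yields $F_\sigma$-hardness.

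Next, I would construct the sum operation as a continuous map $\Psi\colon\B^{\Nat}\to \B$. Fix a bijection $\sigma\colon\Nat\times\Nat\to\Nat$, and, given $(\mu_n)\in \B^{\Nat}$ and $v=\sum_{n,k}a_{n,k}\,e_{\sigma(n,k)}\in V$ (with only finitely many $a_{n,k}\neq 0$), write $w_n:=\sum_k a_{n,k} e_k\in V$ and set
\[
\Psi((\mu_n))(v):=
\begin{cases}
\bigl(\sum_n \mu_n(w_n)^p\bigr)^{1/p} & \text{if } X=\ell_p,\\
\max_n \mu_n(w_n) & \text{if } X=c_0.
\end{cases}
\]
For every fixed $v$ this depends continuously on only finitely many of the quantities $\mu_n(w_n)$, so $\Psi$ is continuous into $\PP$. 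Disjoint supports together with $\mu_n\in\B$ ensure $\Psi((\mu_n))\in\B$ and $X_{\Psi((\mu_n))}\equiv(\bigoplus_n X_{\mu_n})_p$ (resp.\ $(\bigoplus_n X_{\mu_n})_0$); note that in the $c_0$-case the finite support of $v$ is what forces the completion to be the $c_0$-sum rather than the $\ell_\infty$-sum. By Lemma~\ref{lem:crucialStepHard}, $\Psi((\mu_n))\in\isomtrclass[\B]{X}$ iff $\mu_n\in\isomtrclass[\B]{X}$ for every $n$.

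Finally, let $C\subseteq 2^{\Nat}$ be the $F_\sigma$-complete set of eventually zero sequences and $B:=\{(x_n)\in(2^{\Nat})^{\Nat}\setsep \forall n,\, x_n\in C\}$, a standard $F_{\sigma\delta}$-complete set in a zero-dimensional Polish space. Using the $F_\sigma$-hardness established above (applied to the zero-dimensional space $2^{\Nat}$), pick a continuous $f\colon 2^{\Nat}\to\B$ with $f^{-1}(\isomtrclass[\B]{X})=C$. Then $(x_n)\mapsto \Psi\bigl((f(x_n))_n\bigr)$ is a continuous reduction of $B$ to $\isomtrclass[\B]{X}$, proving the proposition. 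The main technical obstacle I foresee lies in Step~2, namely verifying that the $c_0$-sum pseudonorm indeed codes the $c_0$-sum of Banach spaces (and not the $\ell_\infty$-sum) and lies in $\B$; this hinges on the finite support of vectors in $V$ and on each $X_{\mu_n}$ being infinite-dimensional.
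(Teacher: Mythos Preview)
Your proposal is correct and follows essentially the same approach as the paper: both first obtain $F_\sigma$-hardness from the failure of $\isomtrclass[\B]{X}$ to be $G_\delta$ via Lemma~\ref{lem:F_sigma-hard} (the paper invokes Theorems~\ref{thm:classOfLp} and~\ref{thm:gurariiTypicalInPGeneralized} directly, which encapsulate precisely the pairing-with-$L_p[0,1]$/$\mathbb{G}$ argument you spell out via Lemma~\ref{lem:uniqueGDelta}), then amplify to $F_{\sigma\delta}$-hardness by a continuous $\ell_p$-sum/$c_0$-sum construction together with Lemma~\ref{lem:crucialStepHard}, reducing from the standard set $P_3\subseteq 2^{\Nat\times\Nat}$ (your $B\subseteq (2^\Nat)^\Nat$ is just $P_3$ under the obvious homeomorphism). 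Your concern about the $c_0$-case is well-placed but resolves exactly as you indicate, since the finitely supported nature of $V$ forces the completion to be the $c_0$-sum.
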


\begin{proof}
Our plan is to find a Wadge reduction of a known $F_{\sigma \delta}$-hard set to $\isomtrclass[\B]{X}$.
For this purpose we will use the set
\[
P_3=\{x\in 2^{\Nat\times\Nat}\colon\forall m\text{ there are only finitely many }n\text{'s with } x(m,n)=1 \}
\]
(see e.g. \cite[Section 23.A]{KechrisBook} for the fact that $P_3$ is $F_{\sigma \delta}$-hard in $2^{\Nat\times\Nat}$).
But before we start to construct the reduction of $P_3$ to $\isomtrclass[\B]{X}$ we need to do some preparation.

By Theorem~\ref{thm:classOfLp} (in case $X=\ell_p$) and Theorem~\ref{thm:gurariiTypicalInPGeneralized} (in case $X=c_0$) we know that $\isomtrclass[\B]{X}$ is not a $G_\delta$-set in $\B$. Therefore it is $F_\sigma$-hard in $\B$ by Lemma~\ref{lem:F_sigma-hard}. Now as the set
\[
N_2=\{x\in 2^\Nat\colon\textnormal{there are only finitely many }n\textnormal{'s with }x(n)=1\}
\]
is an $F_\sigma$-set in $2^\Nat$, it is Wadge reducible to $\isomtrclass[\B]{X}$, so there is a continuous function $\varrho\colon 2^\Nat\rightarrow\B$ such that
\[
x\in N_2\Leftrightarrow\varrho(x)\in\isomtrclass[\B]{X}.
\]

We fix a bijection $b\colon\Nat^2\rightarrow\Nat$.
For every $x\in 2^\Nat$ and every $m\in\Nat$ we define $\varrho_m(x)\in\PP_\infty$ as follows.
Suppose that $v=\sum_{n\in\Nat}\alpha_ne_n$ is an element of $V$ (i.e., $\alpha_n$ is a rational number for every $n$, and $\alpha_n\neq 0$ only for finitely many $n$'s), then we put
\[
\varrho_m(x)(v)=\varrho(x)\left(\sum_{n\in\Nat}\alpha_{b(m,n)}e_n\right).
\]
Note that the set $\{e_{b(m,n)}\colon n\in\Nat\}$ is both linearly independent and linearly dense in $X_{\varrho_m(x)}$, and that $\varrho_m(x)(e_k)=0$ if $k\notin\{b(m,n)\colon n\in\Nat\}$. Also, $X_{\varrho_m(x)}$ is isometric to $X_{\varrho(x)}$, where the isometry is induced by the operator $$e_k\mapsto \begin{cases}e_n & k=b(m,n),\\
0 & k\notin\{b(m,n)\colon n\in\Nat\}.
\end{cases}$$

Now we are ready to construct the required reduction $f\colon 2^{\Nat\times\Nat}\rightarrow\B$.
For every $x\in 2^{\Nat\times\Nat}$ and every $m\in\Nat$ we write $x^{(m)}$ for the sequence $(x(m,n))_{n\in\Nat}$.
If $X=\ell_p$, we define
\[
f(x)(v)=\sqrt[p]{\sum_{m\in\Nat}\left(\varrho_m(x^{(m)})(v)\right)^p},\;\;\;\;\;v\in V,
\]
and if $X=c_0$ we put
\[
f(x)(v)=\sup\{(\varrho_m(x^{(m)}))(v)\colon m\in\Nat\},\qquad v\in V.
\]
This formula, together with the preceding considerations, easily imply that $f(x)\in\B$ and that $X_{f(x)}$ is isometric to the $\ell_p$-sum, or to the $c_0$-sum (depending on whether $X=\ell_p$ or $X=c_0$), of the spaces $X_{\varrho(x^{(m)})}$, $m\in\Nat$.
Continuity of the functions $\varrho_m$ and $x\mapsto x^{(m)}$, $m\in\Nat$, immediately implies continuity of $f$. By Lemma~\ref{lem:crucialStepHard}, $f(x)\in \isomtrclass[\B]{X}$ if and only if $\varrho(x^{(m)})\in \isomtrclass[\B]{X}$ for every $m\in\Nat$. Hence, 
\[
x\in P_3\Leftrightarrow \forall m\in\Nat:\; x^{(m)}\in N_2\Leftrightarrow f(x)\in\isomtrclass[\B]{X}.\qedhere
\]
\end{proof}

\subsection{The spaces $\ell_p$}\label{sect:ellP}
The purpose of this subsection is to prove the following result.
\begin{proposition}\label{prop:class_of_l_p}
For every $p\in[1,2)\cup(2,\infty)$ we have that $\isomtrclass{\ell_p}$ is an $F_{\sigma\delta}$-set in $\PP_\infty$.
\end{proposition}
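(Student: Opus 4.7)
The plan is to combine the isometric characterization of $\ell_p$ among separable $\mathcal{L}_{p,1+}$-spaces announced in Proposition~\ref{prop:characterizationOflp} with the $G_\delta$-description of the class of $\mathcal{L}_{p,1+}$-spaces provided by Theorem~\ref{thm:scriptLp}. Guided by the style of Theorem~\ref{thm:characterizationLp}, I anticipate that Proposition~\ref{prop:characterizationOflp} will characterize $\ell_p$, within the separable $\mathcal{L}_{p,1+}$-spaces, via the norm-density in the unit sphere of ``non-splittable'' unit vectors, namely those $x\in S_X$ for which some $\varepsilon>0$ admits no pair $(x_1,x_2)\approximates[\ell_p^2]{1+\varepsilon}$ with $\|2^{1/p}x-x_1-x_2\|<\varepsilon$. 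The standard basis $(e_i)$ of $\ell_p$ (for $p\neq 2$) yields such a dense set: Clarkson-type rigidity forces the summands in any isometric $\ell_p^2$-split to be disjointly supported, which is impossible for a vector supported on a single coordinate. By contrast, every unit vector in $L_p[0,1]$ is splittable by Theorem~\ref{thm:characterizationLp}(iv), and in the remaining candidates $L_p[0,1]\oplus_p\ell_p^n$ and $L_p[0,1]\oplus_p\ell_p$ from Theorem~\ref{thm:scriptLpCharacterization} a pure $L_p$-side unit vector sits at positive distance from any non-splittable unit vector (whose $\ell_p$-mass is bounded below), so the density condition fails there.

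With such a characterization in hand, the Borel-complexity calculation amounts to a routine layering. By Theorem~\ref{thm:scriptLp}, the set $\mathcal{L}:=\{\mu\in\PP_\infty\setsep X_\mu\text{ is }\mathcal{L}_{p,1+}\}$ is $G_\delta$. For each $n\in\Nat$ and $v\in V$, the condition
\[
C_n(v):=\{\mu\in\PP_\infty\setsep \mu(v)>0 \text{ and } \forall u_1,u_2\in V,\;(u_1,u_2)\approximates[\ell_p^2]{1+\tfrac{1}{n}}\Rightarrow \mu\bigl(2^{1/p}v/\mu(v)-u_1-u_2\bigr)\geq \tfrac{1}{n}\}
\]
is the intersection of an open set (by Lemma~\ref{lem:infiniteDimIsGDelta}) with a countable intersection of closed sets (each coming from the complement of an open condition supplied by Fact~\ref{fact:openCondition} combined with Lemma~\ref{lem:infiniteDimIsGDelta}), hence $F_\sigma$. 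Thus the set $\mathrm{PA}(v):=\bigcup_n C_n(v)$ of $\mu$'s for which $v$ is a \emph{pseudo-atom} is $F_\sigma$ as well. The density condition then reads: for every $w\in V$ and every $k\in\Nat$, either $\mu(w)=0$, or there exists $v\in V$ such that $\mu\in\mathrm{PA}(v)$ and $\mu\bigl(w/\mu(w)-v/\mu(v)\bigr)<1/k$. For fixed $w,k$ this is an $F_\sigma$-set (countable union over $v\in V$ of intersections of $F_\sigma$-sets with open sets, again using Fact~\ref{fact:openCondition}), and intersecting over the countable family of pairs $(w,k)$ yields an $F_{\sigma\delta}$-set. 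Taking the intersection with $\mathcal{L}$ gives $\isomtrclass[\PP_\infty]{\ell_p}$ as an $F_{\sigma\delta}$-set, as required.

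The main obstacle will not lie in the Borel-complexity arithmetic, which reduces to a bookkeeping exercise involving Fact~\ref{fact:openCondition} and Lemma~\ref{lem:infiniteDimIsGDelta}, nor in the characterization itself, which I am free to invoke from Proposition~\ref{prop:characterizationOflp}. Rather, the delicate step is to faithfully translate the intrinsic characterization---phrased for unit vectors $x\in S_{X_\mu}$ and approximating pairs $x_1,x_2\in X_\mu$---into the countable-quantifier form over the dense $\Rat$-linear skeleton $V\subseteq X_\mu$. Ensuring this translation is equivalent requires using the density of $V$ in $X_\mu$ together with the stability of the non-splittability and the norm-approximation conditions under arbitrarily small perturbations of all the involved vectors, so that no surreptitious existential over $X_\mu$ (which would produce an analytic rather than Borel set) leaks into the final description; this is standard but must be carried out with care.
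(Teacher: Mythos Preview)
Your overall architecture is exactly right: intersect the $G_\delta$ class of $\mathcal L_{p,1+}$-spaces from Theorem~\ref{thm:scriptLp} with an $F_{\sigma\delta}$ description coming from Proposition~\ref{prop:characterizationOflp}, and your layer-counting with Fact~\ref{fact:openCondition} and Lemma~\ref{lem:infiniteDimIsGDelta} is carried out correctly. The gap is that you have guessed the wrong content for Proposition~\ref{prop:characterizationOflp}, and the characterization you build your argument on is neither the one the paper supplies nor adequately justified on its own.

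The paper's condition~(iv) does not speak about \emph{density} of vectors that fail to $2$-split; it asserts that \emph{every} $x\in D\setminus\{0\}$ and every $\delta\in(0,1)$ admit some $N\in\Nat$ and $\varepsilon>0$ for which no $N$-tuple $(N^{1/p}x_i)_{i=1}^N\approximates[\ell_p^N]{1+\varepsilon}$ comes within $\delta$ of $x/\|x\|$. The $N$ is allowed to grow with $x$ and $\delta$, and this is essential: in $\ell_p$ the vector $2^{-1/p}(e_1+e_2)$ \emph{does} $2$-split exactly (via $e_1,e_2$), so it is not ``non-splittable'' in your sense, yet the paper's condition~(ii) still holds for it with $N$ large. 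Your sentence ``the standard basis $(e_i)$ \dots\ yields such a dense set'' does not establish density, since the basis vectors themselves are certainly not dense in $S_{\ell_p}$, and you give no argument that the $2$-non-splittable vectors form a dense set there.

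Once you use the actual condition~(iv), the translation is in fact simpler than yours: one obtains directly
\[
\isomtrclass[\PP_\infty]{\ell_p}=\mathcal L\cap\bigcap_{v\in V\setminus\{0\}}\bigcap_{m}\bigcup_{n,k}V_{v,m,n,k},
\]
with each $V_{v,m,n,k}$ closed, and no extra ``density'' layer over an auxiliary $w$ is needed. So the fix is to drop the $N=2$/density speculation and quote Proposition~\ref{prop:characterizationOflp} as stated; the $F_{\sigma\delta}$ conclusion then follows by the same bookkeeping you already outlined.
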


We start with the following classical result, which is sometimes named Clarkson's inequality. The proof may be found on various places, the original one is in the paper by Clarkson, see \cite{C36}. In fact, we use only a very special case of Clarkson's inequality where $z,w$ are required to be elements of the real line instead of an $L_p$ space (and this case is rather straightforward to prove).

\begin{lemma}[Clarkson's inequality]\label{lem:inequalityDisjointSupports}
Let $1\leq p < \infty$, $p\neq 2$. If $p>2$, then for every $z,w\in\Rea$ we have
\[|z+w|^p + |z-w|^p - 2|z|^p - 2|w|^p\geq 0.\]
If $p<2$ then the reverse inequality holds. Moreover, the equality holds if and only if $zw=0$.
\end{lemma}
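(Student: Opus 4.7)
The plan is to derive both inequalities from a two-step chain of standard mean comparisons, applied to the non-negative reals $u:=|z+w|$ and $v:=|z-w|$. The bridge is the scalar parallelogram identity $u^2+v^2=(z+w)^2+(z-w)^2=2(z^2+w^2)$, which relates the $\ell_2$-mean of $(u,v)$ to the $\ell_2$-mean of $(|z|,|w|)$.

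First, I would invoke the classical power-mean inequality, which is just Jensen applied to the convex function $x\mapsto x^{p/2}$ on $[0,\infty)$ when $p>2$ (resp.\ the concave version when $1\le p<2$): one has $\bigl((u^p+v^p)/2\bigr)^{1/p}\ge \bigl((u^2+v^2)/2\bigr)^{1/2}$ for $p>2$, with the reverse for $1\le p<2$. Raising to the $p$-th power and substituting the parallelogram identity yields $|z+w|^p+|z-w|^p\ge 2(z^2+w^2)^{p/2}$ for $p>2$, and the opposite inequality for $1\le p<2$. Second, monotonicity of $p\mapsto\|(|z|,|w|)\|_p$ (equivalently, Jensen applied to $x\mapsto x^{2/p}$) gives $(z^2+w^2)^{p/2}\ge |z|^p+|w|^p$ for $p>2$, and the reverse for $p<2$. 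Chaining the two estimates produces the required inequality in each of the two regimes.

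For the equality clause, both comparisons are strict unless a degeneracy occurs. The power-mean step is an equality iff $u=v$, i.e.\ $|z+w|=|z-w|$, which is equivalent to $zw=0$. The $\ell_p$-versus-$\ell_2$ step is an equality iff at most one of $|z|,|w|$ is non-zero (using $p\ne 2$), again $zw=0$. Conversely, if $zw=0$, say $w=0$, then both sides of the claimed inequality collapse to $2|z|^p$, so equality holds.

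I do not foresee a substantive obstacle here; the only care required is the bookkeeping of the reversal of both inequalities as $p$ crosses the value $2$, and the verification that the equality cases of the two separate Jensen applications intersect in precisely the condition $zw=0$.
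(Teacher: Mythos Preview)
Your argument is correct: the chain of power-mean/Jensen inequalities together with the scalar parallelogram identity $|z+w|^2+|z-w|^2=2(z^2+w^2)$ yields the desired inequality in both regimes, and your equality analysis is sound (in particular, $|z+w|=|z-w|$ for real $z,w$ is indeed equivalent to $zw=0$).

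The paper does not supply its own proof of this lemma; it simply cites Clarkson's original paper and remarks that the scalar case is straightforward. Your write-up therefore provides more than the paper does here. One minor comment: for the equality clause you only need \emph{one} of the two Jensen steps to be strict when $zw\neq 0$, so the observation that both equality cases coincide with $zw=0$ is a pleasant bonus rather than a necessity.
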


\begin{proposition}\label{prop:characterizationOflp}
Let $1\leq p <\infty$, $p\neq 2$, and let $X$ be a separable infinite-dimensional $\mathcal{L}_{p,1+}$-space. Let $D$ be a dense subset of $X$. Then the following assertions are equivalent.
\begin{enumerate}[(i)]
    \item $X$ is isometric to $\ell_p$.
    \item For every $x\in S_X$ and every $\delta\in (0,1)$ the following condition is satisfied:
    \[\exists N\in\Nat\; \exists \varepsilon>0\;\forall x_1,\ldots,x_N\in X:\quad (N^{1/p}\cdot x_i)_{i=1}^N\approximates[\ell_p^N]{1+\varepsilon} \Rightarrow \|x-\sum_{i=1}^N x_i\|>\delta.\]
    %\item For every $x\in S_X$ the following condition is satisfied:
    %\[\exists N\in\Nat\; \exists \varepsilon>0\;\forall x_1,\ldots,x_N\in X:\quad (N^{1/p}\cdot x_i)_{i=1}^N\approximates[\ell_p^N]{1+\varepsilon} \Rightarrow \|x-\sum_{i=1}^N x_i\|\geq \frac{1}{2}.\]
    \item For every $x\in S_X$ the following condition is satisfied:
    \[\exists N\in\Nat\; \forall x_1,\ldots,x_N\in X:\quad (N^{1/p}\cdot x_i)_{i=1}^N\equiv\ell_p^N \Rightarrow x\neq \sum_{i=1}^N x_i.\]
    
    \item For every $x\in D\setminus\{0\}$ and every $\delta\in (0,1)$ the following condition is satisfied:
    \[\exists N\in\Nat\; \exists \varepsilon>0\;\forall x_1,\ldots,x_N\in D:\quad (N^{1/p}\cdot x_i)_{i=1}^N\approximates[\ell_p^N]{1+\varepsilon} \Rightarrow \Big\|\frac x{\|x\|}-\sum_{i=1}^N x_i\Big\|\ge\delta.\]
    
\end{enumerate}
\end{proposition}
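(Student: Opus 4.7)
The plan is to establish the cycle $(i)\Rightarrow(ii)\Rightarrow(iii)\Rightarrow(i)$ together with the density equivalence $(ii)\Leftrightarrow(iv)$. The step $(ii)\Rightarrow(iii)$ is immediate: for $x\in S_X$ apply $(ii)$ with $\delta=1/2$ to obtain $N,\varepsilon$; any exact tuple $(N^{1/p}x_i)_{i=1}^N\equiv\ell_p^N$ is in particular $(1+\varepsilon)$-equivalent, so $\|x-\sum x_i\|>1/2>0$, hence $x\ne\sum x_i$. For $(iii)\Rightarrow(i)$ I invoke the Lindenstrauss--Pe\l czy\'nski classification (Theorem~\ref{thm:scriptLpCharacterization}): if $X\not\equiv\ell_p$, then $X$ contains an isometric $1$-complemented copy of $L_p[0,1]$, and the level-set partition used in the proof of Theorem~\ref{thm:characterizationLp}$(i)\Rightarrow(ii)$ produces, for every norm-one vector in that copy and every $N$, an exact decomposition $x=\sum x_i$ with $(N^{1/p}x_i)\equiv\ell_p^N$, contradicting $(iii)$.

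The main analytic content is $(i)\Rightarrow(ii)$. Fix $X=\ell_p$, $x\in S_{\ell_p}$ with $M:=\|x\|_\infty>0$, and $\delta\in(0,1)$. The rigidity tool is Clarkson's inequality (Lemma~\ref{lem:inequalityDisjointSupports}) applied coordinate-wise: equality forces the two arguments to vanish simultaneously at each coordinate, so any exact isometric $\ell_p^N$-tuple in $\ell_p$ consists of pairwise disjointly supported vectors with $\|x_i\|_p=N^{-1/p}$. Assuming first $\delta<M$, choose $N>(M-\delta)^{-p}$ and argue by contradiction: if $(ii)$ fails, extract $\varepsilon_k\downarrow 0$ and $(x_i^{(k)})_{i=1}^N$ with $(N^{1/p}x_i^{(k)})\approximates[\ell_p^N]{1+\varepsilon_k}$ and $\|x-\sum_i x_i^{(k)}\|_p\leq\delta$. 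Restricting the approximate isometry to $b=e_i\pm e_j$ and invoking the quantitative (coordinate-wise) Clarkson inequality gives, for each pair $(i,j)$, the overlap bound $\sum_m\min(|x_i^{(k)}(m)|,|x_j^{(k)}(m)|)^p=O(\varepsilon_k/N)$. At the coordinate $k_0$ realizing $\|x\|_\infty=M$, this forces all but (at most) one of the $|x_i^{(k)}(k_0)|$ to be of order $(\varepsilon_k/N)^{1/p}$, whence $|\sum_i x_i^{(k)}(k_0)|\leq(1+\varepsilon_k)N^{-1/p}+O_N(\varepsilon_k^{1/p})$ and $\|x-\sum_i x_i^{(k)}\|_p\geq M-N^{-1/p}-O_N(\varepsilon_k^{1/p})>\delta$ for $k$ large, contradicting the assumption. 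The range $\delta\geq M$ (which only occurs when $x$ is very spread out) is handled either by running the analogous argument on a finite cluster of top coordinates of $x$, or by a coordinate-wise compactness argument producing an exact limit $\ell_p^N$-tuple (upgraded from weak to norm convergence using uniform convexity of $\ell_p$ for $1<p<\infty$, the Schur property for $p=1$, and tightness forced by $x\in\ell_p$) and then reducing to the already-established exact case.

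The equivalence $(ii)\Leftrightarrow(iv)$ is a density argument. The forward direction is tautological: restricting the $x_i$-quantifier from $X$ to $D\subseteq X$ and weakening $>$ to $\geq$ both relax the statement. For the converse, given $x\in S_X$ and $\delta\in(0,1)$, fix $\delta'\in(\delta,1)$, approximate $x$ by $\tilde x\in D\setminus\{0\}$ so that $\tilde x/\|\tilde x\|$ is arbitrarily close to $x$, and apply $(iv)$ to $\tilde x$ and $\delta'$ to obtain $N,\varepsilon'$. For any $(x_i)\subseteq X$ with $(N^{1/p}x_i)\approximates[\ell_p^N]{1+\varepsilon}$ and $\varepsilon<\varepsilon'$ small, approximate each $x_i$ by $x_i'\in D$ so close that $(N^{1/p}x_i')\approximates[\ell_p^N]{1+\varepsilon'}$ still holds (possible since the $(1+\varepsilon')$-isomorphism condition is open by Lemma~\ref{lem:infiniteDimIsGDelta}) and $\sum_i\|x_i-x_i'\|$ is negligible; $(iv)$ then yields $\bigl\|\tilde x/\|\tilde x\|-\sum_i x_i'\bigr\|\geq\delta'$, and the triangle inequality delivers $\|x-\sum_i x_i\|>\delta$.

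The main obstacle is the stability passage in $(i)\Rightarrow(ii)$: the direct coordinate-wise overlap argument works cleanly for $\delta<M$, but the range $\delta\geq M$ forces one either into a cluster-of-coordinates refinement (more delicate but still elementary) or into a genuine compactness/limit argument whose main difficulty is securing tightness of the family $\{x_i^{(k)}\}_k$ in the non-locally-compact space $\ell_p$ --- a step that leverages the smallness of the tails of the fixed $\ell_p$-vector $x$ together with the constraint that $\sum_i x_i^{(k)}$ stays close to $x$.
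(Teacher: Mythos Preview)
Your overall architecture matches the paper's: the same cycle $(i)\Rightarrow(ii)\Rightarrow(iii)\Rightarrow(i)$ together with $(ii)\Leftrightarrow(iv)$, the same use of the Lindenstrauss--Pe\l czy\'nski classification for $(iii)\Rightarrow(i)$, and the same density argument for $(iv)\Rightarrow(ii)$.

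The one place where you diverge is the execution of $(i)\Rightarrow(ii)$. You split into $\delta<M:=\|x\|_\infty$ and $\delta\ge M$, handle the first case cleanly via a single dominant coordinate, and then defer the second case to either a ``cluster of top coordinates'' refinement or a compactness argument. The paper does \emph{not} make this case split: it starts immediately from the cluster idea, choosing $l$ with $\sum_{k\le l}|x(k)|^p>\delta^p$ and then $N$ so large that $\sum_{k\le l}(|x(k)|-3N^{-1/p})^p>\delta^p$. This single choice covers all $\delta\in(0,1)$ uniformly; your single-coordinate argument is precisely the special case $l=1$, which is why it only reaches $\delta<M$. Since you yourself identify the cluster refinement as the correct fix, you are essentially one step away from the paper's proof --- just begin with the cluster choice and drop the case distinction entirely.

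Your proposed compactness alternative, on the other hand, is not needed and is the shakier of the two options. The tightness claim (``forced by $x\in\ell_p$ together with $\sum_i x_i^{(k)}$ staying close to $x$'') does not obviously control the tails of the \emph{individual} $x_i^{(k)}$: approximate disjointness allows mass of separate $x_i^{(k)}$'s to escape to infinity without their sum moving far from $x$ (cancellation is not available, but nothing forces each piece to stay put). Even granting strong subsequential limits, you would then need, for the chosen $N$, that no exact disjointly supported tuple satisfies $\|x-\sum x_i\|\le\delta$ --- and verifying this is again the cluster estimate. So the compactness route, if it can be made to work at all, loops back to the same inequality. I would drop it and go directly with the cluster argument, which is exactly what the paper does.
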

\begin{proof}
$(i)\implies(ii)$: Fix $x\in S_{\ell_p}$ and $\delta\in(0,1)$. Pick $l\in\Nat$ with $\sum_{k=1}^l |x(k)|^p > \delta^p$ and $N\in\Nat$ such that $\sum_{k=1}^l (|x(k)|  - \frac{3}{\sqrt[p]{N}})^p > \delta^p$. Fix a sequence $(\varepsilon_m)_{m\in\Nat}\in(0,1)^\Nat$ with $\varepsilon_m\to 0$. In order to get a contradiction, for every $m\in\Nat$, pick $x_1^{\varepsilon_m},\ldots,x_N^{\varepsilon_m}\in \ell_p$ such that $(N^{1/p}\cdot x_i^{\varepsilon_m})_{i=1}^N\approximates[\ell_p^N]{1+\varepsilon_m}$ and $\|x-\sum_{i=1}^N x_i^{\varepsilon_m}\|\leq \delta$ for every $m\in\Nat$.

We \emph{claim} that there is $m\in\Nat$ such that $|x_i^{\varepsilon_m}(k)x_j^{\varepsilon_m}(k)|<\eta:=N^{-(2+2/p)}$ for every $i,j\in\{1,\ldots,N\}$, $i\neq j$, and $k\in\{1,\ldots,l\}$. Indeed, otherwise there are $i,j,k$ such that $|x_i^{\varepsilon_m}(k)x_j^{\varepsilon_m}(k)|\geq \eta$ for infinitely many $m$'s. By passing to a subsequence, we may assume that this holds for every $m\in\Nat$. Since the sequences $(|x_i^{\varepsilon_m}(k)|)_m$  and $(|x_j^{\varepsilon_m}(k)|)_m$ are bounded, by passing to a subsequence we may assume there are numbers $a,b\in\Rea$ with $x_i^{\varepsilon_m}(k)\to a$, $x_j^{\varepsilon_m}(k)\to b$ and $|ab|\geq \eta > 0$. Since  $(N^{1/p}\cdot x_i^{\varepsilon_m},N^{1/p}\cdot x_j^{\varepsilon_m})\approximates[\ell_p^2]{1+\varepsilon_m}$, using Lemma~\ref{lem:inequalityDisjointSupports}, for $p>2$ we obtain
\[\begin{split}
    0 & \leq |a+b|^p + |a-b|^p - 2|a|^p - 2|b|^p\\
    & = \lim_m
    \big(|x_i^{\varepsilon_m}(k) + x_j^{\varepsilon_m}(k)|^p + |x_i^{\varepsilon_m}(k) - x_j^{\varepsilon_m}(k)|^p - 2|x_i^{\varepsilon_m}(k)|^p - 2|x_j^{\varepsilon_m}(k)|^p\big)\\
    & \leq \lim_m \big(\|x_i^{\varepsilon_m} + x_j^{\varepsilon_m}\|^p + \|x_i^{\varepsilon_m} - x_j^{\varepsilon_m}\|^p - 2\|x_i^{\varepsilon_m}\|^p - 2\|x_j^{\varepsilon_m}\|^p\big) = 0;
\end{split}\]
hence, $|a+b|^p + |a-b|^p=2|a|^p+2|b|^p=0$ which, by Lemma~\ref{lem:inequalityDisjointSupports}, is in contradiction with $|ab|>0$. The case when $p<2$ is similar.

From now on, we write $x_i$ instead of $x_i^{\varepsilon_m}$, where $m\in\Nat$ is chosen to satisfy the claim above. Fix $k\leq l$. By the claim above, there is at most one $i_0\in\{1,\ldots,N\}$ with $|x_{i_0}(k)| \geq \sqrt{\eta}$ and for this $i_0$ we have $|x_{i_0}(k)|\leq \|x_{i_0}\|\leq 2N^{-1/p}$. Consequently, we have
\[
\sum_{i=1}^N |x_i(k)|\leq \frac{2}{N^{1/p}} + \sum_{i\in \{1,\ldots,N\}\& |x_i(k)|<\sqrt{\eta}} |x_i(k)|\leq \frac{2}{N^{1/p}} + N\cdot\sqrt{\eta} = \frac{3}{N^{1/p}}.
\]

Thus, we have
\[\|x-\sum_{i=1}^N x_i\|^p\geq \sum_{k=1}^l \Big(|x(k)| - \sum_{i=1}^N |x_i(k)|\Big)^p\geq \sum_{k=1}^l \Big(|x(k)| - \frac{3}{N^{1/p}}\Big)^p > \delta^p,\]
which is in contradiction with $\|x-\sum_{i=1}^N x_i\| = \|x-\sum_{i=1}^N x_i^{\varepsilon_m}\|\leq \delta$.

$(ii)\implies (iii)$ is obvious.

$(iii)\implies (i)$: Suppose that $X$ is not isometric to $\ell_p$.
By Theorem~\ref{thm:scriptLpCharacterization}, $X$ is isometric to $L_p[0,1]\oplus_p Y$ for some (possibly trivial) Banach space $Y$.
By abusing the notation, we may assume that $X=L_p[0,1]\oplus_p Y$.
Let ${\bf 1}\in L_p[0,1]$ be the constant 1 function, and define $x\in X=L_p[0,1]\oplus_p Y$ by $x=({\bf 1},0)$.
Now fix $N\in\Nat$ arbitrarily.
Define $x_1,\ldots,x_N\in X$ by $x_i=(\chi_{[\frac{i-1}n,\frac in]},0)$. Clearly $(N^{1/p}\cdot x_i)_{i=1}^N\equiv\ell^n_p$ and we have $x = \sum_{i=1}^N x_i$.

$(ii)\implies(iv)$ is obvious, so it only remains to show that $(iv)\implies(ii)$. For every $x\in X\setminus\{0\}$, $\delta\in(0,1)$, $N\in\Nat$, $\varepsilon>0$ and $x_1,\ldots,x_N\in X$ we denote by $V(x,\delta,N,\varepsilon,(x_i)_{i=1}^N)$ the assertion that if $(N^{1/p}\cdot x_i)_{i=1}^N\approximates[\ell_p^N]{1+\varepsilon}$ then $\|\frac x{\|x\|}-\sum_{i=1}^N x_i\|\ge\delta$. The desired implication straightforwardly follows by the following two easy observations. First, if $x\in D\setminus\{0\}$, $\delta$, $N$ and $\varepsilon$ are given such that $V(x,\delta,N,\varepsilon,(x_i)_{i=1}^N)$ holds for every $x_1,\ldots,x_N\in D$ then $V(x,\delta,N,\varepsilon,(x_i)_{i=1}^N)$ holds for every $x_1,\ldots,x_N\in X$. Second, if for every $x\in D\setminus\{0\}$ and $\delta$ there are $N$ and $\varepsilon$ such that $V(x,\frac{1+\delta}2,N,\varepsilon,(x_i)_{i=1}^N)$ holds for every $x_1,\ldots,x_N\in X$, then for every $x\in X\setminus\{0\}$ and $\delta$ there are $N$ and $\varepsilon$ such that $V(x,\delta,N,\varepsilon,(x_i)_{i=1}^N)$ holds for every $x_1,\ldots,x_N\in X$.
\end{proof}

\begin{proof}[Proof of Proposition~\ref{prop:class_of_l_p}]
Let $\F$ be the set of those $\nu\in\PP_\infty$ for which $X_\nu$ is an $\mathcal{L}_{p,1+}$-space. By Theorem~\ref{thm:scriptLp}, $\F\subseteq \PP_\infty$ is a $G_\delta$-set. By Proposition~\ref{prop:characterizationOflp} $(i)\Leftrightarrow(iv)$, we have
\[\isomtrclass[\PP_\infty]{\ell_p} = \F\cap\bigcap_{v\in V\setminus\{0\}}\bigcap_{m\in\Nat}\bigcup_{n,k\in\Nat}V_{v,m,n,k},\]
where the closed (see Fact~\ref{fact:openCondition} and Lemma~\ref{lem:infiniteDimIsGDelta}) sets $V_{v,m,n,k}$ are given by
\[\begin{split}
V_{v,m,n,k}:=\Big\{\mu\in\PP_\infty\setsep & \mu(v)=0\text{ or for every $(v_i)_{i=1}^n\in V^n$ we have}\\
& \neg\left((\sqrt[p]nv_i)_{i=1}^n\approximates[\ell_p^n]{1+\frac 1k}\right) \text{ or }\mu\big(\frac{v}{\mu(v)} - \sum_{i=1}^nv_i\big)\ge\frac 1m\Big\}.
\end{split}\]
Thus, $\isomtrclass[\PP_\infty]{\ell_p}$ is an $F_{\sigma\delta}$-set.
\end{proof}

\subsection{Dual unit balls and the Szlenk derivative}\label{subsection:szlenkDerivative} The purpose here is to show that mappings which assign the dual unit ball and its Szlenk derivative to a separable Banach space may be realized as Borel maps, see Lemma~\ref{lem:slozitost_koule} and Lemma~\ref{lem:slozitost_derivace}. This will be later used in order to estimate the Borel complexity of the isometry class of the space $c_0$ because the isometric characterization of the space $c_0$ we use involves Szlenk derivatives, see Theorem~\ref{thm:characterizationOfc0}. Note that the issue of handling Szlenk derivations as Borel maps was previously considered also by Bossard in \cite[page 141]{Bo02}, but our approach is slightly different as we prefer to work with the coding $\PP$ and we also need to obtain an estimate on the Borel class of the mapping.

Let us recall that given a real Banach space $X$, a $w^*$-compact set $F\subseteq X^*$ and $\varepsilon>0$, the Szlenk derivative is given as
\[
F'_{\varepsilon} = \big\{ x^{*} \in F : U \ni x^{*} \textnormal{ is $ w^{*} $-open} \Rightarrow \mathrm{diam}(U \cap F) \geq \varepsilon \big\}.
\]

We start by coding dual unit balls as closed subsets of $ B_{\ell_{\infty}} $ equipped with the weak* topology, i.e., the topology generated by elements of the unique predual $\ell_1$.

\begin{lemma}\label{lem:ballIsOmega}
Let $X$ be a separable Banach space and let $\{x_n\colon n\in\Nat\}$ be a dense set in $B_X$. Then the mapping $B_{X^{*}}\ni x^*\mapsto (x^*(x_n))_{n=1}^\infty\in B_{\ell_\infty}$ is $\|\cdot\|$-$\|\cdot\|$ isometry and $w^*$-$w^*$ homeomorphism onto the set
\[
\Omega(X):=\Big\{ (a_{n})_{n=1}^{\infty} \in B_{\ell_{\infty}} : M \subseteq \mathbb{N} \textnormal{ finite} \Rightarrow  \Big| \sum_{n \in M} a_{n} \Big| \leq \Big\Vert \sum_{n \in M} x_{n} \Big\Vert \Big\}.
\]
\end{lemma}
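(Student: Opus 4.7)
The plan is to verify three things: (a) $\Phi(x^*):=(x^*(x_n))_n$ is a norm isometry, (b) $\Phi$ is $w^*$-$w^*$ continuous with $w^*$-compact image (hence a homeomorphism onto it), and (c) this image coincides with $\Omega(X)$.

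Parts (a) and (b) are routine. Density of $\{x_n\}$ in $B_X$ gives $\|\Phi(x^*)\|_\infty=\sup_n|x^*(x_n)|=\|x^*\|$, so $\Phi$ is an injective norm isometry. For $w^*$-continuity, any $c\in\ell_1=(\ell_\infty,w^*)^*$ satisfies $\langle c,\Phi(x^*)\rangle=x^*(\sum_n c_n x_n)$, an absolutely convergent series giving a $w^*$-continuous function of $x^*$. Banach--Alaoglu compactness of $B_{X^*}$ together with Hausdorffness of the $w^*$-topology on $B_{\ell_\infty}$ then yields (b), and $\Phi(B_{X^*})$ is $w^*$-closed. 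The easy inclusion $\Phi(B_{X^*})\subseteq\Omega(X)$ follows from $\bigl|\sum_{n\in M}x^*(x_n)\bigr|=\bigl|x^*(\sum_{n\in M}x_n)\bigr|\le\bigl\|\sum_{n\in M}x_n\bigr\|$.

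The content of the lemma is the reverse inclusion in (c). Given $(a_n)\in\Omega(X)$, I will construct $x^*\in B_{X^*}$ with $x^*(x_n)=a_n$. For each $y\in B_X$, set $A(y):=\lim_k a_{n_k}$, where $(x_{n_k})\subseteq\{x_n\}$ is any sequence converging to $y$. To show that the limit exists and depends only on $y$, pick an auxiliary sequence $x_{l_k}\to -y$ with $l_k\ne n_k$ (possible because $-B_X=B_X$ and $B_X$ has no isolated points in any non-trivial Banach space, so $\{x_n\}$ contains infinitely many elements in every neighbourhood of any point). The $\Omega$-condition applied to $\{n_k,l_k\}$ gives $|a_{n_k}+a_{l_k}|\le\|x_{n_k}+x_{l_k}\|\to 0$, so any subsequential limit of $(a_{n_k})$ must equal $-\lim a_{l_k}$ and is therefore independent of the choice of $(n_k)$. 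The same reasoning, applied to $m$-element index sets $\{n^{(1)}_k,\ldots,n^{(m)}_k\}$ with $x_{n^{(i)}_k}\to y_i$ and pairwise distinct indices, yields the sum-to-zero identity $\sum_i A(y_i)=0$ whenever $y_i\in B_X$ and $\sum_i y_i=0$.

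Taking a constant sequence gives $A(x_n)=a_n$. From the sum-to-zero identity I read off $A(-y)=-A(y)$, additivity of $A$ on tuples staying in $B_X$, and the bound $|A(y)|\le\lim\|x_{n_k}\|=\|y\|$. Extending to $\tilde A:X\to\Rea$ by $\tilde A(z):=M\,A(z/M)$ for any integer $M\ge\|z\|$, independence of $M$ and $\Rat$-linearity again reduce to the sum-to-zero identity, while $|\tilde A(z)|\le\|z\|$ extends $A$ to a $\Rea$-linear element of $B_{X^*}$ with $\tilde A(x_n)=a_n$, as required. The main obstacle is precisely this surjectivity step: the $\Omega$-condition only constrains sums with coefficients in $\{0,1\}$, yet we must recover a genuine linear functional. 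The key idea is that the auxiliary sequence $x_{l_k}\to -y$ converts the one-sided pair-sum inequality from $\Omega$ into two-sided control over the limits $\lim a_{n_k}$, and that distinctness of the indices appearing in any $M$ is always arranged by density combined with the absence of isolated points in $B_X$.
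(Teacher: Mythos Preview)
Your proof is correct and rests on the same key idea as the paper's: to pass from the one-sided $\{0,1\}$-coefficient constraint in $\Omega(X)$ to genuine linearity, approximate $-y$ by some $x_{n'}$ and use the pair-sum inequality $|a_n+a_{n'}|\le\|x_n+x_{n'}\|$ to manufacture two-sided control. The packaging differs slightly: the paper first derives the discrete inequalities $|a_n-a_m|\le\|x_n-x_m\|$ and $|a_n+a_m-a_l|\le\|x_n+x_m-x_l\|$, invokes a $1$-Lipschitz extension $f:B_X\to\Rea$, and checks additivity of $f$; you instead define $A(y)=\lim a_{n_k}$ directly and establish a general sum-to-zero identity. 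Your well-definedness step would read more cleanly if stated as: given two convergent sequences $a_{n_k}\to L$ and $a_{n'_k}\to L'$ with $x_{n_k},x_{n'_k}\to y$, a single auxiliary $x_{l_k}\to -y$ forces $a_{l_k}\to -L$ and $a_{l_k}\to -L'$, hence $L=L'$.
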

\begin{proof}
That the mapping is $w^*$-$w^*$ homeomorphism onto its image follows from the fact that $B_{X^*}$ is $w^*$-compact and the mapping is one-to-one (because $(x_n)$ separate the points of $B_{X^*}$) and $w^*$-$w^*$ continuous (because on $B_{\ell_\infty}$ the $w^*$-topology coincides with the topology of pointwise convergence). It is also straightforward to see that the mapping is isometry. Thus, it suffices to prove that
\[
\{(x^*(x_n))_{n=1}^\infty\colon x^*\in B_{X^{*}}\} = \Omega(X).
\]
The inclusion $ \subseteq $ is easy, let us prove $ \supseteq $. Given numbers $ a_{1}, a_{2}, \dots $ satisfying $ | \sum_{n \in M} a_{n} | \leq \Vert \sum_{n \in M} x_{n} \Vert $ for any finite $ M \subseteq \mathbb{N} $, we need to find $ x^{*} \in B_{X^{*}} $ such that $ x^{*}(x_{n}) = a_{n} $ for each $ n $.

Let us realize first that
\begin{itemize}
\item $ |a_{n} - a_{m}| \leq \Vert x_{n} - x_{m} \Vert $ for every $ n, m $,
\item $ |a_{n} + a_{m} - a_{l}| \leq \Vert x_{n} + x_{m} - x_{l} \Vert $ for every $ n, m, l $.
\end{itemize}
We check the first inequality only, the second inequality can be checked in the same way. Given $ \varepsilon > 0 $, let $ n' $ different from $ n $ and $ m $ be such that $ \Vert x_{n} + x_{n'} \Vert < \varepsilon $. We obtain $ |a_{n} - a_{m}| = |(a_{n} + a_{n'}) - (a_{m} + a_{n'})| \leq |a_{n} + a_{n'}| + |a_{m} + a_{n'}| \leq \Vert x_{n} + x_{n'} \Vert + \Vert x_{m} + x_{n'} \Vert \leq 2 \Vert x_{n} + x_{n'} \Vert + \Vert x_{m} - x_{n} \Vert < 2 \varepsilon + \Vert x_{m} - x_{n} \Vert $. Since $ \varepsilon > 0 $ was chosen arbitrarily, we arrive at $ |a_{n} - a_{m}| \leq \Vert x_{m} - x_{n} \Vert $.

It follows that there is a function $ f : B_{X} \to \mathbb{R} $ with the Lipschitz constant $ 1 $ such that $ f(x_{n}) = a_{n} $ for each $ n $. We claim that $ f(u + v) = f(u) + f(v) $ and $f(\alpha u)=\alpha f(u)$, whenever $ u, \alpha u, v, u + v \in B_{X} $. Given $ \varepsilon > 0 $, we pick $ n, m, l $ such that $ \Vert x_{n} - u \Vert < \varepsilon, \Vert x_{m} - v \Vert < \varepsilon $ and $ \Vert x_{l} - (u + v) \Vert < \varepsilon $. Then $ |f(u) + f(v) - f(u + v)| < |a_{n} + a_{m} - a_{l}| + 3\varepsilon \leq \Vert x_{n} + x_{m} - x_{l} \Vert + 3\varepsilon \leq \Vert u + v - (u + v) \Vert + 3\varepsilon + 3\varepsilon = 6\varepsilon $. Since $ \varepsilon > 0 $ was chosen arbitrarily, we arrive at $ |f(u) + f(v) - f(u + v)| = 0 $. This also shows that $f(u/2)=f(u)/2$, therefore $f(\alpha u)=\alpha f(u)$, provided that $\alpha$ is a dyadic rational number. For a general $\alpha$, we use density of dyadic rationals and continuity of $f$.

Now, it is easy to see that $f$ uniquely extends to a linear functional on $X$.
\end{proof}

By the above, every dual unit ball of a separable Banach space may be realized as a subset of the unit ball of $\ell_\infty$. Thus, in what follows we use the following convention.

\begin{convention}
Whenever we talk about open (closed, $F_\sigma$, etc.) subsets of $B_{l_\infty}$ we always mean open (closed, $F_\sigma$, etc.) subsets in the weak* topology. On the other hand, whenever we talk about the diameter of a subset of $B_{l_\infty}$, or about the distance of two subsets of $B_{l_\infty}$, we always mean the diameter, or the distance, with respect to the metric given by the norm of $\ell_\infty$. Also, we write only $\mathcal K(B_{l_\infty})$ instead of $\mathcal K(B_{l_\infty},w^*)$.
\end{convention}

Let us note the following easy observation for further references.
\begin{lemma}\label{lem:meritelnost}
Let $P$ be a Polish space, $X$ a metrizable compact, $\alpha\in[1,\omega_1)$ and $f:P\to\mathcal K(X)$ a mapping such that $\{p\in P\colon f(p)\subseteq W\}\in \boldsymbol{\Sigma}_\alpha^0(P)\cup\boldsymbol{\Pi}_\alpha^0(P)$ for every open $W\subseteq X$. Then $f$ is $\boldsymbol{\Sigma}_{\alpha+1}^0$-measurable.
\end{lemma}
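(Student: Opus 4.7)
The plan is to reduce the verification of $\boldsymbol{\Sigma}_{\alpha+1}^0$-measurability to computing the preimage under $f$ of a sub-basis of the Vietoris topology on $\mathcal K(X)$. Recall that this topology is generated by the sub-basic sets
\[
\langle U\rangle^{\subseteq} := \{K\in\mathcal K(X)\colon K\subseteq U\},\qquad
\langle U\rangle^{\cap} := \{K\in\mathcal K(X)\colon K\cap U\neq\emptyset\},
\]
with $U$ ranging over open subsets of $X$. Since $X$ is metrizable compact, it is second countable, and consequently $\mathcal K(X)$ is second countable (Polish under the Hausdorff metric). As the class $\boldsymbol{\Sigma}_{\alpha+1}^0$ is closed under finite intersections and countable unions (note $\alpha+1\geq 2$), it therefore suffices to show that $f^{-1}(S)\in\boldsymbol{\Sigma}_{\alpha+1}^0(P)$ for each sub-basic set $S$.

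For sub-basic sets of the first form, this is essentially the hypothesis: $f^{-1}(\langle U\rangle^{\subseteq})=\{p\in P\colon f(p)\subseteq U\}$ belongs to $\boldsymbol{\Sigma}_\alpha^0(P)\cup\boldsymbol{\Pi}_\alpha^0(P)\subseteq\boldsymbol{\Sigma}_{\alpha+1}^0(P)$.

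The main (and only) obstacle is handling $\langle U\rangle^{\cap}$, since the hypothesis says nothing directly about ``intersection'' conditions. I would overcome this by using metrizability of $X$ to approximate the closed set $X\setminus U$ from outside by open sets. Fixing a compatible metric $d$ on $X$ and setting $V_n:=\{x\in X\colon d(x,X\setminus U)<1/n\}$, each $V_n$ is open and $X\setminus U=\bigcap_{n}V_n$. For any $K\in\mathcal K(X)$ one then has $K\cap U=\emptyset$ if and only if $K\subseteq V_n$ for every $n\in\Nat$, whence
\[
f^{-1}(\langle U\rangle^{\cap}) \;=\; P\setminus\bigcap_{n\in\Nat}\{p\in P\colon f(p)\subseteq V_n\}.
\]
By hypothesis each set in the intersection lies in $\boldsymbol{\Sigma}_\alpha^0(P)\cup\boldsymbol{\Pi}_\alpha^0(P)\subseteq\boldsymbol{\Pi}_{\alpha+1}^0(P)$, so the countable intersection is in $\boldsymbol{\Pi}_{\alpha+1}^0(P)$ and its complement is in $\boldsymbol{\Sigma}_{\alpha+1}^0(P)$, as required.

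Assembling these two cases and passing to finite intersections and countable unions yields $f^{-1}(W)\in\boldsymbol{\Sigma}_{\alpha+1}^0(P)$ for every open $W\subseteq\mathcal K(X)$. No compactness of $f(p)$ is actually needed for the argument beyond the fact that $\mathcal K(X)$ is second countable; the crux is the metrizability of $X$, which permits the approximation $X\setminus U=\bigcap_n V_n$ and thereby converts ``intersects'' into a countable collection of ``subset'' conditions covered by the hypothesis.
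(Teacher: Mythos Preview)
Your proof is correct and follows essentially the same approach as the paper: both reduce to the Vietoris sub-basis, handle $\langle U\rangle^{\subseteq}$ directly from the hypothesis, and rewrite $\langle U\rangle^{\cap}$ as a countable combination of ``subset'' conditions. The only cosmetic difference is that the paper approximates from inside $U$ (using basic open sets $W_n$ with $\overline{W_n}\subseteq U$ and writing $f^{-1}(\langle U\rangle^\cap)=\bigcup_n P\setminus\{p:f(p)\subseteq X\setminus\overline{W_n}\}$), while you approximate $X\setminus U$ from outside by the open sets $V_n$; both produce a countable union of complements of sets of the required form.
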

\begin{proof}
The sets of the form
\[
\{F\in\mathcal K(X)\colon F\subseteq W\}\;\;\;\text{ and }\;\;\;\{F\in\mathcal K(X)\colon F\cap W\neq\emptyset\},
\]
where $W$ ranges over all open subsets of $X$, form a subbasis of the topology of $\mathcal K(X)$. So we only need to check that $f^{-1}(U)$ is a $\boldsymbol{\Sigma}_{\alpha+1}^0$-set for every open set $U$ of one of these forms. For the first case this follows immediately from the assumptions and for the second case, if $\{W_n\colon n\in\Nat\}$ is an open basis for the topology of $X$, we have
\[
f^{-1}(\{F\in\mathcal K(X)\colon F\cap W\neq\emptyset\}) = \bigcup _{n\in\Nat \text{ such that $\overline{W_n}\subseteq W$}} P\setminus \{p\in P\colon f(p)\subseteq X\setminus \overline{W_n}\},
\]
which, by the assumptions, is the countable union of sets from  $\boldsymbol{\Sigma}_\alpha^0(P)\cup\boldsymbol{\Pi}_\alpha^0(P)$.
\end{proof}

\begin{lemma}\label{lem:slozitost_koule}
For every $\nu\in\mathcal P$ we can choose a countable dense subset $\{x_n^\nu\colon n\in\mathbb N\}$ of $B_{X_\nu}$ in such a way that the mapping $\Omega\colon\mathcal P\rightarrow\mathcal K(B_{l_\infty},w^*)$ given by
\[
\Omega(\nu)=\left\{(a_n)_{n=1}^\infty\in B_{l_\infty}\colon M\subseteq\mathbb N\textnormal{ finite}  \Rightarrow\left|\sum\limits_{n\in M}a_n\right|\le \nu\left(\sum\limits_{n\in M}x_n^\nu\right)\right\}
\]
is continuous.
\end{lemma}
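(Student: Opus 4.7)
My plan is to choose the dense subsets so that they depend continuously on $\nu$ and then establish continuity of $\Omega$ as the combination of upper and lower semicontinuity in the Vietoris topology on $\mathcal{K}(B_{\ell_\infty},w^*)$. Fix a bijective enumeration $(v_n)_{n=1}^\infty$ of $V$ and, for every $\nu\in\PP$, set $x_n^\nu:=v_n/\max\{1,\nu(v_n)\}$, viewed as an element of the unit ball $B_{X_\nu}$. A routine approximation yields density of $\{x_n^\nu\setsep n\in\Nat\}$ in $B_{X_\nu}$: given $y\in B_{X_\nu}$ and $\varepsilon>0$, pick $v\in V$ with $\nu(v-y)<\varepsilon/2$, so that $\nu(v)\le 1+\varepsilon/2$, and then $v/\max\{1,\nu(v)\}$ lies within $\varepsilon$ of $y$. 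Lemma~\ref{lem:ballIsOmega} then applies and identifies $\Omega(\nu)$ with the isometric image of $B_{X_\nu^*}$ in $B_{\ell_\infty}$.

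The key preliminary observation is that for every finite $M\subseteq\Nat$ the map $\nu\mapsto\nu(\sum_{n\in M}x_n^\nu)$ is continuous on $\PP$: writing $c_n(\nu):=1/\max\{1,\nu(v_n)\}$, one has $\sum_{n\in M}x_n^\nu=\sum_{n\in M}c_n(\nu)v_n$, which sits in a fixed finite-dimensional subspace of $c_{00}$ with continuous coefficients, and if $\nu_k\to\nu$ in $\PP$ then the restrictions of $\nu_k$ to this finite-dimensional subspace are uniformly equicontinuous (bounded by a fixed multiple of the coordinate sum) and converge pointwise on its dense $\Rat$-subspace, hence uniformly on compact sets. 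Upper semicontinuity of $\Omega$ is then immediate: the graph $\{(a,\nu)\setsep a\in\Omega(\nu)\}$ is the intersection over finite $M\subseteq\Nat$ of the closed sets $\{(a,\nu)\setsep |\sum_{n\in M}a_n|\le\nu(\sum_{n\in M}x_n^\nu)\}$ in $B_{\ell_\infty}\times\PP$, hence closed, and combined with $w^*$-compactness of $B_{\ell_\infty}$ this gives that $\{\nu\setsep\Omega(\nu)\cap F\ne\emptyset\}$ is closed for every closed $F\subseteq B_{\ell_\infty}$.

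The principal obstacle will be lower semicontinuity. Given $\nu_k\to\nu$ and a point $(a_n)\in\Omega(\nu)$, which by Lemma~\ref{lem:ballIsOmega} corresponds to a $\Rat$-linear functional $x^*\colon V\to\Rea$ with $|x^*(v)|\le\nu(v)$, I must produce $(a_n^{(k)})\in\Omega(\nu_k)$ with $a_n^{(k)}\to a_n$ for each fixed $n$. My strategy is to reduce, for each $N$, to a finite-dimensional problem on $F_N:=\qeSpan\{v_1,\ldots,v_N\}$: the value vector $(x^*(v_1),\ldots,x^*(v_N))$ lies in the polar $\Lambda_\nu^N\subseteq\Rea^N$ of the $\nu$-unit ball of $F_N$, and I want to find a nearby point in $\Lambda_{\nu_k}^N$. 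Since $\nu_k|_{F_N}\to\nu|_{F_N}$ uniformly on bounded subsets of the finite-dimensional $F_N$, the corresponding unit balls (and hence their polars) converge in the Kuratowski sense, so every point of $\Lambda_\nu^N$ is approximable by points of $\Lambda_{\nu_k}^N$. Extending the corresponding $\Rat$-linear functional from $F_N$ to all of $V$ by Hahn--Banach (preserving the $\nu_k$-bound) and diagonalizing in $N$ then yields the required sequence. The delicate point, where I expect the argument to require the most care, is the Kuratowski continuity of the polar operation in the degenerate case when $\nu|_{F_N}$ has a nontrivial kernel: $B_{\nu|_{F_N}}$ is then unbounded, $\Lambda_\nu^N$ collapses to a proper lower-dimensional slab, and one must verify that $\Lambda_{\nu_k}^N$ (which is generically full-dimensional) shrinks to $\Lambda_\nu^N$ in the collapsed directions, which it does thanks to the automatic vanishing of $x^*$ on $\ker(\nu|_{F_N})$.
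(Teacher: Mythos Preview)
Your proposal is correct and shares with the paper both the choice of dense set $x_n^\nu=v_n/\max\{1,\nu(v_n)\}$ and the closed-graph argument for upper semicontinuity. The difference lies in how lower semicontinuity is handled. The paper works directly in the topology of $\PP$: given $\nu$ and a point $(a_n)\in\Omega(\nu)\cap U$ coming from $x^*\in B_{X_\nu^*}$, it writes $E=\Span\{v_1,\dots,v_N\}=F\oplus G$ with $F=\ker(\nu|_E)$, defines an explicit basic open neighbourhood $\mathcal O$ of $\nu$ (controlling $\nu'$ multiplicatively on $G$ and forcing $\nu'(P_Fv_n)$ small), and for each $\nu'\in\mathcal O$ sets $y^*=(1-\delta)x^*$ on $G$ and Hahn--Banach extends, finishing with a direct estimate of $|b_n-a_n|$. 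Your route is sequential: you invoke Kuratowski convergence of the finite-dimensional polars $\Lambda_{\nu_k}^N$ to $\Lambda_\nu^N$, then diagonalise in $N$. The ``delicate point'' you flag---the degenerate case with nontrivial kernel---unpacks to exactly the paper's decomposition: one needs both that $x^*$ vanishes on $\ker(\nu|_{F_N})$ \emph{and} that $\nu_k\to 0$ there (so the Hahn--Banach extension is automatically small on the kernel), together with $\nu_k\ge(1-\delta)\nu$ on the complement. Your abstract framing is cleaner to state; the paper's explicit neighbourhood construction makes the quantitative dependence transparent and avoids the diagonalisation step.
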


\begin{proof}
First of all, we describe the choice of the sets $\{x_n^\nu\colon n\in\mathbb N\}$, $\nu\in\mathcal P$. Let $g\colon[0,\infty)\rightarrow[1,\infty)$ be given by $g(t)=1$ for $t\le 1$ and $g(t)=t$ for $t>1$. Let $\{v_n\colon n\in\mathbb N\}$ be an enumeration of all elements of the vector space $V$ (which is naturally embedded into all Banach spaces $X_\nu$, $\nu\in\mathcal P$). Now for every $\nu\in\mathcal P$ and every $n\in\mathbb N$ we define $x_n^\nu\in B_{X_\nu}$ by $x_n^\nu=\frac {v_n}{g(\nu(v_n))}$. Then for every $\nu\in\mathcal P$ we have that $\{x_n^\nu\colon n\in\mathbb N\}$ is a dense subset of $B_{X_\nu}$. Note also that the set
\[
\left\{\left(\nu,(a_n)_{n=1}^\infty\right)\in\mathcal P\times B_{l_\infty}\colon\left|\sum\limits_{n\in M}a_n\right|>\nu\left(\sum\limits_{n\in M}x_n^\nu\right)\right\}
\]
is open in $\mathcal P\times(B_{l_\infty},w^*)$ for every $M\subseteq \Nat$ finite (the proof is easy and is omitted).

Pick an open subset $U$ of $B_{\ell_{\infty}}$. We have
\begin{align*}
&\Omega^{-1}\left(\{F\in\mathcal K(B_{l_\infty})\colon F\subseteq U\}\right)\\
=&\left\{\nu\in\mathcal P\colon\mathop{\forall}\limits_{(a_n)_{n=1}^\infty\in B_{l_\infty}}\left(\left(\mathop{\exists}\limits_{\substack{M\subseteq\mathbb N\\ \text{finite}}}\left|\sum\limits_{n\in M}a_n\right|>\nu\left(\sum\limits_{n\in M}x_n^\nu\right)\right)\text{ or }\left((a_n)_{n=1}^\infty\in U\right) \right)\right\}.
\end{align*}
The complement of the last set is the projection of a closed subset of $\mathcal P\times(B_{l_\infty},w^*)$ onto the first coordinate. As the space $(B_{l_\infty},w^*)$ is compact, the complement is a closed subset of $\mathcal P$.

It remains to show that the set $\{\nu\in\PP\colon \Omega(\nu)\cap U\neq\emptyset\}$ is open. Pick $\nu\in \PP$ with $\Omega(\nu)\cap U\neq\emptyset$. By Lemma~\ref{lem:ballIsOmega}, there exists $x^*\in B_{X_\nu^*}$ such that the sequence $(a_n)_{n=1}^\infty$ given by $a_n=x^*(x_n^\nu)$, $n\in\Nat$, satisfies $(a_n)_{n=1}^\infty\in\Omega(\nu)\cap U$. Let $\varepsilon>0$ and $N\in\Nat$ be such that $(b_n)_{n=1}^\infty\in \ell_\infty$ is an element of $U$ whenever $|b_n-a_n|<\varepsilon$ for every $1\leq n\leq N$. Let us consider subspaces of $c_{00}$ given as $E=\Span\{v_1,\ldots,v_N\}$ and $F=\{x\in E\colon \nu(x)=0\}$. Let $G$ be such that $F\oplus G = E$ and $\overline{G\cap V}=G$ (it is enough to pick a basis of $E$ consisting of vectors from $V$, and using the Gauss elimination to determine which vectors from the basis generate the algebraic complement to $F$). Let $P_F:E\to F$ and $P_G:E\to G$ be linear projections onto $F$ and $G$, respectively.  Pick $\delta < \min\{1,\tfrac{\varepsilon}{3}\}$ such that $ \delta \cdot | x^{*}(P_{G}v_{n}) | < \varepsilon / 3 $ for every $1\leq n\leq N$. Finally, put
\[\begin{split}
\O:=& \{\nu'\in\PP\colon \frac{1}{1-\delta}\nu(x)>\nu'(x)>(1-\delta)\nu(x)\text{ for every $x\in G\setminus \{ 0 \}$}\}\cap\\
& \qquad \bigcap_{n=1}^N\{\nu'\in\PP\colon \nu'(P_{F}v_{n}) < \delta, | \nu'(v_{n}) - \nu(v_{n}) | < \delta\}.
\end{split}\]
Then $\O$ is an open neighborhood of $\nu$, which easily follows from Lemma~\ref{lem:infiniteDimIsGDelta} and the fact that $G\cap V$ is dense in $G$.

We will show that $\O\subseteq \{\nu'\in\PP\colon \Omega(\nu')\cap U\neq\emptyset\}$. Pick $\nu'\in\O$. If we put $y^*(x):=(1-\delta)x^*(x)$ for $x\in G$, then $|y^*(x)|=(1-\delta)|x^*(x)| \leq (1 - \delta) \nu(x) \leq \nu'(x) $ for every $ x \in G $ and so by the Hahn-Banach theorem we may extend $y^*$ to a functional (denoted again by $y^*$) from the dual unit ball of $X_{\nu'}$. By Lemma~\ref{lem:ballIsOmega}, the sequence $(b_n)_{n=1}^\infty$ given by $b_n:=y^*(x_n^{\nu'})$, $n\in\Nat$, is in $\Omega(\nu')$. Moreover, for every $1\leq n\leq N$ we have
\[\begin{split}
|b_n-a_n| & = \big| \tfrac{1}{g(\nu'(v_{n}))} y^{*}(v_{n}) - \tfrac{1}{g(\nu(v_{n}))} x^{*}(v_{n}) \big|\\
& = \big| \tfrac{1}{g(\nu'(v_{n}))} y^{*}(P_{F}v_{n}) + \tfrac{1}{g(\nu'(v_{n}))} y^{*}(P_{G}v_{n}) - \tfrac{1}{g(\nu(v_{n}))} x^{*}(P_{F}v_{n} + P_{G}v_{n}) \big|\\
& = \big| \tfrac{1}{g(\nu'(v_{n}))} y^{*}(P_{F}v_{n}) + \tfrac{1}{g(\nu'(v_{n}))} (1-\delta)x^*(P_{G}v_{n}) - \tfrac{1}{g(\nu(v_{n}))} x^{*}(P_{G}v_{n}) \big|\\
& \leq \tfrac{1}{g(\nu'(v_{n}))} |y^{*}(P_{F}v_{n})| + \big| \tfrac{1}{g(\nu'(v_{n}))} (1 - \delta) - \tfrac{1}{g(\nu(v_{n}))} \big| | x^{*}(P_{G}v_{n}) |\\
& \leq \delta + \big(|g(\nu(v_{n})) - g(\nu'(v_{n}))| + \delta\big)|x^{*}(P_{G}v_{n})|\\
& \leq \delta + 2\delta |x^*(P_{G}v_{n})| < \varepsilon,
\end{split}\]
and so $(b_n)_{n=1}^\infty\in\Omega(\nu')\cap U$. Hence, $\O\subseteq \{\nu'\in\PP\colon \Omega(\nu')\cap U\neq\emptyset\}$, so $\{\nu\in\PP\colon \Omega(\nu)\cap U\neq\emptyset\}$ is an open set and $\Omega$ is a continuous mapping.
\end{proof}

We close this subsection by realizing that the mapping which assigns to every compact subset of $B_{\ell_\infty}$ its Szlenk derivative is Borel. Let us note that the result is almost optimal as the mapping from Lemma~\ref{lem:slozitost_derivace} is not $F_\sigma$-measurable, see Corollary~\ref{cor:lowerBoundDerivative}.

\begin{lemma}
\label{lem:slozitost_derivace}
For every $\varepsilon>0$, the function $s_\varepsilon\colon\mathcal K(B_{l_\infty},w^*)\rightarrow\mathcal K(B_{l_\infty},w^*)$ given by $s_\varepsilon(F)=F'_\varepsilon$ is $\boldsymbol{\Sigma}_3^0$-measurable.
\end{lemma}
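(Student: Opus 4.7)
The plan is to reduce the claim, via Lemma~\ref{lem:meritelnost} applied with $\alpha = 2$, to showing that for every $w^*$-open set $W \subseteq B_{\ell_\infty}$ the set $\{F \in \mathcal K(B_{\ell_\infty},w^*) : s_\varepsilon(F) \subseteq W\}$ is $F_\sigma$. Fix a countable base $\mathcal B$ of the $w^*$-topology on $B_{\ell_\infty}$. The definition of $F'_\varepsilon$, together with $w^*$-compactness of $F \setminus W$, will yield the following reformulation: $F'_\varepsilon \subseteq W$ holds if and only if there exist $n \in \Nat$ and $U_1, \ldots, U_n \in \mathcal B$ with $F \subseteq W \cup \bigcup_{i=1}^n U_i$ and $\operatorname{diam}(U_i \cap F) < \varepsilon$ for every $i \leq n$ (the diameter being taken in the $\ell_\infty$-norm, which is the relevant norm by the isometric coding from Lemma~\ref{lem:slozitost_koule}). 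Consequently,
\[
\{F : F'_\varepsilon \subseteq W\} \;=\; \bigcup_{n \in \Nat} \bigcup_{U_1, \ldots, U_n \in \mathcal B} \Bigl(\{F : F \subseteq W \cup \bigcup_i U_i\} \cap \bigcap_{i=1}^n A_{U_i}\Bigr),
\]
where $A_U := \{F \in \mathcal K : \operatorname{diam}(U \cap F) < \varepsilon\}$.

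The main step is to show that each set $A_U$ is $F_\sigma$ in $\mathcal K$. The essential observation is that the distance function $(x,y) \mapsto \|x - y\|_\infty$ is $w^*$-lower semicontinuous on $B_{\ell_\infty}^2$, since $\|x - y\|_\infty = \sup_n |x_n - y_n|$ is a supremum of $w^*$-continuous functions. Therefore, for every $\delta > 0$, the set $O_{U,\delta} := (U \times U) \cap \{(x,y) : \|x - y\|_\infty > \delta\}$ is $w^*$-open in $B_{\ell_\infty}^2$, and since the map $F \mapsto F \times F$ from $\mathcal K(B_{\ell_\infty})$ to $\mathcal K(B_{\ell_\infty}^2)$ is continuous, the set
\[
\{F : \operatorname{diam}(U \cap F) > \delta\} \;=\; \{F : (F \times F) \cap O_{U,\delta} \neq \emptyset\}
\]
is open in $\mathcal K$. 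Intersecting over $\delta = \varepsilon - \tfrac{1}{n}$ exhibits $\{F : \operatorname{diam}(U \cap F) \geq \varepsilon\}$ as $G_\delta$, and so $A_U$ is $F_\sigma$, as required.

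Putting the pieces together, each summand in the displayed union is the intersection of a $w^*$-open set in $\mathcal K$ with finitely many $F_\sigma$ sets, hence is $F_\sigma$; the whole countable union is therefore $F_\sigma$, which is exactly what Lemma~\ref{lem:meritelnost} needs in order to conclude $\boldsymbol{\Sigma}_3^0$-measurability of $s_\varepsilon$. The step I expect to be the main obstacle is the diameter analysis inside the merely open set $U$: both the failure of the $\ell_\infty$-norm to be $w^*$-continuous on $B_{\ell_\infty}$ and the failure of $U \cap F$ to be closed force one to separate strict from non-strict diameter inequalities and to pass to a countable intersection; this is exactly the reason why the bound obtained is $F_\sigma$ rather than closed, and ultimately why the complexity of $s_\varepsilon$ comes out as $\boldsymbol{\Sigma}_3^0$ instead of $\boldsymbol{\Sigma}_2^0$.
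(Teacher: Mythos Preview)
Your proposal is correct and follows essentially the same route as the paper: reduce via Lemma~\ref{lem:meritelnost} to showing that $\{F:F'_\varepsilon\subseteq W\}$ is $F_\sigma$, use compactness to rewrite this as a countable union over finite subfamilies of a basis, and then prove that each $A_U=\{F:\operatorname{diam}(U\cap F)<\varepsilon\}$ is $F_\sigma$. The only cosmetic difference is in the verification that $A_U$ is $F_\sigma$: the paper writes $\{F:\operatorname{diam}(U\cap F)\leq\varepsilon-\tfrac1k\}$ as an intersection over pairs of open subsets $O_1,O_2\subseteq U$ with $\operatorname{dist}(O_1,O_2)\geq\varepsilon-\tfrac1k$ of the closed sets $\{F:F\cap O_1=\emptyset\text{ or }F\cap O_2=\emptyset\}$, whereas you pass through the continuous map $F\mapsto F\times F$ and the $w^*$-lower semicontinuity of $\|\cdot\|_\infty$; these are two phrasings of the same observation.
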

\begin{proof}
First, we claim that the set
\[
\{F\in\mathcal K(B_{l_\infty})\colon\text{diam}(U\cap F)<\varepsilon\}
\]
is an $F_\sigma$-set for every open subset $U$ of $B_{l_\infty}$.
Indeed, the set above equals
\begin{align*}
&\bigcup_{k=1}^\infty\{F\in\mathcal K(B_{l_\infty})\colon\text{diam}(U\cap F)\le\varepsilon-\tfrac 1k\}\\
=&\bigcup_{k=1}^\infty\;\bigcap_{\substack{O_1,O_2\text{ open subsets of }U\\ \text{dist}(O_1,O_2)\ge\varepsilon-\tfrac 1k}}\;\{F\in\mathcal K(B_{l_\infty})\colon F\cap O_1=\emptyset\text{ or }F\cap O_2=\emptyset\},
\end{align*}
and our claim immediately follows.

Now let $W$ be an open subset of $B_{l_\infty}$. Let $\{U_n\colon n\in\mathbb N\}$ be an open basis for the weak* topology of $B_{l_\infty}$. Then we have (using a compactness argument in the last equality) that 
\begin{align*}
&s_\varepsilon^{-1}(\{F\in\mathcal K(B_{l_\infty})\colon F\subseteq W\})\\
=&\{F\in\mathcal K(B_{l_\infty})\colon\mathop{\exists}\limits_{M\subseteq\mathbb N}\big(\big(\mathop{\forall}\limits_{n\in M}\text{diam}(U_n\cap F)<\varepsilon\big)\text{ and }\big(F\subseteq W\cup\bigcup_{n\in M}U_n\big)\big)\}\\
=&\{F\in\mathcal K(B_{l_\infty})\colon\mathop{\exists}\limits_{\substack{M\subseteq\mathbb N\\ \text{finite}}}\big(\big(\mathop{\forall}\limits_{n\in M}\text{diam}(U_n\cap F)<\varepsilon\big)\text{ and }\big(F\subseteq W\cup\bigcup_{n\in M}U_n\big)\big)\},
\end{align*}
and our previous claim implies that the last set is an $F_\sigma$-set.

Thus, by Lemma~\ref{lem:meritelnost}, the mapping $s_\varepsilon$ is $\boldsymbol{\Sigma}_3^0$-measurable.
\end{proof}

\subsection{The space $c_0$}\label{sect:c0}
The main goal of this subsection is to prove the following.
\begin{proposition}\label{prop:classOfc0}
$\isomtrclass{c_0}$ is an $F_{\sigma \delta}$-set in $\PP_\infty$.
\end{proposition}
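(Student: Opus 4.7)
The plan is to combine the isometric characterization of $c_0$ among separable $\mathcal L_{\infty,1+}$-spaces from Theorem~\ref{thm:characterizationOfc0} with the Borel measurability results established in Subsection~\ref{subsection:szlenkDerivative}.

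First I would use Theorem~\ref{thm:scriptLp} applied with $p=\infty$ and $\lambda=1$ to observe that
\[
\mathcal F:=\{\nu\in\PP_\infty : X_\nu \text{ is an }\mathcal L_{\infty,1+}\text{-space}\}
\]
is a $G_\delta$-subset of $\PP_\infty$. Since $c_0$ is an $\mathcal L_{\infty,1+}$-space, we have $\isomtrclass[\PP_\infty]{c_0}\subseteq\mathcal F$, so it suffices to show that $\isomtrclass{c_0}$ is a relatively $F_{\sigma\delta}$ subset of $\mathcal F$.

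Next I would unwind the isometric characterization from Theorem~\ref{thm:characterizationOfc0} into a descriptive-set-theoretic condition on $\nu\in\mathcal F$. The characterization is stated in terms of iterated $\varepsilon$-Szlenk derivatives of the dual unit ball $B_{X_\nu^*}$. By Lemma~\ref{lem:slozitost_koule} the assignment $\Omega:\PP\to\mathcal K(B_{\ell_\infty})$ realizing $B_{X_\nu^*}$ as a weak*-compact subset of $B_{\ell_\infty}$ is continuous, and by the proof of Lemma~\ref{lem:slozitost_derivace}, for each $\varepsilon>0$ and every weak*-open $W\subseteq B_{\ell_\infty}$, the preimage $s_\varepsilon^{-1}(\{F:F\subseteq W\})$ is $F_\sigma$ in $\mathcal K(B_{\ell_\infty})$. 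In particular, since $\{\emptyset\}=\{F:F\subseteq\emptyset\}$, the set $\{\nu\in\PP_\infty : s_\varepsilon(\Omega(\nu))=\emptyset\}$ is $F_\sigma$. The characterization in Theorem~\ref{thm:characterizationOfc0} should then take the form of a countable intersection (over $\varepsilon\in\Rat^+$) of a countable union (over an iteration index $n\in\Nat$) of such emptiness conditions on iterated derivatives, yielding the desired $F_{\sigma\delta}$ upper bound.

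The main obstacle I foresee is to keep the descriptive complexity of the iteration under control: naïve composition of $\boldsymbol\Sigma^0_3$-measurable maps like $s_\varepsilon$ can raise the class above $\boldsymbol\Sigma^0_3$. The way around this is to exploit the sharper form of Lemma~\ref{lem:slozitost_derivace} visible in its proof, namely that $\{F: s_\varepsilon^n(F)=\emptyset\}$ can be rewritten via a compactness argument as the existence of finitely many weak*-open sets $U_1,\ldots,U_k$ of diameter $<\varepsilon$ whose union covers the $(n-1)$-th Szlenk derivative of $F$. Iterating this reformulation keeps each individual step within $F_\sigma$, and assembling the union over $n$ and the intersection over the countable set of rational $\varepsilon$'s gives $F_{\sigma\delta}$, which, intersected with the $G_\delta$ set $\mathcal F$, proves the proposition.
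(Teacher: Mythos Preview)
Your proposal misreads Theorem~\ref{thm:characterizationOfc0}. That theorem does \emph{not} involve iterated Szlenk derivatives at all: it says that a separable $\mathcal L_{\infty,1+}$-space $X$ is isometric to $c_0$ if and only if, for a single fixed $\varepsilon\in(0,1)$, one has the \emph{equality of sets}
\[
(B_{X^*})'_{2\varepsilon}=(1-\varepsilon)B_{X^*}.
\]
There is no ``countable union over an iteration index $n$'' and no ``emptiness condition on iterated derivatives'' to extract from it. What you describe is essentially the condition $Sz(X)=\omega$, and among $\mathcal L_{\infty,1+}$-spaces this does \emph{not} pin down $c_0$ up to isometry: for instance, the space $c$ of convergent sequences is an $\mathcal L_{\infty,1+}$-space with $c^*\equiv\ell_1$ and $Sz(c)=\omega$, yet $c\not\equiv c_0$. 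So even if your bookkeeping of Borel classes worked, the set you would be computing is strictly larger than $\isomtrclass{c_0}$.

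The correct argument, once you have the right statement of Theorem~\ref{thm:characterizationOfc0}, is shorter than you anticipate. Fixing $\varepsilon=\tfrac12$, one needs only the set
\[
\isomtrclass{c_0}=\mathcal F\cap\bigl\{\nu\in\PP:\bigl(\tfrac12\Omega(\nu),\,s_1(\Omega(\nu))\bigr)\in\triangle\bigr\},
\]
where $\triangle$ is the diagonal in $\mathcal K(B_{\ell_\infty})\times\mathcal K(B_{\ell_\infty})$. Since $\Omega$ is continuous (Lemma~\ref{lem:slozitost_koule}) and $s_1$ is $\boldsymbol\Sigma^0_3$-measurable (Lemma~\ref{lem:slozitost_derivace}), the displayed map is $\boldsymbol\Sigma^0_3$-measurable, and the preimage of the closed set $\triangle$ is $\boldsymbol\Pi^0_3=F_{\sigma\delta}$. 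No iteration is needed, so your worry about composing $\boldsymbol\Sigma^0_3$-measurable maps never arises.
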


Our estimate on the Borel complexity of the isometry class of $c_0$ is based on an isometric characterization of $c_0$ among $\mathcal{L}_{\infty,1+}$-spaces. Let us recall that $\mathcal{L}_{\infty,1+}$-spaces are often called the \emph{Lindenstrauss spaces} or \emph{$L_1$ predual spaces}. There are many different characterizations of this class of spaces. Let us recall one which we will use further, see e.g. \cite[p. 232]{LaceyBook} (the ``in particular'' part follows from the easy part of Theorem~\ref{thm:scriptLpCharacterization} applied to $X^*$ and the fact that $L_1[0,1]$ is not isomorphic to a subspace of a separable dual Banach space, see e.g. \cite[Theorem 6.3.7]{albiacKniha}).

\begin{theorem}\label{thm:scriptLInfty}
Let $X$ be a Banach space. Then the following conditions are equivalent.
\begin{enumerate}[(i)]
    \item $X$ is an $\mathcal{L}_{\infty,1+}$-space.
    \item $X^*$ is isometric to $L_1(\mu)$ for some measure $\mu$.
\end{enumerate}
In particular, if $X$ is an $\mathcal{L}_{\infty,1+}$-space with $X^*$ separable then $X^*$ is isometric to $\ell_1$.
\end{theorem}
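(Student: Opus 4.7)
The plan is to deduce the equivalence from the Lindenstrauss--Pe{\l}czy\'nski local duality principle for $\mathcal{L}_{p,\lambda+}$-spaces combined with Theorem~\ref{thm:scriptLpCharacterization}, which already classifies separable $\mathcal{L}_{1,1+}$-spaces. The main work has already been done in the papers of Lindenstrauss--Pe{\l}czy\'nski (from which the cited Theorem~\ref{thm:scriptLpCharacterization} is taken), so the proof should consist in assembling the pieces and handling the ``in particular'' clause carefully.

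For the direction (i)$\Rightarrow$(ii), I would argue as follows. If $X$ is an $\mathcal{L}_{\infty,1+}$-space, then for every finite-dimensional $F\subseteq X$ and every $\varepsilon>0$ there is $F\subseteq E\subseteq X$ with $d_{BM}(E,\ell_\infty^{\dim E})<1+\varepsilon$. Dualizing, any finite-dimensional subspace $G\subseteq X^*$ can be $(1+\varepsilon)$-approximated by a quotient of $E^*$, which in turn is $(1+\varepsilon)$-isomorphic to $\ell_1^{\dim E}$; an application of the principle of local reflexivity then produces a finite-dimensional $H\subseteq X^*$ containing $G$ with $d_{BM}(H,\ell_1^{\dim H})<1+O(\varepsilon)$. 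Hence $X^*$ is an $\mathcal{L}_{1,1+}$-space, and by the Lindenstrauss--Pe{\l}czy\'nski result used to prove Theorem~\ref{thm:scriptLpCharacterization} ($p=1$), $X^*$ is isometric to an $L_1(\mu)$-space.

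For (ii)$\Rightarrow$(i), suppose $X^*\equiv L_1(\mu)$. Then $X^*$ is an $\mathcal{L}_{1,1+}$-space. The Lindenstrauss--Wulbert--Grothendieck machinery (equivalently, the standard local duality for $\mathcal{L}_p$-spaces) gives that $X$ is then an $\mathcal{L}_{\infty,1+}$-space: fix a finite-dimensional $F\subseteq X$ and $\varepsilon>0$; using local reflexivity and the $\mathcal{L}_{1,1+}$ structure on $X^*$, together with the fact that the dual of an $\ell_1^n$-like space is $\ell_\infty^n$-like, one produces a finite-dimensional $E$ with $F\subseteq E\subseteq X$ and $d_{BM}(E,\ell_\infty^{\dim E})<1+\varepsilon$. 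This is the direction that requires the heaviest technical input; the main obstacle is to ensure that the finite-dimensional $\ell_\infty^n$-structure can be realized \emph{inside} $X$ rather than merely in $X^{**}$, which is precisely what local reflexivity provides.

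For the ``in particular'' part, assume $X$ is $\mathcal{L}_{\infty,1+}$ with $X^*$ separable. By the equivalence, $X^*$ is isometric to a separable $L_1(\mu)$-space. Apply Theorem~\ref{thm:scriptLpCharacterization} with $p=1$: $X^*$ must be isometric to one of $L_1[0,1]$, $L_1[0,1]\oplus_1\ell_1$, $\ell_1$, or $L_1[0,1]\oplus_1\ell_1^n$ for some $n\in\Nat$. In the first, second and fourth cases $X^*$ contains an isometric copy of $L_1[0,1]$, and hence $L_1[0,1]$ would embed isomorphically into the separable dual $X^*$, contradicting the classical fact (see e.g.\ \cite[Theorem 6.3.7]{albiacKniha}) that $L_1[0,1]$ is not isomorphic to any subspace of a separable dual Banach space. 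Therefore $X^*\equiv\ell_1$, as claimed.
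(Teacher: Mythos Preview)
Your proposal is correct and follows essentially the same route as the paper. The paper does not actually prove the equivalence (i)$\Leftrightarrow$(ii) but simply cites it as a classical result (Lacey's book, p.~232), whereas you sketch the standard Lindenstrauss--Pe{\l}czy\'nski duality argument behind it; for the ``in particular'' clause your argument is identical to the paper's---apply Theorem~\ref{thm:scriptLpCharacterization} with $p=1$ to the separable space $X^*$ and rule out the three options containing $L_1[0,1]$ via the fact that $L_1[0,1]$ does not embed into a separable dual. One small caveat: in your (i)$\Rightarrow$(ii) step you need the \emph{general} (not necessarily separable) version of the Lindenstrauss--Pe{\l}czy\'nski characterization of $\mathcal{L}_{1,1+}$-spaces, since $X^*$ is not assumed separable there; Theorem~\ref{thm:scriptLpCharacterization} as stated covers only the separable case, so you should cite the original result directly at that point.
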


The isometric characterization of $c_0$ which we use for our upper estimate follows.

\begin{theorem}\label{thm:characterizationOfc0}
Let $ X $ be a separable $ \mathcal{L}_{\infty, 1+} $-space and let $ 0 < \varepsilon < 1 $. Then $ X $ is isometric to $ c_{0} $ if and only if
$$ (B_{X^{*}})'_{2\varepsilon} = (1 - \varepsilon) B_{X^{*}}. $$
\end{theorem}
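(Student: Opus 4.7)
The plan for the \emph{only if} direction is a direct computation of $(B_{\ell_1})'_{2\varepsilon}$ in the weak* topology induced by $c_0$. If $x^* \in B_{\ell_1}$ satisfies $\|x^*\|_1 \le 1-\varepsilon$, every basic weak*-neighborhood $U$ of $x^*$ contains $x^* \pm \varepsilon e_n$ for all sufficiently large $n$; both lie in $B_{\ell_1}$ because $\|x^*\|_1 + \varepsilon \le 1$ and are at distance $2\varepsilon$, so $\operatorname{diam}(U \cap B_{\ell_1}) \ge 2\varepsilon$. If instead $\|x^*\|_1 > 1-\varepsilon$, pick $N$ so that the tail $\sum_{n > N}|x^*(n)|$ is small and restrict the first $N$ coordinates tightly; a short estimate shows the resulting weak*-neighborhood intersected with $B_{\ell_1}$ has diameter strictly less than $2\varepsilon$.

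For the converse I would proceed in three stages. \emph{Stage 1.} By Theorem~\ref{thm:scriptLInfty} we have $X^* \equiv L_1(\mu)$ for some measure $\mu$; the extreme points of $B_{L_1(\mu)}$ are of the form $\pm\chi_A/\mu(A)$ for atoms $A$ and are pairwise at norm distance $2$. Each extreme point has norm $1 > 1-\varepsilon$ and so, by hypothesis, admits a weak*-neighborhood of norm-diameter $< 2\varepsilon < 2$; such a neighborhood contains no other extreme point, so $\operatorname{ext}(B_{X^*})$ is weak*-discrete and, because $X$ is separable and $(B_{X^*},w^*)$ is compact metrizable, countable. The same isolation reasoning rules out a non-atomic part of $\mu$, yielding $X^* \equiv \ell_1$. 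Let $T \colon X^* \to \ell_1$ be the isometry and $f_n := T^{-1}(e_n)$.

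\emph{Stage 2.} This is the heart of the argument: showing $f_n \to 0$ weak* in $X^*$. Suppose otherwise; passing to a subsequence, $f_{n_k} \to y^*$ weak* with $y^* \ne 0$. Since every weak*-neighborhood of $y^*$ contains infinitely many pairwise $2$-separated $f_{n_k}$, we have $y^* \in (B_{X^*})'_{2\varepsilon} = (1-\varepsilon)B_{X^*}$, so $t := \|y^*\| \in (0, 1-\varepsilon]$. Choose $\alpha$ strictly inside the non-empty interval
\[
\Big(\tfrac{\varepsilon t}{1-\varepsilon},\ \tfrac{\varepsilon t}{1-\varepsilon-t}\Big),
\]
with the upper endpoint read as $+\infty$ when $t = 1-\varepsilon$, and set $h_k := y^* + \alpha f_{n_k}$. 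Writing $T(y^*) = (a_j)_j \in \ell_1$ and using $a_{n_k} \to 0$, one checks $\|h_k\| \to t + \alpha$; hence $\tilde h_k := h_k/\|h_k\| \in S_{X^*}$ converges weak* to $y_1^* := (1+\alpha)y^*/(t+\alpha)$, whose norm $(1+\alpha)t/(t+\alpha)$ strictly exceeds $1-\varepsilon$, while $\|\tilde h_k - \tilde h_l\| \to 2\alpha/(t+\alpha) > 2\varepsilon$. The hypothesis applied to $y_1^*$ supplies a weak*-neighborhood of $y_1^*$ in $B_{X^*}$ of norm diameter $< 2\varepsilon$ eventually containing all $\tilde h_k$, contradicting their asymptotic pairwise separation.

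\emph{Stage 3 and obstacle.} Finally, with $f_n \to 0$ weak*, the map $\Phi \colon X \to c_0$, $\Phi(x) := (f_n(x))_n$, is well-defined; Krein-Milman gives $\|\Phi(x)\|_\infty = \sup_n |f_n(x)| = \sup_{x^* \in \operatorname{ext}(B_{X^*})}|x^*(x)| = \|x\|$, so $\Phi$ is an isometric embedding. Its adjoint $\Phi^* \colon \ell_1 \to X^*$ sends $e_n$ to $f_n$ and hence coincides with $T^{-1}$, a surjective isometry; by duality $\Phi$ is itself surjective, and $X \equiv c_0$. The principal obstacle is the quantitative coordination in Stage 2: the interval of admissible $\alpha$ must be shown non-empty for every $t \in (0, 1-\varepsilon]$, and $\alpha$ must be chosen so as to simultaneously force the weak*-limit to leave the ball of radius $1-\varepsilon$ while keeping the asymptotic separation $2\alpha/(t+\alpha)$ strictly above $2\varepsilon$.
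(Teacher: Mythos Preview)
Your \emph{only if} direction and Stages~2--3 are correct. Stage~1, however, has a genuine gap: the sentence ``the same isolation reasoning rules out a non-atomic part of $\mu$'' is not justified. Weak*-isolation of extreme points tells you only that the atoms of $\mu$ are countably many; it says nothing about a non-atomic summand $\nu$, since $B_{L_1(\nu)}$ contributes no extreme points at all, and there is no obvious way to manufacture the required small-diameter neighborhood at a generic $f\in S_{L_1(\nu)}$ from isolation alone. The paper sidesteps this entirely: iterating the hypothesis gives $(B_{X^*})^{(n)}_{2\varepsilon}=(1-\varepsilon)^n B_{X^*}$, so the Szlenk index of $X$ is $\omega$; this forces $X^*$ to be separable, and then the ``in particular'' clause of Theorem~\ref{thm:scriptLInfty} yields $X^*\equiv\ell_1$ in one line. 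You should replace your Stage~1 by this argument.

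Your Stage~2 is a genuinely different route from the paper's and, once the interval for $\alpha$ is verified non-empty (which you correctly flag), it works cleanly. The paper instead shows that the biorthogonal functionals $e_n^{**}$ are $w^*$-continuous: assuming some $e_n^{**}$ is not, Banach--Dieudonn\'e produces a sequence in $\ker e_n^{**}\cap B_{X^*}$ with a $w^*$-limit outside $\ker e_n^{**}$; after passing to a normalized block sequence $u_l^*\to u^*$ with $e_n^{**}(u^*)>0$, the paper takes the fixed combination $v_l^*=(1-\varepsilon)e_n^*+\varepsilon u_l^*$, so that $\|v_l^*-v_{l'}^*\|=2\varepsilon$ exactly while $\|v^*\|\geq(1-\varepsilon)+\varepsilon\,e_n^{**}(u^*)>1-\varepsilon$. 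Your construction $h_k=y^*+\alpha f_{n_k}$ achieves the same contradiction more directly, at the cost of the quantitative bookkeeping with $\alpha$; the paper's version avoids the parameter by anchoring at a specific extreme point $e_n^*$ and using a \emph{block} sequence to get exact $\ell_1$-distances, but needs the extra machinery of Banach--Dieudonn\'e and a perturbation step to reach that block sequence. Both arguments are valid; yours is shorter once Stage~1 is repaired.
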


\begin{proof}
First, we show that $ (B_{c_{0}^{*}})'_{2\varepsilon} = (1 - \varepsilon) B_{c_{0}^{*}} $ (this must be known but we were unable to find any reference). By a standard argument, $ (1 - \varepsilon) B_{X^{*}} \subseteq (B_{X^{*}})'_{2\varepsilon} $ for any infinite-dimensional $ X $. (Let $ x^{*} \in (1 - \varepsilon) B_{X^{*}} $. Any $ w^{*} $-open set $ U $ containing $ 0 $ contains also both $ y^{*} $ and $ -y^{*} $ for some $ y^{*} \in S_{X^{*}} $, and so $ \mathrm{diam}(U \cap B_{X^{*}}) = 2 $. For this reason, any $ w^{*} $-open set $ V $ containing $ x^{*} $ fulfills $ \mathrm{diam}(V \cap (x^{*} + \varepsilon B_{X^{*}})) = 2\varepsilon $, in particular, $ \mathrm{diam}(V \cap B_{X^{*}}) \geq 2\varepsilon $. This proves that $ x^{*} \in (B_{X^{*}})'_{2\varepsilon} $.)

Let us show that the opposite inclusion takes place for $ X = c_{0} $. Assuming $ 1 - \varepsilon < \Vert x^{*} \Vert \leq 1 $, we need to check that $ x^{*} \notin (B_{c_{0}^{*}})'_{2\varepsilon} $. Let $ e_{1}, e_{2}, \dots $ be the canonical basis of $ c_{0} $. Let $ n $ be large enough that $ \sum_{i=1}^{n} |x^{*}(e_{i})| > 1 - \varepsilon $ and let $ \delta > 0 $ satisfy $ 2 \delta n < [\sum_{i=1}^{n} |x^{*}(e_{i})|] - (1 - \varepsilon) $. Let
$$ U = \{ y^{*} \in c_{0}^{*} : 1 \leq i \leq n \Rightarrow |y^{*}(e_{i}) - x^{*}(e_{i})| < \delta \}. $$
For $ y^{*}, z^{*} \in U \cap B_{c_{0}^{*}} $, we have
$$ \sum_{i=n+1}^{\infty} |y^{*}(e_{i})| = \Vert y^{*} \Vert - \sum_{i=1}^{n} |y^{*}(e_{i})| \leq 1 - \sum_{i=1}^{n} |x^{*}(e_{i})| + \delta n, $$
and the same for $ z^{*} $, thus
$$ \Vert y^{*} - z^{*} \Vert \leq \sum_{i=1}^{n} |y^{*}(e_{i}) - z^{*}(e_{i})| + \sum_{i=n+1}^{\infty} |y^{*}(e_{i})| + \sum_{i=n+1}^{\infty} |z^{*}(e_{i})| $$
$$ \leq 2 \delta n + 2 \Big[ 1 - \sum_{i=1}^{n} |x^{*}(e_{i})| \Big] + 2 \delta n. $$
We get $ \mathrm{diam}(U \cap B_{c_{0}^{*}}) \leq 4 \delta n + 2 [ 1 - \sum_{i=1}^{n} |x^{*}(e_{i})| ] < 2 \varepsilon $.

Now, let us assume that $ X $ satisfies $ (B_{X^{*}})'_{2\varepsilon} = (1 - \varepsilon) B_{X^{*}} $. Clearly, $ X $ is infinite-dimensional, as $ (B_{X^{*}})'_{2\varepsilon} $ is non-empty. Moreover, $ X^{*} $ is separable because the Szlenk index of $X$ is $ \omega $, see e.g. \cite[Proposition 3 and Theorem 1]{L06}. Thus, by Theorem~\ref{thm:scriptLInfty}, the dual $ X^{*} $ is isometric to $ \ell_{1} $. Let $ e^{*}_{1}, e^{*}_{2}, \dots $ be a basis of $ X^{*} $ that is $ 1 $-equivalent to the canonical basis of $ \ell_{1} $, and let $ e^{**}_{1}, e^{**}_{2}, \dots $ be the dual basic sequence in $ X^{**} $. We claim that the functionals $ e^{**}_{n} $ are $ w^{*} $-continuous.

Suppose that $ e^{**}_{n} $ is not $ w^{*} $-continuous for some $ n $. It means that $ \{ x^{*} \in X^{*} : e^{**}_{n}(x^{*}) = 0 \} $ is not $ w^{*} $-closed. By the Banach-Dieudonn\'e theorem, the set $ \{ x^{*} \in B_{X^{*}} : e^{**}_{n}(x^{*}) = 0 \} $ is not $ w^{*} $-closed, too. The space $ (B_{X^{*}}, w^{*}) $ is metrizable, so there is a sequence $ x^{*}_{k} $ in $ B_{X^{*}} $ with $ e^{**}_{n}(x^{*}_{k}) = 0 $ which $ w^{*} $-converges to some $ x^{*} $ with $ e^{**}_{n}(x^{*}) \neq 0 $. Without loss of generality, let us assume that $ e^{**}_{n}(x^{*}) > 0 $ and that $ e^{**}_{i}(x^{*}_{k}) $ converges to some $ a_{i} $ for every $ i $. Then clearly $ a_{n} = 0 $. Note that $ \sum_{i=1}^{\infty} |a_{i}| \leq 1 $, which follows from the fact that $ \sum_{i=1}^{\infty} |e^{**}_{i}(x^{*}_{k})| = \Vert x^{*}_{k} \Vert \leq 1 $ for every $ k $. Let us put $ a^{*} = \sum_{i=1}^{\infty} a_{i} e^{*}_{i} $, $ y^{*}_{k} = x^{*}_{k} - a^{*} $ and $ y^{*} = x^{*} - a^{*} $. Then $ e^{**}_{n}(y^{*}_{k}) = 0, e^{**}_{n}(y^{*}) > 0 $, the sequence $ y^{*}_{k} $ is $ w^{*} $-convergent to $ y^{*} $ and, moreover, $ e^{**}_{i}(y^{*}_{k}) $ converges to $ 0 $ for every $ i $. Choosing a subsequence and making a small perturbation, we can find a sequence $ z^{*}_{l} $ which is a block sequence with respect to the basis $ e^{*}_{i} $ and which still $ w^{*} $-converges to $ y^{*} $. Without loss of generality, let us assume that $ \Vert z^{*}_{l} \Vert $ converges to some $ \lambda $, clearly with $ \lambda \geq \Vert y^{*} \Vert > 0 $, and let us consider $ u^{*}_{l} = \frac{1}{\Vert z^{*}_{l} \Vert} z^{*}_{l} $ and $ u^{*} = \frac{1}{\lambda} y^{*} $.

So, we have seen that there is a normalized block sequence $ u^{*}_{l} $ in $ X^{*} $ which $ w^{*} $-converges to some $ u^{*} $ with $ e^{**}_{n}(u^{*}) > 0 $. We put
$$ v^{*}_{l} = (1 - \varepsilon) e^{*}_{n} + \varepsilon u^{*}_{l}, \quad v^{*} = (1 - \varepsilon) e^{*}_{n} + \varepsilon u^{*}. $$
Then $ v^{*}_{l} $ is a sequence in $ B_{X^{*}} $ that $ w^{*} $-converges to $ v^{*} $. Since $ \Vert v^{*}_{l} - v^{*}_{l'} \Vert = 2 \varepsilon $ for $ l \neq l' $, any $ w^{*} $-open set $ U $ containing $ v^{*} $ fulfills $ \mathrm{diam}(U \cap B_{X^{*}}) \geq 2\varepsilon $. It follows that $ v^{*} \in (B_{X^{*}})'_{2\varepsilon} $ and, by our assumption, $ v^{*} \in (1 - \varepsilon) B_{X^{*}} $. At the same time,
$$ \Vert v^{*} \Vert \geq e^{**}_{n}(v^{*}) = (1 - \varepsilon) + \varepsilon e^{**}_{n}(u^{*}) > 1 - \varepsilon, $$
which is not possible.

Hence, the functionals $ e^{**}_{n} $ are $ w^{*} $-continuous indeed. Every $ e^{**}_{n} $ is therefore the evaluation of some $ e_{n} \in X $. Finally, it is easy to check that $ e_{1}, e_{2}, \dots $ is a basis of $ X $ that is $ 1 $-equivalent to the canonical basis of $ c_{0} $.
\end{proof}

\begin{proof}[Proof of Proposition~\ref{prop:classOfc0}]
Let $\F$ be the set of those $\mu\in\PP_\infty$ for which $X_\mu$ is an $ \mathcal{L}_{\infty, 1+} $-space. By Theorem~\ref{thm:scriptLp}, $\F$ is a $G_\delta$-set in $\PP_\infty$. Let $\Omega$ be the mapping from Lemma~\ref{lem:slozitost_koule} and let us denote by $\triangle$ the closed set $\{(x,x)\colon x\in\mathcal K(\ell_\infty)\}$ in $\mathcal K(\ell_\infty)\times \mathcal K(\ell_\infty)$. By Lemma~\ref{lem:ballIsOmega} and Theorem~\ref{thm:characterizationOfc0}, we have that
\[
\isomtrclass{c_0} = \F\cap\{\nu\in\PP\colon (\tfrac{1}{2}\Omega(\nu),\Omega'_1(\nu))\in\triangle\}.
\]
By Lemma~\ref{lem:slozitost_koule} and Lemma~\ref{lem:slozitost_derivace}, the mapping $\PP\ni \nu\mapsto (\tfrac{1}{2}\Omega(\nu),\Omega'_1(\nu))\in \mathcal K(\ell_\infty)\times \mathcal K(\ell_\infty)$ is an $\boldsymbol{\Sigma}_3^0$-measurable, so we obtain that $\isomtrclass{c_0}$ is an $F_{\sigma \delta}$-set in $\PP_\infty$.
\end{proof}

\begin{corollary}\label{cor:lowerBoundDerivative}
Let $\varepsilon>0$. Then the mapping $s_\varepsilon$ from Lemma~\ref{lem:slozitost_derivace} is not $\boldsymbol{\Sigma}_2^0$-measurable.
\end{corollary}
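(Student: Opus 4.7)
The plan is to derive a contradiction with Theorem~\ref{thm:gurariiTypicalInPGeneralized} by reversing the complexity bookkeeping in the proof of Proposition~\ref{prop:classOfc0}. Since $\mathrm{diam}(B_{\ell_\infty}) \leq 2$, we have $s_\varepsilon(F) = \emptyset$ for every $F$ whenever $\varepsilon > 2$, so the statement is nontrivial only in the range $\varepsilon \in (0, 2)$; I would fix such an $\varepsilon$ from now on and aim to show that $s_\varepsilon$ is not $\boldsymbol{\Sigma}_2^0$-measurable.

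Suppose towards a contradiction that $s_\varepsilon$ is $\boldsymbol{\Sigma}_2^0$-measurable. Let $\Omega\colon \PP \to \mathcal{K}(B_{\ell_\infty}, w^*)$ be the continuous map furnished by Lemma~\ref{lem:slozitost_koule}, and observe that the scaling $F \mapsto (1 - \varepsilon/2)\,F$ is a continuous self-map of $\mathcal{K}(B_{\ell_\infty}, w^*)$. Consequently, the composed map
\[
\Psi\colon \nu \longmapsto \bigl((1 - \varepsilon/2)\,\Omega(\nu),\; s_\varepsilon(\Omega(\nu))\bigr)
\]
from $\PP$ to $\mathcal{K}(B_{\ell_\infty}) \times \mathcal{K}(B_{\ell_\infty})$ would be $\boldsymbol{\Sigma}_2^0$-measurable, and therefore the preimage $\Psi^{-1}(\triangle)$ of the closed diagonal $\triangle$ would lie in $\boldsymbol{\Pi}_2^0$, i.e., would be a $G_\delta$-set in $\PP$. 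At this point I would invoke Theorem~\ref{thm:characterizationOfc0} with the parameter $\varepsilon/2 \in (0, 1)$: for every separable $\mathcal{L}_{\infty, 1+}$-space $X$, one has $X \equiv c_0$ exactly when $(B_{X^*})'_\varepsilon = (1 - \varepsilon/2)\,B_{X^*}$. Combining this with the isometric $w^*$-homeomorphic identification from Lemma~\ref{lem:ballIsOmega} gives
\[
\isomtrclass{c_0} = \F \cap \Psi^{-1}(\triangle),
\]
where $\F := \{\mu \in \PP_\infty : X_\mu \text{ is an } \mathcal{L}_{\infty, 1+}\text{-space}\}$ is a $G_\delta$-set in $\PP_\infty$ by Theorem~\ref{thm:scriptLp}. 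Hence $\isomtrclass{c_0}$ would be a $G_\delta$-set in $\PP_\infty$. But $c_0$ has no nontrivial cotype ($c_0$ is trivially finitely representable in itself) and $c_0 \not\equiv \mathbb{G}$, so Theorem~\ref{thm:gurariiTypicalInPGeneralized} forbids this, yielding the desired contradiction.

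I do not anticipate any serious obstacle: the whole argument is essentially the proof of Proposition~\ref{prop:classOfc0} read in reverse. The only routine points to verify are that the composition of a continuous map with a $\boldsymbol{\Sigma}_2^0$-measurable map remains $\boldsymbol{\Sigma}_2^0$-measurable, and that preimages of closed sets under $\boldsymbol{\Sigma}_2^0$-measurable maps lie in $\boldsymbol{\Pi}_2^0 = G_\delta$; both follow directly from the definitions.
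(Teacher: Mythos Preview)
Your proposal is correct and follows exactly the paper's approach: assume $s_\varepsilon$ is $\boldsymbol{\Sigma}_2^0$-measurable, rerun the proof of Proposition~\ref{prop:classOfc0} to deduce that $\isomtrclass{c_0}$ is $G_\delta$ in $\PP_\infty$, and contradict Theorem~\ref{thm:gurariiTypicalInPGeneralized}. Your write-up is in fact more careful than the paper's one-line proof, since you adapt the parameter in Theorem~\ref{thm:characterizationOfc0} to a general $\varepsilon\in(0,2)$ rather than the fixed value used in Proposition~\ref{prop:classOfc0}. One small quibble: your observation that $s_\varepsilon\equiv\emptyset$ for $\varepsilon>2$ does not make the statement ``nontrivial only for $\varepsilon\in(0,2)$''---it makes the corollary literally false in that range (a constant map is continuous), so the stated hypothesis $\varepsilon>0$ should really be read as $\varepsilon\in(0,2)$.
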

\begin{proof}
Otherwise, similarly as in the proof of Proposition~\ref{prop:classOfc0} we would prove that $\isomtrclass{c_0}$ is a $G_\delta$-set in $\PP_\infty$, which is not possible due to Theorem~\ref{thm:gurariiTypicalInPGeneralized}.
\end{proof}

\section{Miscellaneous}\label{sect:misc}

\subsection{Superreflexive spaces}

Recall that a map $f:M\to N$ between metric spaces is called a $C$-bilipschitz embedding if
\[
\forall x\neq y\in M:\; C^{-1}d_M(x,y)< d_N(f(x),f(y))< Cd_M(x,y).
\]
\begin{lemma}\label{lem:finiteMetricSpaceEmbeds}
Let $M$ be a finite metric space and $C>0$. The set $E(M,C)$ consisting of those $\mu\in\PP$ such that $M$ admits a $C$-bilipschitz embedding into $X_\mu$ is open in $\PP$.
\end{lemma}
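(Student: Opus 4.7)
The plan is a direct continuity argument using density of $V$ in $X_\mu$ together with the subbasic description of the topology on $\PP$.

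Fix $\mu \in E(M,C)$ and write $M = \{m_1, \ldots, m_k\}$ with $d_{ij} := d_M(m_i, m_j)$, so $d_{ij} > 0$ whenever $i \neq j$. By definition there is a $C$-bilipschitz embedding $f : M \to X_\mu$; set $x_i := f(m_i)$, so that
\[
C^{-1} d_{ij} < \mu(x_i - x_j) < C d_{ij} \qquad (i \neq j).
\]
The first step is to replace the $x_i$'s by elements of $V$. Since $V$ is dense in $X_\mu$ (rationals are dense in $\Rea$, so $V$ is dense in $c_{00}$ with respect to $\mu$, and $c_{00}/N$ is dense in $X_\mu$ by construction), and since all $k(k-1)$ inequalities above are strict, we may pick $v_1, \ldots, v_k \in V$ with
\[
C^{-1} d_{ij} < \mu(v_i - v_j) < C d_{ij} \qquad (i \neq j).
\]

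The second step is to open up these strict inequalities: choose $\varepsilon > 0$ so small that whenever $\mu' \in \PP$ satisfies
\[
|\mu'(v_i - v_j) - \mu(v_i - v_j)| < \varepsilon \qquad (i \neq j),
\]
one still has $C^{-1} d_{ij} < \mu'(v_i - v_j) < C d_{ij}$. The set $\mathcal{U}$ of such $\mu'$ is a finite intersection of subbasic open sets of the form $U[v_i - v_j, I]$ (with $I$ a short open interval around $\mu(v_i - v_j)$), hence open in $\PP$.

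The third step is to verify that $\mathcal{U} \subseteq E(M,C)$. For any $\mu' \in \mathcal{U}$, consider the map $g : M \to X_{\mu'}$ defined by $g(m_i) := [v_i]_{N'}$, where $N' = \{x \in c_{00} : \mu'(x) = 0\}$. Then $\|g(m_i) - g(m_j)\|_{\mu'} = \mu'(v_i - v_j)$ lies strictly between $C^{-1} d_{ij}$ and $C d_{ij}$ for $i \neq j$, so $g$ is a $C$-bilipschitz embedding (note that strict positivity of $\mu'(v_i - v_j)$ also guarantees that $g$ is well-defined as an injection, since $d_{ij} > 0$). Hence $\mu' \in E(M,C)$, which proves the openness of $E(M,C)$.

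There is essentially no obstacle here; the only point to watch is that the bilipschitz condition uses strict inequalities, which is precisely what allows both the rational approximation (step one) and the choice of $\varepsilon$ (step two) to go through.
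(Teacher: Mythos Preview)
Your proof is correct and follows essentially the same approach as the paper: perturb the image of the embedding into $V$ using density and the strictness of the bilipschitz inequalities, then use that the finitely many conditions $\mu'(v_i-v_j)\in(C^{-1}d_{ij},Cd_{ij})$ define an open neighbourhood of $\mu$. The paper's argument is simply a terser version of yours, phrasing the open set as $\{\mu':|\mu'(f(x)-f(y))-\mu(f(x)-f(y))|<\varepsilon\}$ and leaving the verification to the reader.
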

\begin{proof}
Let $\mu\in E(M,C)$. Thus, there is a $C$-bilipschitz embedding $f:M\to X_\mu$. By perturbing the image of $f$ if necessary, we may without loss of generality assume that $f(M)\subseteq V$.

Consider $\varepsilon>0$ and the open neighborhood $U_\varepsilon$ of $\mu$ consisting of those $\mu'\in\PP$  for which $|\mu(f(x)-f(y)) - \mu'(f(x)-f(y))|<\varepsilon$ for every $x,y\in M$. Then $U_\varepsilon\subseteq E(M,C)$ for $\varepsilon>0$ small enough. Indeed, it suffices to choose $\varepsilon$ smaller than $$\min\{\min_{x\neq y\in M} C d_M(x,y)-\mu(f(x)-f(y)),\min_{x\neq y\in M} \mu(f(x)-f(y))-C^{-1} d_M(x,y) \}.$$ The easy verification is left to the reader. 
\end{proof}

\begin{proposition}\label{prop:sequenceOfFiniteMetricSpacesEmbed}
Let $(M_n)_{n\in\Nat}$ be a sequence of finite metric spaces and let $\mathcal{X}$ be the class of those Banach spaces $X$ for which there exists a constant $C$ such that for every $n\in\Nat$, $M_n$ admits a $C$-bilipschitz embedding into $X$.

Then $\F:=\{\mu\in\PP_\infty\setsep X_\mu \text{ is in }\mathcal{X}\}$ is a $G_{\delta \sigma}$-set in $\PP_\infty$.
\end{proposition}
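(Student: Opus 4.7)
The plan is to directly combine Lemma~\ref{lem:finiteMetricSpaceEmbeds} with a countable union/intersection decomposition of the defining condition of $\mathcal{X}$. The only mild subtlety is that $\mathcal{X}$ is defined with an \emph{existential} quantifier over the real constant $C$, but because larger values of $C$ yield a weaker condition (and because Lemma~\ref{lem:finiteMetricSpaceEmbeds} is stated with strict inequalities), one can replace the quantifier ``$\exists C > 0$'' by ``$\exists C \in \Nat$'' without changing the class. Concretely, if $f \colon M_n \to X_\mu$ is $C$-bilipschitz in the sense of the paper (strict inequalities), then $f$ is also $C'$-bilipschitz for any $C' \geq C$, so we may always round $C$ up to an integer.

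With this observation in hand, the set $\F$ can be written as
\[
\F = \PP_\infty \cap \bigcup_{C \in \Nat} \bigcap_{n \in \Nat} E(M_n, C),
\]
where $E(M_n, C) \subseteq \PP$ is the open set from Lemma~\ref{lem:finiteMetricSpaceEmbeds}. Since $E(M_n, C)$ is open in $\PP$, its trace on $\PP_\infty$ is open in $\PP_\infty$. Therefore, for each fixed $C$, the set $\bigcap_{n} E(M_n, C) \cap \PP_\infty$ is a $G_\delta$-subset of $\PP_\infty$, and the countable union over $C \in \Nat$ is a $G_{\delta\sigma}$-subset of $\PP_\infty$.

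There is no real obstacle here; the heart of the argument is already done in Lemma~\ref{lem:finiteMetricSpaceEmbeds}. The only point worth double-checking is the reduction from a real parameter $C > 0$ to a natural number parameter; this relies on the strict-inequality convention in the definition of $C$-bilipschitz embedding, which is precisely the convention used in Lemma~\ref{lem:finiteMetricSpaceEmbeds}. Once that reduction is justified, the complexity bound is immediate from the formula above.
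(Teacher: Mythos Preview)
Your proof is correct and follows the same approach as the paper, which simply writes $\F = \PP_\infty\cap \bigcup_{C>0}\bigcap_{n\in\Nat} E(M_n,C)$ and invokes Lemma~\ref{lem:finiteMetricSpaceEmbeds}. You are in fact slightly more careful than the paper in explicitly reducing the union over real $C>0$ to a countable union over $C\in\Nat$ via monotonicity, which is a point the paper leaves implicit.
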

\begin{proof}
Follows immediately from Lemma~\ref{lem:finiteMetricSpaceEmbeds}, because we have
\[\F = \PP_\infty\cap \bigcup_{C>0}\bigcap_{n\in\Nat} E(M_n,C).\]
\end{proof}

Bourgain in his seminal paper \cite{B86} found a sequence of finite metric spaces $(M_n)_{n\in\Nat}$ such that a separable Banach space is not superreflexive if and only if there exists a constant $C$ such that for every $n\in\Nat$, $M_n$ admits a $C$-bilipschitz embedding into $X$. We refer the interested reader to \cite[Section 9]{ostKniha} for some more related facts and results. Thus, combining this result with Proposition~\ref{prop:sequenceOfFiniteMetricSpacesEmbed} we obtain immediately the following.

\begin{theorem}\label{thm:superreflexive}
The class of all superreflexive spaces is an $F_{\sigma \delta}$-set in $\PP_\infty$.
\end{theorem}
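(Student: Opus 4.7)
The plan is a short deduction from Proposition~\ref{prop:sequenceOfFiniteMetricSpacesEmbed} together with Bourgain's celebrated metric characterization of superreflexivity. Recall from \cite{B86} that there is a sequence $(M_n)_{n\in\Nat}$ of finite metric spaces (the binary trees of depth $n$ equipped with the graph distance) such that a separable Banach space $X$ \emph{fails} to be superreflexive if and only if there is a constant $C>0$ for which every $M_n$ admits a $C$-bilipschitz embedding into $X$. This is exactly the shape of condition to which Proposition~\ref{prop:sequenceOfFiniteMetricSpacesEmbed} applies.

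Given this, I would proceed in two steps. First, apply Proposition~\ref{prop:sequenceOfFiniteMetricSpacesEmbed} to the Bourgain sequence $(M_n)_{n\in\Nat}$ to obtain that the set
\[
\F:=\{\mu\in\PP_\infty\setsep X_\mu\text{ is not superreflexive}\}
\]
is a $G_{\delta\sigma}$-subset of $\PP_\infty$. Second, observe that $G_{\delta\sigma}$ coincides with the class $\boldsymbol{\Sigma}^0_3$, so its complement in $\PP_\infty$, which by Bourgain's theorem is precisely the class of separable infinite-dimensional superreflexive spaces, is a $\boldsymbol{\Pi}^0_3$-set, i.e. an $F_{\sigma\delta}$-set. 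This gives the desired conclusion.

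There is essentially no obstacle to overcome here, since all of the descriptive-set-theoretic work has already been carried out: Lemma~\ref{lem:finiteMetricSpaceEmbeds} supplies the openness of the ``admits a $C$-bilipschitz embedding'' condition, and Proposition~\ref{prop:sequenceOfFiniteMetricSpacesEmbed} assembles this into the $G_{\delta\sigma}$ estimate via the obvious formula $\F=\PP_\infty\cap\bigcup_{C>0}\bigcap_{n\in\Nat} E(M_n,C)$ (which in fact only needs a countable union over rational $C$). The only point requiring minor care is to quote Bourgain's characterization in the precise uniform form matching the hypothesis of Proposition~\ref{prop:sequenceOfFiniteMetricSpacesEmbed}, rather than in one of its equivalent but differently phrased variants.
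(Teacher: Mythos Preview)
Your proposal is correct and follows exactly the same approach as the paper: combine Bourgain's metric characterization of non-superreflexivity with Proposition~\ref{prop:sequenceOfFiniteMetricSpacesEmbed} to get that the non-superreflexive spaces form a $G_{\delta\sigma}$-set, then take complements.
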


A metric space $M$ is called \emph{locally finite} if it is uniformly discrete and  all balls in $M$ are finite sets (in particular, every such $M$ is at most countable).
Let us mention a result by Ostrovskii by which a locally finite metric space bilipschitz embeds into a Banach space $X$ if and only if all of its finite subsets admit uniformly bilipschitz embeddings into $X$, see \cite{Ost12} or \cite[Theorem 2.6]{ostKniha}. Thus, from Proposition~\ref{prop:sequenceOfFiniteMetricSpacesEmbed} we obtain also the following.

\begin{corollary}\label{cor:locFiniteMetricSpaces}
Let $M$ be a locally finite metric space. Then the set of those $\mu\in\PP_\infty$ for which $M$ admits a bilipschitz embedding into $X_\mu$ is a $G_{\delta \sigma}$-set in $\PP_\infty$.
\end{corollary}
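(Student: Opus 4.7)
The plan is to apply Proposition~\ref{prop:sequenceOfFiniteMetricSpacesEmbed} essentially verbatim, using the cited theorem of Ostrovskii as a bridge between ``bilipschitz embeddability of $M$'' and ``uniform bilipschitz embeddability of the finite subsets of $M$''.

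First I would dispose of the trivial case where $M$ is finite by appealing directly to Lemma~\ref{lem:finiteMetricSpaceEmbeds}: the set of $\mu$ for which $M$ admits a bilipschitz embedding into $X_\mu$ is then $\bigcup_{C\in\Nat} E(M,C)$, which is already an open set and hence $G_{\delta\sigma}$. Assuming now that $M$ is countably infinite, I would fix an enumeration $M=\{x_n\setsep n\in\Nat\}$ and set $M_n:=\{x_1,\ldots,x_n\}$, viewed as a finite metric subspace of $M$. Then every finite subset of $M$ is contained in some $M_n$, so a $C$-bilipschitz embedding of $M_n$ into a Banach space $X$ restricts to a $C$-bilipschitz embedding of each finite subset of $M_n$, and conversely each $M_n$ is itself a finite subset of $M$. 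Consequently, the condition ``there exists $C$ such that each $M_n$ admits a $C$-bilipschitz embedding into $X$'' coincides with the condition ``the finite subsets of $M$ admit uniformly bilipschitz embeddings into $X$''.

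By Ostrovskii's theorem as recalled just above the statement, the latter condition is equivalent to $M$ itself being bilipschitz embeddable into $X$. Hence the set in question is exactly the set $\F$ attached by Proposition~\ref{prop:sequenceOfFiniteMetricSpacesEmbed} to the sequence $(M_n)_{n\in\Nat}$, and that proposition delivers the $G_{\delta\sigma}$ conclusion. There is no real obstacle here; the only point to be careful about is that the uniform constant appearing in Ostrovskii's characterization matches the quantifier structure used in Proposition~\ref{prop:sequenceOfFiniteMetricSpacesEmbed}, which is automatic from the restriction/containment observation above.
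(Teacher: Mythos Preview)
Your proposal is correct and follows exactly the approach the paper intends: the paper's proof is just the one-line remark that the corollary follows from Proposition~\ref{prop:sequenceOfFiniteMetricSpacesEmbed} via Ostrovskii's theorem, and you have simply spelled out the details (choosing the sequence $(M_n)$ and handling the finite case separately).
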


It is well-known that many important classes of separable Banach spaces are not Borel. This concerns e.g. reflexive spaces, spaces with separable dual,  spaces containing $\ell_1$, spaces with the Radon-Nikodým property, spaces isomorphic to $L_p[0,1]$ for $p\in(1,2)\cup(2,\infty)$, or spaces isomorphic to $c_0$. We refer to \cite[page 130 and Corollary 3.3]{Bo02} and \cite[Theorem 1.1]{K19} for papers which contain the corresponding results and to the monograph \cite{dodosKniha} and the survey \cite{G17} for some more information. Thus, e.g. in combination with Corollary~\ref{cor:locFiniteMetricSpaces}, we see that none of those classes might be characterized as a class into which a given locally finite metric space bilipchitz embeds. Let us give an example of such a result which is related to \cite[Problem 12.5(b)]{MS17}. This is an elementary, but interesting application of the whole theory.

\begin{corollary}
There does not exist a locally finite metric space $M$ such that any separable Banach space $X$ is not reflexive if and only if $M$ admits a bilipschitz embeddings into $X$.
\end{corollary}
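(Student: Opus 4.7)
The plan is a short contradiction argument combining Corollary~\ref{cor:locFiniteMetricSpaces} with the well-known fact (recalled just before the statement) that the class of reflexive separable Banach spaces is not Borel.

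Suppose, towards a contradiction, that such a locally finite metric space $M$ exists. Then, using the assumed characterization together with Corollary~\ref{cor:locFiniteMetricSpaces}, the set
\[
\{\mu\in\PP_\infty\setsep X_\mu\text{ is not reflexive}\}
=\{\mu\in\PP_\infty\setsep M\text{ admits a bilipschitz embedding into }X_\mu\}
\]
is a $G_{\delta\sigma}$-set in $\PP_\infty$. In particular, it is Borel, and hence so is its complement, the class of reflexive spaces (coded in $\PP_\infty$).

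On the other hand, as mentioned in the paragraph preceding the corollary, the class of reflexive separable Banach spaces is not Borel; this is a classical result of Bossard (cf.\ \cite[page 130 and Corollary 3.3]{Bo02}). This non-Borelness transfers to the coding $\PP_\infty$ by Remark~\ref{rem:admissible}: the continuous map $\Phi:(SB,\tau)\to\PP$ from \cite[Theorem 3.3]{CDDK1} is a reduction which preserves the isomorphism type, so Borelness of the reflexive class in $\PP_\infty$ would imply Borelness in any admissible topology on $SB$, a contradiction.

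The only nontrivial point is the transfer of non-Borelness from the Effros--Borel setting $SB$ to our coding $\PP_\infty$, but this is immediate from the existence of the continuous, isometry-preserving map $\Phi$ recalled in Remark~\ref{rem:admissible}; everything else is a direct application of Corollary~\ref{cor:locFiniteMetricSpaces}. No further analysis of $M$ is required.
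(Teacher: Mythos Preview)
Your proof is correct and follows exactly the approach the paper intends: combine Corollary~\ref{cor:locFiniteMetricSpaces} with the non-Borelness of the class of reflexive spaces (Bossard) to obtain a contradiction. The only addition you make beyond the paper's implicit argument is spelling out the transfer of non-Borelness from the $SB$ coding to $\PP_\infty$ via Remark~\ref{rem:admissible}, which is a helpful clarification but not a different route.
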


\begin{remark}
Let us draw attention of the reader once more to the remarkable paper \cite{MS17}, where the authors found a metric characterization of reflexivity even though such a condition is necessarily non Borel (as mentioned above).
\end{remark}

\subsection{Szlenk indices}\label{subsection:szlenkIndices}

In this subsection we give estimates on the Borel classes of spaces with Szlenk index less than or equal to a given ordinal number. Note that it is a result by Bossard, see \cite[Section 4]{Bo02}, that those sets are Borel and their Borel classes are unbounded. So our contribution here is that we provide certain quantitative estimates from above. Similarly, we give an estimate on the Borel class of spaces with summable Szlenk index, which is a quantitative improvement of the result mentioned in \cite[page 367]{G10}. Let us start with the corresponding definitions.
Let $X$ be a real Banach space and $K\subseteq X^*$ a $w^*$-compact set. Following \cite{L06}, for $\varepsilon>0$ we define $s_\varepsilon(K)$ as the Szlenk derivative of the set $K$ (see Subsection~\ref{subsection:szlenkDerivative}) and then we inductively define $s_\varepsilon^\alpha(K)$ for an ordinal $\alpha$ by $s_\varepsilon^{\alpha+1}(K):=s_\varepsilon(s_\varepsilon^\alpha(K))$ and $s_\varepsilon^\alpha(K):=\bigcap_{\beta<\alpha} s_\varepsilon^\beta(K)$ if $\alpha$ is a limit ordinal. Given a real Banach space $X$, $Sz(X,\varepsilon)$ is the least ordinal $\alpha$ such that $s_\varepsilon^\alpha(B_{X^*})=\emptyset$, if such an ordinal exists (otherwise we write $Sz(X,\varepsilon) = \infty$). The Szlenk index is defined by $Sz(X) = \sup_{\varepsilon > 0} Sz(X,\varepsilon)$.

Recall that for a separable infinite-dimensional Banach space $X$ the Szlenk index is either $\infty$ or $\omega^\alpha$ for some $\alpha\in [1,\omega_1)$, see \cite[Section 3]{L06}.

\begin{theorem}\label{thm:Szlenk}
Let $\alpha\in[1,\omega_1)$ be an ordinal. Then
\[
\{\mu\in\PP_\infty\colon Sz(X_{\mu})\leq \omega^\alpha\}
\]
is a $\boldsymbol{\Pi}_{\omega^\alpha + 1}^0$-set in $\PP_\infty$.
\end{theorem}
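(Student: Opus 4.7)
Plan: The tools from Section~\ref{subsection:szlenkDerivative}---the continuous map $\Omega\colon\PP\to\mathcal K(B_{l_\infty})$ of Lemma~\ref{lem:slozitost_koule} and the Szlenk derivative $s_\varepsilon$ of Lemma~\ref{lem:slozitost_derivace}---allow us to analyze the transfinite iterates $T_\beta(\mu):=s_\varepsilon^\beta(\Omega(\mu))$ directly. Since $Sz(X_\mu)\leq\omega^\alpha$ is equivalent to $s_{1/n}^{\omega^\alpha}(B_{X_\mu^*})=\emptyset$ for every $n\geq 1$ (by monotonicity of $Sz(X_\mu,\varepsilon)$ in $\varepsilon$), and since by Lemma~\ref{lem:ballIsOmega} the $w^*$-compacta $B_{X_\mu^*}$ and $\Omega(\mu)$ are in natural isometric and $w^*$-homeomorphic correspondence, the set in the theorem equals $\bigcap_{n\geq 1}\{\mu\in\PP_\infty\colon s_{1/n}^{\omega^\alpha}(\Omega(\mu))=\emptyset\}$. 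The problem thus reduces to estimating the Borel class of $\{\mu\in\PP\colon T_{\omega^\alpha}(\mu)=\emptyset\}$ for fixed $\varepsilon>0$.

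I will run a transfinite induction on $\beta<\omega_1$ that simultaneously tracks two countable ordinals $a(\beta)$ and $b(\beta)$ so that, for every weak-$^*$ open $W\subseteq B_{l_\infty}$, the set $\{\mu\colon T_\beta(\mu)\subseteq W\}$ lies in $\boldsymbol{\Sigma}^0_{a(\beta)}$ and the set $\{\mu\colon T_\beta(\mu)\cap W\neq\emptyset\}$ lies in $\boldsymbol{\Sigma}^0_{b(\beta)}$. The base case $\beta=0$ gives $a(0)=b(0)=1$ by continuity of $\Omega$. At a successor $\beta+1$, I use the explicit covering formula implicit in the proof of Lemma~\ref{lem:slozitost_derivace}, namely that $s_\varepsilon(K)\subseteq W$ iff there is a finite $M\subseteq\Nat$ with $K\subseteq W\cup\bigcup_{n\in M}U_n$ and $\mathrm{diam}(U_n\cap K)<\varepsilon$ for every $n\in M$; the diameter condition unfolds into countable Boolean combinations of disjointness assertions $T_\beta(\mu)\cap O=\emptyset$ whose negations lie in $\boldsymbol{\Sigma}^0_{b(\beta)}$, which forces $a(\beta+1)\leq\max(a(\beta),b(\beta)+1)$. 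Symmetrically, covering $W$ by basic open sets $V$ with $\overline V\subseteq W$ and using the equivalence $T_{\beta+1}(\mu)\cap W\neq\emptyset\Leftrightarrow\exists V\colon T_{\beta+1}(\mu)\not\subseteq B_{l_\infty}\setminus\overline V$ yields $b(\beta+1)\leq a(\beta+1)+1$. At a countable limit $\lambda$, compactness of the decreasing chain $(T_\gamma)_{\gamma<\lambda}$ gives $T_\lambda\subseteq W\Leftrightarrow\exists\gamma\colon T_\gamma\subseteq W$ and $T_\lambda\cap W\neq\emptyset\Leftrightarrow\forall\gamma\colon T_\gamma\cap W\neq\emptyset$, hence $a(\lambda)\leq\sup_{\gamma<\lambda}a(\gamma)$ and $b(\lambda)\leq\sup_{\gamma<\lambda}b(\gamma)+1$ (the second bound because a countable intersection of $\boldsymbol{\Sigma}^0_{b(\gamma)}$ sets with $b(\gamma)$ cofinal in the limit $\sup b(\gamma)$ is a $\boldsymbol{\Pi}^0_{\sup b(\gamma)}$-set and hence a $\boldsymbol{\Sigma}^0_{\sup b(\gamma)+1}$-set).

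A short computation from this recursion yields $a(n)=2n$ and $b(n)=2n+1$ for $1\leq n<\omega$, and then by transfinite induction $a(\omega^\alpha)=\omega^\alpha$ and $b(\omega^\alpha)=\omega^\alpha+1$ for every $\alpha\in[1,\omega_1)$. Taking $W=B_{l_\infty}$ in the clause controlling $b$ at $\beta=\omega^\alpha$, the set $\{\mu\in\PP\colon T_{\omega^\alpha}(\mu)\neq\emptyset\}$ lies in $\boldsymbol{\Sigma}^0_{\omega^\alpha+1}$, so its complement $\{\mu\in\PP\colon T_{\omega^\alpha}(\mu)=\emptyset\}$ lies in $\boldsymbol{\Pi}^0_{\omega^\alpha+1}$; intersecting over $\varepsilon=1/n$ keeps us in $\boldsymbol{\Pi}^0_{\omega^\alpha+1}$, which is the claimed bound. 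The main obstacle is the ordinal bookkeeping: the crucial point is that $\omega^\alpha$ is additively indecomposable, so the finite (and later transfinite) increments added at successor steps are absorbed at limits, keeping the indices $a(\gamma)$ and $b(\gamma)$ cofinal in $\omega^\alpha$ as $\gamma\uparrow\omega^\alpha$ and delivering the tight bound $\boldsymbol{\Pi}^0_{\omega^\alpha+1}$ rather than some strictly larger class.
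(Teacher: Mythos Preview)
Your overall strategy is the same as the paper's: reduce via Lemma~\ref{lem:ballIsOmega} and Lemma~\ref{lem:slozitost_koule} to estimating the Borel complexity of $\mu\mapsto s_\varepsilon^\beta(\Omega(\mu))$, use the covering description from the proof of Lemma~\ref{lem:slozitost_derivace} at successor steps, and invoke compactness of the decreasing chain at limits. The paper packages this slightly differently by tracking only the $\boldsymbol{\Sigma}$-measurability class of the map $s_\varepsilon^\beta$ (via Lemma~\ref{lem:meritelnost}) rather than your two indices $a(\beta),b(\beta)$, but the content is the same and your recursion for $a$ is exactly what the paper uses.

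There is, however, one genuine slip. At a limit $\lambda$ you assert
\[
T_\lambda(\mu)\cap W\neq\emptyset\ \Longleftrightarrow\ \forall\gamma<\lambda\colon T_\gamma(\mu)\cap W\neq\emptyset,
\]
and deduce $b(\lambda)\leq\sup_{\gamma<\lambda}b(\gamma)+1$ from it. The implication $\Leftarrow$ is false for open $W$: a decreasing chain of compacta can meet $W$ at every stage while the intersection lies entirely in $\partial W$ (e.g.\ $T_n=[0,1/n]$, $W=(0,1]$, $T_\omega=\{0\}$). The fix is immediate and you already have it in hand: at a limit, do for $b$ exactly what you did at successors, namely
\[
\{T_\lambda\cap W\neq\emptyset\}=\bigcup_{\overline V\subseteq W}\bigl(\PP\setminus\{T_\lambda\subseteq B_{\ell_\infty}\setminus\overline V\}\bigr),
\]
which gives $b(\lambda)\leq a(\lambda)+1$. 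Since you have the correct $a(\lambda)\leq\sup_{\gamma<\lambda}a(\gamma)$, this still yields $b(\omega^\alpha)\leq\omega^\alpha+1$, and the rest of your argument (including the indecomposability remark that keeps $a(\gamma),b(\gamma)<\omega^\alpha$ for $\gamma<\omega^\alpha$) goes through unchanged.
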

\begin{proof}
Using Lemma~\ref{lem:slozitost_derivace}, it is easy to prove by induction on $n$ that the mapping $\mathcal K(B_{\ell_\infty})\ni F\mapsto s_\varepsilon^n(F)\in \mathcal K(B_{\ell_\infty})$ is $\boldsymbol{\Sigma}_{2n+1}^0$-measurable for every $n\in\Nat$. Further, the mapping $\mathcal K(B_{\ell_\infty})\ni F\mapsto s_\varepsilon^\omega(F)\in \mathcal K(B_{\ell_\infty})$ is $\boldsymbol{\Sigma}_{\omega+1}^0$-measurable. Indeed, for every open $V\subseteq B_{\ell_\infty}$, by compactness argument, we have
\[
\{F\colon s_\varepsilon^\omega(F)\subseteq V\} = \bigcup_{n=1}^\infty \{F\colon s_\varepsilon^n(F)\subseteq V\}
\]
which is a $\boldsymbol{\Sigma}_{\omega}^0$-set, so by Lemma~\ref{lem:meritelnost} the mapping $s_\varepsilon^\omega$ is $\boldsymbol{\Sigma}_{\omega+1}^0$-measurable. Similarly, we prove by transfinite induction that $s_\varepsilon^\beta$ is $\boldsymbol{\Sigma}_{\beta+1}^0$-measurable whenever $\beta\in[\omega,\omega_1)$ is a limit ordinal.

Let $\Omega$ be the mapping from Lemma~\ref{lem:slozitost_koule}. Then by Lemma~\ref{lem:ballIsOmega} we have
\[\begin{split}
\{\mu\in\PP_\infty\colon Sz(X_{\mu})\leq \omega^\alpha\} & = \bigcap_{k\in\Nat} \{\mu\in\PP_\infty\colon Sz(X_{\mu},\tfrac{1}{k})\leq \omega^\alpha\}\\ & = \bigcap_{k\in\Nat} \{\mu\in\PP_\infty\colon s_{1/k}^{\omega^\alpha}(\Omega(\mu)) = \emptyset\},
\end{split}\]
which, by the above and Lemma~\ref{lem:slozitost_koule}, is the countable intersection of preimages of closed sets under $\boldsymbol{\Sigma}_{\omega^\alpha + 1}^0$-measurable mapping, so it is a $\boldsymbol{\Pi}_{\omega^\alpha + 1}^0$-set in $\PP_\infty$.
\end{proof}

Let us recall that a Banach space $X$ has a \emph{summable Szlenk index} if there is a constant $M$ such that for all positive $\varepsilon_1,\ldots,\varepsilon_n$ with $s_{\varepsilon_1}\ldots s_{\varepsilon_n}B_{X^*}\neq\emptyset$ we have $\sum_{i=1}^n \varepsilon_i\leq M$.

\begin{proposition}\label{prop:summable}
The set $\{\mu\in\PP_\infty\colon X_{\mu}\textnormal{ has a summable Szlenk index}\}$ is a $\boldsymbol{\Sigma}_{\omega+2}^0$-set in $\PP_\infty$.
\end{proposition}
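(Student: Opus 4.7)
The plan is to reformulate the summable Szlenk property so that the quantifiers over positive reals $\varepsilon_i$ become quantifiers over positive rationals, and then to compute the complexity of each basic set by using Lemma~\ref{lem:slozitost_koule} and Lemma~\ref{lem:slozitost_derivace} exactly as in the proof of Theorem~\ref{thm:Szlenk}.

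First I would record the monotonicity properties of the Szlenk derivative: for every $w^*$-compact $K \subseteq X^*$, the set $s_\varepsilon(K)$ is antitone in $\varepsilon$ (smaller $\varepsilon$ yields a larger derivative) and monotone in $K$. Iterating on $n$, for $\varepsilon'_i \le \varepsilon_i$ and $K' \supseteq K$ we obtain $s_{\varepsilon_1} \cdots s_{\varepsilon_n}(K) \subseteq s_{\varepsilon'_1} \cdots s_{\varepsilon'_n}(K')$. This lets me replace the real quantifiers by rational ones: $X_\mu$ has summable Szlenk index if and only if there exists $M \in \N$ such that for every $n \in \N$ and every $\vec\varepsilon \in \Q_+^n$ with $\sum_i \varepsilon_i > M$ one has $s_{\varepsilon_1} \cdots s_{\varepsilon_n}(B_{X_\mu^*}) = \emptyset$. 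The forward direction is trivial; for the converse one takes a real tuple witnessing failure, picks rationals $\varepsilon'_i < \varepsilon_i$ with $\sum \varepsilon'_i > M$, and uses monotonicity to contradict the rational assumption.

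Next, using the continuous coding $\Omega$ from Lemma~\ref{lem:slozitost_koule}, which realizes $B_{X_\mu^*}$ isometrically as $\Omega(\mu)\subseteq B_{\ell_\infty}$, I rewrite the set in question as
\[
\bigcup_{M \in \N}\; \bigcap_{n \in \N}\; \bigcap_{\substack{\vec\varepsilon \in \Q_+^n\\ \sum_i \varepsilon_i > M}} \big\{ \mu \in \PP_\infty \setsep s_{\varepsilon_1} \cdots s_{\varepsilon_n}(\Omega(\mu)) = \emptyset \big\}.
\]
For fixed $n$ and $\vec\varepsilon\in\Q_+^n$, by iterating Lemma~\ref{lem:slozitost_derivace} in exactly the induction used in the proof of Theorem~\ref{thm:Szlenk} (preimages of a $\boldsymbol{\Sigma}^0_3$ set under a $\boldsymbol{\Sigma}^0_{2n-1}$-measurable map sit in $\boldsymbol{\Sigma}^0_{2n+1}$), the composition $F \mapsto s_{\varepsilon_1} \cdots s_{\varepsilon_n}(F)$ on $\mathcal K(B_{\ell_\infty})$ is $\boldsymbol{\Sigma}^0_{2n+1}$-measurable. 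Pre-composing with the continuous $\Omega$ and pulling back the open set $\{\emptyset\}=\{F \setsep F\subseteq\emptyset\}$ shows that each basic set above is $\boldsymbol{\Sigma}^0_{2n+1}$, and in particular lies in $\boldsymbol{\Sigma}^0_\omega$.

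The countable intersection (over $n$ and $\vec\varepsilon \in \Q_+^n$) of sets in $\boldsymbol{\Sigma}^0_\omega$ lies in $\boldsymbol{\Pi}^0_{\omega+1}$, and the outer countable union over $M \in \N$ therefore lies in $\boldsymbol{\Sigma}^0_{\omega+2}$, giving the desired estimate. I expect the only delicate point to be the monotonicity-based reduction to rational tuples, whereas the Borel-class bookkeeping in the finite composition of Szlenk derivatives is a routine adaptation of the induction already carried out in the proof of Theorem~\ref{thm:Szlenk}.
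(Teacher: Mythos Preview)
Your proposal is correct and follows essentially the same approach as the paper: you write the set as $\bigcup_{M}\bigcap_{\vec\varepsilon\in\Rat_+^n,\,\sum\varepsilon_i>M}\{\mu:s_{\varepsilon_1}\cdots s_{\varepsilon_n}\Omega(\mu)=\emptyset\}$ and invoke Lemmas~\ref{lem:slozitost_koule} and~\ref{lem:slozitost_derivace} together with the induction from the proof of Theorem~\ref{thm:Szlenk}. The paper's own proof is the same but terser---it simply declares the rational reformulation ``easy to see'' and omits the monotonicity justification and the explicit $\boldsymbol{\Sigma}^0_{2n+1}$ bookkeeping that you spell out.
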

\begin{proof}
Let $\Omega$ be the mapping from Lemma~\ref{lem:slozitost_koule}. It is easy to see that the set $\{\mu\in\PP_\infty\colon X_{\mu}\text{ has a summable Szlenk index}\}$ is equal to
\[
\bigcup_{M\in\Nat}\quad \bigcap_{\substack{\varepsilon_1,\ldots,\varepsilon_n\in\Rat_+\\ \sum_{i=1}^n \varepsilon_i > M}} \{\mu\in\PP_\infty\colon s_{\varepsilon_1}\ldots s_{\varepsilon_n}\Omega(\mu)=\emptyset\},
\]
which by Lemma~\ref{lem:slozitost_koule} and Lemma~\ref{lem:slozitost_derivace} is a $\boldsymbol{\Sigma}_{\omega+2}^0$-set in $\PP_\infty$.
\end{proof}

Finally, let us note that similarly one can of course estimate Borel complexity of various other classes of spaces related to Szlenk derivations, e.g. spaces with Szlenk power type at most $p$ etc.

\subsection{Spaces having Schauder basis-like structures}
It is an open problem whether the class of spaces with Schauder basis is a Borel set in $\B$ (see e.g. \cite[Problem 8]{dodosKniha}) and note that by the results from \cite[Section~3]{CDDK1} it does not matter whether we use the coding $SB(C([0,1]))$ or $\B$. However, it was proved by Ghawadrah that the class of spaces with $\pi$-property is Borel (actually, it is a $\boldsymbol{\Sigma}_6^0$-set in $\PP_\infty$ which follows immediately from \cite[Lemma 2.1]{Gh15}, see also \cite{Gh19}) and that the class of spaces with the bounded approximation property (BAP) is Borel (actually, it is a $\boldsymbol{\Sigma}_7^0$-set in $\PP_\infty$ which follows immediately from \cite[Lemma 2.1]{Gh17} and this estimate has recently been improved to a $\boldsymbol{\Sigma}_6^0$-set in any admissible topology, see \cite{Gh19}).

One is therefore led to the question of finding examples of Banach spaces having BAP but not the Schauder basis. Such an example was constructed by Szarek \cite{Sz87}. Actually, Szarek considered classes of separable spaces with \emph{local basis structure (LBS)} and \emph{local $ \Pi $-basis structure (L$\Pi$BS)} for which we have
\[
\text{basis}\implies \text{(L$\Pi$BS)} \implies \Big(\text{(LBS) and (BAP)}\Big) \implies \text{(BAP)}
\]
and he proved that the converse to the second and the third implication does not hold in general. The problem of whether the converse to the first implication holds seems to be open, see \cite[Problem 1.8]{Sz87}. In this subsection we prove that both (LBS) and (L$\Pi$BS) give rise to a Borel class of separable Banach spaces (we even compute an upper bound on their Borel complexities, see Theorem~\ref{thm:lbs}). Note that this result somehow builds a bridge between both open problems mentioned above, that is, between the problem of whether $\langle \text{spaces with Schauder basis} \rangle$ is a Borel set in $\B$ and the problem of whether every separable Banach space with (L$\Pi$BS) has a basis.

Let us start with the definitions as they are given in \cite{Sz87}.

\begin{definition}
By the \emph{basis constant} of a basis $ (x_{i})_{i=1}^{d} $ of a Banach space $ X $ of dimension $ d \in [0, \infty] $ we mean the least number $ C \geq 1 $ such that $ \Vert \sum_{i=1}^{n} a_{i} x_{i} \Vert \leq C \Vert \sum_{i=1}^{m} a_{i} x_{i} \Vert $ whenever $ n , m \in \mathbb{N} $, $ n \leq m \leq d $ and $ a_{1}, \dots, a_{m} \in \mathbb{R} $. The basis constant of $ (x_{i})_{i=1}^{d} $ is denoted by $ \mathrm{bc}( (x_{i})_{i=1}^{d} ) $. We further denote
$$ \mathrm{bc}(X) = \inf \big\{ \mathrm{bc}( (x_{i})_{i=1}^{d} ) : \textnormal{$ (x_{i})_{i=1}^{d} $ is a basis of $ X $} \big\}. $$
\end{definition}

\begin{definition}
A Banach space $ X $ is said to have the \emph{local basis structure (LBS)} if $ X = \overline{\bigcup_{n=1}^{\infty} E_{n}} $, where $ E_{1} \subseteq E_{2} \subseteq \dots $ are finite-dimensional subspaces satisfying $ \sup_{n \in \mathbb{N}} \mathrm{bc}(E_{n}) < \infty $.

Further, $ X $ is said to have the \emph{local $ \Pi $-basis structure (L$\Pi$BS)} if $ X = \overline{\bigcup_{n=1}^{\infty} E_{n}} $, where $ E_{1} \subseteq E_{2} \subseteq \dots $ are finite-dimensional subspaces satisfying $ \sup_{n \in \mathbb{N}} \mathrm{bc}(E_{n}) < \infty $ for which there are projections $ P_{n} : X \to E_{n} $ such that $ P_{n}(X) = E_{n} $ and $ \sup_{n \in \mathbb{N}} \Vert P_{n} \Vert < \infty $.
\end{definition}

\begin{lemma}\label{lem:eta}
Whenever $E$ is a finite-dimensional subspace of a Banach space $X$, $\delta\in (0,1)$, $K>0$, $T:E\to X$ is a $(1+\delta)$-isomorphism (not necessarily surjective) with $\|T-I\|<\delta$ and $P:X\to E$ is a projection with $P(X)=E$ and $\|P\|\leq K$, then for every subspace $F$ of $E$ we have $\|TP|_{T(F)} - I_{T(F)}\|\leq 4\delta K$. 

Moreover, whenever $\|TP|_{T(E)} - I_{T(E)}\|\leq q <1$ then $T(E)$ is $\frac{(1+\delta) K}{1-q}$-complemented in $X$.
\end{lemma}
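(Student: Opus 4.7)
The plan is to prove both assertions by direct computation, exploiting the fact that $Px=x$ for every $x\in E$. For the main inequality, I would take an arbitrary $y\in T(F)$, write $y=Tx$ with $x\in F\subseteq E$, and perform the key algebraic rewriting
\[
TP(y)-y = TPTx - Tx = TPTx - TPx = TP(T-I)x,
\]
where the middle equality uses $Px=x$. Estimating crudely gives
\[
\|TP(y)-y\| \le \|T\|\,\|P\|\,\|T-I\|\,\|x\| \le (1+\delta)K\delta\,\|x\|,
\]
and then combining with $\|x\|\le(1+\delta)\|y\|$ (from $T$ being a $(1+\delta)$-isomorphism) yields
\[
\|TP|_{T(F)}-I_{T(F)}\| \le (1+\delta)^2\delta K < 4\delta K,
\]
the last inequality following from $\delta<1$.

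For the moreover part, the subtle point is that $TP$ itself is typically not a projection onto $T(E)$, even though it maps $X$ onto $T(E)$. However, the hypothesis $\|TP|_{T(E)}-I_{T(E)}\|\le q<1$ means the restriction of $TP$ to $T(E)$ is invertible by a Neumann series, and its inverse $S:=(TP|_{T(E)})^{-1}:T(E)\to T(E)$ satisfies $\|S\|\le 1/(1-q)$. The natural candidate for a bounded projection is then $Q:=S\circ TP:X\to T(E)$. I would check directly that $Q(X)\subseteq T(E)$, that $Qy=S(TP(y))=y$ for every $y\in T(E)$ (so $Q$ is indeed a projection onto $T(E)$), and that $\|Q\|\le\|S\|\,\|T\|\,\|P\|\le (1+\delta)K/(1-q)$.

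There is no essential obstacle in either part; the only thing one has to notice is the trick of ``straightening'' $TP$ into a genuine projection by precomposing with the Neumann inverse $S$, rather than attempting to use $TP$ as a projection itself.
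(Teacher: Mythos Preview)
Your proof is correct and follows essentially the same approach as the paper's: the identity $TP(y)-y = TP(T-I)T^{-1}y$ via $Px=x$ for $x\in E$, followed by the estimate $(1+\delta)^2\delta K<4\delta K$, and for the moreover part the Neumann-series inverse $(TP|_{T(E)})^{-1}$ composed with $TP$ to produce the projection. The only cosmetic difference is that the paper fixes a basis of $F$ to write elements of $T(F)$, whereas you work directly with $y=Tx$; the computations are identical.
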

\begin{proof}
Let $\{f_1,\ldots,f_n\}$ be a basis of $F$. Then for every $x=\sum_{i=1}^n a_i T(f_i)\in T(F)$ we have
\[
\|TPx-x\|=\|TP(\sum_{i=1}^n a_i(T(f_i)-f_i))\|\leq (1+\delta)K\|(T-I)T^{-1}x\|\leq (1+\delta)^2\delta K \|x\|.
\]
Moreover, if $\|TP|_{T(E)} - I_{T(E)}\|<1$ then the mapping $TP|_{T(E)}$ is an isomorphism with $\|(TP|_{T(E)})^{-1}\|\leq \sum_{i=0}^\infty q^i = \frac{1}{1-q}$. It is now straightforward to prove that $P':=(TP|_{T(E)})^{-1} TP:X\to T(E)$ is a projection onto $T(E)$ with $\|P'\|\leq \frac{(1+\delta) K}{1-q}$.
\end{proof}

\begin{lemma}\label{lem:lbsCharacterization}
For every $\mu\in\B$, $K,l\in\Nat$ and $v_1,\ldots,v_m\in V$, let us denote by $\Phi(\mu,K,v_1,\ldots,v_m)$ and $\Psi(\mu,K,l,v_1,\ldots,v_m)$ the formulae
$$\Phi(\mu,K,v_1,\ldots,v_m)=\forall a_{1}, \dots, a_{m} \in \mathbb{R} : \max_{1 \leq k \leq m} \mu \Big( \sum_{i=1}^{k} a_{i} v_{i} \Big) \leq K \mu \Big( \sum_{i=1}^{m} a_{i} v_{i} \Big)$$ and $$\Psi(\mu,K,l,v_1,\ldots,v_m)=\exists u_{1}, \dots, u_{l} \in \mathbb{Q}\textrm{-}\mathrm{span} \{ v_{1}, \dots, v_{m} \} \, \forall a_{1}, \dots, a_{m}, b_{1}, \dots, b_{l} \in \mathbb{R} : $$
$$ \mu \Big( \sum_{i=1}^{m} a_{i} v_{i} + \sum_{i=1}^{l} b_{i} u_{i} \Big) \leq K \mu \Big( 
\sum_{i=1}^{m} a_{i} v_{i} + \sum_{i=1}^{l} b_{i} e_{i} \Big).$$ 
Then for every $\nu\in\B$ the following holds.

\begin{enumerate}[(a)]
    \item The space $ X_{\nu} $ has LBS if and only if
\[ \exists K \in \mathbb{N} \, \forall n \in \mathbb{N} \, \exists m \in \mathbb{N} \, \exists v_{1}, \dots, v_{m} \in V, \, \{ e_{1}, \dots, e_{n} \} \subseteq \mathrm{span} \{ v_{1}, \dots, v_{m} \}\]
\[ \Phi(\nu,K,v_1,\ldots,v_m). \]
    \item The space $ X_{\nu} $ has L$\Pi$BS if and only if
$$ \exists K \in \mathbb{N} \, \forall n \in \mathbb{N} \, \exists m \in \mathbb{N} \, \exists v_{1}, \dots, v_{m} \in V, \, \{ e_{1}, \dots, e_{n} \} \subseteq \mathrm{span} \{ v_{1}, \dots, v_{m} \}$$
$$ \Phi(\nu,K,v_1,\ldots,v_m) \wedge \forall l\in\Nat \Psi(\nu,K,l,v_1,\ldots,v_m).$$
\end{enumerate}
\end{lemma}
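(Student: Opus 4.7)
The plan is to pass between the abstract definitions of LBS and L$\Pi$BS---phrased in terms of finite-dimensional subspaces and projections on $X_\nu$---and the discrete formulae in the statement, which speak only about rational vectors from $V$. The bridges for this passage are the density of $V$ in $X_\nu$ and Lemma~\ref{lem:approx2}, which ensures that sufficiently small perturbations of a basis remain bases of nearby subspaces with almost the same basis constant.

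For part (a), the direction $(\Leftarrow)$ is direct: setting $F_n:=\Span\{v_1,\ldots,v_m\}\subseteq X_\nu$ for the vectors provided at level $n$ gives a family of finite-dimensional subspaces, each containing $\Span\{e_1,\ldots,e_n\}$ and each with a basis of basis constant $\leq K$. Because every $v_j\in V$ lies in $\Span_\Rat\{e_i:i\le N\}$ for some $N$, one has $F_{n-1}\subseteq F_N$ for $N$ large enough, and a nested subsequence $E_k:=F_{n_k}$ with $\mathrm{bc}(E_k)\leq K$ and dense union can be extracted, witnessing LBS. For $(\Rightarrow)$, start from a chain $E_1\subseteq E_2\subseteq\ldots$ witnessing LBS with $\mathrm{bc}(E_k)\leq C$. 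Given $n$, pick $k$ large enough that each $e_i$ ($i\leq n$) is $\delta$-close in $\nu$ to some $\tilde e_i\in E_k$, and fix a basis $(f_1,\ldots,f_d)$ of $E_k$ with basis constant $\leq C$. Using density of $V$, perturb each $f_j$ to a rational vector $v_j\in V$; then, writing $\tilde e_i=\sum_j a_{ij}f_j$ and choosing indices $j_1,\ldots,j_n$ with the $a_{i,j_i}$ non-zero (possible after re-indexing), swap each $v_{j_i}$ for the rational vector $e_i$. Two successive applications of Lemma~\ref{lem:approx2}, valid for $\delta$ sufficiently small, show that the resulting tuple in $V$ is a basis with basis constant arbitrarily close to $C$, and its span contains $e_1,\ldots,e_n$ by construction, as required by $\Phi$.

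Part (b) runs on the same template enriched with a projection. For $(\Rightarrow)$, L$\Pi$BS provides projections $P_k\colon X_\nu\to E_k$ with $\sup_k\|P_k\|<\infty$; if $\tilde E:=\Span\{v_j\}$ denotes the (slightly perturbed) rational subspace produced by (a) and $Q\colon E_k\to\tilde E$ is the near-identity map sending the basis of $E_k$ to that of $\tilde E$, then $Q\circ P_k\colon X_\nu\to\tilde E$ is a projection onto $\tilde E$ of norm close to $\|P_k\|$; setting $u_i$ to be a vector in $\Rat\text{-}\Span\{v_j\}$ approximating $(Q\circ P_k)(e_i)$ fulfils the inequality in $\Psi$ for an appropriate $K$. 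For $(\Leftarrow)$, the inequality in $\Psi(\nu,K,l,v_1,\ldots,v_m)$ makes the assignment $v_j\mapsto v_j$, $e_i\mapsto u_i$ a well-defined linear projection of norm $\leq K$ from $E_n+\Span\{e_1,\ldots,e_l\}$ onto $E_n:=\Span\{v_j\}$ (well-definedness uses that the right-hand side of the inequality vanishes whenever $\sum a_iv_i+\sum b_ie_i=0$ in $X_\nu$); extending each such projection to $X_\nu$ (by Hahn--Banach applied to the finitely many coordinate functionals into the finite-dimensional $E_n$) and using that the $\|\cdot\|\leq K$ ball of operators into $E_n$ is compact in the strong operator topology, a diagonal argument extracts a limit projection $P\colon X_\nu\to E_n$ of norm $\leq K$, establishing L$\Pi$BS. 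The principal technical obstacle throughout is the discreteness requirement $v_j\in V$: ensuring that $e_1,\ldots,e_n$ lie \emph{exactly} in the rational span while the quantitative controls (basis constant and projection norm) survive the unavoidable rational approximations is precisely why density of $V$ and Lemma~\ref{lem:approx2} are indispensable.
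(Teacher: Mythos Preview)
Your overall architecture is right and matches the paper's, but the ``swap'' step in the $(\Rightarrow)$ direction of (a) does not work as stated, and this gap propagates into (b).

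You start with a basis $(f_1,\dots,f_d)$ of $E_k$ with $\mathrm{bc}\le C$ and want a tuple in $V$ whose span contains $e_1,\dots,e_n$ exactly. Your plan is to perturb each $f_j$ to $v_j\in V$ and then replace $v_{j_i}$ by $e_i$ for suitable indices $j_i$. The first perturbation is indeed small and controlled by Lemma~\ref{lem:approx2}. The second is not: $e_i$ is close to $\tilde e_i=\sum_j a_{ij}f_j$, a \emph{linear combination} of the $f_j$'s, and there is no reason for $e_i$ to be close to the single vector $v_{j_i}\approx f_{j_i}$. The Steinitz-type exchange $f_{j_i}\rightsquigarrow \tilde e_i$ preserves the property of being a basis (when $a_{i,j_i}\neq 0$) but has no control whatsoever on the basis constant; Lemma~\ref{lem:approx2} is a small-perturbation lemma and simply does not apply here. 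Consequently the resulting tuple need not satisfy $\Phi(\nu,K,\cdot)$ for any fixed $K$.

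The paper avoids this by never asking the good-bc basis to contain $e_1,\dots,e_n$. Instead it first constructs a \emph{subspace} $F_n$ of $X_\nu$, spanned by vectors of $V$, which (i) contains $e_1,\dots,e_n$ exactly and (ii) is Banach--Mazur close to $E_k$. This is done by taking approximations $h_i\in E_k$ of the $e_i$, using complementation of $\Span\{e_i\}$ and Lemma~\ref{lem:eta} to split $E_k=\Span\{h_i\}\oplus G$, and setting $F_n=\Span\{e_1,\dots,e_n\}\cup\{\text{rational perturbations of a basis of }G\}$. Since $\mathrm{bc}$ is stable under small $d_{BM}$-changes, $\mathrm{bc}(F_n)\le 2C$; and since $V\cap F_n$ is dense in $F_n$, a basis $(v_1,\dots,v_m)$ witnessing this can be taken in $V$. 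Crucially, this basis need not contain any $e_i$; the constraint in the lemma is only $\{e_1,\dots,e_n\}\subseteq\Span\{v_1,\dots,v_m\}$.

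Two smaller points. In (b)$(\Leftarrow)$, extending the partial projections to $X_\nu$ by Hahn--Banach on coordinate functionals does \emph{not} preserve the operator bound $\le K$, so your compactness argument in the $K$-ball breaks. The fix (which the paper uses) is not to extend at all: the $P^{(l)}$ are defined on an increasing chain of domains with dense union and fixed finite-dimensional range, so one takes an SOT accumulation point directly. In (b)$(\Rightarrow)$, $Q\circ P_k$ is not literally a projection onto $\tilde E$ since $\tilde E\not\subseteq E_k$; one must first observe $QP_k|_{\tilde E}$ is close to the identity (hence invertible) and pass to $(QP_k|_{\tilde E})^{-1}QP_k$. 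This is exactly what Lemma~\ref{lem:eta} packages.
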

\begin{proof}
We prove only the more difficult part (b). Since $ \nu \in \mathcal{B} $, the space $ X_{\nu} $ is just the completion of $ (c_{00}, \nu) $ (it is not necessary to consider a quotient). So, the notions of linear span and of linear independence have the same meaning in $ c_{00} $ and in $ X_{\nu} $, if performed on subsets of $ c_{00} $.

Let us suppose that $ \nu \in \mathcal{B} $ satisfies the formula in (b) for some $ K \in \mathbb{N} $. We put $ E_{0} = \{ 0 \} $ and choose recursively subspaces $ E_{1} \subseteq E_{2} \subseteq \dots $ of $ X_{\nu} $, each of which is generated by a finite number of elements of $ V $, in the following way. Assuming that $ E_{j} $ has been already chosen, we pick first $ n_{j+1} \geq j + 1 $ such that $ E_{j} \subseteq \mathrm{span} \{ e_{1}, \dots, e_{n_{j+1}} \} $. Then we can pick $ m_{j+1} \in \mathbb{N} $ and $ v^{j+1}_{1}, \dots, v^{j+1}_{m_{j+1}} \in V $ with $ \{ e_{1}, \dots, e_{n_{j+1}} \} \subseteq \mathrm{span} \{ v^{j+1}_{1}, \dots, v^{j+1}_{m_{j+1}} \} $ such that $\Phi(\nu,K,v^{j+1}_{1}, \ldots, v^{j+1}_{m_{j+1}})$ and for every $l\in\Nat$, $\Psi(\nu,K,l,v^{j+1}_{1}, \ldots, v^{j+1}_{m_{j+1}})$ hold.

We put
$E_{j+1} = \mathrm{span} \big\{ v^{j+1}_{1}, \dots, v^{j+1}_{m_{j+1}} \big\}$.
In this way, we obtain $ E_{j} \subseteq E_{j+1} $. Also, $ X_{\nu} = \overline{\bigcup_{n=1}^{\infty} E_{n}} $ (we have $ e_{j+1} \in E_{j+1} $, as $ n_{j+1} \geq j + 1 $). If we take all non-zero vectors $ v^{j+1}_{i}, 1 \leq i \leq m_{j+1} $, we obtain a basis of $ E_{j+1} $ with the basis constant at most $ K $.

To show that the sequence $ E_{1} \subseteq E_{2} \subseteq \dots $ witnesses that $ X_{\nu} $ has L$\Pi$BS, it remains to find a projection $ P_{j+1} $ of $ X_{\nu} $ onto $ E_{j+1} $ such that $ \Vert P_{j+1} \Vert \leq K $. Let us pick some $ l \in \mathbb{N} $ and put $ E^{(l)} = \mathrm{span} \{ v^{j+1}_{1}, \dots, v^{j+1}_{m_{j+1}}, e_{1}, \dots, e_{l} \} $. By $\Psi(\nu,K,l,v^{j+1}_{1}, \ldots, v^{j+1}_{m_{j+1}})$, there exists a projection $P^{(l)}$ of $E^{(l)}$ onto $E_{j+1}$ with  $\|P^{(l)}\|\leq K$. Since the norms of $P^{(l)}$, for $l\in\Nat$, are uniformly bounded and have a fixed finite-dimensional range, there exists their accumulation point in SOT which is a projection $P_{j+1}: X_\nu\rightarrow E_{j+1}$ of norm bounded by $K$ as desired.

Conversely, suppose that $X_\nu$ has L$\Pi$BS as witnessed by some $C>1$ and a sequence $(E_n)_{n\in\Nat}$ of finite-dimensional subspaces satisfying $X_\nu=\overline{\bigcup_n E_n}$ and $ \sup_{n \in \mathbb{N}} \mathrm{bc}(E_{n}) <C$, for which there are projections $ P_{n} : X_\nu \to E_{n} $ such that $ P_{n}(X_\nu) = E_{n} $ and $ \sup_{n \in \mathbb{N}} \Vert P_{n} \Vert < C$.
Pick $D>0$ such that $H_n:=(\Span\{e_1,\ldots,e_n\},\nu)$ is $D$-complemented in $X_\nu$ and let $\phi_1:=\phi_2^{e_1,\ldots,e_n}$ be the function from Lemma~\ref{lem:approx2}. Fix $\varepsilon>0$ such that $\phi_1(t)$ is small enough (to be specified later) whenever $t<\varepsilon$. Find $k\in\Nat$ such that there are $h_1,\ldots,h_n\in E_k$ with $\nu(e_i-h_i)<\varepsilon$. If $\phi_1(\varepsilon)$ is small enough we have $\frac{(1+\phi_1(\varepsilon))D}{1-4\phi_1(\varepsilon)D}\leq 2D$ (this value refers to the ``Moreover'' part in Lemma~\ref{lem:eta}). By Lemma~\ref{lem:eta}, $\Span\{h_i\colon i\leq n\}$ is $2D$-complemented in $X_\nu$, so let $Q:X_\nu\to \Span\{h_i\colon i\leq n\}$ be the corresponding projection. Pick a basis $h_{n+1},\ldots,h_{\dim E_k}$ of the space $E_k\cap Q^{-1}(0)$ which is $(2D+1)$-complemented in $E_k$.
Let $\phi_2:=\phi_2^{h_{n+1},\ldots,h_{\dim E_k}}$ be the function from Lemma~\ref{lem:approx2}. Fix $\delta>0$ such that $\phi_2(t)$ is small enough (to be specified later) whenever $t<\delta$. Finally, find $f_{n+1},\ldots,f_{\dim E_k}\in V$ with $\nu(f_j-h_j)<\delta$ for $j=n+1,\ldots,\dim E_k$.

We \emph{claim} that the space $F_n:=(\Span\{e_1,\ldots,e_n,f_{n+1},\ldots,f_{\dim E_k}\},\nu)$ is $2C$-complemented in $X_\nu$ and $d_{BM}(F_n,E_k)<2$. If we denote by $T:E_k\to F_n$ the linear mapping given by $h_i\mapsto e_i$, $i\leq n$, and $h_j\mapsto f_j$, $n+1\leq j\leq \dim E_k$, then for every $y\in \Span\{h_i\colon i\leq n\}$ and 
$z\in \Span\{h_j\colon j=n+1,\ldots,\dim E_k\}$ we have
\[\begin{split}
\nu(T(y+z)-y-z) & \leq \nu(Ty-y) + \nu(Tz-z)\leq \phi_1(\varepsilon)\nu(y) + \phi_2(\delta)\nu(z)\\
& \leq \Big(\phi_1(\varepsilon)2D + \phi_2(\delta)(2D+1)\Big)\nu(y+z);
\end{split}\]
hence, if $\eta:=\Big(\phi_1(\varepsilon)2D + \phi_2(\delta)(2D+1)\Big)<1$, we obtain $\|T\|\leq 1 + \|I-T\|\leq 1+\eta$ and $\|Tx\|\geq \|x\|-\|(I-T)x\|\geq (1-\eta)\|x\|$ for every $x\in E_k$ so $T$ is an isomorphism with $\|T\|^{-1}\leq (1-\eta)^{-1}$. Thus, by Lemma~\ref{lem:eta}, if $\phi_1(\varepsilon)$ and $\phi_2(\delta)$ are small enough (and so $\eta$ is small enough), we obtain $\|T\|\|T^{-1}\|<2$ and $F_n$ is $2C$-complemented in $X_\nu$.

Thus, $\mathrm{bc}(F_n)\leq \mathrm{bc}(E_k)d_{BM}(E_k,F_n)<2C$ which is witnessed by some basis $v_1,\ldots,v_m\in V$ of $F_n$. This shows that $\Phi(\nu,2C,v_1,\ldots,v_m)$ holds. Let $P:X_\nu\rightarrow F_n$ be a projection with $P[X_\nu]=F_n$ and $\|P\|\leq 2C$. Given $ l \in \Nat $, let $ T \subseteq \{ 1, \dots, l \} $ be a set such that $ (e_{i})_{i \in T} $ together with $ (v_{i})_{i=1}^{m} $ form a basis of $ \Span (\{ v_{1}, \dots, v_{m} \} \cup \{ e_{1}, \dots, e_{l} \}) $. Pick $A>0$ such that $(v_i)_{i=1}^m\cup (e_i)_{i\in T} \approximates[\ell_1^{m+|T|}]{A}$. %For every $i\in \{1,\ldots,l\}\setminus T$ put $u_i:=e_i=P(e_i)$ (note that in this case we have $u_i\in \qeSpan \{v_1,\ldots,v_n\}$
For $i\in T$ pick $u_i\in \qeSpan \{v_1,\ldots,v_m\}$ such that $\nu(u_i-P(e_i))<\frac{C}{A}$. Then for every $a_1,\ldots,a_m\in\Rea$ and every $(b_i)_{i\in T}\in \Rea^T$ we have
\[\begin{split}
\nu \Big( \sum_{i=1}^{m} a_{i} v_{i} + \sum_{i\in T} b_{i} u_{i} \Big) & \leq 2C \nu \Big( 
\sum_{i=1}^{m} a_{i} v_{i} + \sum_{i\in T} b_{i} e_{i} \Big) + \nu(\sum_{i\in T} b_{i} (u_{i} - P(e_i)))\\ & \leq 3C \nu \Big( 
\sum_{i=1}^{m} a_{i} v_{i} + \sum_{i\in T} b_{i} e_{i} \Big).
\end{split}\]
Thus, the linear mapping $O:\Span (\{ v_{1}, \dots, v_{m} \} \cup \{ e_{1}, \dots, e_{l} \})\to \Span \{ v_{1}, \dots, v_{m} \}$ given by $v_i\mapsto v_i$, $i\leq m$, and $e_i\mapsto u_i$, $i\in T$, is a linear projection, and if we put $u_i:=O(e_i)\in V$ for every $i\in\{1,\ldots,l\}$, we see that $\Psi(\nu,3C,l,v_1,\ldots,v_m)$ holds and the formula in $(b)$ is satisfied with $K=3C$.
\end{proof}

\begin{theorem}\label{thm:lbs}
(a) The class of spaces which have LBS is a $ \boldsymbol{\Sigma}^{0}_{4} $-set in $ \mathcal{B} $.

(b) The class of spaces which have L$\Pi$BS is a $ \boldsymbol{\Sigma}^{0}_{6} $-set in $ \mathcal{B} $.
\end{theorem}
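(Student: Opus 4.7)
The plan is to apply the characterizations of LBS and L$\Pi$BS provided by Lemma~\ref{lem:lbsCharacterization} and simply count quantifiers. The only piece of work beyond that lemma is to observe that, for every fixed tuple of parameters, the formulas $\Phi$ and $\Psi$ define sets of the following low complexity in $\B$: the set $\{\nu\in\B\setsep\Phi(\nu,K,v_1,\ldots,v_m)\}$ is closed, and the set $\{\nu\in\B\setsep\Psi(\nu,K,l,v_1,\ldots,v_m)\}$ is an $F_\sigma$-set. The former is a countable intersection, over rational tuples $(a_i)$ (which suffice by continuity, homogeneity of $\nu$, and density of $\Rat$ in $\Rea$), of conditions of the form $\nu(\sum_{i=1}^k a_iv_i)\le K\nu(\sum_{i=1}^m a_iv_i)$, each of which is closed in $\nu$ by the very definition of the topology on $\PP$. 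The latter is a countable union, over the countable set of $l$-tuples $(u_1,\ldots,u_l)\in(\qeSpan\{v_1,\ldots,v_m\})^l$, of closed sets obtained analogously.

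With these two observations at hand, Lemma~\ref{lem:lbsCharacterization}(a) rewrites the LBS class as
\[
\{\nu\in\B\setsep X_\nu\text{ has LBS}\}=\bigcup_{K\in\Nat}\bigcap_{n\in\Nat}\bigcup_{\substack{m\in\Nat,\,v_1,\ldots,v_m\in V\\ \{e_1,\ldots,e_n\}\subseteq\Span\{v_1,\ldots,v_m\}}}\{\nu\in\B\setsep\Phi(\nu,K,v_1,\ldots,v_m)\}.
\]
The inner countable union of closed sets is $\boldsymbol{\Sigma}^0_2$, the intersection over $n$ is then $\boldsymbol{\Pi}^0_3$, and the outer union over $K$ is $\boldsymbol{\Sigma}^0_4$, which proves (a).

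For (b), the same rewriting via Lemma~\ref{lem:lbsCharacterization}(b) gives
\[
\{\nu\in\B\setsep X_\nu\text{ has L}\Pi\text{BS}\}=\bigcup_{K}\bigcap_{n}\bigcup_{m,\vec v}\Big(\{\nu\setsep\Phi(\nu,K,\vec v)\}\cap\bigcap_{l\in\Nat}\{\nu\setsep\Psi(\nu,K,l,\vec v)\}\Big),
\]
where the second union carries the same constraint $\{e_1,\ldots,e_n\}\subseteq\Span\vec v$ as before. By the observation on $\Psi$, the internal piece $\bigcap_l\{\nu\setsep\Psi(\nu,K,l,\vec v)\}$ is a countable intersection of $F_\sigma$-sets, hence $\boldsymbol{\Pi}^0_3$, and intersecting with the closed $\Phi$-piece keeps us at $\boldsymbol{\Pi}^0_3$. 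The outer bookkeeping -- countable union over $(m,\vec v)$, intersection over $n$, union over $K$ -- then lifts us successively to $\boldsymbol{\Sigma}^0_4$, $\boldsymbol{\Pi}^0_5$, and finally $\boldsymbol{\Sigma}^0_6$, giving (b). The main obstacle in the theorem has therefore already been overcome in Lemma~\ref{lem:lbsCharacterization}: once the presence of a uniformly bounded sequence of genuine projections is replaced by the finitary ``approximate projection'' condition encoded in $\Psi$, no further difficulty arises and the proof reduces to a mechanical quantifier count.
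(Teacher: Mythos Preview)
Your proof is correct and follows exactly the same approach as the paper's own proof, which is a one-line appeal to Lemma~\ref{lem:lbsCharacterization} together with the observation that the $\Phi$-condition is closed and the $\Psi$-condition is $F_\sigma$. You have simply spelled out the quantifier count in more detail than the paper does.
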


\begin{proof}
This follows from Lemma~\ref{lem:lbsCharacterization} because the conditions given by formulas $\Phi$ and $\Psi$ are obviously closed and $F_\sigma$, respectively.
\end{proof}

\section{Open questions and remarks}\label{sect:question}

In Theorem~\ref{thm:FSigmaIsomrfClasses} we proved that $\ell_2$ is the unique separable infinite-dimensional Banach space (up to isomorphism) whose isomorphism class is an $F_\sigma$-set. Following \cite{JLS96}, we say that a separable infinite-dimensional Banach space $X$ is \emph{determined by its finite dimensional subspaces} if it is isomorphic to every separable Banach space $Y$ which is finitely crudely representable in $X$ and for which $X$ is finitely crudely representable in $Y$. Note that $\ell_2$ is determined by its finite dimensional subspaces and that if a separable infinite-dimensional Banach space is determined by its finite dimensional subspaces then it is obviously determined by its pavings and so, by Theorem~\ref{thm:pavingDetermined2}, its isomorphism class is $G_{\delta\sigma}$. Johnson, Lindenstrauss, and Schechtman conjectured (see \cite[Conjecture 7.3]{JLS96}) that $\ell_2$ is the unique, up to isomorphism, separable infinite-dimensional Banach space which is determined by its finite dimensional subspaces. We believe that Theorem~\ref{thm:FSigmaIsomrfClasses} could be instrumental for proving this conjecture, since it follows from this theorem that the conjecture is equivalent to the positive answer to the following question. We thank Gilles Godefroy who suggested to us that there might be a relation between having $F_\sigma$ isomorphism class and being determined by finite dimensional subspaces.

\begin{question}\label{q:ellTwo}
Let $X$ be a separable infinite-dimensional Banach space determined by its finite dimensional subspaces. Is $\isomrfclass{X}$ an $F_\sigma$-set in $\B$?
\end{question}

It would be interesting to know whether there is a separable infinite-dimensional Banach space $X$ such that $\isomrfclass{X}$ is a $G_\delta$-set in $\B$ or in $\PP_\infty$. Note that the only possible candidate is the Gurari\u{\i} space, see Section~\ref{subsec:smallIsometry} for more details. One of the possible strategies to answer Question~\ref{q:4} in negative for $\PP_\infty$ would be to find an admissible topology $\tau$ on $SB(X)$ such that $\isomrfclass{\mathbb{G}}$ is a dense and meager set in $(SB(X),\tau)$. However, we do not even know whether $\isomrfclass{\mathbb{G}}$ is Borel.

\begin{question}\label{q:4}
Is $\isomrfclass{\mathbb{G}}$ a $G_\delta$-set in $\PP_\infty$ or in $\B$? Is it at least Borel?
\end{question}

Solving the homogeneous Banach space problem, Komorowski and Tomczak-Jaegermann (\cite{KoTJ95}), and Gowers (\cite{G02}) proved that if a separable infinite-dimensional Banach space is isomorphic to all of its closed infinite-dimensional subspaces, then it is isomorphic to $\ell_2$. It seems that the isometric variant of this result is open; that is, whether $\ell_2$ is the only (up to isometry) separable infinite-dimensional Banach space that is isometric to all of its infinite-dimensional closed subspaces. We note that any Banach space satisfying this criterion must be, by the Gowers' result, isomorphic to $\ell_2$. Our initial interest in this problem was that we observed that a positive answer implies that whenever $\isomtrclass{X}$ is closed in $\PP_\infty$ then $X\equiv\ell_2$. Eventually we found another argument (see Subsection~\ref{subsec:smallIsometry}), but the question is clearly of independent interest.

\begin{question}\label{q:noe}
Let $X$ be a separable infinite-dimensional Banach space which is isometric to all of its closed infinite-dimensional subspaces. Is then $X$ isometric to $\ell_2$?
\end{question}

We note here that Question~\ref{q:noe} was already attacked by N. de Rancourt \cite{Noe20}, who was able to prove that if $X$ is as above (that is, isometric to all of its closed infinite-dimensional subspaces) then, for every $\varepsilon>0$, $X$ admits a $(1+\varepsilon)$-unconditional basis.

In Theorem~\ref{thm:Intro2} we proved that $\isomtrclass{\mathbb{G}}$, resp. $\isomtrclass{L_p[0,1]}$, for $p\in[1,\infty)$, are $G_\delta$-sets; we even proved that they are dense $G_\delta$-sets in $\PP_\infty$, resp. in $\mathcal{L}_{p,1+}\cap \PP_\infty$. Coincidentally, all these spaces are Fra\" iss\' e limits (we refer to \cite[Proposition 3.7]{FLMT19} for this statement about $L_p[0,1]$. According to \cite{FLMT19}, no other examples of separable Banach spaces which are Fra\"{\i}ss\'e limits seem to be known. We remark that a characterization of separable Banach spaces with $G_\delta$ isometry classes has been obtained in \cite{CDR}, where some new examples are presented.

It also follows that for $1\leq p<\infty$, $L_p[0,1]$ is a generic $\mathcal{L}_{p,1+}$-space. On the other hand, by Corollary~\ref{cor:LpnotQSLpgeneric}, for $p\in \left[1,2\right)\cup \left(2,\infty\right)$, $L_p[0,1]$ is not a generic $QSL_p$-space. For $p=2$, $\ell_2$ is obviously the generic $QSL_2$-space, and since $QSL_1$-spaces coincide with the class of all Banach spaces, for $p=1$, $\mathbb{G}$ is the generic $QSL_1$-space. This leaves open the next question.
\begin{question}
For $p\in \left(1,2\right)\cup \left(2,\infty\right)$, does there exist a generic $QSL_p$-space in $\B$ or $\PP_\infty$?
\end{question}

In Theorem~\ref{thm:superreflexive}, we have computed that the class of superreflexive spaces is an $F_{\sigma\delta}$-set. It is easy to check that the class of superreflexive spaces is dense in $\PP_\infty$ and $\B$, so it cannot be a $G_\delta$-set as then this class would have a non-empty intersection with the isometry class of $\Gurarii$ which is not superreflexive. However, the following is not known to us.
\begin{question}
Is the class of all superreflexive spaces $F_{\sigma\delta}$-complete in $\PP_\infty$ or $\B$?
\end{question}

Taking into account that spaces with a summable Szlenk index form a class of spaces which is a $\boldsymbol{\Sigma}^0_{\omega+2}$-set, see Proposition~\ref{prop:summable}, the following seems to be an interesting problem.

\begin{question}
Is the set $\{\mu\in\PP_\infty\colon X_{\mu}\textnormal{ has a summable Szlenk index}\}$ of a finite Borel class?
\end{question}

Even though we do not formulate it as a numbered question, a natural project to consider is to determine at least upper bounds for isometry classes of other (classical or less classical) separable infinite-dimensional Banach spaces, such as $C[0,1]$, $C([0,\alpha])$ with $\alpha$ countable ordinal, Orlicz sequence spaces, Orlicz function spaces, spaces of absolutely continuous functions, Tsirelson's space, etc.

Kechris in \cite[page 189]{KechrisBook} mentions that there are not known any natural examples of Borel sets from topology or analysis that are $\boldsymbol{\Pi}^0_\xi$ or $\boldsymbol{\Sigma}^0_\xi$, for $\xi\geq 5$, and not of lower complexity. We think that the area of research investigated in this paper is a good one to find such examples.

\bigskip

\noindent{\bf Acknowledgements.}
We would like to thank William B. Johnson for his helpful advice concerning the structure of subspaces and quotients of $L_p$ spaces (see Section~\ref{sect:simpleClasses}), Gilles Godefroy for  his suggestion to have a look at the paper \cite{JLS96}, and Ghadeer Ghawadrah for sending us a preliminary version of her paper \cite{Gh19}.

\bibliographystyle{siam}
\bibliography{ref}
\end{document}